\numberwithin{equation}{section}
\newcommand{\Real}{\mathbb R}
\newcommand{\Naturals}{\mathbb N}
\newcommand{\Complex}{\mathbb C}
\newcommand{\Integer}{\mathbb Z}
\newcommand{\norm}[1]{\left\lVert#1\right\rVert}
\newcommand{\abs}[1]{\left\vert#1\right\vert}
\newcommand{\set}[1]{\left\{#1\right\}}
\newcommand{\grad}{\nabla}
\newcommand{\K}{\mathcal{K}}
\newcommand{\F}{\mathcal{F}}
\newcommand{\jap}[1]{\langle #1 \rangle}
\newtheorem{theorem}{Theorem}
\theoremstyle{definition}
\newtheorem{remark}{Remark}
\theoremstyle{lemma}
\newtheorem{proposition}{Proposition}
\theoremstyle{definition}
\newtheorem{definition}{Definition}
\theoremstyle{lemma}
\newtheorem{lemma}{Lemma}[section]
\begin{document}

\title{Existence, Uniqueness and Lipschitz Dependence for Patlak-Keller-Segel and Navier-Stokes in $\Real^2$ with Measure-valued Initial Data}

\author{Jacob Bedrossian\footnote{\textit{jacob@cims.nyu.edu}, Courant Institute of Mathematical Sciences. Partially supported by NSF Postdoctoral Fellowship in Mathematical Sciences, DMS-1103765} \, and Nader Masmoudi\footnote{\textit{masmoudi@cims.nyu.edu}, Courant Institute of Mathematical Sciences. Partially supported by NSF  grant  
DMS-0703145 }}

\date{}

\maketitle

\begin{abstract}
We establish a new local well-posedness result in the space of finite Borel measures for mild solutions of the parabolic-elliptic Patlak-Keller-Segel (PKS) model of chemotactic aggregation in two dimensions. 
Our result only requires that the initial measure satisfy the necessary assumption $\max_{x \in \Real^2}\mu(\set{x}) < 8\pi$. 
This work improves the small-data results of Biler \cite{Biler95} and the existence results of Senba and Suzuki \cite{Senba02}.  
Our work is based on that of Gallagher and Gallay \cite{GallagherGallay05}, who prove the uniqueness and log-Lipschitz continuity of the solution map for the 2D Navier-Stokes equations (NSE) 
with measure-valued initial vorticity. 
We refine their techniques and present an alternative version of their proof which yields existence, uniqueness and \emph{Lipschitz} continuity of the solution maps of both PKS and NSE. 
Many steps are more difficult for PKS than for NSE, particularly on the level of the linear estimates related to the self-similar spreading solutions. 
\end{abstract}

\tableofcontents

\section{Introduction}
The primary focus of this work is establishing a large-data local well-posedness result in the space of finite Borel measures for the parabolic-elliptic Patlak-Keller-Segel model in two dimensions: 
\begin{equation} \label{def:PKS}
\left\{
\begin{array}{l}
  u_t + \grad \cdot (u \grad c) = \Delta u, \\
  - \Delta c = u. 
\end{array}
\right.
\end{equation}
This system is generally considered the fundamental mathematical model for the study of aggregation by chemotaxis of certain microorganisms \cite{Patlak,KS,Hortsmann,HandP}. From now on we will refer to \eqref{def:PKS} as Patlak-Keller-Segel (PKS).
The first equation describes the motion of the microorganism as a random walk with drift up the gradient of the \emph{chemo-attractant} $c$. The second equation describes the production and (instantaneous) diffusion of the chemo-attractant. 
PKS and related variants have received considerable mathematical attention over the years, for example, see the review \cite{Hortsmann} or some of the following representative works \cite{ChildressPercus81,JagerLuckhaus92,Nagai95,Biler95,HerreroVelazquez96,Senba02,Biler06,BlanchetEJDE06,Senba07,Blanchet08,BlanchetCalvezCarrillo08,BlanchetCarlenCarrillo10}. 

An important and well-known property of \eqref{def:PKS} in two dimensions is that it is $L^1$-critical: if $u(t,x)$ is a solution to \eqref{def:PKS} then for all $\lambda \in (0,\infty)$, so is
\begin{equation*}
u_\lambda(t,x) = \frac{1}{\lambda^2}u\left(\frac{t}{\lambda^2}, \frac{x}{\lambda}\right). 
\end{equation*}
It has been known for some time that \eqref{def:PKS} possesses a critical mass: if $\norm{u_0}_1 \leq 8\pi$ then classical solutions exist for all time (see e.g. \cite{Senba02,BlanchetEJDE06,Blanchet08,BlanchetCarlenCarrillo10,BlanchetCalvezCarrillo08}) and if $\norm{u_0}_1 > 8\pi$ then all classical solutions with finite second moment blow up in finite time \cite{JagerLuckhaus92,Nagai95,BlanchetEJDE06} and are known to concentrate at least $8\pi$ mass into a single point at blow-up \cite{Senba02} (see also \cite{HerreroVelazquez96,Senba07}). 
Another important property of \eqref{def:PKS} that plays a decisive role in our work is the existence (and uniqueness) of self-similar spreading solutions for all mass $\alpha \in (0,8\pi)$. 
These are known to be global attractors for the dynamics if the total mass is less than $8\pi$ \cite{BlanchetEJDE06} and for the purposes of our analysis, should be thought of as analogous to the Oseen vortices of the Navier-Stokes equations. 
When studied in higher dimensions, \eqref{def:PKS} is supercritical and the dynamics are quite different, see for example \cite{Biler95,Corrias04,BedrossianIA10,HerreroMedinaVelazquez97,HerreroMedinaVelazquez98}.
Variants of \eqref{def:PKS} involving nonlinear diffusion which are critical in higher dimensions have also been studied \cite{Kowalczyk05,CalvezCarrillo06,SugiyamaDIE06,SugiyamaADE07,Blanchet09,KimYao11} (see also the related \cite{LiebYau87}).
The parabolic-parabolic version of \eqref{def:PKS} has also been analyzed in various contexts (see e.g. \cite{Senba07,CalvezCorrias,HerreroVelazquez97,NaitoSuzukiYoshida02,BilerCorriasDolbeault11}). 
We should also mention that variants of \eqref{def:PKS} have been studied in the context of astrophysics (referred to as \emph{Smoluchowski-Poisson}) as a simplified model for the collapse of overdamped self-gravitating particles undergoing Brownian motion (see e.g. \cite{BilerAccretion95,ChavanisSire04,ChavanisSire06,SireChavanis08}). 
 
The goal of the present work is to prove the most general local well-posedness result known for \eqref{def:PKS}. 
We work with so-called \emph{mild solutions}, motivated by similar notions used in fluid mechanics (other authors have also used this notion for \eqref{def:PKS}). 
See below for the full definition and discussion (Definition \ref{def:MildSolution}), but the main idea is that these solutions satisfy \eqref{def:PKS} as the integral equation
\begin{equation*}
u(t) = e^{t\Delta}\mu - \int_0^t e^{(t-s)\Delta}\grad \cdot (u(s)\grad c(s)) ds, 
\end{equation*}
and satisfy the optimal hypercontractive estimate $\sup_{t \in (0,T)}t^{1/4}\norm{u(t)}_{4/3} < \infty$ (the self-similar spreading solutions show that the rate cannot generally be better).  
We show that there exists a unique mild solution to \eqref{def:PKS} given initial data which is a non-negative, finite Borel measure $\mu$ that satisfies $\max_{x \in \Real^2} \mu(\set{x}) < 8\pi$. Moreover, we also show that the solution map is locally Lipschitz continuous with respect to the total variation norm of the initial data.   
This is the most general well-posedness result possible in this space without considering weaker notions of solutions which can be extended past blow up (such solutions do exist \cite{DolbeaultSchmeiser09}, see below for a discussion).
The mild solutions we construct are smooth for $t > 0$ at least for some short time, which is not possible if there already exists a concentration with critical mass. 
Biler proved \cite{Biler95} using a contraction mapping argument that if the initial measure has a small atomic part then one can construct a unique mild solution. 
Senba and Suzuki \cite{Senba02} construct weak solutions only under the assumption $\max_{x \in \Real^2}\mu(\set{x}) < 8\pi$, however, their solutions are not a priori mild solutions and it is far from clear that more general solutions would necessarily agree with the mild solution.  
Hence, our proof also yields existence, which was to our knowledge open.

Much of our work and motivation is a result of the similarities \eqref{def:PKS} shares with the Navier-Stokes equations in vorticity-transport form
\begin{equation} \label{def:NSE}
\left\{
\begin{array}{l}
  \omega_t + \grad^{\perp} \Psi \cdot \grad \omega = \Delta \omega, \\
   \Delta \Psi = \omega. 
\end{array}
\right.
\end{equation}
Existence of mild solutions to \eqref{def:NSE} with measure-valued initial data was proved earlier in \cite{Cottet86,GigaMiyakawaOsada88} and similar to \eqref{def:NSE}, it is relatively easy to prove well-posedness if the initial data has only very small atoms. 
However, in \cite{GallagherGallay05}, Gallagher and Gallay proved that given arbitrary initial vorticity in the space of finite Borel measures, there is a unique mild solution to \eqref{def:NSE} and the solution map is log-Lipschitz continuous with respect to the total variation norm (see also \cite{GallayWayne05,GallagherGallayLions05} for a proof of uniqueness of the Oseen vortex with point measure initial data).

The proof of Gallagher and Gallay \cite{GallagherGallay05} uses an accurate approximate solution and an intelligent decomposition of the error between the approximate solution and the true solution, which is shown to be very small in an appropriate sense. 
A Gr\"onwall-type estimate is used to prove that if two solutions have the same initial data then they must differ from the approximate solution in the same way and hence are equal. 
However, the argument is not quite a contraction mapping. Consequently, it requires the a priori existence of well-behaved mild solutions and yields log-Lipschitz dependence on initial data, but not Lipschitz.   
Our argument follows the same general principles set forth in \cite{GallagherGallay05}, however, we use a different decomposition which allows stronger results. In particular, unlike \cite{GallagherGallay05}, our argument is a true contraction mapping, 
and this allows us to prove existence of solutions as well as the Lipschitz continuity of the solution maps of both \eqref{def:PKS} and \eqref{def:NSE} (see Theorem \ref{thm:Lipschitz}).

As in \cite{GallagherGallay05}, the approximate solution is constructed by guessing that near a large atomic concentration in the initial data, for short time, the solution to \eqref{def:PKS} or \eqref{def:NSE} should look like a self-similar spreading solution, and elsewhere can be approximated by a linear evolution. 
In order to close a contraction mapping argument, some knowledge about the linearization around the approximate solution is necessary. 
A `brute force' linear analysis is likely intractable, however, it turns out that knowing good spectral properties of all the well-separated pieces of the approximate solution is sufficient to close the argument. 
In particular, we need good spectral properties of the linearization of \eqref{def:PKS} or \eqref{def:NSE} around the self-similar spreading solutions. 
For NSE, the nonlinearity vanishes for radially symmetric data, which is why the Oseen vortices are simply the self-similar solutions to the linear heat equation. 
Moreover, the linearization around the Oseen vortices is relatively easy to analyze, as it is a sum of the Fokker-Planck operator and an operator which is skew-symmetric in an appropriate Hilbert space.
Nothing analogous to these properties hold in the case of the Patlak-Keller-Segel system: the self-similar spreading solutions solve a genuinely nonlinear elliptic system and the spectral properties of the linearization are far from trivial to analyze.
One of the  main tools for dealing with the linearization is a variant of the spectral gap recently obtained by J. Campos and J. Dolbeault \cite{CamposDolbeault12}. 
An independent proof of a weaker version specific to our needs is given in Appendix \S\ref{apx:SpectralGap}.  
This spectral gap needs to be further adapted to the spaces we are working in, similar to what is done in Gallay and Wayne \cite{GallayWayne05} for NSE (see Proposition \ref{prop:SpecT} below).

An additional technicality that appears here is the fact that the velocity field for PKS is not divergence free. 
This makes most of the results of Carlen and Loss \cite{CarlenLoss94} on the fundamental solutions of linear advection-diffusion equations inapplicable. Due to the singular nature of the velocity fields, the linear advection-diffusion equations we study cannot be treated as a perturbation of the heat equation locally in time (see \cite{JiaSverak13} for a related issue) and hence even on the linear level we need to develop tools to carefully deal with questions such as uniqueness and continuity at the initial time. 

Global measure-valued solutions of \eqref{def:PKS} in the sense of Poupaud's weak solutions \cite{Poupaud02}, which make sense even if there are mass concentrations, have been constructed by Dolbeault and Schmeiser in \cite{DolbeaultSchmeiser09} by taking sequences of regularized problems and extracting a measure for $u(t)$ along with an appropriate `defect measure' to make sense of the nonlinear term.  
It appears that the resulting solution depends on the chosen regularization, as the formal dynamics derived by Vel\'azquez \cite{Velazquez04I,Velazquez04II} are different from those constructed by Dolbeault and Schmeiser.
Whether or not measure-valued solutions can be uniquely selected by physically or biologically relevant criteria remains an interesting open question. See  \cite{Velazquez04} for some work in this direction. 

Let us end this introduction by summarizing some of the main difficulties compared to the study 
of the NSE in  \cite{GallagherGallay05}: 

\begin{itemize}

\item[(a)] For the PKS, the vector field is not divergence-free, hence we cannot use 
the results of \cite{CarlenLoss94} on the pointwise decay and localization for 
the fundamental solution of the linear advection-diffusion equation.
 
\item[(b)] For NSE, the existence of a mild solution with strong a priori estimates was already known, a fact which Gallagher and Gallay exploit multiple times in combination with the results of \cite{CarlenLoss94}. 
 For the PKS, the existence of mild solutions with such general initial data was not known. 
 
\item[(c)] The self-similar profiles of \eqref{def:PKS} corresponding to the Oseen vortices are not linear in the mass. 
The critical mass $8\pi$ will appear in many places in our analysis. 

\item[(d)] The linear operators we have to deal with are harder than those that arise in the study of NSE. 
For NSE, due to the divergence-free property, these linear operators are a skew-symmetric perturbation of a Fokker-Planck operator. 
 
\end{itemize} 

In order to overcome difficulties (a) and (b), we had to find a better decomposition of the 
error terms between the solution and the approximate solution.   
This gives better control of the error in norms which permit us to close a contraction mapping argument, allowing also the deduction
of Lipschitz continuity of the solution map with respect to the initial data. To our knowledge, this is a new result even for NSE. 
To overcome difficulty (d), a compactness/rigidity argument is used to prove uniqueness for the singular linear equations and 
a variant of the spectral gap of Campos and Dolbeault \cite{CamposDolbeault12} and known spectral properties of general Fokker-Planck equations both play important roles in the linear analysis.  

\subsection{Results} 
The precise notion of weak solution we are using is that of a \emph{mild solution}, which are motivated by similar notions in fluid mechanics and have also been used previously in the study of PKS (e.g. \cite{Biler95,BlanchetDEF10}). 
\begin{definition}[Mild Solution] \label{def:MildSolution}
Given $\mu \in \mathcal{M}_+(\Real^2)$, we define $u(t)$ to be a \emph{mild solution} to \eqref{def:PKS} with initial data $\mu$ on $[0,T)$ if the following are satisfied:
\begin{itemize}
\item[(i)] $u(t) \rightharpoonup^\star \mu$ as $t \searrow 0$, 
\item[(ii)] $u(t) \in \chi_T$ where
\begin{equation}
\chi_T =  C_w( [0,T]; \mathcal{M}_+(\Real^2)) \cap \set{u(t): \sup_{t \in (0,T)} t^{1/4}\norm{u(t)}_{4/3} < \infty}, \label{def:chiT}
\end{equation}
\item[(iii)] 
$u(t)$ satisfies the following Duhamel integral equation for all $t \in (0,T)$
\begin{equation}
u(t) = e^{t\Delta}\mu - \int_0^t e^{(t-s)\Delta}\grad \cdot (u(s)\grad c(s)) ds, \label{eq:MildSolutionDef} 
\end{equation}
with $-\Delta c(s) = u(s)$ in the sense 
\begin{equation}
c(t,x) = -\frac{1}{2\pi}\int \log\abs{x-y}u(t,y) dy. 
\end{equation}
\end{itemize}  
\end{definition}
\begin{remark} \label{rmk:NonConverge} 
Recall the estimates on the heat kernel, 
\begin{align} 
\norm{e^{t\Delta} f}_p & \lesssim t^{1/p - 1/q}\norm{f}_q, \label{ineq:HeatLpLqEasy} \\ 
\norm{e^{t\Delta}\grad f}_p & \lesssim t^{-1/2 + 1/p - 1/q}\norm{f}_q,  \label{ineq:HeatLpLq}
\end{align} 
which are a consequences of Young's inequality for convolutions. The estimate \eqref{ineq:HeatLpLq} ultimately implies that \eqref{def:chiT} ensures that the Duhamel integral converges in the sense that: 
\begin{align*}
\sup_{t \in (0,T)}t^{1/4}\norm{\int_0^t e^{(t-s)\Delta}\grad \cdot (u(s)\grad c(s)) ds}_{4/3} + \sup_{t \in (0,T)}\norm{\int_0^t e^{(t-s)\Delta}\grad \cdot (u(s)\grad c(s)) ds}_{1} < \infty. 
\end{align*}  
However, if the initial measure has a non-zero atom, the integral does \emph{not} converge to zero in these norms as $t \searrow 0$.
That is, the solution cannot be approximated by the linear heat evolution in the critical norms by choosing $t$ small; the only option would be to impose that the atoms are small (see Theorem \ref{thm:Basics} and the results of Biler \cite{Biler95} below).  
In this general sense, the work here is related to the recent works on 3D NSE in the critical space $L^{3,\infty}$ \cite{JiaSverak12,JiaSverak13}.  
\end{remark}

\begin{remark} 
Here $C_w([0,T];\mathcal{M}_+(\Real^2))$ is the space of $u(t)$ which take values in finite non-negative Borel measures
continuously in time with respect to the weak$^\ast$ topology.  
\end{remark} 

\begin{remark} \label{def:MildSolnLinear}
Often in the sequel we will be studying singular advection-diffusion equations of the form $\partial_t f + \grad \cdot (vf) = \Delta f$ with measure-valued initial data. For these we use a definition of mild solution exactly analogous to Definition \ref{def:MildSolution} except that we will not impose a priori that the solution or initial data is non-negative and of course the velocity field is imposed externally and not derived from the solution itself. 
\end{remark}

In addition to \eqref{ineq:HeatLpLqEasy} and \eqref{ineq:HeatLpLq}, the heat kernel also satisfies the following precise estimate \cite{GigaMiyakawaOsada88}: for all $\mu \in \mathcal{M}(\Real^2)$ and $p \in (1,\infty]$,  
\begin{equation}
\limsup_{t \searrow 0} t^{1-\frac{1}{p}}\norm{e^{t\Delta} \mu}_{p} \lesssim \norm{\mu}_{pp}, \label{ineq:Heatpp}
\end{equation}
where $\norm{\cdot}_{pp}$ denotes the semi-norm on $\mathcal{M}(\Real^2)$ which measures the total variation of the atomic part:
\begin{equation*}
\norm{\mu}_{pp} := \sum_{\set{x \in\Real^2: \abs{\mu(\set{x})} > 0}}\abs{\mu(\set{x})}. 
\end{equation*}
Estimate \eqref{ineq:Heatpp} and related estimates play a key role in our analysis and the work of Gallagher and Gallay, as they show that size conditions for short time results should only depend on the atomic part of the initial data.  
Additional important facts about mild solutions are summarized in the following theorem.
\begin{theorem} \label{thm:Basics}
\begin{itemize}
\item[(i)] Let $u(t)$ be any mild solution to PKS which exists on some time interval $[0,T]$, $T < \infty$.
Then $\sup_{t \in (0,T)}t^{1-1/p}\norm{u(t)}_{p}  < \infty$ for all $p \in [1,\infty]$. 
\item[(ii)] (Biler \cite{Biler95}) There exists some $\epsilon_0 > 0$ such that if $\mu_0 \in \mathcal{M}_+(\Real^2)$ and satisfies 
\begin{equation*}
\limsup_{t \searrow 0} t^{1/4} \norm{e^{t\Delta}\mu_0}_{4/3} < \epsilon_0,
\end{equation*}
 then there exists a unique local-in-time mild solution to \eqref{def:PKS} with initial data $\mu_0$. 
\end{itemize}
\end{theorem}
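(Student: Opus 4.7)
The plan is to establish part (i) by bootstrapping the Duhamel identity \eqref{eq:MildSolutionDef} starting from the single a priori bound encoded in the definition of $\chi_T$, and to establish part (ii) by a Banach fixed point argument in a small ball of that same critical space, as in Biler \cite{Biler95}.

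For part (i), write $K := \sup_{t \in (0,T)} t^{1/4}\norm{u(t)}_{4/3}$. Mass conservation (obtained by pairing \eqref{eq:MildSolutionDef} with a cutoff converging to $1$) gives $\norm{u(t)}_1 \leq \norm{\mu}_{TV}$ for all $t \in [0,T]$. Since $\grad c$ is convolution of $u$ with the Riesz-type kernel $-x/(2\pi\abs{x}^2)$, the Hardy-Littlewood-Sobolev inequality yields $\norm{\grad c(s)}_4 \lesssim \norm{u(s)}_{4/3}$, and H\"older gives $\norm{u(s)\grad c(s)}_1 \lesssim s^{-1/2}K^2$. Applying \eqref{ineq:HeatLpLq} with $q=1$ to the nonlinear term in \eqref{eq:MildSolutionDef}, together with the standard bound $\norm{e^{t\Delta}\mu}_p \lesssim t^{-1+1/p}\norm{\mu}_{TV}$, produces
\begin{equation*}
  t^{1-1/p}\norm{u(t)}_p \lesssim \norm{\mu}_{TV} + K^2 \,t^{1-1/p}\!\int_0^t (t-s)^{-3/2+1/p} s^{-1/2}\,ds = \norm{\mu}_{TV} + C_p K^2
\end{equation*}
for any $p \in [1,2)$, the $s$-integral converging after rescaling to a beta function. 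With these improved estimates in hand, one reapplies HLS at different exponents to upgrade $\norm{u\grad c}_r$ and iterates the Duhamel computation finitely many times to reach any $p<\infty$. The endpoint $p=\infty$ is obtained by splitting the $s$-integral at $t/2$ so that each half is controlled by one of the already-established $L^p$ bounds, sidestepping the non-integrable tail that would arise from using the $L^1\to L^\infty$ version of \eqref{ineq:HeatLpLq} directly.

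For part (ii), choose $T>0$ small enough that $\eta := \sup_{t \in (0,T)} t^{1/4}\norm{e^{t\Delta}\mu_0}_{4/3} < \epsilon_0$, which is possible under the hypothesis. On the Banach space $X_T$ equipped with $\norm{u}_X := \sup_{t \in (0,T)} t^{1/4}\norm{u(t)}_{4/3}$, the exact computation of the previous paragraph, with $K^2$ replaced by $\norm{u}_X\norm{v}_X$, gives the bilinear estimate
\begin{equation*}
  \sup_{t \in (0,T)} t^{1/4}\left\| \int_0^t e^{(t-s)\Delta}\grad\cdot\bigl(u(s)\grad c[v](s)\bigr)\,ds \right\|_{4/3} \leq C\,\norm{u}_X\norm{v}_X.
\end{equation*}
Therefore $\Phi(u)(t) := e^{t\Delta}\mu_0 - \int_0^t e^{(t-s)\Delta}\grad\cdot(u\,\grad c[u])\,ds$ maps the closed ball of radius $2\eta$ into itself and is a strict contraction once $\epsilon_0$ is chosen small (say $4C\epsilon_0 < 1$), producing a unique fixed point. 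The $\chi_T$-regularity claims -- weak$^\ast$ continuity in time and weak$^\ast$ convergence to $\mu_0$ at $t=0$ -- follow because $e^{t\Delta}\mu_0 \rightharpoonup^\star \mu_0$ and the Duhamel integral is uniformly bounded in $L^1$ with $L^1$-norm vanishing as $t\searrow 0$ by the same Beta-function computation.

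The main obstacle I expect is the $p = \infty$ endpoint in the bootstrap for (i): a naive $L^1\to L^\infty$ bound on $e^{(t-s)\Delta}\grad\cdot$ gives a non-integrable $(t-s)^{-3/2}$ singularity, so one has to carefully partition the time integral and use the finite-$p$ estimates obtained at the previous step of the iteration to extract a clean pointwise-in-$t$ bound without logarithmic losses.
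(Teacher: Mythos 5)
For part (i) your proof is correct, but the route is genuinely different from the paper's. The paper's first bootstrap step (from $L^{4/3}$ to $L^3$) is a PDE energy estimate carried out in self-similar variables: it uses a level-set truncation $w_k = (w-k)_+$ together with a time-dependent Lebesgue exponent $p(\tau) = 4/3 + (5/3)(\tau - \tau_0)$, the Gagliardo--Nirenberg inequality $\|w_k\|_{p+1}^{p+1} \lesssim \|w_k\|_1 \|\grad w_k^{p/2}\|_2^2$, and the Chebyshev control $\|w_k\|_1 \lesssim k^{-1/3}$ supplied by the a priori $L^{4/3}$ bound. Its second step (to $L^\infty$) uses Duhamel on dyadic time intervals. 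Your argument iterates Duhamel directly: one pass with $\|u\grad c\|_1$ gives all $p \in [1,2)$, a second pass (taking $\|u\grad c\|_a$ with $a$ approaching $2^-$) reaches any finite $p$, and the split at $s=t/2$ handles $p=\infty$. Both routes are valid; yours is shorter for this specific semilinear structure, while the paper's energy method is the one that transfers to, e.g., nonlinear-diffusion variants that lack a clean Duhamel representation. (Be aware that spelling out the second iteration requires some exponent bookkeeping: with $1/a = 1/p_1 + 1/r - 1/2$ and $r \in (1,2)$, one needs $1/2 < 1/a < 1/2 + 1/p_{\mathrm{new}}$ and $a \leq p_{\mathrm{new}}$, which is achievable precisely because the first pass gives $p_1, r$ arbitrarily close to $2$.)

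For part (ii) the bilinear estimate and contraction are fine (this is Biler's argument), but your justification of weak$^\star$ convergence to the initial data contains a genuine error. You assert that the Duhamel integral has "$L^1$-norm vanishing as $t\searrow 0$ by the same Beta-function computation." The Beta-function computation gives $\bigl\|\int_0^t e^{(t-s)\Delta}\grad\cdot(u\grad c)\,ds\bigr\|_1 \lesssim \|u\|_X^2 \int_0^1 (1-\sigma)^{-1/2}\sigma^{-1/2}\,d\sigma$, a \emph{$t$-independent} constant of size $O(\epsilon_0^2)$; it is small but does not tend to zero. Indeed Remark \ref{rmk:NonConverge} of the paper states precisely that when the initial measure has a non-zero atom the Duhamel integral does \emph{not} converge to zero in these critical norms. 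The correct argument, as the paper carries out in Section \S\ref{sec:finalStep}, is to test against $\phi \in C_c^\infty$, integrate by parts to move a derivative onto $\phi$ (which replaces the $(t-s)^{-1/2}$ factor by $1$ and produces the convergent $\int_0^t s^{-1/2}\,ds \to 0$), obtain convergence in the sense of distributions, and then upgrade to weak$^\star$ convergence using the uniform $L^1$ bound plus a tightness estimate in $\mathcal{M}(\Real^2)$.
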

Part (i), (to our knowledge new), shows that the condition $\sup_{t \in (0,T)}t^{1/4}\norm{u(t)}_{4/3} < \infty$ is equivalent to the $L^\infty$ hypercontractivity estimate $u(t) \lesssim t^{-1}$ (the proof shows that all such estimates are equivalent).
Accordingly, standard parabolic theory implies that all mild solutions are smooth and strictly positive after $t > 0$ until (potentially) critical mass concentration.  
Part (ii) is due to Biler \cite{Biler95} and combined with \eqref{ineq:Heatpp} shows that given a measure with a sufficiently small \emph{atomic} part, one can construct a unique mild solution local in time. 
Part (i) will play a role in the proof of the main results of the paper, although (ii) will not.

We now state our main results. 
For PKS we prove the following existence and uniqueness theorem:  
\begin{theorem} \label{thm:PKSUnique}
Let $\mu \in \mathcal{M}_+(\Real^2)$ with $\max_{x \in \Real^2}\mu(\set{x}) < 8\pi$. Then there exists a unique, local-in-time mild solution $u(t)$ to \eqref{def:PKS} with initial data $\mu$.  
\end{theorem}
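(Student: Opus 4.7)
The plan is to adapt the approximate-solution strategy of Gallagher--Gallay \cite{GallagherGallay05} and upgrade it to a genuine contraction mapping, so that existence and uniqueness both drop out of a single fixed-point argument. First, decompose the initial data by atom size: fix $\delta > 0$ with $\max_{x \in \Real^2} \mu(\set{x}) < 8\pi - 2\delta$, and pick a small threshold $\eta = \eta(\delta)>0$. Write
\begin{equation*}
\mu = \mu_a + \mu_b, \qquad \mu_a = \sum_{i=1}^N \alpha_i \delta_{x_i}, \quad \alpha_i \in [\eta, 8\pi - 2\delta], \quad \norm{\mu_b}_{pp} < \eta,
\end{equation*}
where $N < \infty$ because $\mu(\Real^2)<\infty$. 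Define the approximate solution
\begin{equation*}
u_{\mathrm{app}}(t,x) = \sum_{i=1}^N G^{\alpha_i}(t, x - x_i) + e^{t\Delta}\mu_b,
\end{equation*}
where $G^{\alpha}(t,x) = t^{-1}\varphi^{\alpha}(x/\sqrt{t})$ is the self-similar PKS spreading profile of mass $\alpha$. Since each $G^{\alpha_i}$ is an exact PKS solution and $e^{t\Delta}\mu_b$ solves the heat equation, the PKS residual $R_{\mathrm{app}}$ is made up entirely of cross-interactions $\grad\cdot(G^{\alpha_i}\grad c_j)$ for $i\ne j$, mixed terms between the $G^{\alpha_i}$ and $e^{t\Delta}\mu_b$, and the nonlinear self-term $\grad\cdot(e^{t\Delta}\mu_b\grad c_{\mu_b})$; the first two shrink as $T \to 0$ because the atoms stay well separated on the diffusive scale $\sqrt{t}$ for short time, and the third is small by \eqref{ineq:Heatpp} and the choice of $\eta$. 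By construction $u_{\mathrm{app}}(t) \rightharpoonup^\star \mu$ as $t \searrow 0$.

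Next, write the putative mild solution as $u = u_{\mathrm{app}} + w$, so that $w$ satisfies a perturbed linear equation with zero initial data,
\begin{equation*}
\partial_t w - \Delta w + \grad\cdot(u_{\mathrm{app}}\grad c_w) + \grad\cdot(w \grad c_{\mathrm{app}}) = R_{\mathrm{app}} - \grad\cdot(w \grad c_w),
\end{equation*}
and work in the critically-scaling error space
\begin{equation*}
X_T = \set{w \in C_w([0,T]; L^1(\Real^2)) : \norm{w}_{X_T} := \sup_{t \in (0,T]}\norm{w(t)}_1 + \sup_{t \in (0,T]} t^{1/4}\norm{w(t)}_{4/3} < \infty}.
\end{equation*}
Let $\Phi:X_T\to X_T$ be the map sending $w$ to the mild solution $\tilde w$ of the linear equation whose left-hand side is the linearization $L_{\mathrm{app}}$ acting on $\tilde w$ and whose forcing is $R_{\mathrm{app}} - \grad\cdot(w\grad c_w)$. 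Using Duhamel against the propagator of $L_{\mathrm{app}}$, the goal is to prove $\norm{\Phi(w)}_{X_T} \lesssim \norm{R_{\mathrm{app}}}_\star + \norm{w}_{X_T}^2$ and a matching Lipschitz bound, so that for $T$ small $\Phi$ is a strict contraction on a small ball of $X_T$. Unlike \cite{GallagherGallay05}, the whole argument stays within a contraction rather than a Gr\"onwall estimate, which directly yields existence together with Lipschitz (not merely log-Lipschitz) dependence.

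The principal obstacle is the propagator estimate for $L_{\mathrm{app}}$ at the critical scaling. Since the atoms are well separated on scale $\sqrt{t}$, $L_{\mathrm{app}}$ is, to leading order, a direct sum of the linearizations $\mathcal{L}^{\alpha_i}$ around each $G^{\alpha_i}$ (after passing to self-similar variables centered at $x_i$), plus a mild perturbation contributed by $e^{t\Delta}\mu_b$ and cross-terms. Each $\mathcal{L}^{\alpha_i}$ is a Fokker--Planck operator plus a non-skew-symmetric chemotactic perturbation, and a spectral gap is available precisely when $\alpha_i < 8\pi$, via the Campos--Dolbeault inequality \cite{CamposDolbeault12} (a variant of which is provided in Appendix \ref{apx:SpectralGap}); this is where the critical-mass hypothesis is consumed. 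The gap is transferred from its natural weighted $L^2$ setting to the scale-invariant $L^{4/3}$ framework by Proposition \ref{prop:SpecT}, in the spirit of \cite{GallayWayne05}. A further difficulty absent in NSE is that $\grad c_{\mathrm{app}}$ is not divergence-free and is singular as $t\searrow 0$, so the heat-kernel comparisons of \cite{CarlenLoss94} do not apply; uniqueness for the associated singular linear advection--diffusion equation (Definition \ref{def:MildSolnLinear}) is instead recovered by a compactness/rigidity argument. With all these ingredients, the fixed point $w$ of $\Phi$ produces the desired mild solution, uniqueness of general mild solutions follows because Theorem \ref{thm:Basics}(i) forces any mild solution into a class where the same decomposition applies, and weak-$\star$ continuity at $t=0$ is inherited from $u_{\mathrm{app}}(t)\rightharpoonup^\star \mu$ and $\norm{w(t)}_1 \to 0$.
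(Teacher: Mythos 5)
Your proposal correctly identifies the overarching strategy (approximate solution built from the self-similar profiles $G_{\alpha_i}$ plus a small-atom remainder, followed by a contraction around it), and the features you flag as difficulties --- the spectral gap via Campos--Dolbeault, the non-divergence-free vector field, the unavailability of \cite{CarlenLoss94} --- are indeed the right obstacles. But the proposal elides exactly the technical steps that make the argument work, and at those points the plan as written would fail.

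\textbf{The function space and the decomposition.} You work with a single perturbation $w = u - u_{\mathrm{app}}$ and the scaling-invariant norm $\norm{w}_{X_T} = \sup \norm{w(t)}_1 + \sup t^{1/4}\norm{w(t)}_{4/3}$. However, the spectral gap for the linearization around $G_{\alpha_i}$ (Proposition \ref{prop:SpecT}) lives in the polynomially weighted space $L^2(m)$, $m>2$, \emph{in self-similar coordinates centered at $z_i$}. The exponential decay \eqref{ineq:SpecGapT}--\eqref{ineq:TGradDecay} has no analogue in the translation-invariant $L^1 \cap t^{1/4}L^{4/3}$ framework: while $L^2(m) \hookrightarrow L^{4/3}$, you cannot go the other way, so the spectral gap is simply not visible from $X_T$. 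This is precisely why the paper splits $u - u_{\mathrm{app}}$ into $N+1$ pieces $\tilde{w}_0, \dots, \tilde{w}_N$ and norms $\tilde w_0$ in $t^{1/4}L^{4/3}$ but each $\tilde{w}_i$ ($i\geq 1$) in $L^2(m)$ in the self-similar variables of $z_i$. The tuning constant $K_0$ in the norm $M[\tilde w]$ is then needed so that the pieces do not disrupt one another. Your undivided $X_T$ cannot encode any of this, and without it the Duhamel estimates against $\mathcal{T}_{\alpha_i}$ do not close.

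\textbf{The propagator estimate.} You assert that the propagator of $L_{\mathrm{app}}$ is ``to leading order, a direct sum of the linearizations $\mathcal{L}^{\alpha_i}$ \dots plus a mild perturbation.'' This is exactly the step the paper's introduction flags as ``likely intractable'' by brute force, and the entire architecture of the proof is built to avoid ever estimating the full linearized propagator globally. Instead one proves separate estimates for the singular transport propagator $S_N(t,s)$ (Proposition \ref{prop:SNProperties}, whose key inequality \eqref{ineq:SNgrad} carries the loss $(t/s)^{\gamma+1/2-\lambda_0}$ controlled by a Fokker--Planck spectral gap) and for $\mathcal{T}_{\alpha_i}(\tau)$, and the decomposition is what allows one to invoke each in its own regime, with the cutoff $\phi$ in \eqref{eq:wi} handling the cross-terms. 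To make your sketch rigorous you would be forced to re-derive these piecewise estimates, at which point you are doing the paper's construction; the ``direct sum plus mild perturbation'' heuristic is not a shortcut.

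\textbf{Uniqueness.} You claim uniqueness follows because ``Theorem \ref{thm:Basics}(i) forces any mild solution into a class where the same decomposition applies.'' This is not so. Theorem \ref{thm:Basics}(i) only gives the hypercontractive bounds $\norm{u(t)}_p \lesssim t^{1/p-1}$; it does \emph{not} show that $u(t) - u_{\mathrm{app}}(t)$ is small in $X_T$. Establishing that is the content of Proposition \ref{prop:EquivSolutions}, a substantial step whose proof requires (a) a compactness/rigidity argument showing that the localized pieces $w_i(\tau)$ of any mild solution converge in $L^2(m)$ to $G_{\alpha_i}$ as $\tau\to-\infty$ (which uses the uniqueness of $G_\alpha$ among finite-energy profiles and the delicate Lemma \ref{lem:RiDecay} controlling the cross-velocities), and (b) a second contraction that reconstitutes $\tilde{w}_i$ from $\bar{w}_i = w_i - G_{\alpha_i}$ because the coupled system \eqref{def:wiDuhamel} cannot be decoupled directly. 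Nothing in your proposal substitutes for these steps, so the claimed uniqueness does not follow.

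A minor remark: using $e^{t\Delta}\mu_b$ rather than $S_N(t,0)\mu_0$ for the non-atomic part of $u_{\mathrm{app}}$ is not fatal by itself, since the advection of $w$ by $\grad c_{\mathrm{app}}$ is anyway inside your linear operator. But this just relocates the same difficulty into the propagator of $L_{\mathrm{app}}$, which you have not estimated.
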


As discussed above, our approach also yields the Lipschitz continuity of the solution maps for \eqref{def:PKS} and \eqref{def:NSE}. 
Even for NSE, this is an improvement of the existing result of log-Lipschitz continuity, due to Gallagher and Gallay \cite{GallagherGallay05}. 

\begin{theorem} \label{thm:Lipschitz}
The solution maps of both NSE and PKS in two dimensions are locally Lipschitz continuous with respect to the total variation norm. 
That is, for all $\mu^1,\mu^2 \in \mathcal{M}(\Real^2)$ (in the case of PKS, we assume additionally $\mu^i \in\mathcal{M}_+(\Real^2)$ and $\max \mu^i(\set{x}) < 8\pi$) with associated mild solutions $w^1(t),w^2(t)$, there exists some constant $C_L = C_L(\mu^1) > 0$ and $T = T(\mu^1) > 0$ such that for all $\delta > 0$ sufficiently small, if 
\begin{equation*}
\norm{\mu^1 - \mu^2}_{\mathcal{M}} < \delta,
\end{equation*}
then
\begin{equation*}
\sup_{t \in (0,T)}\left(\norm{w^1(t) - w^2(t)}_{L^1} + t^{1/4}\norm{w^1(t) - w^2(t)}_{4/3}\right) \leq C_L\delta. 
\end{equation*}
Here, $\norm{\cdot}_{\mathcal{M}}$ denotes the total variation norm on finite Borel measures. 
\end{theorem}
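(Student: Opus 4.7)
The plan is to leverage the contraction-mapping framework already required for the existence and uniqueness proof of Theorem \ref{thm:PKSUnique} and its NSE counterpart, rather than start from scratch. In that framework, a mild solution is decomposed as $w = u_{\text{app}} + v$, where $u_{\text{app}}$ is an explicit approximate solution built from self-similar spreading profiles centered at the large atoms of the initial data $\mu$ plus a linear heat evolution of the non-atomic remainder, and $v$ is the unique fixed point of a contraction $\Phi_{u_{\text{app}}}$ in a complete metric space $(X_T, \norm{\cdot}_{X_T})$, with $\norm{f}_{X_T} := \sup_{0<t<T}\bigl(\norm{f(t)}_1 + t^{1/4}\norm{f(t)}_{4/3}\bigr)$. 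Because $\Phi$ is built from multilinear operations in $u_{\text{app}}$ and $v$, it will depend Lipschitz-continuously on $u_{\text{app}}$, so it suffices to show that (a) $u_{\text{app}}^i$ can be chosen so that $\norm{u_{\text{app}}^1 - u_{\text{app}}^2}_{X_T} \lesssim \delta$ and (b) the reduction $u_{\text{app}} \mapsto v$ is Lipschitz.

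First I would use the smallness of $\delta = \norm{\mu^1 - \mu^2}_{\mathcal{M}}$ to pair atoms. For any fixed threshold $\eta > 0$, each atom of $\mu^1$ of mass at least $\eta$ must coincide \emph{in location} with an atom of $\mu^2$ as soon as $\delta < \eta$, since the total variation norm separates atoms at distinct points by twice their mass. Choosing $\eta$ depending on how close $\max_x \mu^1(\set{x})$ is to $8\pi$, I would take identical atomic supports $\set{x_1,\ldots,x_N}$ in both approximate solutions, with masses $\alpha_j^i$ satisfying $\abs{\alpha_j^1 - \alpha_j^2} \leq \delta$ and $\max_j \alpha_j^i < 8\pi$. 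For each self-similar contribution, the family $\alpha \mapsto G^\alpha$ of PKS profiles is $C^1$ on $(0,8\pi)$ in the relevant norms (the $\alpha$-derivative solves a linear equation controlled by the linearization at $G^\alpha$, whose invertibility on the orthogonal complement of its kernel follows from the spectral gap of Proposition \ref{prop:SpecT}), so
\[
\sup_{0<t<T} t^{1/4}\norm{G^{\alpha_j^1}(t,\cdot-x_j) - G^{\alpha_j^2}(t,\cdot-x_j)}_{4/3} \lesssim \abs{\alpha_j^1 - \alpha_j^2} \lesssim \delta.
\]
The heat-evolution contribution is handled by $\norm{e^{t\Delta}\nu}_{X_T} \lesssim \norm{\nu}_{\mathcal{M}}$ (via \eqref{ineq:HeatLpLqEasy}), applied to $\nu = (\mu^1 - \mu^2) - \sum_j (\alpha_j^1 - \alpha_j^2)\delta_{x_j}$, whose total variation is $\lesssim \delta$.

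Second I would propagate the Lipschitz bound through the error equation. Writing $\Phi_{u_{\text{app}}^i}(v) = L_i v + B(v,v) + F_i$, with $L_i$ the linearization at $u_{\text{app}}^i$, $B$ the bilinear Duhamel form of $\grad\cdot(u\grad c)$, and $F_i$ the approximation defect, the existence argument produces $v^i \in X_T$ with $\norm{v^i}_{X_T} \leq r$ small, on which $\Phi_{u_{\text{app}}^i}$ is a contraction with some constant $\theta < 1$ uniform in $i$ (for $T$ chosen small depending only on $\mu^1$ and thus stable under $\delta$-perturbations). Subtracting the two fixed-point identities and inserting the intermediate term $\Phi_{u_{\text{app}}^1}(v^2)$ gives
\[
\norm{v^1 - v^2}_{X_T} \leq \theta\norm{v^1 - v^2}_{X_T} + \norm{\Phi_{u_{\text{app}}^1}(v^2) - \Phi_{u_{\text{app}}^2}(v^2)}_{X_T},
\]
and the multilinearity of $B$ together with continuity of $F_i$ in $u_{\text{app}}^i$ bounds the second term by $O(\norm{u_{\text{app}}^1 - u_{\text{app}}^2}_{X_T}) = O(\delta)$. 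Absorbing the $\theta$ term yields $\norm{v^1 - v^2}_{X_T} \lesssim \delta$, and the triangle inequality concludes $\norm{w^1 - w^2}_{X_T} \lesssim \delta$ on $(0,T)$.

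The main obstacle I anticipate is the $C^1$ dependence in mass of the PKS self-similar profile $G^\alpha$: unlike NSE, where the Oseen vortex is explicit and linear in the circulation, $G^\alpha$ solves a genuinely nonlinear elliptic system and its variation in $\alpha$ must be extracted from the spectral analysis of the linearization around $G^\alpha$ on the orthogonal complement of its symmetry kernel. Securing the quantitative norm bound $\norm{\partial_\alpha G^\alpha}_{X_T} \lesssim 1$ uniformly on compact subsets of $(0,8\pi)$ is precisely where the sharp form of the Campos--Dolbeault spectral gap, as adapted in Proposition \ref{prop:SpecT}, enters decisively. A secondary subtlety is the uniform treatment of atoms of very small but nonzero mass, resolved by absorbing them into the non-atomic remainder at the cost of an $\eta$-controlled additive error, $\eta$ being chosen as a function of $\mu^1$ alone.
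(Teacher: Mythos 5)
Your high-level plan --- pair the atoms using the total variation norm, bound the difference of approximate solutions by $\delta$, and propagate Lipschitz dependence through the contraction machinery --- is the same strategy the paper uses, and you correctly flag the regularity of $\alpha \mapsto G_\alpha$ (Proposition \ref{prop:Galpha}~(v)) as a genuine obstacle specific to PKS. However, there are two places where your argument, as written, would not close.

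First, the space and the contraction. The existence argument of Theorem \ref{thm:PKSUnique} is \emph{not} a contraction for $v = w - u_{\mathrm{app}}$ in $X_T$ with $\norm{\cdot}_{X_T} = \sup_t (\norm{\cdot}_1 + t^{1/4}\norm{\cdot}_{4/3})$; such a one-space scheme is essentially Biler's small-atom argument and it fails when atoms approach $8\pi$. The actual fixed point is for the tuple $\set{\tilde{w}_i}_{i=0}^N$ in the $M$-norm, with $\tilde{w}_0$ in $t^{1/4}L^{4/3}$ and each $\tilde{w}_i$, $i\geq 1$, in $L^2(m)$ in self-similar variables centered at $z_i$. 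The $L^2(m)$ structure is not cosmetic: it is the setting in which the spectral gap of $\mathcal{T}_\alpha$ (Proposition \ref{prop:SpecT}) and of $S_N$ are available, and those gaps are what allow a contraction with large atoms. Writing $\Phi(v) = Lv + B(v,v) + F$ on $X_T$ hides exactly the structure that makes the existence proof work.

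Second, and more fundamentally, your step that bounds $\norm{\Phi_{u_{\mathrm{app}}^1}(v^2) - \Phi_{u_{\mathrm{app}}^2}(v^2)}$ by ``multilinearity of $B$ together with continuity of $F_i$ in $u_{\mathrm{app}}^i$'' quietly assumes what needs to be proved. The paper opens Section \S\ref{sec:Lipschitz} precisely by observing that the contraction mapping theorem does \emph{not} give Lipschitz dependence for free here, because the linear propagators $S_N(t,s)$ and $\mathcal{T}_{\alpha}(\tau)$ used to \emph{define} the map depend on the masses $\alpha_j$ and hence on the initial datum. When you subtract the two fixed-point equations, the difference contains terms of the form $\bigl(S_N^1(t,s)-S_N^2(t,s)\bigr)\grad\cdot(\cdot)$ and $\bigl(\mathcal{T}_{\alpha^1}-\mathcal{T}_{\alpha^2}\bigr)(\tau)\grad(\cdot)$, and the claim that these are $O(\delta)$ in the correct weighted/hypercontractive operator norms is a genuine perturbation result (Propositions \ref{prop:SNPerturb} and \ref{prop:TPerturb} in the paper). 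Establishing these requires a separate contraction argument, and for $\mathcal{T}_\alpha$ it crucially feeds on Proposition \ref{prop:Galpha}~(iv)--(v). The $\alpha$-regularity of $G_\alpha$ that you discuss is one ingredient, but the propagator perturbation is a distinct step you did not supply and is exactly why the statement is not a formal corollary of the contraction theorem.
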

Let us briefly discuss the energy structure of \eqref{def:PKS}, which is important for characterizing the self-similar spreading solutions and for analyzing the global behavior, the former being crucially important for our work. 
Formally, the Patlak-Keller-Segel model \eqref{def:PKS} is a gradient flow in the $L^2$ Wasserstein metric for the \emph{free energy} (see \cite{BlanchetCalvezCarrillo08,BlanchetCarlenCarrillo10}), 
\begin{equation}
\F(u) = \int u(x) \log u(x) dx + \frac{1}{4\pi} \int \int u(x)u(y) \log\abs{x-y} dx dy.  
\end{equation}
In particular, if the initial data has finite free energy, then for reasonable notions of weak solution we have the \emph{energy dissipation inequality}, 
\begin{equation*}
\F(u(t)) + \int_0^t \int u(s) \abs{ \grad \log u(s) - \grad c(s)}^2 dx ds \leq \F(u(0)),  
\end{equation*}
for all $t \geq 0$ until blow-up time. 
Using the sharp logarithmic Hardy-Littlewood-Sobolev inequality (see e.g. \cite{CarlenLoss92}) this implies global existence 
of any weak solution which has finite initial free energy provided that the total mass is less than $8\pi$ \cite{Dolbeault04,BlanchetEJDE06}. 
The energy dissipation inequality is actually stronger in similarity variables:
\begin{equation}
\xi = \frac{x}{\sqrt{t}}, \;\;\; \tau = \log t
\end{equation}
and $w(\tau,\xi) := t u(x,t)$. In these variables, \eqref{def:PKS} becomes the following, 
\begin{equation} \label{def:resPKS}
\left\{
\begin{array}{l}
  w_\tau + \grad \cdot (w \grad c) = \Delta w + \frac{1}{2}\grad \cdot (\xi w) \\
  - \Delta c = w,
\end{array}
\right.
\end{equation}
which is formally a gradient flow for the self-similar free energy 
\begin{equation}
\mathcal{G}(w) = \int w(\xi) \log w(\xi) d\xi + \frac{1}{2}\int w(\xi)\abs{\xi}^2 d\xi + \frac{1}{4\pi} \int \int w(\xi)w(\zeta) \log\abs{\xi-\zeta} d\xi d\zeta.  \label{def:Gssfree}
\end{equation}
As the second moment is now part of the energy, uniform control on the entropy $\int w(\xi) \log w(\xi) d\xi$ from below can be obtained and the sharp logarithmic Hardy-Littlewood-Sobolev can then be used to show that any solution to \eqref{def:resPKS} with finite self-similar free energy and mass strictly less than $8\pi$ is uniformly bounded in time. 
In physical variables this is the optimal decay estimate $u(t,x) \lesssim t^{-1}$ as $t \rightarrow \infty$. 

The free energy $\mathcal{G}$ is important to characterize the self-similar solutions of \eqref{def:PKS}. 
Biler et. al. show in \cite{Biler06} that for all $\alpha \in (0,8\pi)$ there exists a unique, radially symmetric self-similar solution with mass $\alpha$, denoted here in self-similar variables by $G_\alpha(\xi)$ (existence had been previously established in \cite{BilerAccretion95,NaitoSuzuki04}). 
These solutions will play the role that the Oseen vortices play in \cite{GallagherGallay05} as the approximation for the solution near the large atomic pieces of the initial data.  
The following proposition collects the important properties of the self-similar solutions, which show that in many ways they are qualitatively similar to the Gaussian Oseen vortices of the NSE.  
While these results are trivial for NSE, they are more difficult for PKS, due to the fully nonlinear nature of the self-similar solutions $G_{\alpha}$.
 Parts (i-iii) are not new, but we sketch some aspects of the proof in Appendix \S\ref{apx:PropSelfSim} for the readers' convenience, as they are not all located in one place in the literature. 
Parts (iv) and (v) seem to be new and are both crucial in deducing the Lipschitz dependence in Theorem \ref{thm:Lipschitz}.
Part (iv) is also necessary to show that many constants and linear estimates are uniform as $\alpha \searrow 0$, 
which is necessary to prove Theorem \ref{thm:PKSUnique}.
We should point out that the result of (v) depends on the variant of the spectral study \cite{CamposDolbeault12} in Appendix \S\ref{apx:SpectralGap}. 
For any $f \in L^1$, $f^\star$ denotes the Riesz symmetric decreasing re-arrangement (see \cite{LiebLoss} for more information on this symmetrization technique).    
In what follows we denote the polynomial weighted $L^2$ space, 
\begin{equation*}
L^p(m) := \set{f \in L^p : \jap{\xi}^{m}f(\xi) \in L^p}, 
\end{equation*}
with the convention $\jap{\xi} := (1 + \abs{\xi}^2)^{1/2}$. We also define $L_0^p(m) = \set{f \in L^p(m): \int f dx = 0}$. 
Note that for $m > 2$, $L^2(m) \hookrightarrow L^1$. 
For any $m > 2$ we have the following, which will be useful later 
\begin{align}
t^{1/4}\norm{\frac{1}{t}f(\log t, \frac{\cdot -z}{\sqrt{t}})}_{4/3} = \norm{f(\log t,\cdot)}_{4/3} \lesssim \norm{f(\log t,\cdot)}_{L^2(m)}. \label{ineq:L2mInject}
\end{align}
Now we may state Proposition \ref{prop:Galpha}.
\begin{proposition}[Properties of the self-similar solutions] \label{prop:Galpha}
Let $\alpha \in (0,8\pi)$.
\begin{itemize} 
\item[(i)] There exists a stationary solution to \eqref{def:resPKS}, denoted $G_\alpha$, which is smooth, strictly positive, satisfies $G_\alpha = G^\star_\alpha$, $\norm{G_\alpha}_1 = \alpha$ and denoting $c_\alpha = -\Delta^{-1}G_\alpha$ we have
\begin{subequations} 
\begin{align} 
G_\alpha(\xi) & \sim \frac{\alpha}{\int e^{c_\alpha(\zeta) - \abs{\zeta}^2/4} d\zeta} \abs{\xi}^{-\frac{\alpha}{2\pi}}e^{-\abs{\xi}^2/4} \textup{ as } \xi \rightarrow \infty, \label{eq:Galpha_limit} \\ 
\grad G_\alpha(\xi) & \sim -\frac{\alpha}{2\int e^{c_\alpha(\zeta) - \abs{\zeta}^2/4} d\zeta}\left(\frac{\alpha \xi}{\pi\abs{\xi}^2} + \xi\right) \abs{\xi}^{-\frac{\alpha}{2\pi}}e^{-\abs{\xi}^2/4} \textup{ as } \xi \rightarrow \infty \label{eq:GradGalpha_limit} 
\end{align} 
\end{subequations} 
in the sense of asymptotic expansion. 
Moreover, for all $p \in [1,\infty]$ and $m \geq 0$ we have 
\begin{align} 
\norm{G_\alpha}_{L^p(m)} + \norm{\grad G_\alpha}_{L^p(m)}  \lesssim_{m,p,\alpha} 1. \label{ineq:GalphaBoundsLarge}
\end{align} 
\item[(ii)] $G_\alpha(\xi)$ is the unique stationary solution of \eqref{def:resPKS} with finite self-similar energy \eqref{def:Gssfree}. Moreover, $G_\alpha$ is the unique minimizer of the self-similar free energy.
\item[(iii)] In physical variables, $\frac{1}{t}G_\alpha\left(\frac{x}{\sqrt{t}}\right)$ is the unique mild solution with finite self-similar free energy with initial data $\alpha \delta$, where $\delta$ denotes the Dirac delta mass.  
\item[(iv)] For $\alpha$ sufficiently small, the following estimate holds for all $p \in [1,\infty]$ and $m > 4$,  
\begin{align} 
\norm{G_\alpha}_{L^p(m)} + \norm{\grad G_\alpha}_{L^p(m)}  \lesssim_{m,p} \alpha. \label{ineq:GalphaBounds} 
\end{align} 
\item[(v)] For all $K < 8\pi$ and for $\alpha, \beta \leq K$, the estimate $\norm{G_\alpha - G_\beta}_{L^p} +
   \norm{G_\alpha - G_\beta}_{L^2(m)} \lesssim_{p,m,K} \abs{\alpha - \beta}$ holds for all $1 \leq p 
   \leq \infty$ and $m > 4$.
\end{itemize}
\end{proposition}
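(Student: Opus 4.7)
The plan is to take parts (i)--(iii) as essentially classical, to be established in Appendix \S\ref{apx:PropSelfSim} following \cite{Biler06, BilerAccretion95, NaitoSuzuki04}, and to focus on the new statements (iv) and (v). Both rest on the Gibbs-type representation implicit in (i): multiplying out the stationary form of \eqref{def:resPKS} one has $\grad\cdot[G_\alpha(\grad c_\alpha - \grad\log G_\alpha - \xi/2)] = 0$, and since $G_\alpha = G^\star_\alpha$ the bracket is a radial divergence-free vector field in $\Real^2$ which must vanish by local integrability. This gives $G_\alpha(\xi) = A(\alpha)\,e^{c_\alpha(\xi) - |\xi|^2/4}$ with $A(\alpha) = \alpha/\int e^{c_\alpha - |\zeta|^2/4}\,d\zeta$, the starting point for the remaining arguments.

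For part (iv), I plan a small-mass contraction argument. Rescaling $f_\alpha := G_\alpha/\alpha$ so that $\int f_\alpha = 1$, the Gibbs formula reads $f_\alpha = Z(\alpha)^{-1}\exp(-\alpha\Delta^{-1} f_\alpha - |\xi|^2/4)$ with $Z(\alpha)$ a normalizing constant. I will write $f_\alpha = (4\pi)^{-1}e^{-|\xi|^2/4} + h$ and expand the exponential: standard logarithmic potential bounds on $\Real^2$ give $\alpha\Delta^{-1} f_\alpha = O(\alpha\log\jap{\xi})$ up to an additive $O(\alpha)$ term in $L^\infty$, so that $e^{-\alpha\Delta^{-1} f_\alpha} = 1 + O(\alpha)$ in any polynomially weighted space. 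This sets up a contraction on a ball of radius $O(\alpha)$ in $L^\infty \cap L^2(m)$ for $m > 4$, whose unique fixed point coincides with $f_\alpha$ by (ii). The outcome is $\norm{G_\alpha}_{L^p(m)} \lesssim \alpha$, and the gradient bound follows from $\grad G_\alpha = G_\alpha(\grad c_\alpha - \xi/2)$ together with the corresponding bound on $\grad c_\alpha$.

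For part (v), the plan is to establish the uniform bound
\begin{equation*}
\sup_{\alpha \in (0,K]}\left(\norm{\partial_\alpha G_\alpha}_{L^p} + \norm{\partial_\alpha G_\alpha}_{L^2(m)}\right) \lesssim_{K,p,m} 1,
\end{equation*}
from which $G_\alpha - G_\beta = \int_\beta^\alpha \partial_s G_s\,ds$ yields the claimed Lipschitz estimate in any Banach space on the left. Differentiating the stationary equation in $\alpha$ gives $\mathcal{L}_\alpha(\partial_\alpha G_\alpha) = 0$ together with $\int \partial_\alpha G_\alpha = 1$, where $\mathcal{L}_\alpha$ denotes the Fokker--Planck-type linearization of \eqref{def:resPKS} around $G_\alpha$. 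The variant of the Campos--Dolbeault spectral gap proved in Appendix \S\ref{apx:SpectralGap} gives strict coercivity of $\mathcal{L}_\alpha$ on the codimension-one subspace of mass-zero functions in the weighted Hilbert space $L^2(G_\alpha^{-1})$. Transferring this gap to $L^2(m)$ as in Gallay--Wayne \cite{GallayWayne05}, the kernel of $\mathcal{L}_\alpha$ is exactly one-dimensional, and its unique element of mass one is $\partial_\alpha G_\alpha$; the same argument produces a uniform bound on its $L^2(m)$ norm. The $L^p$ parts are obtained by elliptic bootstrapping, combining the $L^2(m)$ estimate with \eqref{ineq:GalphaBoundsLarge} applied to $G_\alpha$ and $\grad c_\alpha$.

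The hard part will be ensuring that the spectral-gap constants and the multipliers relating the $\alpha$-dependent Hilbert space $L^2(G_\alpha^{-1})$ to the $\alpha$-independent $L^2(m)$ remain uniform as $\alpha$ runs over $(0,K]$ with $K < 8\pi$. As $\alpha \nearrow 8\pi$, the profile $G_\alpha$ develops strong polynomial decay at infinity by \eqref{eq:Galpha_limit}, so the weight $G_\alpha^{-1}$ grows rapidly and the equivalence between norms degrades; quantitative use of the pointwise asymptotics \eqref{eq:Galpha_limit}--\eqref{eq:GradGalpha_limit} together with \eqref{ineq:GalphaBoundsLarge} is needed to keep the transition bounded. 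Correspondingly the small-$\alpha$ regime requires combining part (iv) with the same asymptotics so that the constants do not blow up at either endpoint of $(0,K]$.
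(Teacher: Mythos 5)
Your plan for parts (i)--(iii) matches the paper's (defer to \cite{Biler06,NaitoSuzuki04} and the energy structure). For (iv) and (v) you take genuinely different routes, and there are substantive issues with both.

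For (iv), the paper contracts the map $G_\alpha = \alpha G + L^{-1}\grad\cdot(G_\alpha\grad c_\alpha)$ in the norm $\norm{f}_X = \norm{f}_{L^2(m)} + \norm{\grad f}_4$, with the needed mapping properties of $L^{-1}\grad$ isolated in a lemma. Your Gibbs-formula contraction $f_\alpha = Z(\alpha)^{-1}e^{-\alpha\Delta^{-1}f_\alpha - |\xi|^2/4}$ is a reasonable alternative, but the step ``$e^{-\alpha\Delta^{-1}f_\alpha} = 1 + O(\alpha)$ in any polynomially weighted space'' is false as stated: since $\Delta^{-1}f_\alpha(\xi) \sim \frac{1}{2\pi}\log|\xi|$ at infinity, one has $e^{-\alpha\Delta^{-1}f_\alpha} \sim |\xi|^{-\alpha/(2\pi)}$, which tends to $0$ as $\xi \to \infty$ for every $\alpha>0$, so the difference from $1$ is of order one rather than order $\alpha$. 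What is true is that the \emph{product} $e^{-\alpha\Delta^{-1}f_\alpha - |\xi|^2/4} - e^{-|\xi|^2/4}$ is $O(\alpha)$ in $L^2(m)$, because the Gaussian kills the polynomial correction. So the contraction must be formulated for the full Gibbs weight, not for the correction factor in isolation; otherwise the proposed ball is not invariant. This is fixable but needs to be done.

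For (v), your plan to integrate $\partial_\alpha G_\alpha$ is cleaner than the paper's contraction mapping for $G_\alpha - G_\beta$ and sidesteps the quadratic self-interaction term. However, the load-bearing step --- a uniform bound $\sup_{\alpha \le K}\norm{\partial_\alpha G_\alpha}_{L^2(m)} \lesssim_K 1$ --- is not supported by what you've written. The spectral gap (and its transfer to $L^2(m)$ \`a la Gallay--Wayne) tells you the kernel of $L-\Lambda_\alpha$ in the mass-zero subspace is trivial, hence that the kernel on all of $L^2(m)$ is at most one-dimensional and spanned by $\partial_\alpha G_\alpha$; it does not, by itself, bound the $L^2(m)$ norm of the mass-one normalized element, nor its uniformity in $\alpha$. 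The paper obtains this bound by a separate, explicit ODE analysis: setting $e(r) := \partial_\alpha G_\alpha(r)/G_\alpha(r)$, it shows $e$ solves $\frac{1}{r}(re')'+G_\alpha e = 0$, proves via the integrated-mass ODE for $n(t) = \int_{|x|\le\sqrt t}\partial_\alpha G_\alpha$ (following Biler et al.\ Lemma 4.1) that $n'(0)$ exists and hence $e$ is bounded at the origin, and then bootstraps to obtain $\norm{\partial_\alpha G_\alpha}_{X}\lesssim_\alpha 1$. That ingredient is missing from your proposal, and without it the integral representation $G_\alpha - G_\beta = \int_\beta^\alpha \partial_s G_s\,ds$ is not justified in $L^2(m)\cap L^p$. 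If you prefer to avoid the contraction mapping, you should still incorporate something equivalent to the radial ODE argument to control the zero mode.
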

\begin{remark}
The proof of Proposition \ref{prop:Galpha} (iv) primarily shows that for $\alpha$ sufficiently small, $\norm{G_\alpha - \alpha G}_{L^2(m)} + \norm{G_\alpha - \alpha G}_{p} \lesssim \alpha^2$ where $G(\xi) = (4\pi)^{-1/2}e^{-\abs{\xi}^2/4}$ is the standard Gaussian. 
\end{remark}

Due to the a priori estimates and the uniqueness of $G_\alpha$, a compactness argument shows that if $\int u(t) dx = \alpha$, then $\lim_{t \rightarrow \infty} \norm{u(t,x) - \frac{1}{t}G_\alpha(\frac{x}{\sqrt{t}})}_p = 0 $ for $p \in [1,\infty]$. 
These results are naturally analogous to the well-known results for the heat equation and for the 2D Navier-Stokes equations \cite{GallayWayne05}. 
The spectral gap-type inequality deduced by J. Campos and J. Dolbeault in \cite{CamposDolbeault12} can also be used to deduce an exponential estimate on the rate of convergence; see also e.g. \cite{GallayWayne05,BlanchetDEF10}.

\begin{remark}
An obvious question arises about whether or not Theorems \ref{thm:PKSUnique} and \ref{thm:Lipschitz} can be extended to more general models than PKS and NSE. 
If the nonlocal velocity law is a linear combination of the Biot-Savart law for NSE and the chemotactic gradient law for PKS then this generalization should be more or less straightforward since the $G_\alpha$ will still be the self-similar solutions. 
However, if the velocity law is no longer homogeneous, for example if the equation for the chemo-attractant is replaced by $-\Delta c + c = u$ or $-\grad \cdot(a(x)\grad c) = u$, then there are no longer exact self-similar solutions. 
If $G_\alpha$ are still good short-time approximations for the evolution of atomic initial data, as should be the case in the examples just mentioned, then the stated results of Theorems \ref{thm:PKSUnique} and \ref{thm:Lipschitz} can likely be proved with similar arguments after some additional approximations.
Such cases should also include models in which the chemo-attractant and/or the density $u(t,x)$ is subjected to an 
external drift, provided the drift is sufficiently regular.
If $G_\alpha$ no longer provide a good short-time approximation to atomic initial data, more substantial changes would have to be made. 
\end{remark}

\subsubsection*{Notation and Conventions}
We denote the $L^p(dx)$ norms by $\norm{u}_p := \norm{u}_{L^p}$. 
If a measure other than Lebesgue measure is used to define the norm, this is denoted by $L^p(d\mu)$ (note that this is different than the definition of the polynomial weighted space $L^2(m)$).  

To avoid clutter in computations, function arguments (time and space) will be omitted whenever they are obvious from context.  
In formulas we will sometimes use the notation $C(p,k,M,..)$ to denote a generic constant, which may be different from line to line or even term to term in the same computation. 
Moreover, to further reduce clutter in formulas, we make very frequent use of the notation $f\lesssim_{p,k,...} g$ 
to denote $f\leq C(p,k,..)g$. 
We will generally suppress the dependencies which are not relevant for the estimate at hand and simply write $f \lesssim g$. 
In most cases, universal constants from functional inequalities and parameters which are not important for the discussion are omitted.
We will also usually suppress dependence from uniform estimates which have already been established, although we often alert the reader to the estimate being used.

Self-similar solutions of mass $\alpha$ are denoted $G_{\alpha}$, and when the mass is given by $\alpha_i$, we will often shorten this to $G_i$. Similarly, the velocity field associated with the self similar solutions are denoted $v^{G_\alpha}$ or $v^{G_i}$. 

\section{Preliminaries}
The following proposition collects the basic properties of the nonlocal velocity law of \eqref{def:PKS}, which are essentially analogous to the properties of the Biot-Savart law for NSE. 

\begin{proposition}[Properties of the nonlocal velocity law] \label{prop:Vel}
Define 
\begin{equation*}
B(x) = -\frac{x}{2\pi\abs{x}^2}. 
\end{equation*}
Then
\begin{itemize}
\item[(i)] Let $\frac{1}{q} = \frac{1}{p} - \frac{1}{2}$ for some $p \in (1,2)$. Then, 
\begin{equation}
\norm{B \ast u}_{q} \lesssim \norm{u}_p. \label{ineq:VelLp}
\end{equation}
\item[(ii)] Let $p \in (1,\infty)$. Then, 
\begin{equation*}
\norm{\grad B \ast u}_{p} \lesssim \norm{u}_p. 
\end{equation*}
Moreover, $\grad \cdot B \ast u = -u$. 
\item[(iii)] If $u \in L^2(m)$ for some $m \in (0,1)$ or $u \in L^2_0(m)$ for some $m \in (1,2)$. Then for all $q \in (2,\infty)$
\begin{equation}
\norm{\jap{\xi}^{m - \frac{2}{q}}B \ast u}_{q} \lesssim \norm{u}_{L^2(m)}. \label{ineq:VelWeightedLp}
\end{equation}
\end{itemize}
\end{proposition}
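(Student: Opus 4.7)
My plan is to treat the three parts of Proposition \ref{prop:Vel} in order, handling (i) and (ii) with classical convolution tools and reserving the substantive work for (iii).

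For (i), the key observation is $\abs{B(x)} = \tfrac{1}{2\pi \abs{x}}$, which is, up to a multiplicative constant, the Riesz potential kernel of order $1$ in $\Real^2$. The Hardy--Littlewood--Sobolev inequality then immediately yields $\norm{B \ast u}_q \lesssim \norm{u}_p$ for $\tfrac{1}{q} = \tfrac{1}{p} - \tfrac{1}{2}$ and $p \in (1,2)$. For (ii), a direct computation gives $\grad B(x) = \tfrac{1}{2\pi}\bigl(\tfrac{2 x\otimes x}{\abs{x}^4} - \tfrac{I}{\abs{x}^2}\bigr)$, which is homogeneous of degree $-2$, smooth off the origin, and has vanishing mean on the unit circle, so $u \mapsto \grad B \ast u$ is a Calder\'on--Zygmund singular integral, bounded on $L^p$ for $p \in (1,\infty)$. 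The divergence identity follows from writing $B = \grad \Phi$ with $\Phi(x) = -\tfrac{1}{2\pi}\log \abs{x}$ the fundamental solution of $-\Delta$ in $\Real^2$: then $\grad \cdot B = \Delta \Phi = -\delta$ distributionally, hence $\grad \cdot (B \ast u) = (\grad \cdot B) \ast u = -u$.

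For (iii), I would decompose $\xi$-space at $\abs{\xi} = 1$. On $\set{\abs{\xi} \leq 1}$ the weight $\jap{\xi}^{m-2/q}$ is bounded, and the bound reduces to an unweighted $L^q$ estimate, which follows from part (i) together with the embedding $L^2(m) \hookrightarrow L^p$ for any $p > 2/(m+1)$ (valid since $m > 0$, and matching the HLS exponent for any $q > 2$; the borderline $p$ can be handled via Lorentz-space refinements). For $\abs{\xi} > 1$ I split the $\eta$-integration into the inner region $\set{\abs{\eta} \leq \abs{\xi}/2}$, where $\abs{B(\xi-\eta)} \lesssim \abs{\xi}^{-1}$, and the outer region $\set{\abs{\eta} > \abs{\xi}/2}$, where $u$ inherits polynomial decay $\jap{\eta}^{-m}$ through the $L^2(m)$-weight. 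On the outer region, a further subsplit separating the singular neighborhood $\set{\abs{\eta-\xi} < \abs{\xi}/4}$ (treated by a local HLS estimate, with a favorable $\jap{\xi}^{-m}$ factor pulled out since $\abs{\eta} \sim \abs{\xi}$ there) from its complement (where $\abs{B(\xi-\eta)} \lesssim \abs{\xi}^{-1}$ and the decay of $u$ provides control via Cauchy--Schwarz) yields enough $\abs{\xi}$-decay to be $L^q$-integrable against $\jap{\xi}^{m-2/q}$. For the inner region in the case $m \in (0,1)$, a Cauchy--Schwarz balancing $\jap{\eta}^{-m}$ against the $L^2(m)$-weight produces a pointwise bound of order $\abs{\xi}^{-m}$, which against $\jap{\xi}^{m-2/q}$ belongs to $L^q(\abs{\xi} > 1)$.

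The hard part will be the case $m \in (1,2)$: here the inner-region decay $\abs{\xi}^{-1}$ (even $\abs{\xi}^{-m}$) is no longer integrable against $\jap{\xi}^{m-2/q}$ in $L^q$, so additional decay must be extracted. This is where the hypothesis $\int u = 0$ enters. I would rewrite
\begin{equation*}
B \ast u(\xi) = \int \bigl(B(\xi-\eta) - B(\xi)\bigr) u(\eta)\, d\eta,
\end{equation*}
and apply a first-order Taylor expansion to the inner region, yielding $\abs{B(\xi-\eta) - B(\xi)} \lesssim \abs{\eta}/\abs{\xi}^2$. Combined with the polynomial weight of $u$ (whose contribution stays integrable on $\set{\abs{\eta}\leq \abs{\xi}/2}$ precisely because $m < 2$), this upgrades the inner contribution to pointwise order $\abs{\xi}^{-2}$, which is square-integrable against $\jap{\xi}^{m-2/q}$ in $L^q$. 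Careful bookkeeping of exponents across the decomposition and summation of the contributions then closes the argument.
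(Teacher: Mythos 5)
Your treatment of (i) and (ii) is fine: HLS and Calder\'on--Zygmund are exactly the right tools, and the identity $B = \grad\Phi$ with $\Phi$ the Newtonian potential gives $\grad\cdot(B*u) = -u$ as you say. (The paper itself does not prove Proposition~\ref{prop:Vel}; it is stated as a collection of ``basic properties \ldots essentially analogous to the properties of the Biot--Savart law,'' so the real question is whether your argument would close on its own.)

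Your plan for (iii) has a genuine gap in the inner region $\set{\abs{\eta}\leq\abs{\xi}/2}$, for both ranges of $m$, and it lies exactly where you commit to a pointwise bound. For $m\in(0,1)$, Cauchy--Schwarz does give the pointwise estimate
\begin{equation*}
\Big|\int_{\abs{\eta}\leq\abs{\xi}/2}B(\xi-\eta)\,u(\eta)\,d\eta\Big|
\ \lesssim\ \abs{\xi}^{-1}\big\|\jap{\cdot}^{-m}\big\|_{L^2(\abs{\eta}\leq\abs{\xi}/2)}\,\norm{u}_{L^2(m)}
\ \lesssim\ \abs{\xi}^{-m}\norm{u}_{L^2(m)},
\end{equation*}
since $\norm{\jap{\cdot}^{-m}}_{L^2(\abs{\eta}\leq R)}\sim R^{1-m}$ when $m<1$. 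But then $\jap{\xi}^{m-2/q}\cdot\abs{\xi}^{-m}\sim\abs{\xi}^{-2/q}$, and in $\Real^2$
\begin{equation*}
\int_{\abs{\xi}>1}\abs{\xi}^{-(2/q)\cdot q}\,d\xi=\int_{\abs{\xi}>1}\abs{\xi}^{-2}\,d\xi=\infty,
\end{equation*}
so this quantity is \emph{not} in $L^q(\abs{\xi}>1)$: you are precisely at the logarithmic endpoint. For $m\in(1,2)$, the Taylor estimate gives $\abs{B(\xi-\eta)-B(\xi)}\lesssim\abs{\eta}\abs{\xi}^{-2}$ on the inner region, but $\int_{\abs{\eta}\leq R}\abs{\eta}^2\jap{\eta}^{-2m}\,d\eta\sim R^{4-2m}$ (divergent as $R\to\infty$ precisely when $m<2$, the opposite of what you seem to want), so Cauchy--Schwarz again gives $\abs{\xi}^{-2}\cdot\abs{\xi}^{2-m}=\abs{\xi}^{-m}$, not the $\abs{\xi}^{-2}$ you claim. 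The same borderline failure then occurs.

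The proposition is nonetheless correct; the point is that a uniform pointwise bound is lossier than the $L^q$ estimate you need, because the $\xi$'s that saturate $\abs{B*u(\xi)}\sim\abs{\xi}^{-m}\norm{u}_{L^2(m)}$ require different profiles of $u$ at different scales and cannot be attained simultaneously. Two standard ways to repair this: (a) recognize the inequality as a Stein--Weiss inequality (the weighted Hardy--Littlewood--Sobolev theorem) with $n=2$, $\lambda=1$, $p=2$, weights $\abs{\xi}^{2/q-m}$ and $\abs{\eta}^{-m}$; the hypotheses ($\alpha+\beta\geq 0$, $\alpha<n/p'$, $\beta<n/q$, the scaling identity) are exactly met for $m\in(0,1)$, $q\in(2,\infty)$, and the $\jap{\cdot}$ weights reduce to this after a separate (easier) treatment of $\abs{\xi}\leq 1$; or (b) do a dyadic decomposition $u=\sum_k u\,\mathbf{1}_{\abs{\eta}\sim 2^k}$, estimate the contribution from shell $k$ to the shell $\abs{\xi}\sim 2^j$, and close with Young's inequality on $\ell^1*\ell^2\to\ell^2$ for the geometric factors followed by $\ell^2\hookrightarrow\ell^q$ ($q\geq 2$). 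Either route replaces the pointwise supremum you take (which collapses an $\ell^2$ structure across scales into an $\ell^\infty$ one) by an argument that preserves enough summability. Your outer-region ``local HLS'' step has the same issue in disguise --- you cannot bound $B*g$ in $L^\infty$ for $g\in L^2$ even locally --- but it too closes under the dyadic scheme.
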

  
\subsection{Outline for the proof of Theorem \ref{thm:PKSUnique}} 
For $\epsilon > 0$ chosen small later we define the decomposition of the initial data
\begin{equation*}
\mu = \sum_{i = 1}^N \alpha_i \delta_{z_i} + \mu_0, 
\end{equation*}
for $\delta_{z_i}:= \delta(z - z_i)$, $z_i \in \Real^2$ and $\alpha_i > 0$ chosen such that $\norm{\mu_0}_{pp} < \epsilon$. 
If the measure $\mu$ contains only finitely many point masses then $N$ is finite and independent of $\epsilon$ for $\epsilon$ sufficiently small. However, in general there may be infinitely many point masses and in this case it is important to note that $N$ is fixed large when $\epsilon > 0$ is fixed small. 
Define the minimal distance between any two concentrations (which is generally forced small when $\epsilon$ is chosen small): 
\begin{equation*}
d := \min( \abs{z_i - z_j}) > 0.
\end{equation*} 
The goal of this decomposition is to construct an accurate approximate solution 
and use a perturbation argument to build a true solution which is very close to the approximate one.       
Analogously to \cite{GallagherGallay05}, we construct a mild solution $u(t)$ via a decomposition of the form
\begin{equation}
u(t,x) = \tilde{w}_0(t,x) + \sum_{i = 1}^N\alpha_i\frac{1}{t}\tilde{w}_i\left(\log t, \frac{x - z_i}{\sqrt{t}}\right) + \frac{1}{t}G_{\alpha_i}\left(\frac{x - z_i}{\sqrt{t}}\right), \label{def:decompu}
\end{equation}
with the terms $\tilde{w}_0$, $\tilde{w}_i$ defined below. However, our definition of $\tilde{w}_i$ is different than in \cite{GallagherGallay05}. 

In what follows we will explain the decomposition formally, assuming that we have a well-behaved mild solution already. 
In reality, we will construct this solution using the decomposition. 
The term $\tilde{w}_0$ is defined as the solution associated with the (approximately) non-atomic portion of the initial data, which formally satisfies the initial value problem  
\begin{equation} \label{def:w0Original}
\left\{
\begin{array}{l}
  \partial_t \tilde{w}_0 + \grad \cdot (\tilde{w}_0 v) = \Delta \tilde{w}_0 \\ 
  \tilde{w}_0(0) = \mu_0, 
\end{array}
\right. 
\end{equation}
where still $v = B \ast u$ is given by the nonlocal velocity law associated with the full solution. 
Since $\mu_0$ has a small atomic part, \eqref{ineq:Heatpp} suggests that for short time $t^{1/4}\norm{\tilde{w}_0(t)}_{4/3}$ will be small. Of course, it will take some work (Proposition \ref{prop:SNProperties}) to make this convincing, as $v(t,x)$ is very singular at time zero. 
On the other hand, the part of the solution associated with the large atomic parts of the initial data is not small in any relevant sense, so further decomposition is necessary. 
Consider the solutions $w_i(t,x)$ of the advection-diffusion equation in physical variables: 
\begin{equation*}
\left\{
\begin{array}{l}
  \partial_t w_i + \grad \cdot (w_i v) = \Delta w_i \\ 
  w_i(0) = \alpha_i \delta_{z_i}. 
\end{array}
\right. 
\end{equation*}
In \cite{GallagherGallay05}, the authors consider the difference between $w_i$ and the self-similar solution of mass $\alpha_i$ centered at $z_i$. This quantity turns out to be small as $t^{-1}G_{\alpha_i}((x- z_i)t^{-1/2})$ is an accurate approximation for $w_i$ for short time, however, $w_i(t,x) - t^{-1}G_{\alpha_i}((x-z_i)t^{-1/2})$ proves difficult to correctly control in a contraction argument.
Hence, we choose a different decomposition which still satisfies 
\begin{equation}
\sum_{i = 1}^N w_i(t,x) = \sum_{i = 1}^N \frac{1}{t}G_{\alpha_i}\left(\frac{x - z_i}{\sqrt{t}}\right) + \sum_{i = 1}^N \alpha_i\frac{1}{t}\tilde{w}_i\left(\log t, \frac{x-z_i}{\sqrt{t}} \right) \label{def:decomp}
\end{equation}  
and while each $\tilde{w}_i$ will be localized around $z_i$, $\tilde{w}_i \neq w_i-G_{\alpha_i}$ (although the proof will show they are close to being equal). In particular, \eqref{def:decomp} cannot be decoupled into separate expressions for $\tilde{w}_i$. 

Applying \eqref{def:decomp} to $v = B \ast u$ implies a more precise PDE for $\tilde{w}_0$:  
\begin{equation}  \label{def:w0}
\left\{
\begin{array}{l}
  \partial_t \tilde{w}_0 + \grad \cdot \left(\tilde{w}_0 \sum_{j = 1}^N \frac{1}{\sqrt{t}}v^{G_j}\left(\frac{x - z_j}{\sqrt{t}}\right)\right) + \grad \cdot(\tilde{w}_0 \tilde{v}_0) + \grad \cdot \left(\tilde{w}_0 \sum_{j = 1}^N\alpha_j \frac{1}{\sqrt{t}}\tilde{v}_j\left(\log t,\frac{x-z_j}{\sqrt{t}} \right)\right) = \Delta \tilde{w}_0 \\ 
  \tilde{w}_0(0) = \mu_0,
\end{array}
\right.  
\end{equation}
where $\tilde{v}_0 = B \ast \tilde{w}_0$ and $\tilde{v}_j = B \ast \tilde{w}_j$. 
For future convenience define 
\begin{equation*}
v_j(t,x) := \alpha_j\frac{1}{\sqrt{t}}\tilde{v}_j\left(\log t, \frac{x-z_j}{\sqrt{t}} \right). 
\end{equation*}

We now turn to the definition of $\tilde{w}_j$ for $j \geq 1$, which is somewhat more technical. 
For notational clarity define
\begin{align}
v^{W_i}  (\tau, \xi) & := \sum_{j = 1, j \neq i}^N \alpha_j \tilde{v}_j(\tau,\xi - (z_j - z_i)e^{-\tau/2}), \nonumber \\ 
v^{g_i} (\tau, \xi)  & := \sum_{j = 1, j \neq i}^N v^{G_j}(\xi - (z_j - z_i)e^{-\tau/2}), \label{def:vgi}
\end{align}
the velocity fields induced by the perturbations $j \neq i$ in the coordinate system of $\tilde{w}_i$ and the velocity fields induced by the self-similar solutions of the $j\neq i$ concentrations written in the coordinate system of $\tilde{w}_i$. 
Let $\phi(x)$ be a smooth, non-negative, radially symmetric, non-increasing function such that $\phi(x) = 1$ for $\abs{x} < d/2$ and $\phi(x) = 0$ for $\abs{x} > 3d/4$. 
We will define $\tilde{w}_i$ to be a solution of the following: 
\begin{equation} \label{eq:wi}
\begin{array}{l} 
 \partial_\tau \tilde{w}_i + \grad \cdot (\tilde{w}_i v^{G_i}) + \grad \cdot (G_i \tilde{v}_i) \\
  \hspace{.5cm} + \grad \cdot \Big[\tilde{w}_i\sum_{j \neq i} \big(1-\phi(\xi e^{\tau/2} + z_i - z_j)\big)v^{G_j} \big(\xi - (z_j - z_i)e^{-\tau/2} \big)\Big] \\ 
  \hspace{.5cm} + \grad \cdot (\sum_{i \neq j}\frac{\alpha_j}{\alpha_i}\tilde{w}_j(\xi - (z_j - z_i)e^{-\tau/2})\phi(\xi e^{\tau/2})v^{G_i}) \\
  \hspace{.5cm} + \grad \cdot (\frac{1}{\alpha_i}G_i v^{g_i}) 
  \hspace{.5cm} + \grad \cdot (\frac{1}{\alpha_i}G_i v^{W_i}) 
  \hspace{.5cm} + \grad \cdot (\frac{1}{\alpha_i}G_ie^{\tau/2}\tilde{v}_0(e^\tau,\xi e^{\tau/2} + z_i))\\ 
  \hspace{.5cm}+ \grad \cdot (\tilde{w}_i\tilde{v}_i + \tilde{w}_i v^{W_i}) 
  \hspace{.5cm} + \grad \cdot (\tilde{w}_i e^{\tau/2}\tilde{v}_0(e^\tau, \xi e^{\tau/2} + z_i)) \\
  = \Delta \tilde{w}_i + \frac{1}{2}\grad \cdot(\xi \tilde{w}_i), \\
  \lim_{\tau \rightarrow -\infty}\tilde{w}_i(\tau) = 0.  
\end{array}
\end{equation}
It is in the second and third line where our definition differs from \cite{GallagherGallay05}. Our definition more naturally treats the dangerous terms when making estimates, but destroys the advection-diffusion structure of \eqref{eq:wi} and the coupling makes it trickier to prove that all mild solutions can be decomposed in this manner (proved below in Proposition \ref{prop:EquivSolutions}). 
We re-write the equations for the perturbations $\tilde{w}_0,\tilde{w}_i$ as the corresponding Duhamel integral equations. 
Given some $\nu \in \mathcal{M}_+(\Real^2)$, define $f(t) = S_N(t,s)\nu$ to be the mild solution to the following singular PDE 
\begin{align}
\partial_t f + \grad \cdot \left( f \sum_{j = 1}^N \frac{1}{\sqrt{t}}v^{G_j} \left(\frac{x - z_j}{\sqrt{t}} \right) \right) & = \Delta f \label{eq:SNDefinition} \\ 
f(s) & = \nu. 
\end{align}
We prove that mild solutions to \eqref{eq:SNDefinition} are well-defined and collect the important properties in Proposition \ref{prop:SNProperties} below. 
Hence we may re-write \eqref{def:w0} as the formally equivalent Duhamel integral equation
\begin{equation}
\tilde{w}_0(t) = S_N(t,0)\mu_0 - \int_0^t S_N(t,s) \left[\grad \cdot(\tilde{w}_0(s) \tilde{v}_0(s)) + \grad \cdot (\tilde{w}_0(s) \sum_{j = 1}^Nv_j(s)) \right] ds. \label{def:w0Duhamel}
\end{equation}
We now turn to the perturbations $\tilde{w}_i$. 
Define the following linear operator, which is the linearization of the transport term around the self-similar solution,    
\begin{align*}
\Lambda_\alpha f & := \grad \cdot (G_\alpha v) + \grad \cdot(f v^{G_\alpha}), \\ 
v & = B \ast f,      
\end{align*}
and define the Fokker-Planck operator 
\begin{equation}
Lf := \Delta f + \frac{1}{2}\grad \cdot (\xi f). 
\end{equation}
Denote by $\mathcal{T}_\alpha(\tau) := e^{\tau(L - \Lambda_\alpha)}$ the linear propagator for the PDE
\begin{equation*}
\partial_\tau f = Lf - \Lambda_\alpha f.   
\end{equation*}
The important properties of $\mathcal{T}_\alpha$ are collected in Proposition \ref{prop:SpecT} below. 
We may now write \eqref{eq:wi} as the formally equivalent Duhamel integral equation
\begin{align}
\tilde{w}_i(\tau) & = -\int_{-\infty}^\tau \mathcal{T}_{\alpha_i}(\tau - \tau^\prime)(\grad \cdot (\tilde{w}_i\sum_{j \neq i} (1-\phi(\xi e^{\tau^\prime/2} + z_i - z_j))v^{G_j}(\xi - (z_j - z_i)e^{-\tau^\prime/2})) ) d\tau^\prime \nonumber \\ 
   & \hspace{.5cm} -\int_{-\infty}^\tau \mathcal{T}_{\alpha_i}(\tau - \tau^\prime)(\grad \cdot (\sum_{i \neq j}\frac{\alpha_j}{\alpha_i}\tilde{w}_j(\xi - (z_j - z_i)e^{-\tau^\prime/2})\phi(\xi e^{\tau^\prime/2})v^{G_i})) d\tau^\prime \nonumber  \\
  & \hspace{.5cm} -\int_{-\infty}^\tau \mathcal{T}_{\alpha_i}(\tau-\tau^\prime)(\grad \cdot (\frac{1}{\alpha_i}G_i v^{g_i}))d\tau^\prime  \nonumber
-\int_{-\infty}^\tau \mathcal{T}_{\alpha_i}(\tau-\tau^\prime)(\grad \cdot (\frac{1}{\alpha_i}G_i v^{W_i})) d\tau^\prime \nonumber  \\
  & \hspace{.5cm} -\int_{-\infty}^\tau \mathcal{T}_{\alpha_i}(\tau-\tau^\prime)(\grad \cdot (\frac{1}{\alpha_i}G_ie^{\tau^\prime/2}\tilde{v}_0(e^{\tau^\prime},\xi e^{\tau^\prime/2} + z_i))) d\tau^\prime \nonumber \\
  & \hspace{.5cm} -\int_{-\infty}^\tau \mathcal{T}_{\alpha_i}(\tau-\tau^\prime)( \grad \cdot (\alpha_i\tilde{w}_i\tilde{v}_i + \tilde{w}_i v^{W_i}))d\tau^\prime \nonumber \\ 
  & \hspace{.5cm} -\int_{-\infty}^\tau \mathcal{T}_{\alpha_i}(\tau-\tau^\prime)(\grad \cdot (\tilde{w}_i e^{\tau^\prime/2}\tilde{v}_0(e^{\tau^\prime},\xi e^{\tau^\prime/2} + z_i))) d\tau^\prime.  \label{def:wiDuhamel}
\end{align}  
The primary effort of proving Theorem \ref{thm:PKSUnique} goes into showing that the system of integral equation \eqref{def:w0Duhamel},\eqref{def:wiDuhamel} has a unique solution in the relevant spaces, which is done using a contraction mapping argument.  
The perturbations $\set{\tilde{w}_i}_{i = 0}^N$ are normed with $M[\tilde{w}](t)$ defined as follows, which differs from the norm used in \cite{GallagherGallay05} by the presence of the constant $K_0$ to be chosen later.
Let
\begin{equation*}
M_0[\tilde{w}](t) = M_0[\tilde{w}_0](t) = \sup_{0 < s < t} s^{1/4}\norm{\tilde{w}_0(s)}_{4/3},  
\end{equation*}
and for $1 \leq i \leq N$, 
\begin{equation*}
M_i[\tilde{w}](t) = M_i[\tilde{w}_i](t) = \sup_{-\infty < \tau < \log(t)}\norm{\tilde{w}_i(\tau)}_{L^2(m)},
\end{equation*}
then define
\begin{equation*}
M[\tilde{w}](t) = \max\left( K_0 M_0[\tilde{w}_0](t), \max_{1\leq i \leq N} M_i[\tilde{w}_i](t)\right)
\end{equation*}
for some large constant $K_0 \geq 1$ to be chosen later. We use $K_0$ to enforce more control over $\tilde{w}_0$ than the other perturbations, which is very important for dealing with the potentially disruptive effect of $\tilde{w}_0$ on $\tilde{w}_i$, $i \geq 1$.

By construction, the unique solution to the system \eqref{def:w0Duhamel},\eqref{def:wiDuhamel} can be re-constituted into a mild solution $u(t)$ of \eqref{def:PKS} via \eqref{def:decompu}.
However, it is not a priori clear that every mild solution can be represented as a solution to the system. 
This nontrivial fact is stated in the following proposition. The proof mainly depends on a compactness argument and that $G_\alpha$ are the unique self-similar solutions, analogous to Proposition 4.5 in \cite{GallagherGallay05}.
However, unlike \cite{GallagherGallay05}, an additional step is required to construct $\tilde{w}_j$ which satisfy \eqref{def:wiDuhamel} since the cross-terms in \eqref{def:wiDuhamel} couple all the $\tilde{w}_j$ in a more subtle manner than in \cite{GallagherGallay05}. 
\begin{proposition}[Equivalence of formulations] \label{prop:EquivSolutions}
Suppose $u(t)$ is a mild solution of \eqref{def:PKS}. Then $u(t)$ can be decomposed as in \eqref{def:decompu} with $\tilde{w}_0$,$\tilde{w}_i$ satisfying the integral equations \eqref{def:w0Duhamel},\eqref{def:wiDuhamel}.  
\end{proposition}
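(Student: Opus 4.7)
My plan follows the compactness/rigidity strategy of Proposition 4.5 in \cite{GallagherGallay05}, augmented by one additional step to handle the coupling new to this decomposition. Given a mild solution $u(t)$, I would first decouple the contributions of each atom of $\mu$ using the linearity of $\partial_t f + \grad \cdot (fv) = \Delta f$ with $v = B \ast u$: define $\tilde{w}_0(t)$ to be the mild solution of this linear equation (in the sense of Remark \ref{def:MildSolnLinear}) with initial datum $\mu_0$, and $w_i(t)$ to be the mild solution with initial datum $\alpha_i \delta_{z_i}$. Uniqueness for such singular linear advection-diffusion problems, which the paper establishes via the compactness/rigidity argument mentioned in difficulty (d), then forces $u = \tilde{w}_0 + \sum_{i=1}^N w_i$ on $[0,T]$.

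Second, I would prove that each $w_i$ is asymptotic to the mass-$\alpha_i$ self-similar solution near $z_i$. Let $W_i(\tau, \xi) := e^{\tau} w_i(e^\tau, z_i + e^{\tau/2}\xi)$ be the rescaling to self-similar coordinates centered at $z_i$. Theorem \ref{thm:Basics}(i) provides uniform $L^p$ bounds on $W_i(\tau)$ for all $p \in [1,\infty]$, and interior parabolic regularity upgrades these to equi-continuity on compacts, hence pre-compactness. In the $W_i$ coordinates the transport velocity decomposes as $v^{G_i}(\xi)$ plus cross velocities $v^{G_j}(\xi - (z_j-z_i)e^{-\tau/2})$ from the other concentrations plus the contribution from $\tilde{w}_0$; the cross pieces vanish locally in $\xi$ as $\tau \to -\infty$ because the shifts have norm $|z_j-z_i|e^{-\tau/2} \to \infty$, and the $\tilde{w}_0$ contribution vanishes in the critical norm by \eqref{ineq:Heatpp} and the smallness of $\norm{\mu_0}_{pp}$. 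Any subsequential limit $W_i^\infty$ is therefore a stationary solution of \eqref{def:resPKS} with mass $\alpha_i$; by Proposition \ref{prop:Galpha}(ii)-(iii), $W_i^\infty = G_{\alpha_i}$, and uniqueness of the limit forces $W_i(\tau) \to G_{\alpha_i}$. A quantitative version of this convergence, sharpened using the decay in Proposition \ref{prop:Galpha}(i) and the weighted velocity estimate \eqref{ineq:VelWeightedLp}, places $W_i - G_{\alpha_i}$ in $L^2(m)$ uniformly for $\tau$ near $-\infty$.

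Third, I would construct the perturbations $\tilde{w}_i$ and verify \eqref{def:wiDuhamel}. The naive choice $\alpha_i \tilde{w}_i^{\text{naive}}(\tau,\xi) := W_i(\tau,\xi) - G_{\alpha_i}(\xi)$ satisfies, by direct substitution into the equation for $w_i$, a system identical to \eqref{eq:wi} except that the cross-velocity transport $\grad \cdot(\tilde{w}_i v^{G_j}(\cdot-(z_j-z_i)e^{-\tau/2}))$ appears with weight $1$ rather than $1-\phi$, and without the corresponding $\phi$-weighted exchange term involving $\tilde{w}_j$. Passing from this naive system to \eqref{eq:wi} amounts to reallocating the $\phi$-supported piece of each cross-velocity transport from the $\tilde{w}_i$ equation into a piece of the $\tilde{w}_j$ equation, after the appropriate change of variables. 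The reallocation defines a linear map on the family $(\tilde{w}_i)_{i=1}^N$ which is the identity plus an off-diagonal perturbation whose entries involve $\tilde{w}_j$ evaluated at arguments of norm $\gtrsim d e^{-\tau/2}$; by the $L^2(m)$ tail decay from Step 2 these are uniformly small, so the map is invertible and defines a unique family $(\tilde{w}_i)$ satisfying \eqref{eq:wi} (equivalently \eqref{def:wiDuhamel} once integrated against the semigroup $\mathcal{T}_{\alpha_i}$) with $\lim_{\tau\to-\infty} \tilde{w}_i(\tau) = 0$ in $L^2(m)$. Reassembling and undoing the rescaling recovers the decomposition \eqref{def:decompu}.

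The main obstacle I anticipate is the quantitative form of Step 2: the compactness argument alone yields convergence only in unweighted Lebesgue spaces, whereas $L^2(m)$ control of $W_i - G_{\alpha_i}$ near $\tau=-\infty$ is needed to close Step 3 and to make the Duhamel integral from $-\infty$ in \eqref{def:wiDuhamel} absolutely convergent. Deducing the polynomial-decay convergence rests on combining the pointwise decay of $G_{\alpha_i}$ from Proposition \ref{prop:Galpha}(i) with weighted estimates on $B\ast$ and on the semigroup $\mathcal{T}_{\alpha_i}$, and on tight control of the perturbation of the self-similar profile from Proposition \ref{prop:Galpha}(v). A secondary technical point is that, because $v=B\ast u$ fails to be locally integrable at $t=0$, every step of the linear advection-diffusion theory used in Step 1 must be justified by the singular linear framework developed earlier in the paper rather than by classical parabolic results.
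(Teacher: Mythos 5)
Your overall plan mirrors the paper's own strategy: decompose $u$ into solutions $w_i, \tilde{w}_0$ of the linear advection-diffusion equation with $v = B\ast u$, prove $w_i(\tau) \to G_{\alpha_i}$ in $L^2(m)$ by compactness and rigidity, and then construct the $\tilde{w}_i$ by a fixed-point argument around $\bar{w}_i := \alpha_i^{-1}(w_i - G_{\alpha_i})$. However, there are two genuine gaps in the proposal.

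First, in your Step 2 you claim that the velocity contribution of $\tilde{w}_0$ ``vanishes in the critical norm by \eqref{ineq:Heatpp} and the smallness of $\norm{\mu_0}_{pp}$.'' That argument only yields $\limsup_{\tau \to -\infty} \norm{\jap{\xi}^{-1} e^{\tau/2}\tilde{v}_0}_q \lesssim \epsilon$, i.e.\ smallness, not the actual limit $0$. But to identify the $\alpha$-limit set of $w_i(\tau)$ as \emph{exactly} $\set{G_{\alpha_i}}$ — rather than something in an $O(\epsilon)$-neighborhood of $G_{\alpha_i}$ — the remainder velocity $\tilde R_i$ must genuinely tend to zero as $\tau \to -\infty$. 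This requires exploiting that $\mu_0(\set{z_j}) = 0$: one splits $\mu_0 = \mu_0 \mathbf{1}_{B_{4r}} + \mu_0\mathbf{1}_{\Real^2\setminus B_{4r}}$ with $\mu_0(B_{4r}) < \delta$ for arbitrary $\delta$, propagates each piece separately, and further decomposes the far piece with a spatial cutoff to track how much mass can flow back toward $z_j$ in short time. Without this refinement the rigidity step fails.

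Second, your Step 3 does not verify that the constructed family $(\tilde{w}_i)$ reassembles into the original $u$. Since $\tilde{w}_i = \bar{w}_i + R_i$ with $R_i$ produced by a fixed-point argument, you have $\sum_i G_{\alpha_i} + \alpha_i\tilde{w}_i = \sum_i w_i + \sum_i \alpha_i R_i$, so the identity \eqref{def:decompu} (with the original $u$) only holds if $\sum_i \alpha_i R_i \equiv 0$. This is a nontrivial fact: the $R_i$ are coupled to each other and to $\tilde{w}_0$ through the nonlinear and cut-off terms, and the sum of the systems does not obviously vanish. One has to check that $F(t,x) := \sum_i t^{-1}\alpha_i R_i(\log t, (x-z_i)/\sqrt{t})$ is a mild solution of a linear advection-diffusion equation with vanishing initial data whose source terms cancel after summation, and then combine the double-exponential decay $\norm{R_i(\tau)}_{L^2(m)} \lesssim e^{-c e^{-\tau}}$ with an $S_N$-Duhamel estimate and a Gr\"onwall-type lemma to conclude $F \equiv 0$. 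Your phrase ``reassembling and undoing the rescaling recovers the decomposition'' skips this step entirely. Also, a smaller point: your Step 1 invokes uniqueness of mild solutions for the linear equation with $v = B\ast u$ before it has been established; it is cleaner to simply \emph{define} $\tilde{w}_0 = u - \sum_i w_i$ and only prove uniqueness a posteriori, once the structure of $v$ has been pinned down.
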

By Proposition \ref{prop:EquivSolutions}, any mild solution must correspond to the unique solution of the system \eqref{def:w0Duhamel},\eqref{def:wiDuhamel}, which would complete the proof of Theorem \ref{thm:PKSUnique}. 

\begin{remark}
Alternatively, in order to complete the proof of Theorem \ref{thm:PKSUnique} we could fall back to a proof which more closely matches Gallagher and Gallay and use their decomposition to show that any second mild solution must agree with the one constructed with the integral equations \eqref{def:w0Duhamel},\eqref{def:wiDuhamel}. 
This should work, however, we prefer to give a more self-contained proof by going through Proposition \ref{prop:EquivSolutions}.  
\end{remark}

The rest of the paper is organized as follows. In Section \S\ref{sec:Requis} we state the main linear estimates which 
are required for the proof of both Theorem \ref{thm:PKSUnique} and \ref{thm:Lipschitz}. 
In Section \S\ref{sec:PKSUnique} we prove Theorem \ref{thm:PKSUnique} in several steps. 
In Section \S\ref{sec:Lipschitz} we establish Theorem \ref{thm:Lipschitz}, the proof of which is closely related 
to the main steps of Theorem \ref{thm:PKSUnique}. 
In Appendix \S\ref{apx:LinearEstimates} we establish the linear estimates stated in \S\ref{sec:Requis} and in Appendix \S\ref{apx:PropSelfSim} we sketch the proof of Proposition \ref{prop:Galpha}.
Finally in Appendix \S\ref{apx:SpectralGap} we include an independent proof of a version of the spectral gap estimate due to Campos and Dolbeault. 

\subsection{Requisite Linear Estimates} \label{sec:Requis}
We briefly recall some known properties of the linear propagator of the Fokker-Planck equation $S(\tau) := e^{\tau L}$ in $L^2(m)$, studied in \cite{GallayWayne02}. The following proposition can also be found in \cite{GallagherGallay05}. 

\begin{proposition}[Properties of $S(\tau)$] \label{prop:Stau}
Fix $m > 1$. Then, 
\begin{itemize}
\item[(i)] $S(\tau)$ defines a strongly continuous semigroup on $L^2(m)$ and for all $w \in L^2(m)$, 
\begin{equation}
\norm{S(\tau)w}_{L^2(m)} \lesssim \norm{w}_{L^2(m)}, \;\;\; \norm{\grad S(\tau) w}_{L^2(m)} \lesssim \frac{1}{a(\tau)^{1/2}}\norm{w}_{L^2(
m)},  \label{ineq:SgradHyper}
\end{equation}
for all $\tau > 0$ and where $a(\tau) = 1- e^{-\tau}$. 
\item[(ii)] If $m > 2$ and $w \in L_0^2(m)$, then
\begin{equation}
\norm{S(\tau)w}_{L^2(m)} \lesssim e^{-\tau/2}\norm{w}_{L^2(m)}, \;\;\; \forall \tau>0.
\end{equation}
\item[(iii)] If $q \in [1,2]$ then for all $w \in L^q(m)$ and $\tau > 0$, 
\begin{align}
\norm{S(\tau)w}_{L^2(m)} \lesssim \frac{1}{a(\tau)^{\frac{1}{q} - \frac{1}{2}}}\norm{w}_{L^q(m)} \\ 
\norm{\grad S(\tau) w}_{L^2(m)} \lesssim \frac{1}{a(\tau)^{\frac{1}{q}}}\norm{w}_{L^q(m)}. \label{ineq:gradSDecay}
\end{align}
\end{itemize}
Note that
\begin{equation}
\grad S(\tau) = e^{\tau/2}S(\tau) \grad. \label{eq:SgradCommute} 
\end{equation}
\end{proposition}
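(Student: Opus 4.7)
The plan is to work from the explicit kernel representation of $S(\tau)$, obtained by conjugating the planar heat semigroup through the self-similar change of variables $t=e^\tau$, $x=e^{\tau/2}\xi$ which maps $\partial_t u=\Delta u$ to $\partial_\tau w=Lw$. This should yield
\[
(S(\tau)f)(\xi)=\frac{1}{4\pi a(\tau)}\int_{\Real^2}\exp\!\Bigl(-\frac{\abs{\xi-e^{-\tau/2}y}^2}{4a(\tau)}\Bigr)f(y)\,dy,
\]
which represents $S(\tau)$ essentially as a Gaussian convolution of length scale $\sqrt{a(\tau)}$ composed with the dilation $f\mapsto f(e^{-\tau/2}\,\cdot\,)$. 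The commutation identity $\grad S(\tau)=e^{\tau/2}S(\tau)\grad$ I would deduce either directly from differentiating this kernel in $\xi$, or more invariantly from the commutator relation $[\grad,L]=\tfrac{1}{2}\grad$, which implies that $e^{-\tau/2}\grad w$ solves $\partial_\tau f=Lf$ whenever $w$ does.

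From this kernel formula I would derive parts (i) and (iii) via Young's convolution inequality. The polynomial weight would be handled by the elementary splitting $\jap{\xi}^m\lesssim\jap{e^{-\tau/2}y}^m+\abs{\xi-e^{-\tau/2}y}^m$; the second piece is absorbed by the decay of the Gaussian at the cost of a universal constant, while the first reduces the bound to an unweighted Young estimate on $\jap{y}^m f(y)$. Choosing Young exponents with $1/r=3/2-1/q$ produces the correct rate $a(\tau)^{-(1/q-1/2)}$, and differentiating the kernel once yields an additional factor $a(\tau)^{-1/2}$, giving the claimed $\grad S(\tau)$ bounds. Strong continuity on $L^2(m)$ at $\tau=0$ follows by a routine approximation argument using that the kernel becomes a delta mass as $a(\tau)\searrow 0$.

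The hard part is (ii): the exponential rate $e^{-\tau/2}$ is invisible at the kernel level and must be extracted from the spectral structure of $L$ on $L^2(m)$. For this I would appeal to the spectral analysis of Gallay and Wayne \cite{GallayWayne02}, which establishes that, on $L^2(m)$, the spectrum of $L$ decomposes into essential spectrum inside $\set{\mathrm{Re}\,\lambda\le -(m-1)/2}$ together with isolated simple eigenvalues $\set{-k/2:k\in\Naturals,\ k<m-1}$ whose eigenfunctions are derivatives of the Gaussian. When $m>2$ this provides a spectral gap of exactly $1/2$ between the zero eigenvalue (with eigenfunction the Gaussian $G$) and the remainder of the spectrum. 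Since $\int\,d\xi$ is the left eigenfunction at $0$, the mean-zero hypothesis $\int w\,d\xi=0$ precisely annihilates the projection onto the zero eigenspace, and standard analytic-semigroup bounds then deliver $\norm{S(\tau)w}_{L^2(m)}\lesssim e^{-\tau/2}\norm{w}_{L^2(m)}$. Reproducing the full Gallay--Wayne spectral decomposition here would be lengthy and tangential to the present paper, so I would invoke it as a black box.
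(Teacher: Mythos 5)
The kernel formula, the factorization $S(\tau)=T_{a(\tau)}\circ D_{e^{-\tau/2}}$ (heat semigroup composed with a mass-preserving dilation), and the commutation identity $\grad S(\tau)=e^{\tau/2}S(\tau)\grad$ are all correct, and your citation of \cite{GallayWayne02} for part (ii) matches what the paper does (it cites \cite{GallayWayne02,GallagherGallay05} for the entire proposition rather than proving it).

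However, the Young-plus-weight-splitting derivation of parts (i) and (iii) has a genuine gap: the dilation factor is not innocent. Writing $S(\tau)f=G_{a(\tau)}\ast h$ with $h(z)=e^{\tau}f(e^{\tau/2}z)$, one has $\norm{h}_{q}=e^{\tau(1-1/q)}\norm{f}_{q}$ in two dimensions, so Young gives
\begin{equation*}
\norm{S(\tau)f}_{L^{2}}\lesssim a(\tau)^{-(1/q-1/2)}\,e^{\tau(1-1/q)}\,\norm{f}_{L^{q}},
\end{equation*}
and your weight-splitting $\jap{\xi}^{m}\lesssim\jap{e^{-\tau/2}y}^{m}+\abs{\xi-e^{-\tau/2}y}^{m}$ reduces $L^{2}(m)\to L^{2}(m)$ to exactly these unweighted estimates, so it inherits the same factor $e^{\tau(1-1/q)}$. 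For $q=1$ this is harmless and you do recover \eqref{ineq:gradSDecay} at that endpoint, but for $q\in(1,2]$, and in particular for part (i) (which is precisely (iii) with $q=2$), it produces $e^{\tau/2}$, which grows without bound. This loss is sharp at the unweighted level: $\norm{D_{e^{-\tau/2}}f}_{2}=e^{\tau/2}\norm{f}_{2}$ exactly, so the unweighted operator norm of $S(\tau)$ on $L^{2}$ really is $e^{\tau/2}$. The uniform bound on $L^{2}(m)$ for $m>1$ is a cancellation between the dilation and the weight that simple weight-splitting destroys; it is a spectral fact (essential spectrum of $L$ in $\set{\mathrm{Re}\,\lambda\le(1-m)/2}$, eigenvalues $\set{-k/2:k\ge 0}$), not a Young fact. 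Concretely, there are two ways to repair this: (a) run the energy estimate $\frac{d}{d\tau}\norm{\jap{\xi}^{m}S(\tau)w}_{2}^{2}$ directly from the PDE $\partial_{\tau}w=Lw$, which yields a term $\frac{1-m}{2}\norm{\jap{\xi}^{m}w}_{2}^{2}$ plus lower-order terms controlled via the dissipation and Nash/interpolation against the conserved-monotone quantity $\norm{w(\tau)}_{1}\le\norm{w_{0}}_{1}\lesssim\norm{w_{0}}_{L^{2}(m)}$ --- this is exactly the structure of the proof the paper gives for the nonlinear analogue, Proposition \ref{prop:SpecT}(i); or (b) invoke the spectral decomposition of $L$ on $L^{2}(m)$ from \cite{GallayWayne02} already, not only for (ii) but for (i) as well. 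Once (i) is secured, (iii) for $q\in(1,2]$ follows by Riesz--Thorin interpolation between the (uniform) $L^{2}(m)\to L^{2}(m)$ bound and the loss-free $q=1$ Young estimate; your claim that Young alone produces the correct rate for all $q$ is not correct.
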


The following proposition is of crucial importance. It is the analogue to Proposition 4.6 in \cite{GallagherGallay05} and Proposition 4.12 in \cite{GallayWayne05} but the proof deviates in several key places due to the different nature of the linear operator.  
Indeed, Recall that analyzing the spectral properties of the linearization around $G_\alpha$ is more difficult for PKS than for NSE, as the operator $\Lambda_\alpha$ is not skew-symmetric in any relevant Hilbert space.  As mentioned previously, the key tool used is a variant of the spectral gap-type results recently obtained by J. Campos and J. Dolbeault \cite{CamposDolbeault12}. 
This  spectral gap must be adapted to the polynomial weighted spaces $L^2(m)$, a procedure analogous to what is done in \cite{GallayWayne05}, which we carry out in Appendix \S\ref{apx:LinearEstimates}.

\begin{proposition} \label{prop:SpecT} 
Fix $\alpha \in (0,8\pi)$ and $m > 2$.
\begin{itemize}
\item[(i)] $\mathcal{T}_\alpha(\tau)$ defines a strongly continuous semigroup which is bounded on $L^2(m)$ and satisfies 
\begin{align}
\norm{\mathcal{T}_\alpha(\tau) f}_{L^2(m)} \lesssim_\alpha \norm{f}_{L^2(m)}, \label{ineq:TBounded}\\ 
\norm{\grad\mathcal{T}_\alpha(\tau)f}_{L^2(m)} \lesssim_\alpha a(\tau)^{-1/2}\norm{f}_{L^2(m)}, \label{ineq:TGradientHyper}
\end{align}
where $a(\tau) := 1 - e^{-\tau}$.
\item[(ii)] For some $\nu = \nu(\alpha) \in (0,1/2)$ which depends on $\alpha$ and
for all $f \in L^2_0(m)$,
\begin{equation}
\norm{\mathcal{T}_\alpha(\tau) f}_{L^2(m)} \lesssim_\alpha e^{-\nu \tau}\norm{f}_{L^2(m)}. \label{ineq:SpecGapT}
\end{equation}
\item[(iii)] If $q \in (1,2]$ then $\mathcal{T}_\alpha(\tau) \grad$ is a bounded operator from $L^q(m)$ to $L^2_0(m)$ and there exists a $\nu \in (0,1/2)$ (the same $\nu$ as in (ii)) such that, 
\begin{equation}
\norm{\mathcal{T}_{\alpha}(\tau)\grad f}_{L^2(m)} \lesssim_{\alpha} \frac{e^{-\nu\tau}}{a(\tau)^{1/q}}\norm{f}_{L^q(m)}. \label{ineq:TGradDecay}
\end{equation}  
\end{itemize}
Though $\nu$ and all of the implicit constants depend on $\alpha$, as $\alpha \searrow 0$, $\nu \approx 1/2$ and the constants are uniformly bounded by Proposition \ref{prop:Galpha} (iv) as $\mathcal{T}_\alpha(\tau)$ can be treated as a perturbation of $S(\tau)$ (see Remark \ref{rmk:controlledSpec} in Appendix \S\ref{apx:SpectralGap}). 
\end{proposition}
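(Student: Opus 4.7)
The plan is to prove the three parts by a perturbative Duhamel analysis around the Fokker-Planck semigroup $S(\tau) = e^{\tau L}$ (whose properties in $L^2(m)$ are recorded in Proposition~\ref{prop:Stau}), combined with a transfer of the Campos-Dolbeault spectral gap from its natural Gaussian-weighted Hilbert space to the polynomial-weighted space $L^2(m)$.

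For part (i), start from the Duhamel identity
\[
\mathcal{T}_\alpha(\tau) f = S(\tau) f - \int_0^\tau S(\tau-s)\,\Lambda_\alpha \mathcal{T}_\alpha(s) f \, ds.
\]
Since $\Lambda_\alpha g = \nabla\cdot(G_\alpha\, B*g + g\, v^{G_\alpha})$, the integrand has a divergence structure. Combining the weighted Biot-Savart bound \eqref{ineq:VelWeightedLp}, the Gaussian-like decay of $G_\alpha$ from Proposition~\ref{prop:Galpha}(i), and boundedness of $v^{G_\alpha}$, the auxiliary operator $g \mapsto G_\alpha B*g + g\, v^{G_\alpha}$ is bounded on $L^2(m)$. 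Together with the hypercontractive gradient estimate \eqref{ineq:SgradHyper}—applied in duality form so that $\|S(\tau-s)\nabla\cdot h\|_{L^2(m)} \lesssim a(\tau-s)^{-1/2}\|h\|_{L^2(m)}$—the integral has an integrable $a(\tau-s)^{-1/2}$ singularity, and a Gronwall iteration yields \eqref{ineq:TBounded}. The gradient bound \eqref{ineq:TGradientHyper} is then obtained by splitting $\mathcal{T}_\alpha(\tau) = \mathcal{T}_\alpha(\tau/2)\mathcal{T}_\alpha(\tau/2)$, applying the just-proven boundedness to the inner factor, and running an analogous Duhamel expansion for $\nabla \mathcal{T}_\alpha(\tau/2)$ using the commutation identity \eqref{eq:SgradCommute}.

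For part (ii), invoke the gap of Campos-Dolbeault~\cite{CamposDolbeault12} (or its variant from Appendix~\S\ref{apx:SpectralGap}), which furnishes a spectral gap on the mean-zero subspace of a Gaussian-weighted Hilbert space $X_\alpha$ adapted to $G_\alpha$. Transferring this to $L^2(m)$ follows the strategy of Gallay-Wayne~\cite{GallayWayne05}: first, show that $\Lambda_\alpha$ is a relatively compact perturbation of $L$ on $L^2(m)$, using the Gaussian-type decay of $G_\alpha$ and $v^{G_\alpha}$ from Proposition~\ref{prop:Galpha}(i); combined with the known location of the essential spectrum of $L$ on $L^2(m)$ in $\{\Re\lambda \leq -(m-1)/2\}$ for $m > 2$ \cite{GallayWayne02}, this places the essential spectrum of $L-\Lambda_\alpha$ strictly left of $-\nu$. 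Second, show that any discrete eigenfunction in $L^2(m)$ with $\Re\lambda > -\nu$ in fact lies in $X_\alpha$, via an elliptic-regularity bootstrap exploiting the strong pointwise localization of the coefficients, so that the Campos-Dolbeault gap applies and rules out such eigenvalues on $L^2_0(m)$. Combined with (i) and Hille-Yosida, this yields \eqref{ineq:SpecGapT}. Part (iii) follows by first observing that $\mathcal{T}_\alpha(\tau)\nabla f$ has mean zero (since $m > 2$ gives $f \in L^q(m) \hookrightarrow L^1$, so $\int \nabla f\,d\xi = 0$, and $\mathcal{T}_\alpha$ preserves total integral by divergence structure), then splitting $\mathcal{T}_\alpha(\tau)\nabla = \mathcal{T}_\alpha(\tau/2)[\mathcal{T}_\alpha(\tau/2)\nabla]$, bounding the inner factor by a Duhamel expansion with the dual version of \eqref{ineq:gradSDecay} to obtain an $a(\tau)^{-1/q}$ bound, and finally applying \eqref{ineq:SpecGapT} to the outer factor on $L^2_0(m)$ to acquire the $e^{-\nu\tau/2}$ factor.

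The principal obstacle is part (ii). Unlike the NSE case of~\cite{GallayWayne05}, where the linearization around the Oseen vortex is a skew-symmetric perturbation of the Fokker-Planck operator in an appropriate Hilbert space, here $\Lambda_\alpha$ enjoys no such symmetry, so the bootstrap proving that discrete eigenfunctions of $L-\Lambda_\alpha$ lie in the Gaussian-weighted space $X_\alpha$ must be carried out directly from the eigenvalue equation, exploiting the pointwise decay of $G_\alpha$ and $v^{G_\alpha}$ and the non-negativity of $G_\alpha$. A second, more technical issue is ensuring uniformity of the constants and of $\nu$ as $\alpha \searrow 0$; this uses Proposition~\ref{prop:Galpha}(iv) to treat $\mathcal{T}_\alpha$ as a small perturbation of $S(\tau)$ in that regime, so that the spectral gap of $S$ on $L^2_0(m)$ from Proposition~\ref{prop:Stau}(ii) is inherited up to a small correction.
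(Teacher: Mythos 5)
Your plan for parts (ii) and (iii) tracks the paper's architecture reasonably closely (spectral gap in the Gaussian-weighted space from Campos--Dolbeault, compact perturbation of $S(\tau)$ to control the essential spectrum, then showing $L^2(m)$ eigenfunctions are actually Gaussian-localized, and finally a semigroup split for (iii)). Two remarks there: the step you describe as an ``elliptic-regularity bootstrap exploiting the strong pointwise localization of the coefficients'' needs to be made precise -- in the paper this is Lemma~\ref{lem:TexpoLoc}, a genuine ODE asymptotics argument (decompose into angular Fourier modes, pass to the variable $t=r^2/4$, apply the Coddington--Levinson theory to identify the two linearly independent solutions at infinity, and then use the variation-of-constants formula to rule out the slowly-decaying branch); plain elliptic regularity gives smoothness but not the $e^{-|\xi|^2/4}$ decay that is what puts the eigenfunction in $L^2(G_\alpha^{-1}d\xi)$. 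Also, the tool that turns the spectral-radius bound into the exponential decay~\eqref{ineq:SpecGapT} is a spectral-mapping theorem for the semigroup, not Hille--Yosida (which concerns generation, not decay rates).

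The genuine gap is in part (i). Your ``Duhamel off $S(\tau)$ plus Gronwall iteration'' cannot produce the time-uniform bound~\eqref{ineq:TBounded} for arbitrary $\alpha\in(0,8\pi)$. Even using the divergence structure and the improved estimate $\|S(\sigma)\nabla\cdot h\|_{L^2(m)}\lesssim e^{-\sigma/2}a(\sigma)^{-1/2}\|h\|_{L^2(m)}$ (which follows from~\eqref{eq:SgradCommute} and~\eqref{ineq:SgradHyper}, and which is sharper than the non-decaying version you quote), the resulting fixed-point inequality is of the form $M(\tau) \leq C_1\|f\|_{L^2(m)} + C_2C_3\, M(\tau)$ with $C_3 = \int_0^\infty e^{-\sigma/2}a(\sigma)^{-1/2}\,d\sigma$ fixed and $C_2$ comparable to the operator norm of $g\mapsto G_\alpha(B*g) + g\,v^{G_\alpha}$, which is not small when $\alpha$ is not small; hence the argument does not close. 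Without the decay factor (as you wrote it) the situation is worse: $\int_0^\tau a(\tau-s)^{-1/2}\,ds$ grows linearly, so Gronwall gives only an exponentially growing bound. This is exactly why the paper explicitly remarks that ``an additional analysis must be done to ensure~\eqref{ineq:TBounded} holds independently of $\tau$'' and then abandons the perturbative route in favor of a direct energy estimate: reduce to nonnegative $f_0$ (so $\|f(\tau)\|_1$ is conserved by the divergence structure and positivity preservation), compute $\tfrac{d}{d\tau}\int|f|^2$, apply the Gagliardo--Nirenberg inequality $\|f\|_3^3 \lesssim \|\nabla f\|_2^2\|f\|_1$ to absorb the growth terms into the dissipation, and then promote the unweighted $L^2$ bound to $L^2(m)$ by a second, weighted energy estimate. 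Your Duhamel strategy is only self-consistent in the small-$\alpha$ regime, which is precisely the regime where the paper already notes $\mathcal{T}_\alpha$ may be treated as a perturbation of $S$.
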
 

The following is the analogue of Proposition 4.3 in \cite{GallagherGallay05}, but the proof must deviate from the corresponding one for NSE in a non-trivial manner, as the underlying linear operator no longer has as nice structure (carried out in Appendix \S\ref{sec:SNPrp}). 
The first step is a general lemma (Lemma \ref{lem:GenLinear}) which exhibits at least one well-behaved mild solution to a class of singular advection-diffusion equations including \eqref{eq:SNDefinition}.
Next, uniqueness is proved for the $N=1$ case by a compactness/rigidity argument that requires the monotonicity of $v^{G}$ to localize potential pathologies in the solution as well as a decay estimate of Carlen and Loss \cite{CarlenLoss92}.
The extension to $N > 1$ is straightforward following a similar argument of Gallagher and Gallay \cite{GallagherGallay05}.  
The proof of (iii) below uses spectral properties of linear Fokker-Planck equations with general confining potentials. 
\begin{proposition} \label{prop:SNProperties}
There exists some $t_0$ sufficiently small such that 
\begin{itemize}
\item[(i)] $S_N(t,s)$ defines a weak$^\star$ continuous linear propagator (see Remark \ref{def:weakstarcont} below) on $\mathcal{M}(\Real^2)$ and for all $p \in [1,\infty]$ and $\nu \in \mathcal{M}(\Real^2)$ we have 
\begin{equation}
\norm{S_N(t,s)\nu}_{L^p} \lesssim \frac{1}{(t-s)^{1-1/p}}\norm{\nu}_{\mathcal{M}}, \;\;\; 0 \leq s < t < s + t_0. \label{ineq:SNHypercon} 
\end{equation}
\item[(ii)] For all $p \in (1,\infty]$ and $\nu \in \mathcal{M}_+(\Real^2)$ (uniformly in $s$), 
\begin{equation}
\limsup_{t \searrow s} (t-s)^{1-1/p}\norm{S_N(t,s)\nu}_p \lesssim \norm{\nu}_{pp}. \label{ineq:SNpp}
\end{equation}
\item[(iii)] There exists some $\lambda_0 \in (0,1/2)$ independent of $\epsilon$ such that the following holds: for all $p \in [1,\infty]$, for all $\gamma \in (0,\lambda_0)$ and for all $w \in L^1(\Real^2)$, 
\begin{equation} 
\norm{S_N(t,s)\grad \cdot w}_{p} \lesssim_{p,\gamma} \frac{1}{(t-s)^{3/2 - 1/p}}\left( \frac{t}{s} \right)^{\gamma + 1/2 - \lambda_0}\norm{w}_{1}, \;\;\; 0 < s < t < s + t_0. \label{ineq:SNgrad}
\end{equation}
\end{itemize}
All of the implicit constants above are independent of $t$,$s$,$\epsilon,N$ and $d$. Moreover, it will suffice to choose $t_0$ such that  
\begin{equation*}
t_0 \leq d^2\min(1,K),
\end{equation*}
for some $K$ which is independent of $\epsilon,N$ and $d$. 
\end{proposition}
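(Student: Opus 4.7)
The plan is to follow the roadmap sketched before the statement: apply Lemma \ref{lem:GenLinear} to produce a mild solution, prove uniqueness by a compactness/rigidity argument (first for $N=1$, then extend), and obtain the gradient estimate from the spectral theory of Fokker--Planck operators with a confining potential. For part (i), existence is handed to Lemma \ref{lem:GenLinear} applied to the velocity field $v(t,x) = \sum_{j=1}^N t^{-1/2} v^{G_j}\bigl((x-z_j)/\sqrt t\bigr)$. The hypercontractive estimate \eqref{ineq:SNHypercon} is then obtained by iterating the Duhamel identity $S_N(t,s)\nu = e^{(t-s)\Delta}\nu - \int_s^t e^{(t-r)\Delta}\grad\cdot(v(r) S_N(r,s)\nu)\,dr$, using \eqref{ineq:HeatLpLqEasy}--\eqref{ineq:HeatLpLq} together with the pointwise bound $\abs{v(r,x)} \lesssim \sum_j r^{-1/2}\norm{v^{G_j}}_\infty$; the choice $t_0 \leq K d^2$ with $K$ small keeps the Duhamel correction subdominant to the heat-kernel term. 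Weak-$\ast$ continuity at the initial time then falls out of the $L^1$ bound on the correction.

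For uniqueness I would first treat $N=1$ with $z_1=0$. In self-similar variables $\xi = x/\sqrt t$, $\tau = \log t$, $\tilde f(\tau,\xi) = t f(t,\sqrt t\,\xi)$, the equation becomes autonomous,
\[
\partial_\tau \tilde f = L\tilde f - \grad\cdot(\tilde f v^{G_{\alpha_1}}).
\]
Given two mild solutions with the same initial data, their difference $g$ satisfies this autonomous equation and tends to zero weakly as $\tau \to -\infty$. A compactness/rigidity argument then forces $g \equiv 0$: the radial monotonicity of $v^{G_{\alpha_1}}$ prevents new concentrations from forming near the origin, while the Carlen--Loss decay \cite{CarlenLoss92} provides enough pointwise control on the fundamental solution to rule out a nontrivial asymptotic profile as $\tau \to -\infty$. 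For $N \geq 2$, pick $t_0 \leq c d^2$ so that each profile $t^{-1/2} v^{G_j}((x-z_j)/\sqrt t)$ stays essentially localized near its own center; a partition of unity decouples the problem into local near-center versions of the $N=1$ argument, with the smooth cross-terms harmlessly absorbed, as in Gallagher--Gallay \cite{GallagherGallay05}.

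For part (iii) I would again pass to self-similar variables at a given center so that $S_N$ becomes the autonomous semigroup $e^{(\tau-\sigma)(L-\tilde\Lambda_\alpha)}$ with $\tilde\Lambda_\alpha f := \grad\cdot(f v^{G_\alpha})$. This is a Fokker--Planck operator perturbed by a smooth, bounded drift (with smooth bounded divergence $-G_\alpha$), so standard spectral theory of Fokker--Planck operators with confining potential yields a spectral gap $\lambda_0 \in (0,1/2)$ on the mean-zero subspace, uniform in $\alpha$ on bounded sets. A commutation formula in the spirit of \eqref{eq:SgradCommute} trades a gradient for a short-time smoothing factor $a(\tau-\sigma)^{-1/q}$ at the cost of a mean-zero output, and combining this with the spectral gap gives $\norm{e^{(\tau-\sigma)(L-\tilde\Lambda_\alpha)}\grad\cdot w}_p \lesssim e^{-\lambda_0(\tau-\sigma)} a(\tau-\sigma)^{-3/2+1/p}\norm{w}_1$. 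Undoing the self-similar change of variables produces the $(t/s)^{\gamma+1/2-\lambda_0}$ factor, the arbitrary small $\gamma > 0$ absorbing the loss from peeling off the mean of $w$ before invoking the spectral gap.

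Part (ii) then follows from (i), (iii), and \eqref{ineq:Heatpp}: using the same Duhamel decomposition $S_N(t,s)\nu = e^{(t-s)\Delta}\nu - \int_s^t e^{(t-r)\Delta}\grad\cdot(v(r) S_N(r,s)\nu)\,dr$, applying \eqref{ineq:Heatpp} to the first term, and using (iii) to bound the Duhamel correction as $o((t-s)^{-1+1/p})$ as $t \searrow s$, leaves only the atomic part of $\nu$ in the limit. The main obstacle will be the $N=1$ uniqueness: near $\tau = -\infty$ the singular drift cannot be treated as a perturbation of the heat equation and standard semigroup uniqueness arguments fail. The genuinely new input over NSE is the combination of the monotonicity of $v^{G_\alpha}$ with the Carlen--Loss decay, since the non-divergence-free nature of the PKS velocity rules out the direct use of \cite{CarlenLoss94} that played the analogous role for NSE.
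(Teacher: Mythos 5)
Your overall roadmap agrees with the paper's: existence via Lemma \ref{lem:GenLinear}, a compactness/rigidity argument for $N=1$ uniqueness in self-similar variables using the Carlen--Loss decay and monotonicity of $v^{G_\alpha}$, decoupling for $N>1$, and Fokker--Planck spectral theory for (iii). However, there are two genuine gaps.

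The most serious is your claim that \eqref{ineq:SNHypercon} can be obtained by iterating the Duhamel identity against the heat kernel, with the choice $t_0 \lesssim d^2$ keeping the correction subdominant. That cannot work: the velocity $v(r,x) = \sum_j r^{-1/2}v^{G_j}((x-z_j)/\sqrt r)$ is \emph{critically} singular, so the Duhamel term scales exactly like the leading term, and the constant in the perturbation is governed by $\max_j\norm{v^{G_j}}_\infty$, which is $O(1)$ (and grows as $\alpha_j \to 8\pi$) and is \emph{not} made small by shrinking $t_0$ or $t_0/d^2$. This is precisely the obstruction the paper flags in the introduction -- the singular linear equations cannot be treated as perturbations of the heat equation locally in time -- and which you yourself invoke, correctly, when discussing the $N=1$ uniqueness. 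The paper instead obtains \eqref{ineq:SNHypercon} from the construction in Lemma \ref{lem:GenLinear}: a short-time contraction to seed the estimate, followed by a Moser--Alikakos iteration \emph{in self-similar variables} to propagate $L^p$ bounds globally in rescaled time. For the same reason, your proposed derivation of (ii) via the heat-kernel Duhamel and (iii) fails at $s=0$, where the extra $r^{-1/2}$ from $v(r)$ destroys the $o((t-s)^{-1+1/p})$ bound; the paper gets \eqref{ineq:SNpp} directly from \eqref{ineq:linearHyperLoc} in Lemma \ref{lem:GenLinear}.

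The second gap is in the $N=1$ uniqueness. You state that the compactness/rigidity argument "forces $g \equiv 0$," but that step only shows the difference of two mild solutions converges to zero in $L^1$ as $\tau \to -\infty$ (equivalently $t \to 0^+$); it does not by itself kill $g$ for positive times. The paper needs a further duality argument (Lemma \ref{lem:dual}, testing against solutions of the backward adjoint equation and using the maximum principle) to propagate the $L^1$-vanishing at $t=0$ to $g \equiv 0$ on $(0,T)$. You should also be aware that the tightness step underlying the compactness (Lemma \ref{lem:compactL1}) is not routine: it uses an SDE comparison exploiting the sign $\grad c_\alpha(\xi)\cdot\xi \le 0$, and the rigidity step invokes Theorem 7 of \cite{CarlenLoss92}, which gives $L^1$ decay for tight, mean-zero, bounded solutions rather than pointwise control of the fundamental solution.
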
  
\begin{remark} \label{def:weakstarcont}
By `weak$^\star$ continuous linear propagator' we mean that $S_N(t,s)$ is linear and if $\mu_n \subset \mathcal{M}(\Real^2)$ satisfy $\sup_{n}\norm{\mu_n}_{\mathcal{M}(\Real^2)} < \infty$ and $\mu_n \rightharpoonup^\star \mu$ and $t_n,s_n \subset (0,T)$, $0 \leq s_n \leq t_n$, with $s_n \rightarrow \bar{s} \in [0,T)$ and $t_n \rightarrow \bar{t} \in [0,T)$ we have 
\begin{align*}
S_N(t_n,s_n)\mu_n & \rightharpoonup^\star S_N(\bar t,\bar{s})\mu. 
\end{align*}
\end{remark} 

\section{Existence and Uniqueness: Proof of Theorem \ref{thm:PKSUnique}} \label{sec:PKSUnique}
We proceed in several steps. First we prove the contraction mapping argument which establishes the existence and uniqueness
of solutions to the integral equations \eqref{def:w0Duhamel} and \eqref{def:wiDuhamel} which is the core of the proof.
Next we establish Theorem \ref{thm:Basics} (i) which is necessary to establish Proposition \ref{prop:EquivSolutions}, which is carried out last. Finally we briefly summarize the full argument at the end of the section.  

\subsection{Contraction Mapping} \label{sec:ContractMap}
We will construct our solution to \eqref{def:w0Duhamel} and \eqref{def:wiDuhamel} in the following ball (for $\epsilon > 0$ and $T>0$ to be chosen small later): define $\rho := \set{\rho_i}_{i = 0}^N$, 
\begin{equation}
B_{\epsilon,T} = \set{\rho(t):M[\rho_0 - S_N(t,0)\mu_0,\rho_{i \geq 1}](T) < \epsilon}. \label{def:BepT}
\end{equation} 
Note that the ball is centered around $S_N(t,0)\mu_0$, although given Proposition \ref{prop:SNProperties} (ii), this is a minor detail.
For any $\set{\tilde{w}_{i}}_{i = 0}^N$, the corresponding $u(t,x)$ constructed by \eqref{def:decompu} will be in $\chi_T$ (but is not small due to the presence of the large atomic pieces). 
It might be useful to bear in mind that the approximate solution we are perturbing around is, 
\begin{equation*}
u_{app}(t,x) = S_N(t,0)\mu_0 + \sum_{i = 1}^N\frac{1}{t}G_{\alpha_i}\left(\frac{x - z_i}{\sqrt{t}}\right), 
\end{equation*}
although we will not  make explicit note of this in the remainder of the paper.
 
Let $\tilde{w} = F[\rho]$ be the nonlinear solution map which takes $\rho$ to $\tilde{w} := \set{\tilde{w}_i}_{i = 0}^N$ defined by the following procedure. 
In what follows, define 
\begin{align*}
\tilde{v}_0^\rho(t,x) & : = B \ast \rho_0, \\ 
\tilde{v}_j^\rho(\tau,\xi) & := B \ast \rho_j, \\ 
v_j^\rho(t,x) & : = \frac{1}{\sqrt{t}}\tilde{v}_j^\rho\left(\log t, \frac{x- z_j}{\sqrt{t}} \right), \\ 
v^{R_i}(\tau,\xi) & := \sum_{j \neq i} \alpha_j \tilde{v}^\rho_j(\tau,\xi - (z_j - z_i)e^{-\tau/2}). 
\end{align*}
Given $\rho$, we define $\tilde{w}_0$ by
\begin{equation*}
\tilde{w}_0(t) = S_N(t,0)\mu_0 - \int_0^tS_N(t,s)\left[\grad \cdot (\rho_0(s) \tilde{v}^{\rho}_0(s)) + \grad\cdot(\rho_0(s)\sum_{i = 1}^N \alpha_i v^{\rho}_i(s)) \right] ds. 
\end{equation*}
Similarly we define $\tilde{w}_i$ by 
\begin{align*}
\tilde{w}_i(\tau) & = -\int_{-\infty}^\tau \mathcal{T}_{\alpha_i}(\tau - \tau^\prime)(\grad \cdot (\rho_i\sum_{j \neq i} (1-\phi(\xi e^{\tau^\prime/2} + z_i - z_j))v^{G_j}(\xi - (z_j - z_i)e^{-\tau^\prime/2})) ) d\tau^\prime  \\
   &\hspace{.5cm} -\int_{-\infty}^\tau \mathcal{T}_{\alpha_i}(\tau - \tau^\prime)(\grad \cdot (\sum_{i \neq j}\frac{\alpha_j}{\alpha_i}\rho_j(\xi - (z_j - z_i)e^{-\tau^\prime/2})\phi(\xi e^{\tau^\prime/2})v^{G_i})) d\tau^\prime  \\
  & \hspace{.5cm} -\int_{-\infty}^\tau \mathcal{T}_{\alpha_i}(\tau-\tau^\prime)(\grad \cdot (\frac{1}{\alpha_i}G_i v^{g_i}))d\tau^\prime
-\int_{-\infty}^\tau \mathcal{T}_{\alpha_i}(\tau-\tau^\prime)(\grad \cdot (\frac{1}{\alpha_i}G_i v^{R_i})) d\tau^\prime  \\
  & \hspace{.5cm} -\int_{-\infty}^\tau \mathcal{T}_{\alpha_i}(\tau-\tau^\prime)(\grad \cdot (\frac{1}{\alpha_i}G_ie^{\tau^\prime/2}\tilde{v}^\rho_0(e^{\tau^\prime},\xi e^{\tau^\prime/2} + z_i))) d\tau^\prime \\
  & \hspace{.5cm} -\int_{-\infty}^\tau \mathcal{T}_{\alpha_i}(\tau-\tau^\prime)( \grad \cdot (\rho_i \alpha_i \tilde{v}^\rho_i + \rho_i v^{R_i}))d\tau^\prime \\ 
  & \hspace{.5cm} -\int_{-\infty}^\tau \mathcal{T}_{\alpha_i}(\tau-\tau^\prime)(\grad \cdot (\rho_i e^{\tau^\prime/2}\tilde{v}^\rho_0(e^{\tau^\prime},\xi e^{\tau/2} + z_i))) d\tau^\prime.
\end{align*}
By definition, a fixed point of $F$ corresponds to a solution of the system \eqref{def:w0Duhamel},\eqref{def:wiDuhamel}, which in turn corresponds to a mild solution of \eqref{def:PKS}.  

The application of the contraction mapping theorem requires that $F$ maps $B_{\epsilon,T}$ to itself and that $F$ defines a locally Lipschitz mapping, which we prove in separate propositions. Note that linear terms in the definition of $F$ can be treated essentially the same in the two propositions, and it is in the linear terms that our arguments significantly differ from \cite{GallagherGallay05}. 
\begin{proposition}\label{prop:Fmapping}
For $\epsilon > 0$ sufficiently small, $T >0$ can be chosen sufficiently small such that $F:B_{\epsilon,T} \rightarrow B_{\epsilon,T}$. 
\end{proposition}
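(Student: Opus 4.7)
The plan is to bound each Duhamel integral appearing in the definition of $F[\rho]$ in the relevant norm ($s^{1/4}\norm{\cdot}_{4/3}$ for $\tilde w_0 - S_N(\cdot,0)\mu_0$; $\norm{\cdot}_{L^2(m)}$ for each $\tilde w_i$, $i\geq 1$) and to show each resulting estimate splits into pieces of sizes $O(\epsilon^2)$, $O(\epsilon/K_0)$, or $O(T^\delta)$ for some $\delta>0$. One then fixes $K_0$ large, $T$ small in terms of $K_0,d,N,\{\alpha_i\},m$, and finally $\epsilon$ small, to force $M[F[\rho]](T)<\epsilon$.

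\textbf{Step 1: the $\tilde w_0$ bound.} Apply Proposition~\ref{prop:SNProperties}(iii) with $p=4/3$ and fixed $\gamma\in(0,\lambda_0)$ to obtain $\norm{S_N(t,s)\grad\cdot w}_{4/3}\lesssim (t/s)^{\gamma+1/2-\lambda_0}(t-s)^{-3/4}\norm{w}_1$. The two integrands in the definition of $\tilde w_0$ satisfy $\norm{\rho_0 \tilde v_0^\rho}_1\lesssim \norm{\rho_0}_{4/3}^2$ (H\"older plus Proposition~\ref{prop:Vel}(i)) and $\norm{\rho_0 v_j^\rho}_1\lesssim \norm{\rho_0}_{4/3}\norm{v_j^\rho}_4 \lesssim s^{-1/2}M_0 M_j$ (rescaling plus \eqref{ineq:L2mInject} and Proposition~\ref{prop:Vel}(i)). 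The time integral is of Beta type with exponents $-3/4$ and $-1-(\gamma-\lambda_0)$ after the substitution $s'=s\sigma$, convergent since $\gamma<\lambda_0$, and yields $s^{1/4}\norm{\tilde w_0(s)-S_N(s,0)\mu_0}_{4/3}\lesssim M_0^2 + M_0\sum_j \alpha_j M_j$. Hence $K_0 M_0[\tilde w_0-S_N\mu_0](T)\lesssim \epsilon^2(K_0^{-1}+\sum_j\alpha_j)$, which is $<\epsilon$ for $\epsilon$ small, uniformly in $K_0\geq 1$.

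\textbf{Step 2: the $\tilde w_i$ bounds.} For each of the seven integrals in \eqref{def:wiDuhamel} apply Proposition~\ref{prop:SpecT}(iii) with a suitable $q\in(1,2]$, producing the kernel $e^{-\nu(\tau-\tau')}a(\tau-\tau')^{-1/q}$ which is $\tau'$-integrable over $(-\infty,\tau]$ by the spectral gap. The two quadratic pieces (lines~6--7 of \eqref{def:wiDuhamel}) are controlled in $L^q(m)$ by $C(M_i^2+M_iM_j+M_iM_0/K_0)=O(\epsilon^2)$, using Proposition~\ref{prop:Vel} and the bounds on $G_i$ from Proposition~\ref{prop:Galpha}. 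For the five linear pieces, smallness comes from three mechanisms tied to the separation of supports in similarity variables: (i) whenever $v^{G_j}(\cdot-(z_j-z_i)e^{-\tau'/2})$ is evaluated at a point whose magnitude is $\gtrsim de^{-\tau'/2}$ (namely in the $(1-\phi)v^{G_j}$ piece, and implicitly in $G_i v^{g_i}$ on the support of $G_i$), one gains a factor $\lesssim \alpha_j e^{\tau'/2}/d$ from the asymptote $v^{G_j}(x)\sim \alpha_j B(x)$; (ii) $G_i$ has Gaussian decay, so $G_i(\xi)\tilde v_j^\rho(\xi-(z_j-z_i)e^{-\tau'/2})$ is exponentially suppressed wherever $G_i$ is concentrated; (iii) the cutoff $\phi(\xi e^{\tau'/2})$ in the $\tilde w_j$-cross term forces the argument of $\tilde w_j$ to lie at distance $\geq (d/4)e^{-\tau'/2}$, so the tail bound $\norm{f}_{L^2(|\eta|\geq R)}\lesssim R^{-m}\norm{f}_{L^2(m)}$ combined with $|v^{G_i}(\xi)|\lesssim \jap{\xi}^{-1}$ again yields an $e^{\tau'/2}$ gain. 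After integration against the Fokker-Planck kernel each such piece contributes $O(e^{\tau/2})=O(T^{1/2})$, with implicit constant depending on $d,m,\{\alpha_k\}$. The only exception is the $G_i e^{\tau'/2}\tilde v_0^\rho(e^{\tau'},\xi e^{\tau'/2}+z_i)$ piece: rescaling plus Proposition~\ref{prop:Vel}(i) gives $L^q(m)$-norm $\lesssim \norm{G_i}_{L^2(m)}M_0$, hence an $O(\epsilon/K_0)$ contribution. Summing, $M_i[\tilde w_i](T)\lesssim \epsilon^2 + \epsilon/K_0 + T^\delta/d^C$.

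\textbf{Parameter choice and main obstacle.} Fix $K_0$ large enough that all $C\epsilon/K_0$ contributions sum to $\leq \epsilon/4$; then fix $T$ small (depending on $K_0,d,N,\{\alpha_i\},m$) so the $O(T^\delta)$ pieces sum to $\leq \epsilon/4$; finally fix $\epsilon$ small so the $O(\epsilon^2)$ pieces are $\leq \epsilon/4$. The main obstacle is the purely inhomogeneous term $\int_{-\infty}^\tau \mathcal T_{\alpha_i}(\tau-\tau')\grad\cdot(\alpha_i^{-1}G_i v^{g_i})d\tau'$, which contains neither $\epsilon$ nor $\rho$; its smallness relies entirely on mechanism~(i), namely the fact that in similarity variables the other concentrations recede to distance $\gtrsim de^{-\tau'/2}\to\infty$ as $\tau'\to-\infty$, so that $v^{G_j}$ at that point is $O(e^{\tau'/2}/d)$. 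This is what forces the admissible $T$ small in terms of $d$, consistent with $t_0\lesssim d^2$ in Proposition~\ref{prop:SNProperties}. A secondary subtlety is keeping the large $L^2(m)$ weight $\jap{\xi}^m$ on the support of $\phi(\xi e^{\tau'/2})$ under control using the tail decay of $\tilde w_j$ together with the $\jap{\cdot}^{-1}$-decay of $v^{G_i}$, so that all powers of $e^{-\tau'/2}$ cancel in favor of a positive power of $e^{\tau'/2}$.
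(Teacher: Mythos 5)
Your structural plan mirrors the paper's proof: estimate each Duhamel integral in $F[\rho]$ via Proposition~\ref{prop:SNProperties}(iii) (for $\tilde w_0$) and Proposition~\ref{prop:SpecT}(iii) (for $\tilde w_i$), split the resulting contributions into quadratic-in-$\rho$ pieces, an $O(\epsilon/K_0)$ piece coming from the $\tilde v_0^\rho$ forcing, and purely source/linear pieces that vanish as $T\searrow 0$, and then calibrate the parameters. The individual term estimates you sketch (the Beta-type time integral with exponent $\lambda_0-\gamma-1$, the three geometric gain mechanisms from separation of supports, the $\norm{G_i}_{L^2(m)}M_0$ bound on the $G_i\,\tilde v_0^\rho$ interaction) all match the paper's computations in substance, even if mechanism (ii) is stated a bit loosely (the paper extracts only a polynomial gain $e^{\tau(\gamma/2-1/q)}$ via the weighted velocity estimate \eqref{ineq:VelWeightedLp}, not a literal exponential suppression). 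You also correctly identify the inhomogeneous term $\int \mathcal{T}_{\alpha_i}\grad\cdot(\alpha_i^{-1}G_i v^{g_i})$ as the one forcing $T\lesssim d^2$.

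There is, however, a genuine flaw in the parameter calibration. You propose to fix $K_0$, then $T$, then $\epsilon$. But $T$ must be chosen \emph{after} $\epsilon$: the smallness of the $O(T^\delta)$ pieces (e.g.\ $\delta_2(t)\approx e^{\tau/2}/d$ needs to be $\leq\epsilon/4$, i.e.\ $T\lesssim d^2\epsilon^2$) requires knowing $d$ and $N$, and both are determined by the choice of $\epsilon$ via the decomposition $\mu=\sum\alpha_i\delta_{z_i}+\mu_0$, $\norm{\mu_0}_{pp}<\epsilon$. If you fix $T$ before $\epsilon$, the later choice of $\epsilon$ changes $d$ and $N$ and invalidates the $T$ you chose. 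The correct order, which the paper emphasizes, is: first fix $\epsilon$ small (this is possible because the constants multiplying the $O(\epsilon^2)$ pieces, as well as $K_2$ in the $O(\epsilon/K_0)$ piece, are independent of $\epsilon$ — a fact one must verify and which is nontrivial because $N$ may be large and $d$ small), then fix $K_0>4K_2$ (also $\epsilon$-independent), and only then fix $T$ small depending on $\epsilon$, $d$, $N$. Without this reordering, the argument as stated does not close; with it, your proof agrees with the paper's.
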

\begin{proof}
First we control $\tilde{w}_0$ to show that for $T$ chosen sufficiently small (depending on $\epsilon$)
\begin{align}
K_0M_0[\tilde{w}_0 - S_N(t,0)\mu_0](t) \leq K_1K_0 M_0[\rho_0](t) M[\rho](t) \leq K_1 \left(M[\rho](t)\right)^2, \label{ineq:w0PropFmapping}
\end{align}
for some constant $K_1 > 0$ which is independent of $\epsilon$. 
This is essentially the analogue of Proposition 5.1 in \cite{GallagherGallay05}, and we approach it in a similar way. 
Indeed,
\begin{align}
K_0t^{1/4}\norm{\tilde{w}_0(t) - S_N(t,0)\mu_0}_{4/3} &\leq K_0 t^{1/4}\int_0^t \norm{S_N(t,s)\grad \cdot (\tilde{\rho}_0(s)\tilde{v}^\rho_0(s))}_{4/3}ds \nonumber \\ 
& \hspace{4pt} + K_0t^{1/4}\int_0^t \norm{S_N(t,s)\grad \cdot (\tilde{\rho}_0(s)\sum_{j=1}^N\alpha_jv_j^\rho(s))}_{4/3}ds. \label{ineq:FmapSN}
\end{align}
To control the first term: by Proposition \ref{prop:SNProperties}, H\"older's inequality and \eqref{ineq:VelLp} we have for $t$ sufficiently small (so that Proposition \ref{prop:SNProperties} holds), 
\begin{align*}
\norm{S_N(t,s)\grad \cdot (\rho_0(s)\tilde{v}^\rho_0(s))}_{4/3} & \lesssim \frac{1}{(t-s)^{3/4}}\left(\frac{t}{s}\right)^{\gamma + 1/2 - \lambda_0} \norm{\rho_0(s) \tilde{v}^\rho_0(s)}_1 \\ 
& \leq \frac{1}{(t-s)^{3/4}}\left(\frac{t}{s}\right)^{\gamma + 1/2 - \lambda_0} \norm{\rho_0(s)}_{4/3} \norm{\tilde{v}^\rho_0(s)}_4 \\
& \lesssim \frac{1}{(t-s)^{3/4}}\left(\frac{t}{s}\right)^{\gamma + 1/2 - \lambda_0} \norm{\rho_0(s)}_{4/3}^2.  
\end{align*}
Hence, 
\begin{align*}
K_0 t^{1/4}\int_0^t \norm{S_N(t,s)\grad \cdot (\tilde{\rho}_0(s)\tilde{v}^\rho_0(s))}_{4/3}ds & \lesssim K_0t^{1/4}\int_0^t\frac{1}{(t-s)^{3/4}}\left(\frac{t}{s}\right)^{\gamma + 1/2 - \lambda_0} \norm{\rho_0(s)}_{4/3}^2 ds \\ 
& \lesssim K_0t^{1/4} \left(M_0[\rho_0](t)\right)^2\int_0^t\frac{1}{(t-s)^{3/4}}\left(\frac{t}{s}\right)^{\gamma + 1/2 - \lambda_0} \frac{1}{s^{1/2}} ds \\ 
& \lesssim K_0\left(M_0[\rho_0](t)\right)^2. 
\end{align*}
To control the second term in \eqref{ineq:FmapSN} we proceed similarly (again for $t$ sufficiently small), 
\begin{align*}
K_0t^{1/4}\int_0^t \norm{S_N(t,s)\grad \cdot (\rho_0(s)\sum_{j=1}^N\alpha_jv_j^\rho(s))}_{4/3}ds \lesssim & \\ & \hspace{-4cm}  \sum_{j = 1}^N \alpha_j K_0t^{1/4}\int_0^t\frac{1}{(t-s)^{3/4}}\left(\frac{t}{s}\right)^{\gamma + 1/2 - \lambda_0} \norm{\rho_0(s)}_{4/3} s^{-1/4}\norm{\rho_i(\log s)}_{4/3} ds.  
\end{align*}
Recalling \eqref{ineq:L2mInject}, 
\begin{align*}
K_0t^{1/4}\int_0^t \norm{S_N(t,s)\grad \cdot (\tilde{\rho}_0(s)\sum_{j=1}^N\alpha_jv_j^\rho(s))}_{4/3}ds \lesssim & \\ & \hspace{-4cm} \sum_{j = 1}^N \alpha_j K_0 t^{1/4} M_0[\rho_0](t) M_j[\rho_j](t) \int_0^t\frac{1}{(t-s)^{3/4}}\left(\frac{t}{s}\right)^{\gamma + 1/2 - \lambda_0} \frac{1}{s^{1/2}} ds. 
\end{align*}
Putting the estimates together proves \eqref{ineq:w0PropFmapping}. 

The significantly more delicate challenge is controlling $\tilde{w}_i$ for $1 \leq i \leq N$, which is stated in the following lemma. 
\begin{lemma} \label{lem:Fmap}
For all $\epsilon > 0$ the following estimate holds for $i \in \set{1,...,N}$: 
\begin{align*}
M_i[\tilde{w}](t) & \leq \delta_2(t) + \eta(t)M[\rho](t) + K_2M_0[\rho_0](t) + K_3M_i[\rho](t)M[\rho](t), \\ 
& \leq \delta_2(t) + \eta(t)M[\rho](t) + \frac{K_2}{K_0}M[\rho](t) + K_3M_i[\rho](t)M[\rho](t),
\end{align*}
where $\delta_2(t)$ and $\eta(t)$ depend on $\epsilon$ but go to zero as $t \searrow 0$ while $K_2$ and $K_3$ are independent of $\epsilon$. 
\end{lemma} 
\begin{proof}
Write \eqref{def:wiDuhamel} as
\begin{equation*}
\tilde{w}_i(\tau) = -\sum_{k = 1}^6 F_{i,k}(\tau),
\end{equation*}
where 
\begin{align*}
F_{i,1}(\tau) & = \sum_{j \neq i}\int_{-\infty}^\tau \mathcal{T}_{\alpha_i}(\tau - \tau^\prime) \grad \cdot (\frac{1}{\alpha_i}G_i v^{G_j}(\xi - (z_j-z_i)e^{-\tau^\prime/2})) d\tau^\prime \\ 
F_{i,2}(\tau) & = \sum_{j \neq i}\int_{-\infty}^\tau \mathcal{T}_{\alpha_i}(\tau - \tau^\prime) \grad \cdot (\frac{1}{\alpha_i}G_i \alpha_j \tilde{v}^\rho_j(\xi - (z_j-z_i)e^{-\tau^\prime/2})) d\tau^\prime \\ 
F_{i,3}(\tau) & = \int_{-\infty}^\tau \mathcal{T}_{\alpha_i}(\tau - \tau^\prime) \grad \cdot (\frac{1}{\alpha_i}G_i e^{\tau^\prime/2}\tilde{v}_0^\rho(e^{\tau^\prime},\xi e^{\tau^\prime/2} + z_i)) d\tau^\prime \\
F_{i,4}(\tau) = F_{i,4}^{(1)}(\tau) + F_{i,4}^{(2)}(\tau) & = \int_{-\infty}^\tau \mathcal{T}_{\alpha_i}(\tau - \tau^\prime) \grad \cdot (\rho_i\sum_{j \neq i} (1-\phi(\xi e^{\tau^\prime/2} + z_i - z_j))v^{G_j}(\xi - (z_j - z_i)e^{-\tau^\prime/2}))d\tau^\prime  \\
& \hspace{.5cm} +  \int_{-\infty}^\tau \mathcal{T}_{\alpha_i}(\tau - \tau^\prime)\grad \cdot (\sum_{j \neq i}\frac{\alpha_j}{\alpha_i}\rho_j(\xi + (z_i - z_j)e^{-\tau^\prime/2})\phi(\xi e^{\tau^\prime/2})v^{G_i}(\xi)) d\tau^\prime \\ 
F_{i,5}(\tau) & = \sum_{j = 1}^N \int_{-\infty}^\tau \mathcal{T}_{\alpha_i}(\tau - \tau^\prime) \grad \cdot (\rho_i(\tau^\prime)\alpha_j\tilde{v}_j^{\rho}(\xi - (z_j-z_i)e^{-\tau^\prime/2})) d\tau^\prime \\ 
F_{i,6}(\tau) & = \int_{-\infty}^\tau \mathcal{T}_{\alpha_i}(\tau - \tau^\prime) \grad \cdot (\rho_i e^{\tau^\prime/2}\tilde{v}_0^\rho(e^{\tau^\prime},\xi e^{\tau^\prime/2} + z_i)) d\tau^\prime. 
\end{align*}
The first is controlled analogously to the corresponding term in \cite{GallagherGallay05}. Using \eqref{ineq:TGradDecay},  
\begin{align*}
\norm{F_{i,1}(\tau)}_{L^2(m)} & \lesssim \sum_{j \neq i}\int_{-\infty}^\tau \frac{e^{-\nu(\tau - \tau^\prime)}}{a(\tau - \tau^\prime)^{1/2}}\norm{v^{G_j}(\xi - (z_j - z_i)e^{-\tau^\prime/2})\jap{\xi}^{m}\frac{G_i}{\alpha_i} }_{2} d\tau^\prime \\
 & \lesssim \sum_{j \neq i}\sup_{\xi}\left(\jap{\xi}^{-1}v^{G_j}(\xi - (z_j - z_i)e^{-\tau/2})\right) \int_{-\infty}^\tau \frac{e^{-\nu(\tau - \tau^\prime)}}{a(\tau - \tau^\prime)^{1/2}}\norm{\frac{G_i}{\alpha_i}}_{L^2(m+1)} d\tau^\prime \\
& \lesssim e^{\tau/2}, 
\end{align*}
with an implicit constant which depends on $d$, and hence $\epsilon$. 
The last inequality follows from Proposition \ref{prop:Galpha} (iv) and \eqref{ineq:GradcalphaAsymptotic} which imply $\jap{\xi}^{-1}v^{G_j}(\xi - (z_j - z_i)e^{-\tau/2}) \leq C(\alpha_j)d^{-1}e^{\tau/2}$ (Proposition \ref{prop:Galpha} (iv) shows that the constants are uniform as $\alpha_j \searrow 0$). 
To control the next term we also proceed analogously to \cite{GallagherGallay05}, 
\begin{align*}
\norm{F_{i,2}}_{L^2(m)} \lesssim \sum_{j \neq i}\alpha_j\int_{-\infty}^\tau \frac{e^{-\nu(\tau - \tau^\prime)}}{a(\tau - \tau^\prime)^{1/2}}\norm{\tilde{v}^\rho_j(\xi - (z_j - z_i)e^{-\tau^\prime/2})\jap{\xi}^m \frac{G_{i}}{\alpha_i}}_{2} d\tau^\prime. 
\end{align*}
Continuing with $q \in (2/\nu,\infty)$ and $\gamma \in (2/q,1)$,  
\begin{align*}
\norm{\tilde{v}^\rho_j(\xi - (z_j - z_i)e^{-\tau^\prime/2})\jap{\xi}^m \frac{G_i}{\alpha_i}}_{L^2} & \leq \norm{\jap{\xi}^{\gamma - \frac{2}{q}}\tilde{v}^\rho_j}_q \norm{\jap{\xi - (z_j - z_i)e^{-\tau^\prime/2}}^{\frac{2}{q} - \gamma}\jap{\xi}^m \frac{G_i}{\alpha_i}}_{\frac{2q}{q-2}}.  
\end{align*}
The first factor can be controlled via the weighted estimate on the nonlocal velocity law \eqref{ineq:VelWeightedLp} which implies (since $m > \gamma$):
\begin{equation*}
\norm{\jap{\xi}^{\gamma - \frac{2}{q}}\tilde{v}^\rho_j}_q \lesssim \norm{\rho_j}_{L^2(\gamma)} \leq \norm{\rho_j}_{L^2(m)}.
\end{equation*}
The second factor is controlled by the localization of $G_\alpha$ given in Proposition \ref{prop:Galpha}. In particular, since $d > 0$ we have, 
\begin{equation*}
\norm{\jap{\xi - (z_j - z_i)e^{-\tau^\prime/2}}^{2/q - \gamma}\jap{\xi}^m \frac{G_i}{\alpha_i}}_{\frac{2q}{q-2}} \lesssim_d e^{\tau(\gamma/2 - 1/q)}.
\end{equation*}
Therefore, 
\begin{align*}
\norm{F_{i,2}}_{L^2(m)} & \lesssim_{d} \sum_{j \neq i} \alpha_j \int_{-\infty}^\tau\frac{e^{-\nu(\tau - \tau^\prime)}}{a(\tau - \tau^\prime)^{1/2}}\norm{\rho_j(\tau^\prime)}_{L^2(m)} e^{\tau^\prime \left(\gamma/2 - 1/q\right)} d\tau^\prime \\ 
& \lesssim_d M[\rho](e^\tau) e^{\tau(\gamma/2 - 1/q)}. 
\end{align*}
Now we confront the next term using the $L^p$ estimate on the nonlocal velocity law \eqref{ineq:VelLp}, \eqref{ineq:TGradDecay} and Proposition \ref{prop:Galpha},  
\begin{align*}
\norm{F_{i,3}}_{L^2(m)} & \lesssim \int_{-\infty}^\tau \frac{e^{-\nu(\tau - \tau^\prime)}}{a(\tau - \tau^\prime)^{1/2}}\norm{e^{\tau^\prime/2}\tilde{v}^\rho_0(e^{\tau^\prime},\xi e^{\tau^\prime/2} + z_i)}_{L^4}\norm{\jap{\xi}^{m}\frac{G_i}{\alpha_i}}_{L^4} d\tau^\prime \\ 
& \lesssim  M_0[\rho](e^\tau)\int_{-\infty}^\tau \frac{e^{-\nu(\tau - \tau^\prime)}}{a(\tau - \tau^\prime)^{1/2}} d\tau^{\prime} \\ 
& \leq K_2M_0[\rho](e^\tau).  
\end{align*}
Note that $K_2$ is independent of $d$ and $\epsilon$, since by Proposition \ref{prop:Galpha} (iv), for $\alpha \rightarrow 0$, $\norm{\jap{\xi}^m\frac{G_{\alpha}}{\alpha}}_{L^4}$ remains bounded.  
Now we turn to $F_{i,4}$, which is an important difference between the work here and \cite{GallagherGallay05}. Dealing with the first term:
\begin{align*}
\norm{F_{i,4}^{(1)}}_{L^2(m)} \lesssim \int_{-\infty}^\tau \frac{e^{-\nu(\tau - \tau^\prime)}}{a(\tau - \tau^{\prime})^{1/2}} \norm{\rho_i\sum_{j \neq i} (1-\phi(\xi e^{\tau^\prime/2} + z_i - z_j))v^{G_j}(\xi + (z_i - z_j)e^{-\tau^\prime/2}))}_{L^2(m)} d\tau^\prime.
\end{align*} 
By the definition of the cut off $\phi$, the integrand of the $L^2$ norm is only non-zero if $\abs{\xi e^{\tau^\prime/2} + z_i - z_j} > d/2$, 
which of course is equivalent to $\abs{\xi + (z_i - z_j)e^{-\tau^\prime/2}} > e^{-\tau^\prime/2}d/2$. Since $v^{G_j}$ decays like $(\alpha_j/2\pi)\abs{\xi}^{-1}$, 
\begin{align*}
\norm{F_{i,4}^{(1)}}_{L^2(m)} & \lesssim \int_{-\infty}^\tau \frac{e^{-\nu(\tau - \tau\prime)}}{a(\tau - \tau^{\prime})^{1/2}} \norm{\rho_i\sum_{j \neq i} (1-\phi(\xi e^{\tau^\prime/2} + z_i - z_j))v^{G_j}(\xi + (z_i - z_j)e^{-\tau^\prime/2}))}_{L^2(m)} d\tau^\prime \\ 
& \lesssim \int_{-\infty}^\tau \frac{e^{-\nu(\tau - \tau^\prime)}}{a(\tau - \tau^{\prime})^{1/2}}\left(\frac{e^{\tau^\prime/2}}{d}\sum_{i \neq j} \alpha_j\right) \norm{\rho_i(\tau^\prime)}_{L^2(m)} d\tau^\prime \\ 
& \lesssim \frac{1}{d}e^{\tau/2}M[\rho_i](e^{\tau}). 
\end{align*} 
Now we consider the second term, which is concentrated around $z_i$, 
\begin{align*}
\norm{F_{i,4}^{(2)}}_{L^2(m)} \lesssim \int_{-\infty}^\tau \frac{e^{-\nu(\tau - \tau^\prime)}}{a(\tau - \tau^{\prime})^{1/2}}\norm{\sum_{i \neq j}\frac{\alpha_j}{\alpha_i}\rho_j(\xi - (z_j - z_i)e^{-\tau^\prime/2})\phi(\xi e^{\tau^\prime/2})v^{G_i}(\xi)}_{L^2(m)} d\tau^\prime.
\end{align*}
Since $\abs{\frac{1}{\alpha}\jap{\xi}^{1}v^{G_\alpha}(\xi)}$ is bounded uniformly (for $\alpha < 8\pi$ obviously) by Proposition \ref{prop:Galpha} and \eqref{ineq:GradcalphaAsymptotic}, 
\begin{align*}
\frac{1}{\alpha_i}\norm{\rho_j(\xi - (z_j - z_i)e^{-\tau^\prime/2})\phi(\xi e^{\tau^\prime/2})v^{G_i}(\xi) \jap{\xi}^m }_2 \lesssim \norm{\rho_j(\xi - (z_j - z_i)e^{-\tau^\prime/2})\phi(\xi e^{\tau^\prime/2})\jap{\xi}^{m-1} }_2. 
\end{align*}
Due to the cut-off, the integrand is only supported where $\abs{\xi} \leq 3e^{-\tau^\prime/2}d/4$ and this implies that
\begin{align*}
\abs{\xi - (z_j - z_i)e^{-\tau^\prime/2}} > e^{-\tau^\prime/2}d/4.
\end{align*}
Hence, 
\begin{align*}
\frac{\jap{\xi}^{m-1}}{\jap{\xi -(z_j - z_i)e^{-\tau^\prime/2}}^m} 
& \lesssim \frac{\jap{d e^{-\tau^\prime/2}}^{m-1}}{\jap{d e^{-\tau^\prime/2}}^{m}} \\ & \lesssim \frac{1}{\jap{d e^{-\tau^\prime/2}}}.
\end{align*}
Therefore we can translate the coordinate system in the $L^2(m)$ norm and we get from the above: 
\begin{align*}
\norm{\rho_j(\xi - (z_j - z_i)e^{-\tau^\prime/2})\phi(\xi e^{\tau^\prime/2})\jap{\xi}^{m-1} }_2 & = \norm{\rho_j(\xi - (z_j - z_i)e^{-\tau^\prime/2})\phi(\xi e^{\tau^\prime/2})\jap{\xi}^{m-1}\frac{\jap{\xi -(z_j - z_i)e^{-\tau^\prime/2}}^m}{\jap{\xi -(z_j - z_i)e^{-\tau^\prime/2}}^m}}_2 \\  & \lesssim \frac{1}{\jap{de^{-\tau^\prime/2}}}\norm{\rho_j}_{L^2(m)}.  
\end{align*}
Putting the previous two estimates together we ultimately have 
\begin{align*}
\norm{F_{i,4}^{(1)}(\tau) + F_{i,4}^{(2)}(\tau)}_{L^2(m)} \lesssim \left(\frac{1}{d}e^{\tau/2} + \frac{1}{\jap{de^{-\tau/2}}}\right)M[\rho](e^{\tau}),  
\end{align*}
where the implicit constant does not depend on $\epsilon$ or $d$.
Estimating the first nonlinear term, for $1 < p < 2$ using \eqref{ineq:TGradDecay} 
\begin{align*}
\norm{F_{i,5}(\tau)}_{L^2(m)} \lesssim \sum_{j = 1}^N \alpha_j \int_{-\infty}^\tau\frac{e^{-\nu(\tau - \tau^\prime)}}{a(\tau - \tau^\prime)^{1/p}} \norm{\tilde{v}^\rho_j(\xi - (z_j - z_i)e^{-\tau^\prime/2},\tau^\prime)\rho_i(\tau^\prime)}_{L^p(m)} d\tau^\prime.  
\end{align*}
The norm can be estimated with H\"older's inequality and the $L^p$ estimate for the nonlocal velocity law \eqref{ineq:VelLp} (and that $L^2(m)$ injects into $L^p$, $p < 2$ since $m > 2$): 
\begin{align*}
\norm{\tilde{v}^\rho_j(\xi - (z_j - z_i)e^{-\tau^\prime/2},\tau^\prime)\rho_i(\tau^\prime)}_{L^p(m)} \leq \norm{\jap{\xi}^m\rho_i}_{2}\norm{\tilde{v}_j^\rho}_{2p/(2-p)} \lesssim \norm{\rho_i}_{L^2(m)}\norm{\rho_j}_p \lesssim \norm{\rho_i}_{L^2(m)}\norm{\rho_j}_{L^2(m)}. 
\end{align*}
Hence, 
\begin{equation*}
\norm{F_{i,5}}_{L^2(m)} \lesssim M_i[\rho](e^\tau)M[\rho](e^\tau), 
\end{equation*}
with an implicit constant which is independent of $\epsilon$. 
The last nonlinear term we deal with similarly, 
\begin{align*}
\norm{F_{i,6}(\tau)}_{L^2(m)} & \lesssim \int_{-\infty}^\tau\frac{e^{-\nu(\tau - \tau^\prime)}}{a(\tau - \tau^\prime)^{3/4}}\norm{\rho_i e^{\tau^\prime/2}\tilde{v}_0^\rho(e^{\tau^\prime},\xi e^{\tau^\prime/2} + z_i)}_{L^{4/3}(m)} d\tau^\prime \\ 
& \lesssim \int_{-\infty}^\tau\frac{e^{-\nu(\tau - \tau^\prime)}}{a(\tau - \tau^\prime)^{3/4}}\norm{\rho_i(\tau^\prime)}_{L^2(m)} \norm{e^{\tau^\prime/2}\tilde{v}_0^\rho(e^{\tau^\prime},\xi e^{\tau^\prime/2} + z_i)}_{4} d\tau^\prime.  
\end{align*}
By definition, since $t^\prime = e^{\tau^\prime}$, and using the $L^p$ estimate for the nonlocal velocity law \eqref{ineq:VelLp} we have
\begin{align*}
\norm{e^{\tau^\prime/2}\tilde{v}_0^\rho(e^{\tau^\prime},\xi e^{\tau^\prime/2} + z_i)}_{4} & = (t^\prime)^{1/4}\norm{\tilde{v}_0^\rho(t^\prime)}_{4} \\ 
& \lesssim (t^\prime)^{1/4}\norm{\rho_0(t^\prime)}_{4/3} = M_0[\rho_0](t^\prime). 
\end{align*}
Therefore, 
\begin{equation*}
\norm{F_{i,6}(\tau)}_{L^2(m)} \lesssim M_0[\rho](e^\tau)M[\rho_i](e^\tau),  
\end{equation*}
with an implicit constant which is independent of $\epsilon$. 
This completes the estimate of $\tilde{w}_i$. 
\end{proof}
In order to prove that $F:B_{\epsilon,T} \rightarrow B_{\epsilon,T}$ we need to first fix $\epsilon$ small, then $T$ small and $K_0$ large. 
The reader is advised to note that fixing $\epsilon$ small in turn generally fixes $N$ large and $d$ small.
Now, if we fix $\epsilon$, then restrict $T$ such that $\delta_2(t) < \epsilon/4$, $\eta(t) < \epsilon/4$, and $K_0$ such that $K_0 > 4K_2$ then
\begin{equation*}
K_0M_0[\tilde{w} - S_N(t,0)\mu_0](t) \leq K_1K_0M_0[\rho]M[\rho] < K_1\epsilon^2,  
\end{equation*} 
and 
\begin{align*}
M[\tilde{w}_i](t) & \leq \frac{\epsilon}{4} + \frac{\epsilon}{4}M[\rho] + \frac{1}{4}M[\rho] + K_3 M_i[\rho] M[\rho] \\ 
& \leq 3\epsilon/4 +  K_3 \epsilon^2. 
\end{align*}
Finally, the result follows by choosing $\epsilon$ small. 
\end{proof}

Proving that $F$ is a contraction does not pose any significant new challenges.
The linear terms, which were the most difficult to deal with in Proposition \ref{prop:Fmapping}, are treated exactly the same. 
The only variation is in the treatment of nonlinear terms, but these do not pose a significant issue and can be dealt with as in \cite{GallagherGallay05}. Hence we only sketch the proof.
\begin{proposition}\label{prop:Fcontraction}
For $\epsilon > 0$ sufficiently small, $T$ can be chosen sufficiently small such that $F$ is a contraction on $B_{\epsilon,T}$. That is, if $\rho^1,\rho^2 \in B_{\epsilon,T}$ and $w^1 = F(\rho^1), w^2 = F(\rho^2)$ then,  
\begin{equation*}
M[\tilde{w}^1 - \tilde{w}^2](t) \leq \frac{1}{2}M[\rho^1 - \rho^2](t). 
\end{equation*}
\end{proposition}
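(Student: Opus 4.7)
The strategy is to mimic the proof of Proposition \ref{prop:Fmapping} term by term, recycling its linear estimates verbatim and handling every bilinear expression via the algebraic identity
\[
A^1 B^1 - A^2 B^2 = (A^1 - A^2)\,B^1 + A^2\,(B^1 - B^2).
\]
For two inputs $\rho^1,\rho^2 \in B_{\epsilon,T}$, each resulting piece has one factor of full $M$-norm at most a universal multiple of $\epsilon$ (using the definition of $B_{\epsilon,T}$ for the $\rho_{i \geq 1}$ components and Proposition \ref{prop:SNProperties} (ii) to bound $M_0[\rho^a_0]$ by $M_0[S_N(t,0)\mu_0] + \epsilon/K_0 \lesssim \epsilon$ for $T$ small), and one factor equal to a component of the difference $\rho^1 - \rho^2$. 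Notice that $S_N(t,0)\mu_0$ drops out of $\tilde{w}^1_0 - \tilde{w}^2_0$ since both inputs share the same initial data.

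For the $\tilde{w}_0$ component, the two bilinear integrands $\grad\cdot(\rho_0 \tilde{v}_0^\rho)$ and $\grad\cdot(\rho_0 \sum_j \alpha_j v^\rho_j)$ split by the identity above, and the same application of \eqref{ineq:SNgrad} together with \eqref{ineq:VelLp} used in the proof of Proposition \ref{prop:Fmapping} yields
\[
K_0\, M_0[\tilde{w}^1_0 - \tilde{w}^2_0](t) \lesssim \epsilon\, M[\rho^1 - \rho^2](t).
\]
For each $i \geq 1$ I revisit the six pieces $F_{i,k}$ of \eqref{def:wiDuhamel}. The term $F_{i,1}$ is $\rho$-independent and its contribution cancels. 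The three linear-in-$\rho$ pieces $F_{i,2},F_{i,3},F_{i,4}$ yield, with $\rho$ replaced by $\rho^1-\rho^2$, exactly the estimates of Lemma \ref{lem:Fmap}, giving a total contribution bounded by $\bigl(\eta(t) + K_2/K_0\bigr) M[\rho^1-\rho^2](t)$. The bilinear pieces $F_{i,5}$ and $F_{i,6}$ split as above; in each summand one factor has $M$-norm at most $C\epsilon$ via $B_{\epsilon,T}$ and the other is a component of the difference, so Proposition \ref{prop:SpecT} (iii) combined with Proposition \ref{prop:Vel} gives a contribution $\lesssim \epsilon\, M[\rho^1-\rho^2](t)$.

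Summing all contributions yields
\[
M[\tilde{w}^1 - \tilde{w}^2](t) \;\leq\; \bigl(\eta(t) + K_2/K_0 + C\epsilon\bigr)\, M[\rho^1-\rho^2](t).
\]
Consistent with the choices already made in Proposition \ref{prop:Fmapping}, I fix $K_0$ large so that $K_2/K_0 < 1/6$, then $\epsilon$ small so that $C\epsilon < 1/6$, and finally $T$ small so that $\eta(t) < 1/6$ for $t \in (0,T)$, which forces the prefactor below $1/2$.

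Since the genuinely hard linear estimates (Propositions \ref{prop:SpecT} and \ref{prop:SNProperties}) are already in hand and were the source of all the real difficulty in Proposition \ref{prop:Fmapping}, the only point that requires care is bookkeeping the bilinear splits — in particular for $F_{i,5}$, where the cross-velocity $\tilde{v}^\rho_j$ depends on $\rho_j$ with $j \neq i$, so one must track which component of $\rho^1-\rho^2$ is being estimated in each summand to ensure the small prefactor is uniformly $\epsilon$ rather than $M[\rho^a]$ evaluated on the wrong index. This is the step I would check most carefully.
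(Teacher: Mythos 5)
Your proposal is correct and follows the same strategy as the paper: the bilinear splitting identity is exactly the paper's equation \eqref{eq:vrhoi}, the cancellation of $S_N(t,0)\mu_0$ and of $F_{i,1}$ is observed in the same way, the linear pieces $F_{i,2},F_{i,3},F_{i,4}$ are recycled verbatim from Lemma \ref{lem:Fmap}, and the bilinear pieces $F_{i,5},F_{i,6}$ receive the same one-small-factor/one-difference-factor treatment. The only cosmetic difference is the ordering of parameter choices (you fix $K_0$ before $\epsilon$, the paper nominally does the reverse); since $K_0$ is a universal constant depending only on $K_2$, both are valid, and yours is arguably the logically cleaner ordering.
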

\begin{proof} 
We first estimate $M_0[\tilde{w}^1 - \tilde{w}^2](t)$. By definition, keeping notation analogous with above,  
\begin{align}
\tilde{w}_0^1(t) - \tilde{w}_0^2(t)  & = -\int_0^t S_N(t,s)\grad \cdot \left(\tilde{v}_0^{\rho_1}(s)\rho_0^1(s) - \tilde{v}_0^{\rho_2}(s)\rho_0^2(s) \right) ds \nonumber \\ 
& \hspace{.5cm} - \sum_{j=1}^N\alpha_j\int_0^t S_N(t,s)\grad \cdot \left(v_j^{\rho_1}(s)\rho_0^1(s) - v_j^{\rho_2}(s)\rho_0^2(s) \right) ds. \label{def:w01w02Lip}
\end{align}
All the terms are dealt with essentially the same, but consider the interactions of $\rho_0$ with $\rho_i$, $i \geq 1$. 
Write 
\begin{equation}
v^{\rho_1}_j \rho_0^1 - v^{\rho_2}_j \rho_0^2 = (v_j^{\rho_1} - v_j^{\rho_2})\rho_0^1 + v_j^{\rho_2}(\rho_0^1 - \rho_0^2). \label{eq:vrhoi}
\end{equation}
Estimating the first set of terms, using \eqref{ineq:SNgrad}, H\"older's inequality, \eqref{ineq:VelLp} and \eqref{ineq:L2mInject},  
\begin{align*}
t^{1/4}\norm{\int_0^t S_N(t,s)\grad \cdot \left(\sum_{j = 1}^N\alpha_j(v_j^{\rho_1}(s) - v_j^{\rho_2}(s))\right)\rho_0^1(s) ds}_{4/3} &\\ & \hspace{-4cm} \lesssim  \sum_{j = 1}^N t^{1/4}\alpha_j\int_0^t \frac{1}{(t-s)^{3/4}}\left(\frac{t}{s}\right)^{\gamma+1/2 - \lambda_0}\norm{v_{j}^{\rho_1}(s) - v_j^{\rho_2}(s)}_4\norm{\rho^1_0(s)}_{4/3} ds \\ 
& \hspace{-4cm}\lesssim M[\rho_1 - \rho_2](t) M_0[\rho_0^1]t^{1/4}\int_0^t \frac{1}{(t-s)^{3/4}}\left(\frac{t}{s}\right)^{\gamma + 1/2 - \lambda_0}\frac{1}{s^{1/2}} ds. 
\end{align*}
Similarly, 
\begin{align*}
t^{1/4}\norm{\int_0^t S_N(t,s)\grad \cdot \left(\sum_{j = 1}^N\alpha_jv_j^{\rho_2}(s) \left(\rho_0^1(s) - \rho_0^2(s)\right) \right)ds}_{4/3} &\\ & \hspace{-4cm} \lesssim  \sum_{j = 1}^N t^{1/4} \alpha_j\int_0^t \frac{1}{(t-s)^{3/4}}\left(\frac{t}{s}\right)^{\gamma + 1/2 - \lambda_0}\norm{v_j^{\rho_2}(s)}_4\norm{\rho^1_0(s) - \rho_0^2(s)}_{4/3} ds \\ 
& \hspace{-4cm} \lesssim M[\rho_2](t) M_0[\rho_0^1 - \rho_0^2](t)t^{1/4}\int_0^t \frac{1}{(t-s)^{3/4}}\left(\frac{t}{s}\right)^{\gamma + 1/2 - \lambda_0}\frac{1}{s^{1/2}} ds. 
\end{align*}
The terms involving $\tilde{v}_0^{\rho_i}$ are treated similarly (easier in fact) so we omit the details.  
Using these estimates together with \eqref{def:w01w02Lip} implies that there exists some constant $K_5$ independent of $\epsilon$ such that for all $\epsilon >0$ sufficiently small, we can choose $T$ sufficiently small such that if $t \in (0,T)$,
\begin{align}
K_0 M_0[\tilde{w}_0^1 - \tilde{w}_0^2](t) & \leq K_5M[\rho_1 - \rho_2](t)\left(K_0M_0[\rho^1](t) + M[\rho^2](t)\right) \nonumber \\ 
& \leq K_5M[\rho_1 - \rho_2](t)\left(M[\rho^1](t) + M[\rho^2](t)\right). \label{ineq:M0Contract}  
\end{align}
Now we turn to the contraction estimate on the perturbations around the self-similar solutions. 
Similar to \cite{GallagherGallay05}, define $G_{i,k} = F_{i,k}^1 - F_{i,k}^2$, where $F^j_{i,k}$ is defined as in Proposition \ref{prop:Fmapping} corresponding to $\rho^j$. The source term satisfies $G_{i,1} = 0$, 
whereas the proof of Lemma \ref{lem:Fmap} immediately implies that the linear terms satisfy
\begin{equation}
G_{2,k} + G_{3,k} + G_{4,k} \lesssim \eta(t)M[\rho^1 - \rho^2](t) + \frac{K_2}{K_0}M[\rho^1 - \rho^2](t), \label{ineq:GlinearContract}
\end{equation}
where $\eta(t)$ and $K_2$ are the same as those in Lemma \ref{lem:Fmap}.  
Finally, the nonlinear terms $G_{4,k},G_{5,k}$ can be treated easily by combining \eqref{eq:vrhoi} with the arguments of Lemma \ref{lem:Fmap} to prove that there exists some $K_6$ independent of $\epsilon$ such that  
\begin{equation}
G_{4,k} + G_{5,k} \leq K_6\left(M[\rho^1](t) + M[\rho^2](t)\right)M[\rho^1 - \rho^2](t). \label{ineq:GnonlinContract}
\end{equation} 
Together, \eqref{ineq:M0Contract}, \eqref{ineq:GlinearContract} and \eqref{ineq:GnonlinContract}  imply Proposition \ref{prop:Fcontraction} by first choosing $\epsilon$ sufficiently small: 
\begin{equation*}
\epsilon < \frac{1}{16}\min\left(\frac{1}{K_6},\frac{1}{K_5}\right),
\end{equation*}
choose $K_0 > 8 K_2$ and then choose $T$ such that \eqref{ineq:SNgrad} holds and that $\eta(t) < \frac{1}{8}$ (the parameters of course should also be chosen such that Proposition \ref{prop:Fmapping} holds).  
\end{proof} 

\subsection{Proof of Theorem \ref{thm:Basics} \textit{(i)}}
We now prove Theorem \ref{thm:Basics} \textit{(i)}, which is an important property of mild solutions and also plays a necessary role in the proof of Proposition \ref{prop:EquivSolutions}.  
\begin{proof}
By standard theory on the continuation of classical solutions, it suffices to assume $T > 0$ is sufficiently small. We first prove that the $\norm{u(t)}_{4/3}\lesssim t^{-1/4}$ estimate can be bootstrapped up to the estimate $\norm{u(t)}_{3} \lesssim t^{-2/3}$. Then we show separately that this additional estimate implies the $L^\infty$ estimate. 
This kind of two step bootstrap approach is common when applying similar methods \cite{JagerLuckhaus92,Kowalczyk05,BlanchetEJDE06,Blanchet09,CalvezCarrillo06,Corrias04,BRB10,BedrossianIA10}. 
It turns out to be a little easier to prove the $L^3$ estimate in self-similar variables. Hence define, 
\begin{equation*}
w(\tau,\xi) = t u(t,x), \;\;\; \tau = \log t, \;\; \xi = \frac{x}{\sqrt{t}}. 
\end{equation*}
The hypercontractive estimates $\norm{u(t)}_{p} \lesssim t^{\frac{1}{p}-1}$ are all equivalent to $\norm{w(\tau)}_p \lesssim 1$, 
which is one of the reasons the self-similar variables simplify the argument. 
We use an argument which takes advantage of the a priori control on the vertical distribution of mass implied by $\norm{w(\tau)}_{4/3}\lesssim 1$. Indeed, estimates on the vertical distribution of mass have long been known to be a key controlling quantity for PKS (see e.g. \cite{JagerLuckhaus92,BlanchetEJDE06,CalvezCarrillo06}) so it is natural that such control also produces hypercontractive estimates, as already seen in \cite{BlanchetEJDE06}.
Let $k \geq 1$ be some constant which will be chosen later and define $w_k = (w-k)_+$. 
Let $\tau_0 \in (-\infty, \log(T)-1)$ be arbitrary and define for $\tau \in [\tau_0,\tau_0+1]$, $p(\tau) = 4/3 + (5/3)(\tau-\tau_0)$. 
Note that while $p$ varies with $\tau$, it lies in $p \in [4/3,3]$ and $\dot{p} = 5/3$, so for making most estimates we can treat $p$ as basically constant. 
We will show that there exists some constant $C_1$ independent of $\tau_0$ such that
\begin{equation}
\norm{w(1+\tau_0)}^3_{3} = \norm{w(1+\tau_0)}_{p(1 + \tau_0)}^{p(1 + \tau_0)} \leq C_1, \label{ineq:L3bound}
\end{equation}
which proves the desired claim. For $\tau \in (\tau_0, \tau_0 + 1)$ we now compute the following (defining $-\Delta c = w$), 
\begin{align*}
\frac{d}{d\tau}\int w_k(\tau)^{p(\tau)} d\xi & =  \dot{p}\int w_k^p \log w_k d\xi + p\int w_k^{p-1}\left(Lw - \grad \cdot (w \grad c) \right) d\xi \\ 
& = \dot{p}\int w_k^p \log w_k d\xi - \frac{4(p-1)}{p}\int\abs{\grad w_k^{p/2}}^2 d\xi \\ & \hspace{.5cm} + \frac{p}{2}\int w_k^{p-1}\grad \cdot (\xi w) d\xi - p\int w_k^{p-1}\grad\cdot (w \grad c) d\xi. 
\end{align*}
Using $w_k^lw = w_k^{l+1} + kw_k^l$ and $-\Delta c = w$, the above expands into
\begin{align*}
\frac{d}{d\tau}\int w_k(\tau)^{p(\tau)} d\xi & = \dot{p}\int w_k^p \log w_k d\xi - \frac{4(p-1)}{p}\int\abs{\grad w_k^{p/2}}^2 d\xi + (p-1)\int w_k^{p+1} d\xi \\ & \hspace{.5cm} + C(k,p)\int w_k^{p} d\xi + C(k,p)\int w_k^{p-1} d\xi, 
\end{align*}
for some constants that depend on $k$ and $p$ that we will not need the precise values of. Since $\dot{p} = 5/3 > 0$ and $\log w_k \leq w_k$, 
\begin{align*}
\frac{d}{d\tau}\int w_k(\tau)^{p(\tau)} d\xi  
& \leq -\frac{4(p-1)}{p}\int\abs{\grad w_k^{p/2}}^2 d\xi + (p+\frac{2}{3})\int w_k^{p+1} d\xi + C(k,p)\int w_k^{p} d\xi + C(k,p)\int w_k^{p-1} d\xi. 
\end{align*}
We may interpolate all of the lower order terms between $L^{p+1}$ and $L^1$ and use weighted Young's inequality to deduce 
\begin{equation*}
C(k,p)\norm{w_k}_{p}^p \leq C(k,p)\norm{w_k}_{p+1}^{\frac{p^2 - 1}{p}}\norm{w_k}_1^{\frac{1}{p}} \leq \frac{1}{6}\norm{w_k}_{p+1}^{p+1} + C(k,p)\norm{w_k}_1,  
\end{equation*} 
where the constant in the last inequality is different than the first, but we do not need to track such details. A similar inequality holds for $p-1$ and hence, defining $M = \norm{w}_1$, 
\begin{align*}
\frac{d}{d\tau}\int w_k(\tau)^{p(\tau)} d\xi & \leq -\frac{4(p-1)}{p}\int\abs{\grad w_k^{p/2}}^2 d\xi + (p+1)\int w_k^{p+1} d\xi + C(k,p)M. 
\end{align*} 
Similar to \cite{CalvezCarrillo10}, we apply the Gagliardo-Nirenberg inequality
\begin{equation*}
\norm{w_k}_{p+1}^{p+1} \leq K(p)\norm{w_k}_1\int \abs{\grad w_k^{p/2}}^2 d\xi,    
\end{equation*}
to the first term, which implies, 
\begin{align*}
\frac{d}{d\tau}\int w_k(\tau)^{p(\tau)} d\xi & \leq -\frac{4(p-1)}{pK(p)}\frac{\norm{w_k}_{p+1}^{p+1}}{\norm{w_k}_1} + (p+1)\norm{w_k}_{p+1}^{p+1} + C(k,p)M. 
\end{align*} 
Now we use uniform vertical control imposed by the estimate $\norm{w}_{4/3} \lesssim 1$, which implies, 
\begin{equation*}
\norm{w_k}_1 \leq \int_{w > k} w(\xi) d\xi \leq \frac{1}{k^{1/3}}\int \abs{w(\xi)}^{4/3} d\xi \lesssim \frac{1}{k^{1/3}}. 
\end{equation*}
Applying this to the time evolution of $\norm{w_k}_p$, we have that for for some constant which is uniformly bounded for $p$ on $\tau \in [\tau_0, \tau_0 + 1]$, 
\begin{equation*}
\frac{d}{d\tau}\int w_k(\tau)^{p(\tau)} d\xi  \leq \left(p+1-C(p)k^{1/3}\right)\norm{w_k}_{p+1}^{p+1} + C(k,p)M. 
\end{equation*}
Hence, for $k$ chosen sufficiently large we have, 
\begin{equation*}
\frac{d}{d\tau}\int w_k(\tau)^{p(\tau)} d\xi \leq -\norm{w_k}_{p+1}^{p+1} + C(k,p)M. 
\end{equation*}
Since $\norm{w_k}_{p}^{p} \leq \norm{w_k}_{p+1}^{p+1} + \norm{w_k}_1 \leq \norm{w_k}_{p+1}^{p+1} + M$ we finally have
\begin{equation*}
\frac{d}{d\tau}\int w_k(\tau)^{p(\tau)} d\xi  \leq -\norm{w_k}_{p}^{p} + C_\star,   
\end{equation*} 
where $C_\star$ is some constant which depends only on $M$, as $p \in [4/3,3]$ and $k$ has been fixed. 
Integrating implies 
\begin{equation*}
\norm{w_k(\tau_0 + 1)}_{3}^3 \leq \max\left( \norm{w_k(\tau_0)}_{4/3}^{4/3},C_\star\right) \leq \max\left( \norm{w(\tau_0)}_{4/3}^{4/3},C_\star\right). 
\end{equation*}
Using now the inequality $\norm{w}_{3} \lesssim \norm{w_k}_3 + k^{2}\norm{w}_1$ we finally get \eqref{ineq:L3bound}. 
Notice that the a priori estimates on $w(\tau)$ imply that $\norm{B \ast w(\tau)}_\infty \lesssim 1$, 
which in physical variables is equivalent to $\norm{v(t)}_\infty \lesssim t^{-1/2}$.  

To bootstrap the $L^3$ estimate to $L^\infty$ we return to the original variables, although the reader may wish to note the parallel between the following argument and the one just finished. 
Let $t_k = 2^{-k}$ and consider the dyadic intervals $[t_k,t_{k-1}]$. In the following computations it is important to keep in mind that $t_{k-1} = 2t_k = 4t_{k+1}$, and are hence all comparable.   
On each dyadic interval, 
\begin{equation*}
u(t) = e^{(t-t_k)\Delta}u(t_k) - \int_{t_k}^t e^{(t-s)\Delta}\grad \cdot (u(s) v(s))ds.
\end{equation*}
By the $L^{4/3}$ and $L^3$ bounds on $u$, \eqref{ineq:HeatLpLqEasy}, \eqref{ineq:HeatLpLq} and $\norm{v(t)}_\infty \lesssim t^{-1/2}$, 
\begin{align*}
\norm{u(t)}_\infty & \lesssim (t-t_k)^{-3/4}\norm{u(t_k)}_{4/3} +\int_{t_k}^t \norm{ e^{(t-s)\Delta}\grad \cdot (u(s) v(s))}_{\infty}ds \\ 
& \lesssim t_k^{-1/4}(t-t_k)^{-3/4} + \int_{t_k}^t (t-s)^{-5/6} \norm{u(s) v(s)}_3 ds \\ 
& \lesssim  t_k^{-1/4}(t-t_k)^{-3/4} + \int_{t_k}^t (t-s)^{-5/6} \norm{u(s)}_3 \norm{v(s)}_\infty ds \\
& \lesssim  t_k^{-1/4}(t-t_k)^{-3/4} + \int_{t_k}^t (t-s)^{-5/6}s^{-2/3}s^{-1/2} ds \\
& \lesssim  t_k^{-1/4}(t-t_k)^{-3/4} + t_k^{-7/6}(t-t_k)^{1/6} \\ 
& \lesssim  t_k^{-1/4}(t-t_k)^{-3/4} + t_k^{-1}. 
\end{align*} 
Hence this implies $\norm{u(t_{k-1})}_{\infty} \approx t_{k}^{-1} \approx t_{k-1}^{-1}$. 
Re-doing the above computation using this information to deal with the first term (and the maximum principle $\norm{e^{t\Delta}f}_\infty \lesssim \norm{f}_\infty$) we see that indeed $\norm{u(t)}_{\infty} \lesssim t^{-1}$.  
\end{proof}

\subsection{Proof of Proposition \ref{prop:EquivSolutions}}
In this section we prove the equivalence of the integral equations \eqref{def:w0Duhamel},\eqref{def:wiDuhamel} with general mild solutions.  
Let $u(t,x)$ be a mild solution of \eqref{def:PKS} with initial data $\mu$ with associated nonlocal velocity $v(t) = B \ast u(t)$.
By Theorem \ref{thm:Basics}, $u(t,x)$ necessarily satisfies the following a priori estimates for all $p \in [1,\infty]$ and $q \in (2,\infty]$, 
\begin{equation}
\norm{u(t)}_{p} \lesssim t^{1/p-1}, \;\;\; \norm{v(t)}_q = \norm{B \ast u(t)}_{q} \lesssim t^{\frac{1}{q} - \frac{1}{2}}. \label{ineq:PropEquivApriori}
\end{equation}
Suppose for any $\epsilon > 0$ we write 
\begin{equation*}
\mu = \mu_0 + \sum_{j = 1}^N \alpha_j \delta_{z_j},   
\end{equation*}
with as always $\norm{\mu_0}_{pp} < \epsilon$. 
Define $w_i$, $1 \leq i \leq N$ as non-negative mild solutions to 
\begin{equation} \label{def:wilinear}
\left\{
\begin{array}{l}
  \partial_t w_i + \grad \cdot (w_i v) = \Delta w_i \\ 
  w_i(0) = \alpha_i \delta_{z_i}, 
\end{array}
\right. 
\end{equation}
which also satisfy the following for all $\gamma > 1$ and $t \leq 1$,  
\begin{align} 
\norm{w_i(t)}_p \lesssim t^{1/p-1}, \;\;\; \int e^{\frac{\abs{x}^2}{4\gamma t}}w_i(t,x) dx \lesssim_\gamma 1.  \label{ineq:linbds}
\end{align}
Existence of such solutions is proved below by Lemma \ref{lem:GenLinear} in Appendix \S\ref{subsec:SNExistence}; the only condition which does not immediately follow from \eqref{ineq:PropEquivApriori}  and parabolic regularity is \eqref{def:vtight}. 
However, this can be derived directly from \eqref{ineq:PropEquivApriori}, the tightness implied by $u(t) \in C_w([0,T];\mathcal{M}(\Real^2))$ and the nonlocal velocity law.  
 
With these $w_i$ we may then write $\tilde w_0 = u - \sum_{i = 1}^N w_i$ which is a mild solution to \eqref{def:w0Original}
that also satisfies the a priori estimate $\norm{\tilde w_0(t)}_p \lesssim t^{1/p-1}$ (but we cannot conclude that it is non-negative). 
Note that as of yet we cannot assert this decomposition is unique (although the ensuing proof will show that it is) as we do not yet have the necessary structure for $v(t,x)$. 

For all $i \in \set{1,...,N}$, we write $w_i$ in self-similar coordinates (without re-naming), $\tau = \log t$, $\xi = \frac{x - z_i}{\sqrt{t}}$, which then satisfy
\begin{equation*}
\partial_\tau w_i + \grad \cdot (v_i w_i) + \grad \cdot (\tilde R_i w_i) = Lw_i, 
\end{equation*}
for $\tilde R_i$ given as follows: define $v_i = B \ast w_i$ then as in \cite{GallagherGallay05},
\begin{equation} 
\tilde R_i(\tau,\xi) = e^{\tau/2}\tilde{v}_0(e^\tau,\xi e^{\tau/2} + z_i) + \sum_{j = 1, j \neq i}^N v_j(\tau,\xi - (z_j - z_i)e^{-\tau/2}). \label{def:RiRemVel}
\end{equation}
The steps to proving Proposition \ref{prop:EquivSolutions} are the following: 
\begin{itemize}
\item[(i)] Show that $w_i(\tau,\xi) \rightarrow G_{\alpha_i}(\xi)$ in $L^2(m)$ for $m > 2$ as $\tau \rightarrow -\infty$.   
\item[(ii)] Use (i) to construct $\tilde{w}_i$ which satisfy \eqref{def:wiDuhamel}. 
\item[(iii)] Show that $\tilde{w}_0$ satisfies \eqref{def:w0Duhamel}. 
\end{itemize}
Once we have completed (i) and (ii), (iii) follows from the weak$^\star$ continuity of $S_N(t,s)$, indeed, once we have constructed suitable $\tilde{w}_i$, we may write 
\begin{equation*}
\tilde{w}_0(t) = S_N(t,t_0)\tilde{w}_0(t_0) - \int_{t_0}^t S_N(t,s) \left[\grad \cdot(\tilde{w}_0(s) \tilde{v}_0(s)) + \grad \cdot (\tilde{w}_0(s) \sum_{j = 1}^Nv_j(s)) \right] ds, 
\end{equation*}
and pass to the limit $t_0 \searrow 0$ using Proposition \ref{prop:SNProperties}. 

Now we concentrate on the more involved procedure of proving (i) and (ii).
Part (i) uses an energy/compactness argument analogous to the approach of Gallagher and Gallay \cite{GallagherGallay05}. 
The idea is as follows: a compactness argument shows that $w_i(\tau)$ is precompact in $L^2(m)$ as $\tau \rightarrow -\infty$ and the uniqueness properties of the self-similar solution stated in Proposition \ref{prop:Galpha} will imply that the $\alpha$-limit set can only consist of $\set{G_{\alpha_i}}$.  
The first lemma is the compactness. 
\begin{lemma} \label{lem:precompact} 
For all $i \in \set{1,...,N}$, $\set{w_i(\tau)}$ is precompact in $L^2(m)$ as $\tau \rightarrow -\infty$. 
\end{lemma}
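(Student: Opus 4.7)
The goal is to show that for any sequence $\tau_n \to -\infty$, the family $\{w_i(\tau_n)\}$ admits a subsequence converging in $L^2(m)$. My plan is to establish two complementary uniform-in-$\tau$ estimates which together deliver precompactness by a Rellich/truncation argument: uniform tightness in a weighted space strictly better than $L^2(m)$, and uniform local regularity in $H^1(m)$.

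For the tightness, the Gaussian tail control in \eqref{ineq:linbds} passes directly to self-similar variables as $\int e^{\abs{\xi}^2/(4\gamma)} w_i(\tau,\xi)\,d\xi \lesssim_\gamma 1$ uniformly in $\tau$, while the hypercontractive bound $\norm{w_i(t)}_\infty \lesssim t^{-1}$ becomes simply $\norm{w_i(\tau)}_\infty \lesssim 1$. Combining these via $\norm{w_i(\tau)}_{L^2(m')}^2 \leq \norm{w_i(\tau)}_\infty \int \jap{\xi}^{2m'} w_i(\tau,\xi)\,d\xi$ and absorbing any polynomial weight into the Gaussian yields $\norm{w_i(\tau)}_{L^2(m')} \lesssim_{m'} 1$ for every $m' > 0$. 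Choosing $m' > m$ will control the $L^2(m)$-tails.

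For the regularity, I would use the Duhamel representation on a sliding unit interval,
\begin{equation*}
w_i(\tau_n) = S(1) w_i(\tau_n - 1) - \int_{\tau_n - 1}^{\tau_n} S(\tau_n - \tau')\grad \cdot \bigl((v_i + \tilde R_i) w_i\bigr)(\tau')\,d\tau',
\end{equation*}
where $S(\tau) = e^{\tau L}$ is the Fokker-Planck semigroup of Proposition \ref{prop:Stau}. The gradient smoothing estimate $\norm{\grad S(\tau) f}_{L^2(m)} \lesssim a(\tau)^{-1/2}\norm{f}_{L^2(m)}$, paired with uniform-in-$\tau$ $L^p$ bounds on $v_i + \tilde R_i$ coming from Propositions \ref{prop:Vel} and \ref{prop:Galpha} (for the $v^{G_j}$ pieces of $\tilde R_i$) and from the a priori estimates \eqref{ineq:PropEquivApriori} on $u$ (for the piece involving $\tilde v_0$), should yield $\norm{\grad w_i(\tau_n)}_{L^2(m)} \lesssim 1$ uniformly in $n$. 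The key scaling check is that $e^{\tau/2}\tilde v_0(e^\tau, \xi e^{\tau/2} + z_i)$ is uniformly bounded in $L^4(d\xi)$, because $\norm{\tilde v_0(t)}_4 \lesssim t^{-1/4}$ exactly compensates the Jacobian of the rescaling in \eqref{def:RiRemVel}, and similarly the translated fields $v_j(\tau, \xi - (z_j - z_i)e^{-\tau/2})$ are uniformly controlled and moreover supported far from the origin as $\tau \to -\infty$, so they contribute only lower-order terms.

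Once uniform bounds in $L^2(m') \cap H^1(m)$ are established, precompactness in $L^2(m)$ follows by truncation: on each ball $B_R$ the uniform $H^1$ control gives Rellich compactness in $L^2(B_R)$, while the uniform $L^2(m')$ bound with $m' > m$ forces uniform smallness of the tails $\int_{\abs{\xi} > R} \jap{\xi}^{2m} w_i(\tau_n)^2\,d\xi$, and a diagonal subsequence extraction concludes. The main technical obstacle will be keeping the regularity estimate uniform as $\tau_n \to -\infty$, since $\tilde R_i$ depends on the full, not-yet-identified mild solution $u$ and is singular at the initial time; the scaling identities above and the sliding-window Duhamel formula are precisely what convert this apparent singularity into uniform bounds.
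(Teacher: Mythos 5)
Your overall strategy mirrors the paper's: pass to self-similar variables, use the rescaled Gaussian moment bound \eqref{ineq:linbds} together with the $L^\infty$ hypercontractivity to get uniform $L^2(m')$ bounds for every weight $m'$, establish a uniform $H^1(m)$ bound via Duhamel against the Fokker--Planck semigroup, and then conclude by Rellich on balls plus tail control (the paper packages exactly this as $H^1(m+1)\hookrightarrow\hookrightarrow L^2(m)$). The scaling check that $e^{\tau/2}\tilde v_0(e^\tau,\cdot e^{\tau/2}+z_i)$ is uniformly bounded in $L^4$, and that the translated fields from the other concentrations are harmless, is also correct and matches the paper.

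However, there is a genuine gap at the crucial step where you claim the gradient smoothing ``paired with uniform-in-$\tau$ $L^p$ bounds on $v_i+\tilde R_i$ \ldots should yield $\norm{\grad w_i(\tau_n)}_{L^2(m)}\lesssim 1$.'' When you differentiate the Duhamel formula, the integrand is $\grad S(\tau_n-\tau')\grad\cdot\left((v_i+\tilde R_i)w_i\right)$. To control this you must expand the divergence as
\begin{align*}
\grad\cdot\left((v_i+\tilde R_i)w_i\right) = (v_i+\tilde R_i)\cdot\grad w_i + w_i\,\grad\cdot(v_i+\tilde R_i),
\end{align*}
and the first term reintroduces $\grad w_i$ on the right-hand side: $L^p$ bounds on the velocity alone cannot control it. (There is no way around this by absorbing both derivatives into the semigroup: each derivative costs $a(\tau)^{-1/2}$, so $\grad S(\tau)\grad\cdot$ applied to an $L^q(m)$ function costs at least $a(\tau)^{-1/2-1/q}$ with $q\le 2$, which is not integrable at $\tau=0$.) The paper closes this circularity with a Gronwall-type bootstrap: it first observes that $\norm{\grad w_i(\tau)}_{L^2(m)}$ is at least locally integrable, then weights the estimate by $a(\tau-\tau_0)^{1/2}$ and chooses the time window $\bar T$ \emph{small enough} (independent of $\tau_0$) that the recursive coefficient
\begin{align*}
\sup_{0<\tau<\bar T}\int_0^\tau\frac{a(\tau)^{1/2}}{a(\tau-\tau')^{1/p}a(\tau')^{1/2}}\,d\tau'
\end{align*}
is at most $1/4$; uniformity then follows because $\tau_0$ is arbitrary. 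Your sliding \emph{unit} window $[\tau_n-1,\tau_n]$ would not in general yield a sufficiently small coefficient, so the window must be shrunk. This Gronwall step is the heart of the lemma, and your proposal does not supply it.
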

\begin{proof}
In the rescaled variables, \eqref{ineq:linbds} implies the Gaussian localization estimate
\begin{equation}
\int w_i(\tau, \xi ) e^{\frac{\abs{\xi}^2}{4\gamma}} d\xi \lesssim_\gamma 1, \label{ineq:GaussianLocalize}
\end{equation} 
for all $\gamma > 1$. Combined with the a priori estimates \eqref{ineq:PropEquivApriori}, $w_i(\tau)$ is then uniformly bounded in $L^2(m)$ for all $m$. 
As $H^1(m+1) \hookrightarrow\hookrightarrow L^2(m)$ by the Rellich-Khondrashov embedding theorem, it suffices to prove that $\grad w_i(\tau)$ is uniformly bounded in $L^2(m)$ for all $m > 2$, 
for which we proceed similar to what is done to prove analogous statements in \cite{GallagherGallay05,GallayWayne05}, with the necessary alterations to deal with the divergence of the velocity field. 
One can show using an argument similar to \S\ref{sec:SpecTi} that the uniform bound in $L^2(m)$ implies 
that at least $\norm{\grad w_i(\tau)}_{L^2(m)}$ is locally integrable in $\tau$, hence we may proceed with an a priori estimate.
Write $w_i(\tau)$ in integral form: for some $-\infty < \tau_0 < \log(T)$, 
\begin{equation*}
w_i(\tau) = S(\tau - \tau_0)w_i(\tau_0) - \int_{\tau_0}^\tau S(\tau - \tau^\prime) \grad \cdot (v_i(\tau^\prime) w_i(\tau^\prime) + \tilde R_i(\tau^\prime)w_i(\tau^\prime)) d\tau^\prime. 
\end{equation*}
Using \eqref{ineq:SgradHyper}, for some $p \in (1,2)$ we have,  
\begin{align*}
\norm{\grad w_i(\tau)}_{L^2(m)} & \lesssim \frac{\norm{w_i(\tau_0)}_{L^2(m)}}{a(\tau - \tau_0)^{1/2}} + \int_{\tau_0}^\tau a(\tau-\tau^\prime)^{-1/p}\norm{\grad \cdot (v_i(\tau^\prime) w_i(\tau^\prime) + \tilde R_i(\tau^\prime)w_i(\tau^\prime))}_{L^p(m)} d\tau^\prime \\ 
& \lesssim \frac{\norm{w_i(\tau_0)}_{L^2(m)}}{a(\tau - \tau_0)^{1/2}} + \int_{\tau_0}^\tau a(\tau-\tau^\prime)^{-1/p}\norm{(v_i(\tau^\prime) + \tilde R_i(\tau^\prime))\cdot \grad w_i(\tau^\prime)}_{L^p(m)} d\tau^\prime \\ & \hspace{.5cm} + \int_{\tau_0}^\tau a(\tau-\tau^\prime)^{-1/p} \norm{w_i(\tau^\prime)(\grad \cdot v_i(\tau^\prime) + \grad \cdot \tilde R_i(\tau^\prime))}_{L^p(m)} d\tau^\prime. 
\end{align*}
The second term can be controlled as in \cite{GallagherGallay05}: 
\begin{align*}
\norm{(v_i(\tau^\prime) + \tilde R_i(\tau^\prime))\cdot \grad w_i(\tau^\prime)}_{L^p(m)} \leq \norm{\grad w_i(\tau^\prime)}_{L^2(m)}\norm{v_i(\tau^\prime) + \tilde R_i(\tau^\prime)}_{\frac{2p}{2-p}}. 
\end{align*}
The latter factor is bounded as follows, recalling the definition of $\tilde R_i$ \eqref{def:RiRemVel}. For $j \in \set{1,..,,N} $ using \eqref{ineq:VelLp} and H\"older's inequality, 
\begin{align*}
\norm{v_j(\tau^\prime)}_{\frac{2p}{2-p}} \lesssim \norm{w_j(\tau^\prime)}_{p} \lesssim_m \norm{w_j(\tau^\prime)}_{L^2(m)} \lesssim_m 1,  
\end{align*}
and for the approximately non-atomic part, using the a priori estimate \eqref{ineq:PropEquivApriori} 
\begin{align*}
\norm{e^{\tau^\prime/2}\tilde v_0(e^{\tau^\prime}, \xi e^{\tau^\prime/2} + z_i)}_{\frac{2p}{p-2}} & = e^{\tau^\prime(\frac{1}{2} - \frac{p-2}{2p})}\norm{\tilde v_0(e^{\tau^\prime},x)}_{\frac{2p}{p-2}} \lesssim 1.   
\end{align*}
We now turn to the second term, which involves the divergence of the velocity fields. 
Recall that for PKS, the nonlocal law is given by $v = \grad (-\Delta)^{-1}w$ and hence for all $j \in \set{1,...,N}$,
\begin{align*}
\norm{w_i(\tau^\prime)\grad \cdot v_j(\tau^\prime,\xi - (z_j-z_i)e^{-\tau^\prime/2})}_{L^p(m)} & = \norm{w_i(\tau^\prime) w_j(\tau^\prime,\xi - (z_j - z_i)e^{-\tau^\prime/2})}_{L^p(m)} \\ 
& \leq \norm{w_j(\tau^\prime)}_\infty \norm{w_i(\tau^\prime)}_{L^p(m)} \lesssim 1, 
\end{align*}
by the a priori estimates \eqref{ineq:PropEquivApriori} and \eqref{ineq:GaussianLocalize}. 
Similarly, for the approximately non-atomic term (using the $L^\infty$ estimate of \eqref{ineq:PropEquivApriori}), 
\begin{align*}
\norm{w_i(\tau^\prime) \grad \cdot e^{\tau^\prime/2}\tilde v_0(e^{\tau^\prime},e^{\tau^\prime/2}\xi + z_i)}_{L^p(m)} & = e^{\tau^\prime}\norm{w_i(\tau^\prime) \tilde w_0(e^{\tau^\prime},e^{\tau^\prime/2}\xi + z_i)}_{L^p(m)} \\
&\leq e^{\tau^\prime}\norm{\tilde w_0(e^{\tau^\prime})}_\infty \norm{w_i(\tau^\prime)}_{L^p(m)} \lesssim 1. 
\end{align*}
Putting everything together we have 
\begin{align*}
\norm{\grad w_i(\tau)}_{L^2(m)} \lesssim \frac{1}{a(\tau - \tau_0)^{1/2}} + \int_{\tau_0}^\tau a(\tau - \tau^\prime)^{-1/p}\left( \norm{\grad w_i(\tau^\prime)}_{L^2(m)} + K \right) d\tau^\prime,  
\end{align*}
for some constant $K$ (we have also used that $\norm{w_i(\tau_0)}_{L^2(m)}$ is uniformly bounded). 
Therefore, for some constants $C_i$, 
\begin{align*}
a(\tau - \tau_0)^{1/2}\norm{\grad w_i(\tau)}_{L^2(m)} & \leq C_1 + C_2\int_{\tau_0}^\tau\frac{a(\tau - \tau_0)^{1/2}}{a(\tau - \tau^\prime)^{1/p}a(\tau^\prime-\tau_0)^{1/2}}a(\tau^\prime-\tau_0)^{1/2}\norm{\grad w_i(\tau^\prime)}_{L^2(m)}d\tau^\prime \\ & \hspace{.5cm} + K\int_{\tau_0}^\tau\frac{a(\tau - \tau_0)^{1/2}}{a(\tau - \tau^\prime)^{1/p}}ds. 
\end{align*}
Hence by choosing $\tau \in (\tau_0,\tau_0 + \bar{T})$ for $\bar{T}$ sufficiently small: 
\begin{equation*}
\sup_{\tau_0 < \tau < \tau_0 + \bar{T}}C_2\int_{\tau_0}^\tau \frac{a(\tau-\tau_0)^{1/2}}{a(\tau - \tau^\prime)^{1/p}a(\tau^\prime-\tau_0)^{1/2}} d\tau^\prime = \sup_{0 < \tau < \bar{T}}C_2\int_{0}^\tau \frac{a(\tau)^{1/2}}{a(\tau - \tau^\prime)^{1/p}a(\tau^\prime)^{1/2}} d\tau^\prime \leq \frac{1}{4}, 
\end{equation*}
and similarly, 
\begin{equation*}
\sup_{0 < \tau < \bar{T}}K\int_{0}^\tau \frac{a(\tau)^{1/2}}{a(\tau - \tau^\prime)^{1/p}} d\tau^\prime \leq 1, 
\end{equation*}
we have that $\norm{\grad w_i(\tau)}_{L^2(m)} \leq 2(C_1+1)a(\tau - \tau_0)^{-1/2}$ for $\tau \in (\tau_0,\tau_0 + \bar{T})$. 
 However, $\tau_0$ was arbitrary and $\bar{T}$ was independent of $\tau_0$ so $\norm{\grad w_i(\tau)}_{L^2(m)}$ must be uniformly bounded.
\end{proof}

By the precompactness just proved, the orbit $\set{w_i(\tau)}_{\tau < \log(T)}$ has a non-trivial $\alpha$-limit set $\mathcal{A}$, which we show is invariant under the self-similar PKS \eqref{def:resPKS}.
This is essentially equivalent to showing that as $\tau \rightarrow -\infty$ the remainder $\tilde R_i(\tau)$ becomes negligible, due to not being localized around $z_i$. The primary difficulty is the presence of the velocity field coming from the approximately non-atomic part $\tilde{w}_0$, which requires some care to properly deal with (as in \cite{GallagherGallay05}).
\begin{lemma} \label{lem:RiDecay} 
For any $i \in \set{1,...,N}$, $m > 2$ and $p \in (1,2)$, 
\begin{equation*}
\lim_{\tau \rightarrow -\infty}\norm{\tilde R_i(\tau)w_i(\tau)}_{L^p(m)} = 0.
\end{equation*}
\end{lemma}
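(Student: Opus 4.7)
The plan is to decompose $\tilde R_i = R^{(\neq i)} + R^{(0)}$, where
\[
R^{(\neq i)}(\tau,\xi) = \sum_{j \neq i} v_j\bigl(\tau, \xi - (z_j - z_i)e^{-\tau/2}\bigr),\qquad R^{(0)}(\tau,\xi) = e^{\tau/2}\tilde v_0\bigl(e^\tau,\xi e^{\tau/2} + z_i\bigr),
\]
and to control each piece separately via Hölder's inequality, exploiting in both cases the uniform $L^2(m)$ bound on $w_i$ established in Lemma \ref{lem:precompact}. The two pieces decay for distinct reasons: $R^{(\neq i)}$ because each $v_j$ is evaluated far from where $w_j$ is concentrated and far from the support of $w_i$; $R^{(0)}$ because $\tilde w_0$ has only a small atomic part. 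Writing $\|\tilde R_i w_i\|_{L^p(m)} \le \|R^{(\neq i)} w_i\|_{L^p(m)} + \|R^{(0)} w_i\|_{L^p(m)}$, it suffices to bound each term by a quantity tending to $0$ (or controlled by $\epsilon$) as $\tau \to -\infty$.

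For the far-field piece $R^{(\neq i)}$, each $w_j$ is uniformly bounded in $L^2(m)$ for $m > 2$ by the same precompactness argument as in Lemma \ref{lem:precompact}. Proposition \ref{prop:Vel} (iii) then gives polynomial decay of $v_j$: $\|\langle \xi_j \rangle^{m - 2/q} v_j\|_{L^q} \lesssim \|w_j\|_{L^2(m)} \lesssim 1$ for $q \in (2,\infty)$. Applying Hölder with exponents $1/p = 1/q + 1/s$ and using the translation $\xi_j = \xi - (z_j-z_i)e^{-\tau/2}$, I would bound
\[
\|v_j(\tau,\cdot - (z_j-z_i)e^{-\tau/2}) w_i \langle\xi\rangle^m\|_{L^p} \lesssim \bigl\|\langle \xi_j\rangle^{2/q - m}\langle \xi\rangle^m w_i\bigr\|_{L^s}\,\|w_j\|_{L^2(m)}.
\]
Because $w_i$ carries Gaussian localization (cf.\ \eqref{ineq:GaussianLocalize}), its mass lives essentially in a bounded region where $|\xi_j| \gtrsim d e^{-\tau/2}$, producing an explicit factor $e^{\tau(m - 2/q)/2}$ that vanishes as $\tau \to -\infty$ (here one fixes $q$ so that $m - 2/q > 0$). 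Summing over $j \neq i$ gives $\|R^{(\neq i)} w_i\|_{L^p(m)} \to 0$.

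For the non-atomic piece $R^{(0)}$, the parabolic scaling together with the $-1$-homogeneity of $B$ gives the identity $R^{(0)}(\tau,\xi) = (B \ast W_0)(\tau,\xi)$ where $W_0(\tau,\zeta) := e^{\tau}\tilde w_0(e^\tau, e^{\tau/2}\zeta + z_i)$ is the self-similar rescaling of $\tilde w_0$ centered at $z_i$. By Proposition \ref{prop:Vel} (i), $\|B \ast W_0\|_{L^r} \lesssim \|W_0\|_{L^{r'}}$ with $1/r' = 1/r + 1/2$, and by change of variables $\|W_0(\tau)\|_{L^{r'}} = e^{\tau(1-1/r')}\|\tilde w_0(e^\tau)\|_{L^{r'}}$. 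Hölder with $1/p = 1/r + 1/2$ then yields
\[
\|R^{(0)} w_i\|_{L^p(m)} \lesssim \|\tilde w_0(e^\tau)\|_{L^{r'}}\,e^{\tau(1-1/r')}\,\|w_i\|_{L^2(m)}.
\]
The key ingredient is the hypercontractive bound $\limsup_{t \searrow 0} t^{1-1/r'}\|\tilde w_0(t)\|_{L^{r'}} \lesssim \|\mu_0\|_{pp} < \epsilon$, which I would derive by viewing $\tilde w_0$ as a mild solution of the linear advection-diffusion equation $\partial_t \tilde w_0 + \grad \cdot (\tilde w_0 v) = \Delta \tilde w_0$ with velocity $v = B \ast u$ (whose size is controlled by Theorem \ref{thm:Basics} (i), giving $\|v(t)\|_\infty \lesssim t^{-1/2}$) and initial datum $\mu_0$. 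Applying Duhamel and using \eqref{ineq:Heatpp} for the leading heat-kernel term $e^{t\Delta}\mu_0$ gives the bound by $\|\mu_0\|_{pp}$, while the nonlinear correction $\int_0^t e^{(t-s)\Delta}\grad\cdot(\tilde w_0 v)\,ds$ is subcritical in the relevant norm and vanishes as $t \searrow 0$.

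The main obstacle is this last step: one cannot simply apply the triangle inequality $\tilde w_0 = u - \sum_i w_i$ to deduce the hypercontractive estimate, since that yields a bound by $\|\mu\|_{pp} + \sum_i \alpha_i$ rather than by $\|\mu_0\|_{pp}$ — the required cancellation of the large atoms is lost. Working directly with the advection-diffusion equation satisfied by $\tilde w_0$ is what transfers the smallness of the atomic part of $\mu_0$ to the needed bound on the rescaled velocity. Combining the two ingredients yields $\limsup_{\tau \to -\infty}\|\tilde R_i(\tau) w_i(\tau)\|_{L^p(m)} \lesssim \epsilon$, and since the decomposition can be chosen with $\epsilon$ arbitrarily small (at the cost of increasing $N$), this produces the asserted limit $0$.
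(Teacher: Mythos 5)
Your treatment of the far-field piece $R^{(\neq i)}$ is essentially the same as the paper's and is sound: H\"older, the weighted velocity estimate \eqref{ineq:VelWeightedLp}, and the geometric separation together yield an explicit $e^{\nu\tau/2}$ rate. But your treatment of the $\tilde v_0$-piece has two genuine gaps, one in the conclusion and one in a key step.

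First, the conclusion. Your strategy yields $\limsup_{\tau\to-\infty}\norm{\tilde R_i(\tau)w_i(\tau)}_{L^p(m)} \lesssim \norm{\mu_0}_{pp} < \epsilon$, and you then propose to ``choose $\epsilon$ arbitrarily small.'' That does not prove the stated limit. The decomposition $\mu = \sum\alpha_j\delta_{z_j} + \mu_0$ is \emph{fixed} in the lemma (with fixed $\epsilon$, $N$, $z_j$, $\mu_0$), and the compactness argument for Proposition \ref{prop:EquivSolutions} that follows requires the limit to be exactly zero so that the $\alpha$-limit set is invariant under \eqref{def:resPKS}; an $\epsilon$-smallness bound is not enough. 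The paper's proof gets a true limit $=0$ by a different mechanism. After the weighted H\"older split $\norm{\tilde R_i w_i \jap{\xi}^m}_p \leq \norm{\jap{\xi}^{-1}\tilde R_i}_{2p/(p-2)}\norm{w_i}_{L^2(m+1)}$, which \emph{transfers} one power of the weight from the Gaussian-localized $w_i$ onto $\tilde R_i$, the paper proves a sub-lemma that $\norm{\jap{\xi}^{-1}e^{\tau/2}\tilde v_0(e^\tau,\xi e^{\tau/2}+z_j)}_{L^q}\to 0$ for all $q\in(2,\infty]$. The crucial point is that $\mu_0(\set{z_j})=0$, so for \emph{any} $\delta>0$ one can pick $r>0$ with $\mu_0(B_{4r}(z_j))<\delta$, split $\tilde w_0$ accordingly, and show the velocity near $z_j$ at small times contributes at most $O(\delta)$ after cutting off. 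Since $\delta$ (not $\epsilon$) is arbitrary, the weighted norm of $\tilde v_0$ \emph{vanishes} near $z_j$ as $t\to 0$. Your unweighted H\"older $\norm{R^{(0)}w_i}_{L^p(m)} \lesssim \norm{R^{(0)}}_{L^r}\norm{w_i}_{L^2(m)}$ discards precisely this localization and can at best give $\epsilon$-smallness.

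Second, the hypercontractive estimate you invoke is not established by the argument you sketch. You claim the Duhamel correction $\int_0^t e^{(t-s)\Delta}\grad\cdot(v\tilde w_0)\,ds$ is ``subcritical in the relevant norm and vanishes as $t\searrow 0$.'' It is not: with $\norm{v(s)}_\infty\lesssim s^{-1/2}$ and $\norm{\tilde w_0(s)}_{r'}\lesssim s^{1/r'-1}$, a direct estimate produces $t^{1/r'-1}\cdot B(1/2,1/r'-1/2)$, which is of the same order as the leading term $e^{t\Delta}\mu_0$ -- this is exactly the phenomenon warned about in Remark \ref{rmk:NonConverge}. To make the smallness-of-atomic-part argument work one cannot simply Duhamel off the heat semigroup with the full singular $v$; the paper's route (which also establishes the estimate \eqref{ineq:v0limsup} you want, later in the proof of Proposition \ref{prop:EquivSolutions}) is to write $\tilde w_0 = S_N(t,0)\mu_0 + \text{(small)}$, using the singular propagator $S_N$ from Proposition \ref{prop:SNProperties} -- the propagator which \emph{absorbs} the dangerous parts $\sum_i t^{-1/2}v^{G_i}((x-z_i)/\sqrt{t})$ of the velocity rather than treating them as perturbations of the heat equation -- and then treating the remaining velocity error, which is $O(\epsilon)$ in the critical norm, perturbatively.

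In short: your plan correctly identifies the two pieces and handles the far-field part, but for the $\tilde v_0$-part you need to (i) carry a $\jap{\xi}^{-1}$ weight onto $\tilde R_i$ so that the localization of $w_i$ at $\xi=0$ is paired with the vanishing of $\tilde v_0$ at $x=z_j$, and (ii) exploit $\mu_0(\set{z_j})=0$ via an internal $\delta$-parameter (independent of the fixed $\epsilon$) to get a true limit $=0$, not an $\epsilon$-bound.
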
 
\begin{proof} 
Following a similar procedure as \cite{GallagherGallay05},  
\begin{align*}
\norm{\tilde R_i(\tau) w_i(\tau) \jap{\xi}^{m}}_p \leq \norm{\jap{\xi}^{-1}\tilde R_i(\tau)}_{\frac{2p}{p-2}}\norm{w_i(\tau)}_{L^2(m+1)} \lesssim \norm{\jap{\xi}^{-1}\tilde R_i(\tau)}_{\frac{2p}{p-2}}.
\end{align*}
The portion of $\tilde R_i(\tau)$ coming from the other concentrations is relatively easy to handle as we know a priori they are localized away from the $i$-th concentration by the uniform bound on $\norm{w_j}_{L^2(m)}$. Indeed, define $q = \frac{2p}{p-2}$ and choose $\nu \in (0,1-2/q)$ (as in \cite{GallagherGallay05}). Then by \eqref{ineq:VelWeightedLp}, 
\begin{align*}
\norm{\jap{\xi}^{-1}v_j(\xi - (z_j - z_i)e^{-\tau/2},\tau)}_q & \leq \norm{\jap{\xi}^{-1}\jap{\xi - (z_j - z_i)e^{-\tau/2}}^{-\nu}}_\infty\norm{\jap{\xi}^\nu v_j(\xi)}_{q} \\ & \lesssim e^{\nu \tau/2}\norm{w_j(\tau)}_{L^2(m)} \lesssim  e^{\nu \tau/2}. 
\end{align*} 
Dealing with the approximately non-atomic part is more difficult and is the content of the following lemma. 
\begin{lemma}
For all $q \in (2,\infty]$,
\begin{equation*}
\lim_{\tau \rightarrow -\infty}\norm{\jap{\xi}^{-1}e^{\tau/2}\tilde{v}_0(e^\tau,\xi e^{\tau/2} + z_j)}_{L^q_\xi}   = \lim_{t \rightarrow 0} t^{\frac{1}{2} - \frac{1}{q}}\norm{\tilde{v}_0(t,x)\jap{\frac{\abs{x-z_j}}{\sqrt{t}}}^{-1}}_{L_x^q} = 0. \label{ineq:vanishingv0} 
\end{equation*}
\end{lemma}
\begin{proof}
We proceed similar to Lemma 4.2 in \cite{GallagherGallay05} but with several changes due to the lack of incompressibility and the unavailability of \cite{CarlenLoss94}. 
Without loss of generality, we can assume $z_j = 0$.
Recall that $\tilde{w}_0(t,x)$ satisfies the linear advection-diffusion equation (as a mild solution)
\begin{equation}
\partial_t \tilde{w}_0 + \grad \cdot ( v \tilde{w}_0) = \Delta \tilde{w}_0, \label{eq:w_0linear}
\end{equation}
where $v = B \ast u$, with initial data $\tilde{w}_0(0) = \mu_0$.
The main difficulty posed by the lack of incompressibility at this step is that we no longer have the results of Carlen and Loss \cite{CarlenLoss94} to provide pointwise estimates on the fundamental solution of \eqref{eq:w_0linear}. 
However, such precise pointwise control is not necessary.

Since $\mu_0(\set{0}) = 0$,  for all $\delta > 0$, there exists some $r > 0$ such that $\mu_0(B_{4r}) < \delta$.
Write $w^{(2)}$ as the non-negative mild solution to \eqref{eq:w_0linear} with initial data $\mu_0 \mathbf{1}_{\Real^2 \setminus B_{4r}}$ constructed in Lemma \ref{lem:GenLinear}. Next define $w^{(1)} = \tilde w_0 - w^{(2)}$ which is a mild solution to \eqref{eq:w_0linear} with initial data $\mu_0 \mathbf{1}_{B_{4r}}$. 
In what follows, denote $v^{(1)}$ and $v^{(2)}$ the corresponding velocity fields determined from $v^{(i)} = B \ast w^{(i)}$. 
Although we cannot immediately conclude $w^{(1)}$ is non-negative we still have the a priori estimates: for all $p \in [1,\infty]$ and $q \in (2,\infty]$, 
\begin{align*}
\limsup_{t \searrow 0} t^{1-\frac{1}{p}}\norm{w^{(1)}(t)}_p \lesssim \norm{\mu_0 \mathbf{1}_{B_{4r}}}_{\mathcal{M}} & \lesssim \delta, \\ 
\limsup_{t \searrow 0} t^{\frac{1}{2}-\frac{1}{q}}\norm{v^{(1)}(t)}_q \lesssim \norm{\mu_0 \mathbf{1}_{B_{4r}}}_{\mathcal{M}} & \lesssim \delta,  
\end{align*}

Let $\phi(x)$ be a smooth, radially symmetric, non-increasing cut-off function which is one for $\abs{x} \leq  2r$ and zero for $\abs{x} > 4r$. 
Now, further decompose $w^{(2)}(t,x) = \phi(x)w^{(2)}(t,x) + (1-\phi(x))w^{(2)}(t,x) := w^{(3)}(t,x) + w^{(4)}(t,x)$ and $v^{(3)} = B \ast w^{(3)}$ and $v^{(4)} = B \ast w^{(4)}$.  
One can compute the rate at which mass flows into the origin by (using the definition of distribution solution), 
\begin{align*}
\frac{d}{dt}\int w^{(3)} (t,x) dx = \frac{d}{dt}\int w^{(2)}(t,x) \phi(x) dx  & = \int w^{(2)}(t,x)\Delta \phi(x) + w^{(2)}(t,x) v(t,x) \cdot \grad \phi(x) dx \\ 
& \lesssim \norm{w^{(2)}(t)}_1\frac{1}{r^2} + \norm{w^{(2)}}_1\norm{v(t,x)}_\infty\frac{1}{r} \\ 
  & \lesssim \frac{1}{r^2} + \frac{\norm{w^{(2)}}_1}{r t^{1/2}}.
\end{align*}
Since the RHS is integrable at $t = 0$ and $\int w^{(2)}(0,x) \phi(x) dx = 0$, we have that 
\begin{equation*}
\lim_{t \rightarrow 0} \int w^{(3)}(t,x) dx = \lim_{t \rightarrow 0} \int w^{(2)}(t,x) \phi(x) dx = 0. 
\end{equation*}
By interpolation against the a priori $L^\infty$ bound $\norm{w^{(2)}(t)}_\infty \lesssim t^{-1}$, for $p \in (1,\infty)$,  
\begin{equation*}
t^{1-1/p}\norm{w^{(3)}(t)}_p \leq t^{1-1/p}\norm{w^{(3)}(t)}_\infty^{1-1/p}\norm{w^{(3)}(t)}^{1/p}_1 \lesssim \norm{w^{(3)}(t)}^{1/p}_1 \rightarrow 0. 
\end{equation*}
Therefore by \eqref{ineq:VelLp}, for all $q \in (2,\infty]$, 
\begin{equation*}
\lim_{t\rightarrow 0} t^{\frac{1}{2}-\frac{1}{q}}\norm{v^{(3)}(t)}_{q} = 0. 
\end{equation*}
It remains to control $v^{(4)}(t,x)$. For $\abs{x} < r$ we have, 
\begin{align*}
t^{\frac{1}{2} - \frac{1}{q}}\abs{v^{(4)}(t,x)} \lesssim t^{\frac{1}{2} - \frac{1}{q}} \int_{\abs{y} > 2r} \frac{w^{(4)}(t,y)}{\abs{x-y}} dy \lesssim \frac{t^{\frac{1}{2} - \frac{1}{q}}}{r}\norm{w^{(4)}(t)}_1 \rightarrow 0.  
\end{align*}
For $\abs{x} > r$ we have,
\begin{align*} 
t^{\frac{1}{2} - \frac{1}{q}}\norm{v^{(4)}(t,x)\mathbf{1}_{\abs{x} > r}\jap{\frac{\abs{x}^2}{t}}^{-1}}_q \leq \jap{\frac{r^2}{t}}^{-1}t^{\frac{1}{2} - \frac{1}{q}}\norm{v^{(4)}(t,x)}_q \rightarrow 0, 
\end{align*}
by \eqref{ineq:PropEquivApriori}.
Putting the estimates together, we have shown that for all $\delta > 0$,  
\begin{equation*}
\limsup_{t \rightarrow 0} t^{\frac{1}{2} - \frac{1}{q}}\norm{\tilde v_0(t,x) \jap{\frac{\abs{x}^2}{t}}^{-1}} \lesssim \delta, 
\end{equation*}
which proves the claim by choosing $\delta$ arbitrarily small. 
\end{proof} 
This completes the proof of Lemma \ref{lem:RiDecay}.  
\end{proof} 

The following is a direct consequence of Lemma \ref{lem:RiDecay} and the proof proceeds analogously to Lemma 6.3 in \cite{GallagherGallay05} so we omit it. 
\begin{lemma} 
$\mathcal{A}$ is invariant under the self-similar PKS \eqref{def:resPKS}.
\end{lemma}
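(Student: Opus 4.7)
The plan is to follow the classical dynamical-systems argument for invariance of $\alpha$-limit sets: take $w^\star \in \mathcal{A}$ with an approximating sequence $\tau_n \to -\infty$ such that $w_i(\tau_n) \to w^\star$ in $L^2(m)$, time-shift the solution by $\tau_n$, and use the precompactness together with the decay of the remainder $\tilde R_i$ to pass to a limit that solves \eqref{def:resPKS} with initial data $w^\star$.

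More concretely, first I would define $W_n(\sigma,\xi) := w_i(\tau_n + \sigma, \xi)$, which by construction satisfies the shifted equation
\begin{equation*}
\partial_\sigma W_n + \grad \cdot (W_n \, B \ast W_n) + \grad\cdot (\tilde R_i(\tau_n + \sigma) W_n) = L W_n,
\end{equation*}
on any fixed interval $\sigma \in [0,\Sigma]$, once $n$ is large enough that $\tau_n + \Sigma < \log T$. By Lemma \ref{lem:precompact} applied at each $\sigma$, the family $\{W_n(\sigma)\}_n$ is precompact in $L^2(m)$, and the uniform bounds on $\norm{W_n(\sigma)}_{L^2(m)}$ and $\norm{\grad W_n(\sigma)}_{L^2(m)}$ derived there hold uniformly for $\sigma$ in compact subsets of $[0,\Sigma]$. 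Combined with the equation, this yields a uniform bound on $\partial_\sigma W_n$ in a space like $H^{-1}(m)$, so by an Aubin--Lions argument, along a subsequence $W_n \to W$ in $C([0,\Sigma]; L^2(m))$ with $W(0) = w^\star$.

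Next I would pass to the limit in each term of the equation. The linear Fokker--Planck part $L W_n$ converges in distribution trivially, and the nonlinear term $\grad \cdot (W_n \, B\ast W_n)$ passes to $\grad\cdot (W \, B\ast W)$ thanks to the strong convergence in $L^2(m)$ combined with Proposition \ref{prop:Vel}, which gives continuity of the bilinear map $(w_1,w_2) \mapsto w_1 \, B\ast w_2$ from $L^2(m) \times L^2(m)$ into, say, $L^p(m)$ for some $p \in (1,2)$. The remainder term vanishes in the limit: Lemma \ref{lem:RiDecay} gives $\norm{\tilde R_i(\tau_n+\sigma) W_n(\sigma)}_{L^p(m)} \to 0$ pointwise in $\sigma$, with a bound uniform in $\sigma$ on compact intervals (since $\tilde R_i$ is controlled uniformly in $\tau$ by the a priori estimates of $v$ and $v_j$), so dominated convergence handles the integrated form of the equation. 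Hence $W$ is a (mild) solution of \eqref{def:resPKS} on $[0,\Sigma]$ with $W(0) = w^\star$, and by uniqueness of such solutions (in the class where mild solutions are well-defined, which follows from Theorem \ref{thm:Basics} and the standard theory), $W(\sigma)$ is the self-similar PKS flow of $w^\star$.

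Finally, for any $\sigma \in [0,\Sigma]$, the sequence $\sigma_n := \tau_n + \sigma$ satisfies $\sigma_n \to -\infty$ and $w_i(\sigma_n) = W_n(\sigma) \to W(\sigma)$ in $L^2(m)$, so $W(\sigma) \in \mathcal{A}$; this gives forward invariance, and since $\Sigma$ was arbitrary, the whole forward orbit stays in $\mathcal{A}$. The main obstacle I expect is the passage to the limit in the nonlinear term together with the remainder: one needs the strong $L^2(m)$ convergence to survive the bilinear interaction with the Biot-Savart type kernel $B$, and one needs sufficient uniform integrability in $\sigma$ of $\tilde R_i$ to justify taking the limit inside the Duhamel integral, so the technical heart is combining the weighted velocity estimates of Proposition \ref{prop:Vel} with Lemma \ref{lem:RiDecay} into a clean dominated-convergence argument.
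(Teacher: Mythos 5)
Your approach is the standard LaSalle-type invariance argument --- time-shifting by $\tau_n$, exploiting the precompactness of Lemma \ref{lem:precompact} and the vanishing of the remainder from Lemma \ref{lem:RiDecay}, and passing to the limit in the shifted equation --- and this is exactly what the paper (omitting details, referring to Lemma 6.3 of \cite{GallagherGallay05}) has in mind. The technical pieces you assemble --- continuity of the bilinear map $(w_1,w_2)\mapsto w_1 B\ast w_2$ from $L^2(m)\times L^2(m)$ into $L^p(m)$, $p\in(1,2)$, via Proposition \ref{prop:Vel} and the embedding $L^2(m)\hookrightarrow L^p$ for $m>2$; the Aubin--Lions extraction from the $H^1(m)$ bounds; and the dominated-convergence treatment of the $\tilde R_i$ term --- are all in the right place.

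The one genuine gap is that you only establish \emph{forward} invariance ($\sigma\ge 0$), and you stop there. For the next step in the paper --- concluding that $\mathcal{A}$ consists of compact, \emph{entire} orbits of constant self-similar energy, and hence equals $\{G_{\alpha_i}\}$ by the rigidity of Proposition \ref{prop:Galpha}~(ii) --- one needs each $w^\star\in\mathcal{A}$ to lie on a full two-sided orbit contained in $\mathcal{A}$. Your argument in fact gives this with essentially no extra work: since $\tau_n\to-\infty$, for $n$ large the shifted interval $[\tau_n-\Sigma,\tau_n+\Sigma]$ still lies in the domain of $w_i$, so one should run the very same time-shifting and compactness argument on $\sigma\in[-\Sigma,\Sigma]$. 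This produces an ancient-and-forward solution $W$ of \eqref{def:resPKS} on $(-\Sigma,\Sigma)$ with $W(0)=w^\star$ and range in $\mathcal{A}$, and since $\Sigma$ is arbitrary one obtains the entire orbit. Without this two-sided version, the statement as applied downstream is not yet proved.
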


By standard considerations, $\mathcal{A}$ consists only of compact, entire orbits of finite and constant self-similar energy $\mathcal{G}$.
However, by Proposition \ref{prop:Galpha}, the $G_\alpha$  are the unique functions of this type as the self-similar free energy is strictly decreasing on all other sets. 
Therefore we have proved: 
\begin{lemma}
$\mathcal{A} = \set{G_{\alpha_i}(\xi)}$ and hence for all $m > 2$,
\begin{equation}
\lim_{\tau \rightarrow -\infty}\norm{w_i(\tau) - G_{\alpha_i}}_{L^2(m)} = 0. \label{eq:wilimit}
\end{equation}
\end{lemma}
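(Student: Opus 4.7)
The plan is to combine the precompactness of $\set{w_i(\tau)}_{\tau \to -\infty}$ in $L^2(m)$ established in Lemma \ref{lem:precompact}, the invariance of $\mathcal{A}$ under the self-similar flow \eqref{def:resPKS} from the previous lemma, and the uniqueness statement in Proposition \ref{prop:Galpha}(ii) to pin down $\mathcal{A}$ as a single point. Once $\mathcal{A} = \set{G_{\alpha_i}}$, the convergence \eqref{eq:wilimit} in $L^2(m)$ is a standard consequence of precompactness together with uniqueness of the limit point.

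The key intermediate step is to exhibit $\mathcal{G}$ (the self-similar free energy \eqref{def:Gssfree}) as a strict Lyapunov functional along any orbit contained in $\mathcal{A}$. First I would check that every $W \in \mathcal{A}$ inherits the uniform tail bound \eqref{ineq:GaussianLocalize} and the $L^p$ bounds, so that $\mathcal{G}(W)$ is finite, and that mass is preserved, i.e. $\int W \, d\xi = \alpha_i$ (this follows because mass conservation for $w_i$ passes to $L^2(m)$-limits). Next, if $\tilde w(\tau)$ is any orbit in $\mathcal{A}$, invariance of $\mathcal{A}$ gives that $\tilde w(\tau) \in \mathcal{A}$ for all $\tau \in \Real$, hence $\mathcal{G}(\tilde w(\tau))$ is bounded above and below uniformly in $\tau$. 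Standard monotonicity of $\mathcal{G}$ along \eqref{def:resPKS} (gradient flow structure in the Wasserstein metric mentioned in the introduction) forces the dissipation to vanish in a time-averaged sense on the entire orbit, which, combined with the bounds on both sides of $\tau$, gives $\mathcal{G}(\tilde w(\tau)) \equiv \text{const}$ and therefore $\tilde w(\tau)$ is stationary for \eqref{def:resPKS}.

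Having reduced matters to stationary solutions of \eqref{def:resPKS} with mass $\alpha_i$ and finite self-similar free energy, Proposition \ref{prop:Galpha}(ii) identifies $G_{\alpha_i}$ as the unique such object. Therefore $\mathcal{A} \subseteq \set{G_{\alpha_i}}$, and since $\mathcal{A}$ is nonempty by precompactness, equality holds. Finally, to deduce \eqref{eq:wilimit}: if the convergence failed, one could extract a sequence $\tau_n \to -\infty$ with $\norm{w_i(\tau_n) - G_{\alpha_i}}_{L^2(m)} \geq \delta > 0$; precompactness then yields a subsequential $L^2(m)$-limit $W^\star \in \mathcal{A}$ distinct from $G_{\alpha_i}$, contradicting $\mathcal{A} = \set{G_{\alpha_i}}$.

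The main obstacle is the Lyapunov step: the gradient-flow structure for \eqref{def:resPKS} is only formal, and one needs enough regularity/decay of $\tilde w(\tau) \in \mathcal{A}$ to make the energy dissipation identity rigorous and to justify that constancy of $\mathcal{G}$ along the orbit forces stationarity. The required regularity should come from parabolic smoothing applied to the invariant orbit (which is entire in $\tau$), together with the $L^2(m)$ bounds inherited from precompactness, and the tail estimate \eqref{ineq:GaussianLocalize}. A minor but important technicality is verifying that $\mathcal{G}$ is well-defined and lower semicontinuous on $\mathcal{A}$ — in particular, controlling the logarithmic potential contribution uniformly, which uses the sharp log-HLS inequality referenced in the introduction together with the conserved mass $\alpha_i < 8\pi$.
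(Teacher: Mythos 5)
Your proof is correct and follows essentially the same route as the paper: precompactness of $\{w_i(\tau)\}$ in $L^2(m)$ from Lemma \ref{lem:precompact}, invariance of $\mathcal{A}$ under the self-similar flow, the self-similar free energy $\mathcal{G}$ serving as a Lyapunov functional so that entire orbits in $\mathcal{A}$ have constant energy and are hence stationary, mass conservation passing to the $L^2(m)$-limit (via $L^2(m)\hookrightarrow L^1$), and finally the uniqueness statement in Proposition \ref{prop:Galpha}(ii) to identify the stationary solution. The paper compresses the LaSalle-type step into the phrase ``by standard considerations, $\mathcal{A}$ consists only of compact, entire orbits of finite and constant self-similar energy,'' whereas you spell out that argument in detail and correctly flag the implicit technical point --- that the formal gradient-flow dissipation identity needs to be made rigorous using the uniform bounds and Gaussian localization inherited from the orbit --- which the paper leaves to the reader.
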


We now proceed to step (ii) and construct suitable $\tilde{w}_j$.  
As noted above, unlike in \cite{GallagherGallay05}, \eqref{eq:wilimit} does not immediately imply Proposition \ref{prop:EquivSolutions}. 
Indeed, our decomposition is of the following form (denoting $w_j$ in $t,x$ coordinates again),  
\begin{equation}
\sum_{j = 1}^N w_j(t,x) = \sum_{j = 1}^N \frac{1}{t}G_{\alpha_j}\left(\frac{x-z_j}{\sqrt{t}}\right) + \alpha_j \frac{1}{t} \tilde{w}_j\left(\log t,\frac{x-z_j}{\sqrt{t}}\right), \label{def:widecomp}
\end{equation}
but cannot be decoupled into an equation for individual $j$. Therefore, it is still not obvious how to construct the set of $\tilde{w}_j$ for an arbitrary mild solution. 
Define 
\begin{equation}
\alpha_i\frac{1}{t}\bar{w}_i\left(\log t,\frac{x - z_i}{\sqrt{t}}\right)  = w_i(t,x)- \frac{1}{t}G_{\alpha_i}\left( \frac{x-z_i}{\sqrt{t}}\right),
\end{equation}
which by \eqref{eq:wilimit} satisfies $\lim_{\tau \rightarrow -\infty}\norm{\bar{w}_i(\tau)}_{L^2(m)} = 0$ in self-similar variables. 
Note again, that unlike the analogous perturbations in \cite{GallagherGallay05}, $\tilde{w}_i \neq \bar{w}_i$. 
However, it turns out that the perturbations $\tilde{w}_i$ are not very far from $\bar{w}_i$, so we will produce suitable $\tilde{w}_j$ using a contraction mapping argument around $\bar{w}_i$ in self-similar variables.  
Precisely, we will construct correctors to $\bar{w}_j$, denoted $R_j$, which will be used to define $\tilde{w}_j = \bar{w}_j + R_j$. 
To do so, we construct solutions $\set{R_i(\tau,\xi)}_{i = 1}^N$ to the following system of integral equations 
\begin{align}
R_i(\tau) & = -\bar{w}_i(\tau) -\int_{-\infty}^\tau \mathcal{T}_{\alpha_i}(\tau - \tau^\prime)(\grad \cdot ((\bar{w}_i + R_i)\sum_{j \neq i} (1-\phi(\xi e^{\tau^\prime/2} + z_i - z_j))v^{G_j}(\xi - (z_j - z_i)e^{-\tau^\prime/2})) ) d\tau^\prime \nonumber \\ 
   & \hspace{.5cm} -\int_{-\infty}^\tau \mathcal{T}_{\alpha_i}(\tau - \tau^\prime)(\grad \cdot (\sum_{i \neq j}\frac{\alpha_j}{\alpha_i}(\bar{w}_j + R_j)(\xi - (z_j - z_i)e^{-\tau^\prime/2})\phi(\xi e^{\tau^\prime/2})v^{G_i})) d\tau^\prime \nonumber  \\
  & \hspace{.5cm} -\int_{-\infty}^\tau \mathcal{T}_{\alpha_i}(\tau-\tau^\prime)(\grad \cdot (\frac{1}{\alpha_i}G_i \sum_{j \neq i} v^{G_j}(\xi - (z_j - z_i)e^{-\tau^\prime/2})))d\tau^\prime  \nonumber \\
   & \hspace{.5cm} -\int_{-\infty}^\tau \mathcal{T}_{\alpha_i}(\tau-\tau^\prime)(\grad \cdot (\frac{1}{\alpha_i}G_i \sum_{j \neq i, j \geq 1}\alpha_j \tilde{v}_j(\xi - (z_j - z_i)e^{-\tau^\prime/2}))) d\tau^\prime \nonumber  \\
  & \hspace{.5cm} -\int_{-\infty}^\tau \mathcal{T}_{\alpha_i}(\tau-\tau^\prime)(\grad \cdot (\frac{1}{\alpha_i}G_ie^{\tau^\prime/2}\tilde{v}_0(e^{\tau^\prime},\xi e^{\tau^\prime/2} + z_i))) d\tau^\prime \nonumber \\
  & \hspace{.5cm} -\int_{-\infty}^\tau \mathcal{T}_{\alpha_i}(\tau-\tau^\prime)( \grad \cdot ((R_i + \bar{w}_i)\sum_{j = 1}^N \alpha_j \tilde{v}_j(\xi - (z_j - z_i)e^{-\tau^\prime/2})))d\tau^\prime \nonumber \\ 
  & \hspace{.5cm} -\int_{-\infty}^\tau \mathcal{T}_{\alpha_i}(\tau-\tau^\prime)(\grad \cdot ((R_i + \bar{w}_i) e^{\tau^\prime/2}\tilde{v}_0(e^{\tau^\prime},\xi e^{\tau^\prime/2} + z_i))) d\tau^\prime, \label{def:RjDuhamel}
\end{align}
where now $\tilde{v}_j = B\ast (\bar{w}_j + R_j)$ and $\tilde{v}_0(t,x) := B \ast \tilde{w}_0$. 
By construction, $\tilde{w}_j = \bar{w}_j + R_j$ solves \eqref{def:wiDuhamel}. 
To simplify \eqref{def:RjDuhamel}, first notice that $\bar{w}_i$ is a mild solution to
\begin{equation*}
\partial_\tau \bar{w}_i + \Lambda_{\alpha_i} \bar{w}_i  = L\bar{w}_i - \alpha_i\grad \cdot (\bar{w}_i \bar{v}_i) - \grad \cdot \left((\bar{w}_i + \frac{1}{\alpha_i}G_i)(e^{\tau/2}\tilde{v}_0(e^\tau,\xi e^{\tau/2} + z_i) + \bar{V}_i + v^{g_i})\right), 
\end{equation*}
where $v^{g_i}$ is defined as in \eqref{def:vgi} and if $\bar{v}_j = B \ast \bar{w}_j$ then we define 
\begin{equation*}
\bar{V}_i := \sum_{j \neq i} \alpha_j \bar{v}_j(\tau, \xi - (z_j - z_i)e^{-\tau/2}).  
\end{equation*}
Since $\bar{w}_i(\tau) \rightarrow 0$ in $L^2(m)$ as $\tau \rightarrow -\infty$ we can write $\bar{w}_i$ as a solution to the integral equation
\begin{equation}
\bar{w}_i(\tau) = -\int_{-\infty}^\tau \mathcal{T}_{\alpha_i}(\tau - \tau^\prime)\grad \cdot \left[\alpha_i\bar{w}_i\bar{v}_i + (\bar{w}_i + \frac{1}{\alpha_i}G_i)\left(e^{\tau^\prime/2}v_0(e^{\tau^\prime},\xi e^{\tau^\prime/2} + z_i) + \bar{V}_i + v^{g_i} \right) \right]d\tau^\prime. \label{eq:barwiDuhamel}
\end{equation}
Also write $v^{R_i} = B \ast R_i$ and 
\begin{equation*}
v^{r_i}(\tau,\xi) := \sum_{j \neq i} \alpha_j v^{R_j}(\tau,\xi - (z_j - z_i)e^{-\tau/2}). 
\end{equation*}
Now applying \eqref{eq:barwiDuhamel} to \eqref{def:RjDuhamel} gives
\begin{align}
R_i(\tau) & = -\int_{-\infty}^\tau \mathcal{T}_{\alpha_i}(\tau - \tau^\prime)(\grad \cdot ((\bar{w}_i + R_i)\sum_{j \neq i} (1-\phi(\xi e^{\tau^\prime/2} + z_i - z_j))v^{G_j}(\xi - (z_j - z_i)e^{-\tau^\prime/2})) ) d\tau^\prime \nonumber \\ 
   & \hspace{.5cm} -\int_{-\infty}^\tau \mathcal{T}_{\alpha_i}(\tau - \tau^\prime)(\grad \cdot (\sum_{i \neq j}\frac{\alpha_j}{\alpha_i}(\bar{w}_j + R_j)(\xi - (z_j - z_i)e^{-\tau^\prime/2})\phi(\xi e^{\tau^\prime/2})v^{G_i})) d\tau^\prime \nonumber  \\
   & \hspace{.5cm} -\int_{-\infty}^\tau \mathcal{T}_{\alpha_i}(\tau-\tau^\prime)(\grad \cdot (\frac{1}{\alpha_i}G_i v^{r_i})) d\tau^\prime 
 -\int_{-\infty}^\tau \mathcal{T}_{\alpha_i}(\tau-\tau^\prime)( \grad \cdot (R_i (v^{r_i}+\bar{V}_i + \bar v_i)))d\tau^\prime \nonumber \\ 
   & \hspace{.5cm} -\int_{-\infty}^\tau \mathcal{T}_{\alpha_i}(\tau-\tau^\prime)( \grad \cdot (\bar{w}_i v^{r_i}))d\tau^\prime -\int_{-\infty}^\tau \mathcal{T}_{\alpha_i}(\tau-\tau^\prime)( \grad \cdot ( (\bar{w}_i + R_i)v^{R_i}))d\tau^\prime  \nonumber \\
  & \hspace{.5cm} 
-\int_{-\infty}^\tau \mathcal{T}_{\alpha_i}(\tau-\tau^\prime)(\grad \cdot (R_i e^{\tau^\prime/2}\tilde{v}_0(e^{\tau^\prime},\xi e^{\tau/2} + z_i))) d\tau^\prime \nonumber \\ 
& \hspace{.5cm} + \int_{-\infty}^\tau \mathcal{T}_{\alpha_i}(\tau - \tau^\prime) \grad \cdot (\bar w_i v^{g_i}) d\tau^\prime
. \label{def:RjForRealDuhamel}
\end{align}
\begin{lemma} 
For $\epsilon>0$ sufficiently small and for all $T$ sufficiently small (as always depending on $\epsilon$), the system of integral equations \eqref{def:RjForRealDuhamel} has a unique solution $\set{R_i(\tau)}_{i = 1}^N$ in $C((-\infty, \log T);L^2(m))$. 
Moreover, for $\tau \leq \log T$ sufficiently small, $R_i(\tau)$ satisfies for all $\gamma > 1$,  
\begin{align}
\max_{1 \leq i \leq N}\norm{R_i(\tau)}_{L^2(m)} \lesssim_\gamma e^{-\frac{d^2}{256\gamma}e^{-\tau}}. \label{ineq:2xExp}
\end{align} 
\end{lemma}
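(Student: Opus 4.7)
The plan is to apply the Banach contraction principle to the system \eqref{def:RjForRealDuhamel} in a space of $N$-tuples whose norm already encodes the super-exponential rate \eqref{ineq:2xExp}. Fix $\gamma > 1$ and, for small $\delta, T > 0$, define
$$ \norm{R}_{\mathcal{X}} := \max_{1 \le i \le N} \sup_{\tau \le \log T} e^{\frac{d^2}{256\gamma} e^{-\tau}} \norm{R_i(\tau)}_{L^2(m)}, \quad \mathcal{X}_{\delta,T} := \set{ R \in C((-\infty, \log T]; L^2(m))^N : \norm{R}_{\mathcal X} \le \delta }, $$
and let $\Phi[R]_i(\tau)$ denote the right-hand side of \eqref{def:RjForRealDuhamel}. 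A fixed point of $\Phi$ in $\mathcal{X}_{\delta,T}$ will be the desired $R$ and, by construction, satisfy \eqref{ineq:2xExp}.

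The main observation is that every integrand in \eqref{def:RjForRealDuhamel} decomposes into a product of one factor essentially supported in a bounded ball around $\xi = 0$ (like $G_i$, $\bar w_i$, or the rescaled $\tilde v_0(e^{\tau'}, e^{\tau'/2}\xi + z_i)$) and another factor attached to the $j$-th concentration through a translate by $(z_j - z_i) e^{-\tau'/2}$, hence localized at distance $\gtrsim d e^{-\tau'/2}$ from the origin; the cut-offs $\phi$ and $1 - \phi$ are arranged precisely to preserve only these geometrically separated contributions. Combining the Gaussian tail expansion \eqref{eq:Galpha_limit} for $G_\alpha$ with the Gaussian moment \eqref{ineq:GaussianLocalize} for $w_i$ (inherited by $\bar w_i$), one obtains for each $p \in (1,2)$ a source estimate of the form
$$ \norm{\,[\text{source integrand}](\tau')\,}_{L^p(m)} \lesssim_\gamma e^{-\frac{d^2}{64\gamma} e^{-\tau'}}. $$
Applying \eqref{ineq:TGradDecay} and integrating in $\tau' \in (-\infty, \tau]$ against the kernel $e^{-\nu(\tau - \tau')} a(\tau - \tau')^{-1/p}$ preserves this rate, with a slightly worse constant absorbed into the exponent $\tfrac{d^2}{256\gamma}$, because the integrand is monotone increasing in $\tau'$ while the kernel is integrable near $\tau' = \tau$. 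This controls $\norm{\Phi[0]}_{\mathcal X}$ by a $\gamma$-dependent constant.

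For the contraction estimate, the linear-in-$R$ pieces of \eqref{def:RjForRealDuhamel} acquire the same geometric gain: the terms $G_i v^{r_i}$ and the cut-off lines of \eqref{def:RjForRealDuhamel} with $\bar w_j$ replaced by $R_j$ pick up a factor $e^{-c d^2 e^{-\tau'}}$ multiplying $\norm{R_j(\tau')}_{L^2(m)}$. The purely nonlinear terms together with the mixed terms $R_i \bar v_i$ and $R_i e^{\tau'/2} \tilde v_0$ are controlled via \eqref{ineq:VelLp}, \eqref{ineq:TGradDecay}, the $L^2(m)$-smallness of $\bar w_j$ as $\tau' \to -\infty$, and the vanishing \eqref{ineq:vanishingv0} of $\tilde v_0$, yielding
$$ \norm{\Phi[R] - \Phi[R']}_{\mathcal X} \le \bigl( \eta(T) + C(\norm{R}_{\mathcal X} + \norm{R'}_{\mathcal X}) \bigr) \norm{R - R'}_{\mathcal X}, $$
with $\eta(T) \to 0$ as $T \to 0$. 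Choosing first $\delta$ so that $2C\delta < 1/4$ and then $T$ so that $\eta(T) < 1/4$, the Banach fixed point theorem produces the unique solution $R$, which satisfies \eqref{ineq:2xExp} by construction.

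The step I expect to be hardest is the source estimate for the term $\bar w_i (1 - \phi(\cdot\, e^{\tau'/2} + z_i - z_j)) v^{G_j}(\cdot - (z_j - z_i) e^{-\tau'/2})$, because $v^{G_j}$ only decays like $\abs{\xi}^{-1}$, so the super-exponential gain cannot come from $v^{G_j}$ and must be extracted entirely from $\bar w_i$. This forces one to upgrade the integrated Gaussian moment \eqref{ineq:GaussianLocalize} to a pointwise Gaussian upper bound on $\bar w_i$, which follows by running parabolic regularity on the linear drift-diffusion equation \eqref{def:wilinear} combined with $\norm{v(t)}_\infty \lesssim t^{-1/2}$ and a comparison with a slightly diluted heat kernel. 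Once this pointwise bound is in hand, the rest is bookkeeping in the spirit of Lemma \ref{lem:Fmap}.
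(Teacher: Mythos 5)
There is a genuine gap, and it concerns exactly the term you identify as the hard one. The cut-off $1-\phi(\xi e^{\tau'/2}+z_i-z_j)$ is \emph{not} supported away from the origin: it vanishes only in a ball of radius $\tfrac{3d}{4}e^{-\tau'/2}$ around $(z_j-z_i)e^{-\tau'/2}$, and is identically $1$ in a neighborhood of $\xi=0$. Consequently $\bar w_i\,(1-\phi)\,v^{G_j}(\cdot-(z_j-z_i)e^{-\tau'/2})$ is supported precisely where $\bar w_i$ lives, so no Gaussian decay of $\bar w_i$ --- integrated or pointwise --- can be extracted. At the origin the only smallness comes from the $v^{G_j}$ factor, which is $\sim e^{\tau'/2}/d$; this gives a single-exponential rate in $\tau'$ (as in the $F_{i,1}$, $F_{i,4}^{(1)}$ estimates of Lemma \ref{lem:Fmap}), not the double-exponential rate in \eqref{ineq:2xExp}. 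Your fixed-point scheme in $\mathcal{X}_{\delta,T}$ with the weight $e^{\frac{d^2}{256\gamma}e^{-\tau}}$ therefore cannot close: already $\Phi[0]$ would fail to lie in $\mathcal{X}_{\delta,T}$.

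The device you are missing is the \emph{cancellation} between the first source term (the $\bar w_i$-part of line one in \eqref{def:RjForRealDuhamel}) and the last term $\int \mathcal{T}_{\alpha_i}\grad\cdot(\bar w_i v^{g_i})d\tau'$: since $v^{g_i}=\sum_{j\neq i}v^{G_j}(\cdot-(z_j-z_i)e^{-\tau'/2})$, subtracting produces $\bar w_i\,\phi(\xi e^{\tau'/2}+z_i-z_j)\,v^{G_j}$, with $\phi$ (not $1-\phi$) supported where $\abs{\xi}\geq \tfrac{d}{4}e^{-\tau'/2}$. Only after this cancellation is the integrand localized away from the origin, and at that point the \emph{integrated} Gaussian moment \eqref{ineq:GaussianLocalize} already suffices --- no pointwise Gaussian bound (which would be delicate to obtain here given that the drift is not divergence-free and \cite{CarlenLoss94} is unavailable) is needed. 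Your setup with the double-exponentially weighted contraction space is reasonable and is essentially equivalent to proving the a priori estimate and running a contraction as in the paper; the missing cancellation is what makes it work. The smaller point that the rate is only claimed for $\tau\leq\log T$ sufficiently small rather than built into the space, and the treatment of the linear-in-$R$ and nonlinear terms, you handle in roughly the same way as the paper.
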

\begin{remark} 
The decay \eqref{ineq:2xExp} is natural when one considers that the $R_i$ correct for long-range interactions between concentrations which are Gaussian localized and are being separated by the coordinate system at a rate of $e^{-\tau/2}$. 
\end{remark} 
\begin{proof} 
The proof will be a contraction mapping argument. 
In order to see \eqref{ineq:2xExp}, we show that the source terms are all double exponentially small. 
First, we will identify a cancellation between the first term and the last which makes the total contribution much smaller. Specifically, we see that 
\begin{align}
\mathcal{R}(\tau) & := \int_{-\infty}^\tau \mathcal{T}_{\alpha_i}(\tau - \tau^\prime) \grad \cdot (\bar w_i v^{g_i}) d\tau^\prime  \nonumber \\ & \quad  -\int_{-\infty}^\tau \mathcal{T}_{\alpha_i}(\tau - \tau^\prime)(\grad \cdot (\bar{w}_i\sum_{j \neq i} (1-\phi(\xi e^{\tau^\prime/2} + z_i - z_j))v^{G_j}(\xi - (z_j - z_i)e^{-\tau^\prime/2})) ) d\tau^\prime  \nonumber \\
&  = \int_{-\infty}^\tau \mathcal{T}_{\alpha_i}(\tau - \tau^\prime)(\grad \cdot (\bar{w}_i\sum_{j \neq i} \phi(\xi e^{\tau^\prime/2} + z_i - z_j)v^{G_j}(\xi - (z_j - z_i)e^{-\tau^\prime/2})) ) d\tau^\prime. \nonumber 
\end{align}    
Then, for $p \in (1,2)$ using Proposition \ref{prop:SpecT} (iii), 
\begin{align*} 
\norm{\mathcal{R}(\tau)}_{L^2(m)} & \lesssim \int_{-\infty}^\tau \frac{e^{-\nu(\tau-\tau^\prime)}}{a(\tau-\tau^\prime)^{1/p}}\norm{\bar{w}_i\sum_{j \neq i} \phi(\xi e^{\tau^\prime/2} + z_i - z_j)v^{G_j}(\xi - (z_j - z_i)e^{-\tau^\prime/2})}_{L^p(m)} d\tau^\prime \\ 
& \lesssim \int_{-\infty}^\tau \frac{e^{-\nu(\tau-\tau^\prime)}}{a(\tau-\tau^\prime)^{1/p}}\norm{\bar{w}_i\sum_{j \neq i} \phi(\xi e^{\tau^\prime/2} + z_i - z_j)v^{G_j}(\xi - (z_j - z_i)e^{-\tau^\prime/2})}_{L^p(m)} d\tau^\prime \\ 
& \lesssim \int_{-\infty}^\tau \frac{e^{-\nu(\tau-\tau^\prime)}}{a(\tau-\tau^\prime)^{1/p}}\sum_{j \neq i}\norm{\bar{w}_i \phi(\xi e^{\tau^\prime/2} + z_i - z_j)}_{L^2(m)} \norm{v^{G_j}(\xi - (z_j - z_i)e^{-\tau^\prime/2})}_{2p/(2-p)} d\tau^\prime.  
\end{align*} 
By \eqref{ineq:GaussianLocalize} and \eqref{ineq:PropEquivApriori}, 
\begin{align*} 
\norm{\bar{w}_i\sum_{j \neq i} \phi(\xi e^{\tau^\prime/2} + z_i - z_j)}^2_{L^2(m)} & \leq \int_{\abs{\xi} \geq \frac{d}{4}e^{-\tau^\prime/2}}\jap{\xi}^{2m}\abs{\bar{w}_i(\tau^\prime,\xi)}^2 d\xi \\
 & \lesssim e^{-\frac{d^2}{256\gamma}e^{-\tau^\prime}}\int_{\abs{\xi} \geq \frac{d}{4}e^{-\tau^\prime/2}} e^{\frac{\abs{\xi}^2}{16\gamma}} \jap{\xi}^{2m}\abs{\bar{w}_i(\tau^\prime,\xi)} d\xi \\ 
& \lesssim_{\gamma,m} e^{-\frac{d^2}{256\gamma}e^{-\tau^\prime}} \int_{\abs{\xi} \geq \frac{d}{4}e^{-\tau^\prime/2}} e^{\frac{\abs{\xi}^2}{4\gamma}} \abs{\bar{w}_i(\tau^\prime,\xi)} d\xi \lesssim e^{-\frac{d^2}{256\gamma}e^{-\tau^\prime}}.
\end{align*} 
Since $\norm{v^{G_j}(\xi - (z_j - z_i)e^{-\tau^\prime/2})}_{2p/(2-p)} \lesssim \abs{\alpha_j}$ by Propositions \ref{prop:Galpha} and \ref{prop:Vel}, it follows that 
the total contribution of this term is double-exponentially small uniformly in $N$.
The remaining source term is treated analogously, which only requires the following new estimate, 
\begin{align*} 
\int \abs{\bar{w}_j(\xi - (z_j - z_i)e^{-\tau^\prime/2})}^2 \phi(\xi e^{\tau^\prime/2}) \jap{\xi}^{2m} d\xi & \leq \int_{\abs{\xi}e^{\tau^\prime/2} \leq 3d/4}\abs{\bar{w}_j(\xi - (z_j - z_i)e^{-\tau^\prime/2})}^2 \jap{\xi}^{2m} d\xi \\ 
& \hspace{-2cm}\lesssim    \jap{d e^{-\tau^\prime/2}}^{2m}e^{\frac{-d^2}{64\gamma} e^{-\tau^\prime}}\int_{\abs{\xi + (z_j-z_i)e^{-\tau^\prime/2}} \leq 3d/4 e^{-\tau^\prime/2}} \abs{\bar{w}_j(\xi)} e^{\frac{\abs{\xi}^2}{4\gamma}} d\xi \\
& \hspace{-2cm}\lesssim_{\gamma,m}  e^{\frac{-d^2}{256\gamma} e^{-\tau^\prime}}. 
\end{align*}
We now apply a contraction mapping argument which mirrors the one used to prove Propositions \ref{prop:Fmapping} and \ref{prop:Fcontraction}. 
All of the linear terms involving $\bar{w}_i$ are harmless since by \eqref{eq:wilimit}, $\norm{\bar{w}_i(\tau)}_{L^2(m)} \rightarrow 0$ as $\tau \rightarrow -\infty$ and hence can be made small by choosing $T$ small.  
The second to last term in \eqref{def:RjForRealDuhamel} is being treated here as linear (rather than nonlinear as in Proposition \ref{prop:Fmapping} and \ref{prop:Fcontraction}), as $v_0$ is being considered as an external field.
Due to \eqref{eq:wilimit}, it follows that 
\begin{align*} 
\norm{v(t) - \sum_{i = 1}^N \frac{1}{\sqrt{t}}v^{G_i}\left(\frac{\cdot - z_i}{\sqrt{t}}\right)}_{4} \lesssim \epsilon t^{-1/4},
\end{align*} 
which implies for $\epsilon$ sufficiently small, \eqref{def:w0Original} can be written as a perturbation of \eqref{eq:SNDefinition}.
In particular, we may write $\tilde{w}_0$ as a Duhamel integral involving $S_N(t,s)$
\begin{align*} 
\tilde w_0(t) = S_N(t,0)\mu_0 - \int_0^t S_N(t,s)\grad \cdot \left[\tilde w_0(s)\left(v(t) - \sum_{i = 1}^N \frac{1}{\sqrt{s}}v^{G_i}\left(\frac{\cdot - z_i}{\sqrt{s}}\right) \right)\right] ds, 
\end{align*}
and consider the Duhamel integral as small in the critical norm, from which one can show that $\tilde w_0$ is uniquely determined and enjoys all of the properties in Lemma \ref{lem:GenLinear}.
Therefore, it follows from \eqref{ineq:linearHyperLoc} and \eqref{ineq:VelLp} that
\begin{align} 
\limsup_{t \rightarrow 0} t^{\frac{1}{2} - \frac{1}{q}}\norm{\tilde{v}_0(t)}_q \lesssim \epsilon. \label{ineq:v0limsup}
\end{align} 
For $p \in (1,2)$ using Proposition \ref{prop:SpecT} (iii)
\begin{align*}
\norm{\int_{-\infty}^\tau \mathcal{T}_{\alpha_i}(\tau-\tau^\prime)(\grad \cdot (R_i(\tau^\prime) e^{\tau^\prime/2}\tilde{v}_0(e^{\tau^\prime},\xi e^{\tau^\prime/2} + z_i))) d\tau^\prime}_{L^2(m)} & \\ & \hspace{-5cm} \lesssim \int_{-\infty}^\tau \frac{e^{-\nu(\tau - \tau^\prime)}}{a(\tau - \tau^\prime)^{1/p}} \norm{R_i(\tau^\prime) e^{\tau^\prime/2}\tilde{v}_0(e^{\tau^\prime},\xi e^{\tau^\prime/2} + z_i)}_{L^p(m)} d\tau^\prime \\ 
& \hspace{-5cm} \lesssim \int_{-\infty}^\tau \frac{e^{-\nu(\tau - \tau^\prime)}}{a(\tau - \tau^\prime)^{1/p}} \norm{e^{\tau^\prime/2}\tilde{v}_0(e^{\tau^\prime},\xi e^{\tau^\prime/2} + z_i)}_{2p/(2-p)}\norm{R_i(\tau^\prime)}_{L^2(m)} d\tau^\prime.
\end{align*} 
However, (writing $t = e^{\tau^\prime}$),  
\begin{equation*}
\norm{e^{\tau^\prime/2}\tilde{v}_0(e^{\tau^\prime},\xi e^{\tau^\prime/2} + z_i)}_{2p/(2-p)} = t^{\frac{1}{2} - \frac{2-p}{2p}}\norm{\tilde{v}_0(t,x)}_{2p/(2-p)}.
\end{equation*}
By Propositions \ref{prop:SNProperties} and \ref{prop:Vel},
\begin{equation*}
\limsup_{t \rightarrow 0} t^{\frac{1}{2} - \frac{2-p}{2p}}\norm{v_0(t,x)}_{2p/(2-p)} \lesssim \norm{\mu_0}_{pp} < \epsilon. 
\end{equation*}
Hence, for $\tau < \log(t)$ for $t$ sufficiently small, we have,  
\begin{equation*}
\norm{\int_{-\infty}^\tau \mathcal{T}_{\alpha_i}(\tau-\tau^\prime)(\grad \cdot (R_i(\tau^\prime) e^{\tau^\prime/2}v_0(\xi e^{\tau^\prime/2} + z_i,e^{\tau^\prime}))) d\tau^\prime}_{L^2(m)} \lesssim \epsilon M[R_i](e^\tau).  
\end{equation*}

The rest of the terms can be handled using the same techniques as the proofs of Propositions \ref{prop:Fmapping} and \ref{prop:Fcontraction}. Accordingly, the lemma follows from the contraction mapping theorem. 
\end{proof}

There is one last remaining detail: due to the nonlinear terms in \eqref{def:RjForRealDuhamel} it is not immediately obvious that one re-constitutes the original $\sum_{i}w_i(t,x)$ by summing the $\tilde{w}_j$. From
\begin{equation*}
\sum_{i = 1}^N \frac{1}{t}G_{\alpha_i}\left(\frac{x-z_i}{\sqrt{t}}\right) + \frac{\alpha_i}{t}\tilde{w}_i\left(\log t,\frac{x-z_i}{\sqrt{t}}\right) = \sum_{i = 1}^N \frac{1}{t}\alpha_i R_i\left(\log t,\frac{x - z_i}{\sqrt{t}}\right) + w_i(t,x),   
\end{equation*}
it suffices to prove the following lemma: 
\begin{lemma} \label{lem:sumR}
For all $t \in (0,T)$, 
\begin{equation*}
\sum_{i = 1}^N \frac{1}{t} \alpha_i R_i\left(\log t, \frac{x- z_i}{\sqrt{t}}\right) = 0. 
\end{equation*}
\end{lemma}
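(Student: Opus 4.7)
The plan is to rephrase the statement as a uniqueness assertion for an auxiliary nonlinear problem with vanishing initial data. Define
\begin{equation*}
D(t,x) := \sum_{i=1}^{N}\frac{\alpha_i}{t}R_i\!\left(\log t,\frac{x-z_i}{\sqrt{t}}\right),\qquad u_{\mathrm{recon}}(t,x):=u(t,x)+D(t,x),
\end{equation*}
so the conclusion $D\equiv 0$ is precisely Lemma \ref{lem:sumR}. The first step is to check that $u_{\mathrm{recon}}$ is itself a mild solution of \eqref{def:PKS} with initial datum $\mu$. By the very derivation of the system \eqref{def:w0Duhamel}, \eqref{def:wiDuhamel}, the decomposition \eqref{def:decompu} applied to any solution of that system reconstitutes a mild solution of PKS: since $\tilde{w}_0$ satisfies \eqref{def:w0Duhamel} and $\tilde{w}_i:=\bar{w}_i+R_i$ satisfies \eqref{def:wiDuhamel} by construction, the function $u_{\mathrm{recon}}$ automatically obeys the Duhamel identity \eqref{eq:MildSolutionDef} with velocity $v_{\mathrm{recon}}=B\ast u_{\mathrm{recon}}$. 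The initial datum matches $\mu$ because $\tilde{w}_0(0)=\mu_0$, each $t^{-1}G_{\alpha_i}(\cdot/\sqrt{t})\rightharpoonup^{\star}\alpha_i\delta_{z_i}$, and $t^{-1}\tilde{w}_i(\log t,\cdot/\sqrt{t})\rightharpoonup^{\star}0$ since both $\bar{w}_i$ (by \eqref{eq:wilimit}) and $R_i$ (by \eqref{ineq:2xExp}) vanish in $L^2(m)$ as $\tau\to-\infty$.

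Subtracting the Duhamel identities for $u$ and $u_{\mathrm{recon}}$, and writing $v_u=B\ast u$, $v_D=B\ast D$, one obtains
\begin{equation*}
D(t)=-\int_0^{t}e^{(t-s)\Delta}\nabla\cdot\bigl(D\,v_u+u\,v_D+D\,v_D\bigr)(s)\,ds,
\end{equation*}
with vanishing initial trace. The crucial input is the doubly-exponential bound \eqref{ineq:2xExp}: translating it to physical variables (and using \eqref{ineq:L2mInject}) gives, for some $c=c(d,\gamma)>0$,
\begin{equation*}
\|D(t)\|_{L^1}+t^{1/4}\|D(t)\|_{L^{4/3}}\lesssim e^{-c/t},\qquad 0<t<T.
\end{equation*}
Thus $D$ is smooth for $t>0$, satisfies the integral equation above, and decays to $0$ at the origin faster than any polynomial.

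Finally, I would close the argument by a Gronwall-type estimate in the norm $N(t):=\sup_{0<s\leq t}\bigl(\|D(s)\|_{L^1}+s^{1/4}\|D(s)\|_{L^{4/3}}\bigr)$, using the heat-kernel bounds \eqref{ineq:HeatLpLqEasy}, \eqref{ineq:HeatLpLq}, the hypercontractive a priori estimates $\|u(t)\|_{p}\lesssim t^{1/p-1}$ from Theorem \ref{thm:Basics}, and the nonlocal estimate \eqref{ineq:VelLp}. Estimates analogous to those in the proof of Proposition \ref{prop:Fcontraction} give $N(t)\leq C N(t)^{2}+o(1)$ as $t\to 0$, where the $o(1)$ term is absorbed by the super-polynomial decay of $N$ from \eqref{ineq:2xExp}. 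This forces $N(t)\equiv 0$ on $(0,T)$, hence $D\equiv 0$. The main obstacle is that the coefficients $u$ and $v_u$ in the linear-in-$D$ pieces are genuinely singular at $t=0$, so a naive Gronwall does not close; the doubly-exponential smallness in \eqref{ineq:2xExp} is precisely what is needed to dominate any polynomial-in-$1/t$ singularity in the Duhamel kernels, turning the would-be fixed-point equation for $D$ into a strict contraction near the origin whose unique solution is zero.
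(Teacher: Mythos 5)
Your first step is circular. You assert that $\tilde{w}_0$ satisfies \eqref{def:w0Duhamel} and hence that $u_{\mathrm{recon}}$ is a mild solution of PKS. But in the proof of Proposition \ref{prop:EquivSolutions} the function $\tilde{w}_0$ is defined as $u - \sum_i w_i$ and a priori satisfies only \eqref{def:w0Original} with velocity $v = B\ast u$. Rewriting this as \eqref{def:w0Duhamel} requires the velocity identity $B\ast u = \sum_j \frac{1}{\sqrt{t}}v^{G_j}\bigl(\frac{\cdot - z_j}{\sqrt{t}}\bigr) + \tilde{v}_0 + \sum_j v_j = B \ast u_{\mathrm{recon}}$, which upon taking divergence is exactly $D=0$ — the conclusion of Lemma \ref{lem:sumR}. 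This is why the paper orders the steps of Proposition \ref{prop:EquivSolutions} as it does: Step (iii), which is precisely the assertion $\tilde{w}_0$ satisfies \eqref{def:w0Duhamel}, is established only \emph{after} Lemma \ref{lem:sumR}. A direct computation confirms the gap: summing the $\alpha_i R_i$ equations in physical variables and comparing with the PKS equation for $u$ shows that $u_{\mathrm{recon}}$ satisfies PKS only up to a discrepancy $\grad\cdot(\tilde{w}_0\,v_D)$, which vanishes iff $D=0$.

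The paper avoids the circularity by not treating $u_{\mathrm{recon}}$ as a PKS solution at all. It multiplies the integral equations \eqref{def:RjForRealDuhamel} by $\alpha_i$, changes coordinates, and sums over $i$; the inhomogeneous terms (the third, fourth, fifth, and last lines of \eqref{def:RjForRealDuhamel}) cancel, leaving a \emph{linear} advection-diffusion PDE for $F = D$ with zero initial datum, in which the coefficient in front of $v^F$ is $u - \tilde{w}_0 = \sum_i w_i$, not $u$ or $u_{\mathrm{recon}}$. From there the strategy you sketch is close to the paper's: Duhamel via $S_N(t,s)$ (not $e^{t\Delta}$), Proposition \ref{prop:SNProperties}(iii), the hypercontractive and velocity bounds, and the doubly-exponential smallness \eqref{ineq:2xExp} to neutralize the singular kernel. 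One further remark: the inequality $N(t)\leq C N(t)^2 + o(1)$ you write is not what emerges — the linear-in-$D$ pieces give $N(t)\lesssim C\,N(t)$ with $C$ of order one, which does not close by smallness. What closes the argument is the variant of Lemma 5.4 in \cite{GallagherGallay05}: iterating the Beta-function-type kernel against the super-polynomial vanishing of $N$ produces factorial decay, forcing $N\equiv 0$; this is an iteration argument rather than a contraction with small constant.
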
 
\begin{proof} 
Denote 
\begin{align*} 
F(t,x) := \sum_{i = 1}^N \frac{1}{t} \alpha_i R_i\left(\log t, \frac{x- z_i}{\sqrt{t}}\right). 
\end{align*} 
Multiplying both sides of \eqref{def:RjForRealDuhamel} by $\alpha_i$ and changing coordinates,  
we see that $F$ is a mild solution to the following PDE (using that the source terms in \eqref{def:RjForRealDuhamel} cancel after summation): 
\begin{align*} 
\partial_t F &  + \grad \cdot \left(F \sum_{i = 1}^N \frac{1}{\sqrt{t}}v^{G_i}\left(\frac{x - z_i}{\sqrt{t}}\right)\right)  
+ \grad \cdot \left( \sum_{i = 1}^N \frac{1}{t}G_{i}\left(\frac{x - z_i}{\sqrt{t}}\right) v^F \right) \\ 
& + \grad \cdot \left(F \sum_{j=1}^N \alpha_j\frac{1}{\sqrt{t}} \tilde v_j\left(\log t, \frac{x-z_i}{\sqrt{t}}\right)\right) + \grad \cdot \left( \sum_{i =1 }^N \frac{1}{t}\bar w_{i}\left(\frac{x - z_i}{\sqrt{t}}\right) v^F\right) + \grad \cdot \left(F\tilde v_0\right) - \Delta F = 0, \\ 
F(0) & = 0, 
\end{align*} 
still denoting $\tilde v_j = B \ast \tilde w_j = B \ast (\bar w_i + R_i)$. 
From \eqref{ineq:2xExp} and \eqref{ineq:L2mInject}, additionally satisfies for all $\gamma > 1$: 
\begin{align} 
t^{1/4}\norm{F(t)}_{4/3} \lesssim_\gamma e^{-\frac{d^2}{256 \gamma}t^{-1}}, \label{ineq:Ffast}
\end{align} 
Re-writing $F$ in terms of Duhamel's formula using the linear propagator for \eqref{eq:SNDefinition},
\begin{align*}
F(t) & = -\int_0^t S_N(t,s) \grad \cdot \left( \sum_{i = 1}^N \frac{1}{t}G_{i}\left(\frac{x - z_i}{\sqrt{t}}\right) v^F \right) ds \\
& \quad - \int_0^t S_N(t,s) \grad \cdot \left(F \sum_{j=1}^N \alpha_j\frac{1}{\sqrt{t}} \tilde v_j\left(\log t, \frac{x-z_i}{\sqrt{t}}\right)\right) ds  \\
& \quad - \int_0^t S_N(t,s) \left[\grad \cdot \left( \sum_{i =1 }^N \frac{1}{t}\bar w_{i}\left(\frac{x - z_i}{\sqrt{t}}\right) v^F\right) + \grad \cdot \left(F\tilde v_0\right)\right]ds. 
\end{align*} 
By Proposition \ref{prop:SNProperties} (iii), 
\begin{align*} 
t^{1/4}\norm{F(t)}_{4/3} & \lesssim t^{1/4}\int_0^t \frac{1}{(t-s)^{3/4}}\left[\frac{t}{s}\right]^{1/2 + \gamma - \lambda_0}\norm{\sum_{i = 1}^N \frac{1}{t}G_{i}\left(\frac{x - z_i}{\sqrt{t}}\right) v^F}_{1}ds \\ 
& \quad + t^{1/4}\int_0^t \frac{1}{(t-s)^{3/4}}\left[\frac{t}{s}\right]^{1/2 + \gamma - \lambda_0} \norm{F(s)}_{4/3}\norm{\sum_{j=1}^N \alpha_j\frac{1}{\sqrt{s}} \tilde v_j\left(\log s, \frac{\cdot-z_i}{\sqrt{s}}\right)}_{4} ds \\ 
& \quad + t^{1/4}\int_0^t \frac{1}{(t-s)^{3/4}}\left[\frac{t}{s}\right]^{1/2 + \gamma - \lambda_0} \left( \norm{\sum_{i =1 }^N \frac{1}{s}\bar w_{i}\left(\frac{x - z_i}{\sqrt{s}}\right)}_{4/3}\norm{v^F(s)}_{4} + \norm{F(s)}_{4/3}\norm{v_0(s)}_4\right) ds. 
\end{align*}  
Using \eqref{ineq:v0limsup} and $\norm{\bar w_i(\tau)}_{L^2(m)} \rightarrow 0$ with \eqref{ineq:L2mInject} and \eqref{ineq:VelLp}, we can choose $t$ small enough to move the the latter three terms back to the left-hand side, deducing (also we used Proposition \ref{prop:Galpha}),
\begin{align*} 
t^{1/4}\norm{F(t)}_{4/3} & \lesssim t^{1/4}\int_0^t \frac{1}{s^{1/2}(t-s)^{3/4}}\left[\frac{t}{s}\right]^{1/2 + \gamma - \lambda_0} s^{1/4}\norm{F(s)}_{4/3} ds. 
\end{align*} 
Since $t^{1/4}\norm{F(t)}_{4/3}$ vanishes faster than any polynomial via \eqref{ineq:Ffast}, we can apply a variant of Lemma 5.4 in \cite{GallagherGallay05} to deduce that $F \equiv 0$.
\end{proof}

By applying Lemma \ref{lem:sumR}: 
\begin{align*}
\sum_{i = 1}^N w_i & = \sum_{i = 1}^N \frac{1}{t}G_{\alpha_i}\left(\frac{x-z_i}{\sqrt{t}}\right) + \frac{1}{t}\bar{w}_i\left(\log t,\frac{x-z_i}{\sqrt{t}}\right) \\ 
& = \sum_{i = 1}^N \frac{1}{t}G_{\alpha_i}\left(\frac{x-z_i}{\sqrt{t}}\right) + \frac{\alpha_i}{t}\tilde{w}_i\left(\log t, \frac{x- z_i}{\sqrt{t}} \right).
\end{align*}
Therefore, the above construction yields a proper decomposition of the \emph{original} mild solution $u(t,x)$. 
Moreover, $\tilde{w}_j$ satisfy \eqref{def:wiDuhamel} and for $T$ chosen sufficiently small, the perturbations $\tilde{w}_j$ and $\tilde{w}_0$ are inside the ball \eqref{def:BepT}.
This completes the proof of Proposition \ref{prop:EquivSolutions}. 

\subsection{Final Step of Theorem \ref{thm:PKSUnique}} \label{sec:finalStep}
\begin{proof}(Theorem \ref{thm:PKSUnique})
By Propositions \ref{prop:Fmapping} and \ref{prop:Fcontraction}, the contraction mapping theorem implies that there exists a unique solution to \eqref{def:w0Duhamel} and \eqref{def:wiDuhamel} which satisfies $M[\tilde{w}](t) < \epsilon$ for $t \leq T$ small.
This solution may be assembled into a function $u(t,x) \in L^\infty(0,T;L^1) \cap \set{\sup_{t \in (0,T)} t^{1/4}\norm{u(t)}_{4/3} < \infty}$ which satisfies \eqref{eq:MildSolutionDef} of Definition \ref{def:MildSolution} (the $L^1$ bound may be verified from the arguments of Proposition \ref{prop:Fmapping}). 
The proof of Theorem \ref{thm:Basics} does not require non-negativity, and hence $u(t,x)$ also satisfies $\norm{u(t,x)}_\infty \lesssim t^{-1}$ for short time and by bootstrapping parabolic regularity, after $t>0$ $u(t,x)$ is a classical solution to \eqref{def:PKS} on $(0,T]$.
In order to have a mild solution in the sense of Definition \ref{def:MildSolution}, we need now to verify two remaining properties: that $u(t,x) \in C_w([0,T];\mathcal{M}_+(\Real^2))$
 (and in particular is non-negative) and that it achieves the initial data in the weak$^\ast$ topology on measures, $u(t) \rightharpoonup^\star \mu$ as $t \searrow 0$. 

Since it is classical for $t > 0$, $u(t,x) \in C_w((0,T];\mathcal{M}(\Real^2))$, however since the integral in \eqref{eq:MildSolutionDef} is very singular it is not immediately clear in which norms it vanishes as $t \searrow 0$.
Indeed, as pointed out in Remark \ref{rmk:NonConverge}, the integral does \emph{not} generally vanish in the critical norm $t^{1/4}\norm{\cdot}_{4/3}$. 
Let $\phi \in C_c^\infty$ and consider $\lim_{t \rightarrow 0^+} u(t)$ in the sense of distributions. 
For $t > 0$, using \eqref{ineq:HeatLpLqEasy}, \eqref{ineq:VelLp},    
\begin{align*} 
\abs{\int \phi \int_0^t e^{(t-s)\Delta}\grad \cdot (u(s)\grad c(s)) ds dx} & = \abs{\int \grad \phi \int_0^t e^{(t-s)\Delta}\left( u(s)\grad c(s)\right) ds dx} \\ 
& \lesssim \norm{\grad \phi}_\infty \int_0^t \norm{u(s)}_{4/3}\norm{\grad c(s)}_{4} ds \lesssim \norm{\grad \phi}_\infty \int_0^t \frac{1}{s^{1/2}} ds. 
\end{align*} 
Hence, the integral $\lim_{t \rightarrow 0^+}\int_0^t e^{(t-s)\Delta} \grad \cdot (u(s) \grad c(s)) ds = 0$ in the sense of distributions as $t \rightarrow 0^+$. Together with \eqref{eq:MildSolutionDef} this implies that at least $\lim_{t \rightarrow 0^+} u(t) = \mu$ in the sense of distributions. 
In order to improve this to weak$^\star$ convergence we use the uniform $L^1$ bound and tightness in $\mathcal{M}(\Real^2)$. 
By construction, for $v = B \ast u$ we have the a priori estimate
\begin{align} 
\norm{v(t) - \sum_{i = 1}^N \frac{1}{\sqrt{t}}v^{G_i}\left(\frac{\cdot - z_i}{\sqrt{t}}\right)}_{4} \lesssim \epsilon t^{-1/4},   \label{ineq:vctrl}
\end{align} 
where for $t$ sufficiently small the implicit constant is independent of $N$ and $\epsilon$.  
This implies that $v(t)$ satisfies the tightness condition \eqref{def:vtight} in Lemma \ref{lem:GenLinear} (in \S\ref{subsec:SNExistence}), and hence we may apply the tightness argument in Lemma \ref{lem:GenLinear} to prove that $u(t,x)$ is tight in $L^1$ as $t \rightarrow 0^+$; in particular for all $\delta$, we can choose $\epsilon$ small and $R$ large so that
\begin{align} 
\limsup_{t \rightarrow 0^+}\norm{u(t)\mathbf{1}_{\Real^2\setminus B_R}}_1 \lesssim \delta.  \label{ineq:tightness}
\end{align} 
Now let $\psi \in C^0$ be an arbitrary bounded, continuous function and let $\psi_\delta \in C^\infty_c$ be such that $\sup_{x \in B_R}\abs{\psi - \psi_\delta} < \delta$. 
It follows that for $t > 0$ sufficiently small, using \eqref{ineq:tightness}, the convergence in distribution and the uniform $L^1$  bound on $u$,    
\begin{align*}
\abs{\int \psi(x)(u(t,x)-\mu) dx} & \leq \abs{\int_{B_R} (\psi(x) - \psi_\delta(x)) (u(t,x)-\mu) dx} + \abs{\int_{B_R} \psi_\delta(x)(u(t,x)-\mu)dx} \\ & \quad + \abs{\int_{\Real^2 \setminus B_R} \psi(x)(u(t,x)-\mu) dx} \\ 
 & \lesssim \delta. 
\end{align*}  
Since $\delta$ and $\psi \in C^0$ were arbitrary, we have that $u(t) \rightharpoonup^\star \mu$.

It remains to verify that $u(t)$ is non-negative. 
Consider now the \emph{linear} advection-diffusion initial-value problem with $v(t) = B \ast u(t)$ given by:  
\begin{subequations} \label{def:AdDifw}
\begin{align} 
\partial_t w + \grad \cdot (vw) & = \Delta w \\ 
w(0) & = \mu, 
\end{align} 
\end{subequations}
of which the constructed $u(t)$ is a mild solution. By Lemma \ref{lem:GenLinear}, there exists a non-negative mild solution to this problem which satisfies the same a priori estimates as $u(t)$, hence we need only verify that \eqref{def:AdDifw} admits only one mild solution.    
Any mild solution of \eqref{def:AdDifw} $w(t)$ can be written by (using the continuity properties of $S_N(t,s)$) 
\begin{align*} 
w(t) = S_N(t,0)\mu - \int_0^t S_N(t,s)\grad \cdot \left[w(s)\left(v(t) - \sum_{i = 1}^N \frac{1}{\sqrt{s}}v^{G_i}\left(\frac{\cdot - z_i}{\sqrt{s}}\right) \right)\right] ds.
\end{align*}
Therefore, the control in \eqref{ineq:vctrl} implies that for $\epsilon$ smaller than a universal constant, we may treat the \eqref{def:AdDifw} as a small perturbation of \eqref{eq:SNDefinition} and use a contraction mapping argument similar to that employed in the proof of \eqref{ineq:w0PropFmapping} to prove that in fact solutions to \eqref{def:AdDifw} (with this specific $v$) are unique and hence $u(t)$ agrees with the non-negative mild solution of \eqref{def:AdDifw} constructed in Lemma \ref{lem:GenLinear}.   
Hence we have a mild solution to \eqref{def:PKS} satisfying all of Definition \ref{def:MildSolution}. 

Finally, Proposition \ref{prop:EquivSolutions} implies that any other mild solution can also be written as a solution to \eqref{def:w0Duhamel} and \eqref{def:wiDuhamel}, and by the uniqueness implied by the contraction mapping theorem, must agree with the above constructed solution. This completes the proof of Theorem \ref{thm:PKSUnique}.  
\end{proof} 

\section{Lipschitz Dependence on Initial Data} \label{sec:Lipschitz}
Although the contraction mapping theorem generally provides Lipschitz dependence on initial data for free, the above argument does not. 
This is because all of the linear propagators used in the proof of Theorem \ref{thm:PKSUnique} depend on the initial data itself and the decomposition used to construct the solution. 
However, combining our decomposition \eqref{def:decompu} and the contraction mapping arguments of Section \S\ref{sec:ContractMap} with the main ideas of Section 5.3 of \cite{GallagherGallay05}, we are able to refine the results of Gallagher and Gallay to obtain Lipschitz dependence on initial data.
As the method applies equally well to the 2D Navier-Stokes equations in vorticity form \eqref{def:NSE}, we also prove that the solution map of the Navier-Stokes equations is locally Lipschitz. 
\subsection{Proof of Theorem \ref{thm:Lipschitz}}
\begin{proof} 
Let $\epsilon > 0$ be a small, fixed constant independent of $\delta$ to be chosen later. In particular, we can require $\delta$ to be small with respect to $\epsilon$. If $\delta < \epsilon$ then 
all the atoms of mass bigger than $\epsilon$ in $\mu^1$ and $\mu^2$ must be in the same location (this is the utility of the total variation norm as opposed to a weaker norm). Therefore, as in \cite{GallagherGallay05}, we may decompose the initial measures as 
\begin{equation*}
\mu^l = \sum_{j = 1}^N \alpha_j^l \delta_{z_j} + \mu_0^l, 
\end{equation*}
such that $\norm{\mu_0^l}_{pp} < \epsilon$ and
\begin{equation*}
\norm{\mu^1 - \mu^2}_{\mathcal{M}} = \norm{\mu_0^1 - \mu_0^2}_{\mathcal{M}} + \sum_{j = 1}^N \abs{\alpha_j^1 - \alpha_j^2} < \delta. 
\end{equation*}
Let $u^1$, $u^2$ be the unique mild solutions of \eqref{def:PKS} in $\chi_T$ associated to each of the respective initial measures constructed in Theorem \ref{thm:PKSUnique}. 
By Proposition \ref{prop:EquivSolutions}, we may decompose $u^l$ each into respective large atomic pieces and smaller perturbations as in \eqref{def:decompu}: 
\begin{align}
u^l(t,x) & = \tilde{w}_0^l(t,x) + \sum_{i = 1}^N \frac{1}{t} G_{\alpha^l_i}\left(\frac{x - z_i}{\sqrt{t}}\right) + \sum_{i = 1}^N \alpha^l_i \frac{1}{t}\tilde{w}_i^l\left(\log t,\frac{x - z_i}{\sqrt{t}}\right), \label{def:lipDecomp}
\end{align}  
where the perturbations $\tilde{w}^l_i$ satisfy the corresponding integral equations \eqref{def:w0Duhamel} and \eqref{def:wiDuhamel}.
Analogously to \cite{GallagherGallay05} we use the following quantity as the norm to measure the difference between the two solutions,
\begin{equation*}
\Delta(t) := \sup_{s \in (0,t)}\norm{\tilde{w}_0^1(s) - \tilde{w}_0^2(s)}_{1} + M[\tilde{w}^1 - \tilde{w}^2](t),  
\end{equation*}
where for future convenience we define 
\begin{equation*}
\Delta_0(t) := \sup_{s \in (0,t)}\norm{\tilde{w}_0^1(s) - \tilde{w}_0^2(s)}_{1} + K_0M_0[\tilde{w}^1_0 - \tilde{w}^2_0](t). 
\end{equation*}
By Proposition \ref{prop:Galpha} and the decomposition \eqref{def:lipDecomp}, Theorem \ref{thm:Lipschitz} is equivalent to: there exists a $T>0$ and $C_L < \infty$ (independent of $\delta$ and $\mu^2$), such that $\Delta(T) \leq C_L\delta$.  

The decompositions of $u^l$ define different linear propagators which are centered around the same points but have different masses in the concentrations, denoted $S_N^l(t,s)$ and $\mathcal{T}_{\alpha_i^l}(\tau)$. 
The difference between these linear propagators must be controlled, hence in order to continue we need the equivalents of Proposition 5.5 and 5.6 of \cite{GallagherGallay05}.
The proofs are a straightforward contraction mapping argument which follows by writing one linear propagator as a Duhamel integral equation involving the other and using (iv) and (v) of Proposition \ref{prop:Galpha} to estimate the terms in the integral. We omit the details of the contraction mapping, but the proofs of Proposition \ref{prop:Galpha} (iv) and (v), which are trivial for NSE but not for PKS, can be found in Appendix \S\ref{apx:PropSelfSim}. 
For $S_N(t,s)$ we need the following. 
\begin{proposition} \label{prop:SNPerturb}
There exists some $t_0$ sufficiently small ($t_0 \lesssim d^2$) such that the following holds (with implicit constants independent of $\delta$ and $\epsilon$): 
\begin{itemize}
\item[(i)] For all $p \in [1,\infty]$ and $\nu \in \mathcal{M}(\Real^2)$ we have 
\begin{equation}
\norm{\left(S^1_N(t,s) - S_N^2(t,s)\right)\nu}_{L^p} \lesssim \frac{\delta}{(t-s)^{1-1/p}}\norm{\nu}_{\mathcal{M}}, \;\;\; 0 \leq s < t < s+t_0. 
\end{equation}
\item[(ii)] There exists some $\lambda_0 \in (0,1/2)$ independent of $\epsilon$ such that the following holds: for all $\gamma > 0$ sufficiently small and for all $f \in L^1(\Real^2)$, 
\begin{equation}
\norm{ \left(S^1_N(t,s) - S_N^2(t,s)\right)\grad f}_{p} \lesssim \frac{\delta}{(t-s)^{3/2 - 1/p}}\left( \frac{t}{s} \right)^{\gamma + 1/2 - \lambda_0}\norm{f}_{1}, \;\;\; 0 < s < t < s + t_0. 
\end{equation}
\end{itemize}
\end{proposition}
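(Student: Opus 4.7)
The plan is to realize $S_N^1 - S_N^2$ as driven by the mismatch between the self-similar velocity fields
$$V^l(t,x) := \sum_{j=1}^N \frac{1}{\sqrt t}\, v^{G_{\alpha_j^l}}\Bigl(\frac{x - z_j}{\sqrt t}\Bigr), \qquad l = 1,2,$$
so that (i) and (ii) both follow from a Duhamel integral fed by $V^1 - V^2$ and controlled via the propagator estimates of Proposition \ref{prop:SNProperties} together with the mass-Lipschitz estimate Proposition \ref{prop:Galpha}(v). Setting $f^l := S_N^l(\cdot,s)\nu$ and subtracting the equations, $g := f^1 - f^2$ is a mild solution of
$$\partial_t g + \nabla \cdot (V^1 g) - \Delta g = -\nabla \cdot \bigl(f^2 (V^1 - V^2)\bigr), \qquad g(s) = 0,$$
so Duhamel gives
$$g(t) = -\int_s^t S_N^1(t,s')\, \nabla \cdot \bigl(f^2(s')(V^1 - V^2)(s')\bigr)\,ds'.$$
The derivation for part (ii) is identical, with $f^2 = S_N^2(\cdot,s)\nabla f$ and the gradient estimate Proposition \ref{prop:SNProperties}(iii) used in place of (i) when bounding $f^2$.

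Two ingredients drive the estimate. First, Proposition \ref{prop:Galpha}(v) combined with the nonlocal velocity bound \eqref{ineq:VelLp} gives $\|v^{G_{\alpha_j^1}} - v^{G_{\alpha_j^2}}\|_q \lesssim |\alpha_j^1 - \alpha_j^2|$ for $q \in (2,\infty)$, and self-similar rescaling together with $\sum_j |\alpha_j^1 - \alpha_j^2| \leq \delta$ yields
$$\|V^1(s') - V^2(s')\|_q \lesssim \delta\,(s')^{1/q - 1/2}.$$
Second, Proposition \ref{prop:SNProperties}(i) gives $\|f^2(s')\|_{q'} \lesssim (s'-s)^{-1/q}\|\nu\|_{\mathcal M}$ with $1/q + 1/q' = 1$. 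H\"older and Proposition \ref{prop:SNProperties}(iii) then produce
$$\|g(t)\|_p \lesssim \delta\|\nu\|_{\mathcal M}\int_s^t (t-s')^{-(3/2-1/p)}\Bigl(\frac{t}{s'}\Bigr)^{\gamma + 1/2 - \lambda_0}(s'-s)^{-1/q}(s')^{1/q - 1/2}\,ds',$$
whose exponents collapse under a beta-function identity to exactly the scaling $\delta(t-s)^{-(1-1/p)}\|\nu\|_{\mathcal M}$ of Proposition \ref{prop:SNProperties}(i). Part (ii) is handled analogously, acquiring an extra $(s'-s)^{-(1/2+1/q)}$ factor from using Proposition \ref{prop:SNProperties}(iii) on $f^2$; the algebra reduces to $\delta(t-s)^{-(3/2-1/p)}(t/s)^{\gamma + 1/2 - \lambda_0}\|f\|_1$ after possibly enlarging $\gamma$ by an arbitrarily small amount.

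The main obstacle is extending to the full range $p \in [1,\infty]$, since the kernel $(t-s')^{-(3/2-1/p)}$ ceases to be integrable at $s'=t$ for $p \geq 2$ and direct Duhamel therefore does not close. This is handled by the standard semigroup decomposition
$$S_N^1(t,s) - S_N^2(t,s) = S_N^1(t,m)\bigl(S_N^1 - S_N^2\bigr)(m,s) + \bigl(S_N^1 - S_N^2\bigr)(t,m)\,S_N^2(m,s), \qquad m := (t+s)/2,$$
applying the hypercontractive estimate of Proposition \ref{prop:SNProperties}(i) to the outer propagators and the $L^{p_0}$-bound ($p_0 \in (1,2)$) just established to the inner difference; a finite iteration covers all $p \in [1,\infty]$ while preserving the linear-in-$\delta$ dependence. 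The remaining bookkeeping --- preserving the $(t/s)^{\gamma+1/2-\lambda_0}$ factor of part (ii) through the integration --- forces $\gamma < \lambda_0$ so that the singularity at $s' \searrow s$ is integrable, which is precisely the same power-counting underlying Proposition \ref{prop:SNProperties}(iii) itself.
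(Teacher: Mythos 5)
Your proposal is correct and takes essentially the same route as the paper's (sketched) proof: write the difference of propagators as a Duhamel integral driven by $V^1 - V^2$, feed Proposition \ref{prop:Galpha}(v) and Proposition \ref{prop:Vel}(i) into the nonlocal velocity difference, and control the resulting singular integral with Proposition \ref{prop:SNProperties}. Two small remarks: the phrase ``collapses under a beta-function identity'' hides the real work --- the exponents make the integral dimensionless only up to the target power of $(t-s)$, so one must still verify that the resulting function of $u = t/s$ is uniformly bounded as $u \searrow 1$ and $u \to \infty$, which is where $\gamma < \lambda_0$ and the $q$-close-to-$2$ enlargement of $\gamma$ are actually used; and in the semigroup-decomposition step a single split at $m = (t+s)/2$ suffices (feed the inner difference at $p_0 = 1$ into the outer $\mathcal{M}\to L^p$ bound of Proposition \ref{prop:SNProperties}(i)), so ``finite iteration'' is an overstatement, though harmless.
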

Similarly we need the analogous estimate for $\mathcal{T}_\alpha$.
\begin{proposition} \label{prop:TPerturb}
Fix $\alpha \in (0,8\pi)$, $m > 2$. Then for some $\nu \in (0,1/2)$ (which depends on $\alpha$), all $q \in [1,2]$, all $\beta$ sufficiently small (in absolute value), and all $f \in L^q(m)$,
\begin{equation}
\norm{\left(\mathcal{T}_{\alpha + \beta}(\tau) - \mathcal{T}_{\alpha}(\tau)\right)\grad f}_{L^2(m)} \lesssim_{\alpha,q} \abs{\beta} \frac{e^{-\nu \tau}}{a(\tau)^{1/q}}\norm{f}_{L^q(m)}, \;\; \tau > 0, \label{ineq:TDiffGradDecay}
\end{equation}
where the implicit constant is independent of $\beta$. 
\end{proposition}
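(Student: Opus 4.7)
The plan is to derive the estimate from a standard Duhamel-type identity between the two semigroups, reducing everything to Proposition \ref{prop:SpecT} (iii) combined with the Lipschitz dependence of the profile $G_\alpha$ on the parameter $\alpha$ from Proposition \ref{prop:Galpha} (v). Writing $\mathcal{T}_\alpha(\tau)=e^{\tau(L-\Lambda_\alpha)}$ and $\mathcal{T}_{\alpha+\beta}(\tau)=e^{\tau(L-\Lambda_{\alpha+\beta})}$, the standard semigroup identity (valid on a dense subspace and then extended by density) yields
\begin{equation*}
(\mathcal{T}_{\alpha+\beta}(\tau)-\mathcal{T}_\alpha(\tau))\grad f \;=\; \int_0^\tau \mathcal{T}_{\alpha+\beta}(\tau-s)\,(\Lambda_\alpha-\Lambda_{\alpha+\beta})\,\mathcal{T}_\alpha(s)\grad f\,ds.
\end{equation*}

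The key observation is that the operator difference is itself the divergence of a perturbatively small vector field: for any $g$, writing $v^g=B\ast g$,
\begin{equation*}
(\Lambda_{\alpha+\beta}-\Lambda_\alpha)g \;=\; \grad \cdot \Bigl[(G_{\alpha+\beta}-G_\alpha)\,v^g \,+\, g\,(v^{G_{\alpha+\beta}}-v^{G_\alpha})\Bigr],
\end{equation*}
and both $G_{\alpha+\beta}-G_\alpha$ and $v^{G_{\alpha+\beta}}-v^{G_\alpha}=B\ast(G_{\alpha+\beta}-G_\alpha)$ are $O(\abs{\beta})$ in every relevant (weighted) $L^p$ norm by Proposition \ref{prop:Galpha} (v) together with Proposition \ref{prop:Vel}.

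The estimate then unfolds as follows. Setting $g_s:=\mathcal{T}_\alpha(s)\grad f$, Proposition \ref{prop:SpecT} (iii) gives $\norm{g_s}_{L^2(m)}\lesssim_\alpha e^{-\nu s}a(s)^{-1/q}\norm{f}_{L^q(m)}$. Choosing some auxiliary $p_0\in(1,2)$, Hölder's inequality combined with Propositions \ref{prop:Vel} and \ref{prop:Galpha} (v) yields
\begin{equation*}
\bigl\|(G_{\alpha+\beta}-G_\alpha)v^{g_s} + g_s(v^{G_{\alpha+\beta}}-v^{G_\alpha})\bigr\|_{L^{p_0}(m)} \;\lesssim_\alpha\; \abs{\beta}\,\norm{g_s}_{L^2(m)}.
\end{equation*}
Applying Proposition \ref{prop:SpecT} (iii) a second time, componentwise, to $\mathcal{T}_{\alpha+\beta}(\tau-s)$ acting on this divergence, and integrating, one obtains
\begin{equation*}
\norm{(\mathcal{T}_{\alpha+\beta}(\tau)-\mathcal{T}_\alpha(\tau))\grad f}_{L^2(m)} \;\lesssim_\alpha\; \abs{\beta}\,\norm{f}_{L^q(m)}\int_0^\tau \frac{e^{-\nu(\tau-s)}}{a(\tau-s)^{1/p_0}}\cdot\frac{e^{-\nu s}}{a(s)^{1/q}}\,ds.
\end{equation*}

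The main technical point left is bounding this convolution-type integral by $Ce^{-\nu'\tau}/a(\tau)^{1/q}$ for some $\nu'\in(0,\nu)$. For small $\tau$ the integrand behaves like $(\tau-s)^{-1/p_0}s^{-1/q}$ and the integral is comparable to $\tau^{1-1/p_0-1/q}$, which is absorbed into the prefactor $a(\tau)^{-1/q}$ precisely because $p_0>1$; for large $\tau$ the mild polynomial growth from the resulting Beta-type integral is dominated by the exponential factor at the cost of shrinking $\nu$ slightly to some $\nu'$. The delicate balance is therefore the choice of $p_0$: it must lie in $(1,2)$ so that $L^2(m)\hookrightarrow L^{p_0}$ and Proposition \ref{prop:Vel} (i) applies to $v^{g_s}$, while the condition $1/p_0<1$ keeps the Beta integral finite. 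This choice is compatible with any $q\in(1,2]$; the endpoint $q=1$ can be recovered by a standard interpolation against the $q$ slightly above $1$ case.
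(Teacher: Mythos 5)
Your approach — the Duhamel identity for the difference of two semigroups combined with the Lipschitz dependence of $G_\alpha$ on $\alpha$ (Proposition \ref{prop:Galpha}~(v)) and Proposition \ref{prop:SpecT}~(iii) — is exactly what the paper has in mind. The paper only sketches the argument (``a straightforward contraction mapping argument which follows by writing one linear propagator as a Duhamel integral equation involving the other and using (iv) and (v) of Proposition \ref{prop:Galpha}''), so your proposal actually supplies more of the details. The one genuine difference is that the paper phrases it as a contraction mapping, writing $\mathcal{T}_{\alpha+\beta}$ as a fixed point of a Duhamel equation built from $\mathcal{T}_\alpha$; this has the advantage of not presupposing that the constants in Proposition~\ref{prop:SpecT} for $\mathcal{T}_{\alpha+\beta}$ are uniform in $\beta$, since the contraction establishes uniform bounds on the fly. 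Your direct estimate instead applies Proposition~\ref{prop:SpecT}~(iii) to $\mathcal{T}_{\alpha+\beta}$ as the outer propagator, so it silently relies on that uniformity; this is fine because Proposition~\ref{prop:OurSpectralGap} makes the constants uniform on compact subsets of $(0,8\pi)$, but it is worth saying explicitly.

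There is one genuine gap: the endpoint $q=1$. Your convolution integral
\begin{equation*}
\int_0^\tau \frac{e^{-\nu(\tau-s)}}{a(\tau-s)^{1/p_0}}\cdot\frac{e^{-\nu s}}{a(s)^{1/q}}\,ds
\end{equation*}
has, for small $\tau$ and $q=1$, the non-integrable singularity $\int_0^\tau (\tau-s)^{-1/p_0}s^{-1}\,ds = \infty$, so the direct estimate fails at $q=1$ for every choice of $p_0\in(1,2)$. The remark that $q=1$ ``can be recovered by a standard interpolation against the $q$ slightly above $1$ case'' does not repair this: interpolation produces estimates at intermediate exponents, not at the endpoint below the range you control. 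Moreover Proposition~\ref{prop:SpecT}~(iii), which you invoke for the inner factor $\mathcal{T}_\alpha(s)\grad f$, is itself only stated for $q\in(1,2]$, so even the ingredient you need at $q=1$ is not immediately available. The standard fix is a semigroup splitting: write $(\mathcal{T}_{\alpha+\beta}(\tau)-\mathcal{T}_\alpha(\tau))\grad f = (\mathcal{T}_{\alpha+\beta}(\tau/2)-\mathcal{T}_\alpha(\tau/2))\mathcal{T}_{\alpha+\beta}(\tau/2)\grad f + \mathcal{T}_\alpha(\tau/2)(\mathcal{T}_{\alpha+\beta}(\tau/2)-\mathcal{T}_\alpha(\tau/2))\grad f$, use the (extendable-to-$q=1$) hypercontractive bound on $\mathcal{T}(\tau/2)\grad$ to land in $L_0^2(m)$ on the first half of the propagation, and then apply your $q=2$ difference estimate on the second half; the last term still needs the $q=1$ difference estimate but only for the shorter time $\tau/2$, and iterating this gives the desired bound. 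You should supply this argument rather than appealing to interpolation.
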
  

Armed with these estimates we may now use arguments similar to those in Section \S\ref{sec:ContractMap} to estimate the norm $\Delta(t)$ using the integral equations satisfied by $\tilde{w}_i^l$. 
Consider first, 
\begin{align*}
\tilde{w}_0^1(t) - \tilde{w}_0^2(t) &  = \left(S_N^1(t,0) - S_N^2(t,0)\right)\mu_0^1 + S_N^2(t,0)(\mu_0^1 - \mu_0^2) \\ 
& \hspace{.5cm} -\int_0^t \left(S_N^1(t,s) - S_N^2(t,s)\right)\grad \cdot \left[ \left(\tilde{v}^1_0(s) + \sum_{j=1}^N \tilde{v}_j^1(s)\right) \tilde{w}_0^1(s) \right] ds \\ 
& \hspace{.5cm} -\int_0^t S_N^2(t,s)\grad \cdot \left[ \left( \tilde{v}_0^1 + \sum_{j = 1}^N v_j^1(s) \right) \tilde{w}_0^1(s) - \left( \tilde{v}_0^2 + \sum_{j = 1}^N v_j^2(s)\right)\tilde{w}_0^2(s) \right] ds. 
\end{align*}
An argument using the known a priori estimates on $u^l$ proves that (for $t_0$ sufficiently small so that Propositions \ref{prop:SNProperties} and \ref{prop:SNPerturb} hold)
\begin{align}
\Delta_0(t) \leq \delta K_1(1 + M[\tilde{w}^1](t)^2) + K_2\left(M[\tilde{w}^1](t) + M[\tilde{w}^2](t)\right)\Delta(t), \;\; 0 < t < t_0, \label{ineq:Delta0}
\end{align}
for constants $K_1,K_2$ independent of $\epsilon$. 
We now turn to the estimates for $\tilde{w}_i^l$, $i \in \set{1,...,N}$.
In what follows write $G_{i,k} = F_{i,k}^1 - F_{i,k}^2$ and 
\begin{align}
\tilde{v}_j^l(\tau,\xi) & := B \ast \tilde{w}_j^l, \nonumber \\ 
v^{W^l_i}(\tau,\xi) & := \sum_{j \neq i} \alpha_j^l \tilde{v}^l_j(\tau,\xi - (z_j - z_i)e^{-\tau/2}), \nonumber \\ 
v^{g_i^l}(\tau,\xi) & := \sum_{j \neq i} v^{G_{\alpha_j^l}}(\tau,\xi - (z_j - z_i)e^{-\tau/2}).  \label{def:vgl} 
\end{align}
Consider the first term, 
\begin{align*}
G_{i,1}(\tau) & = \int_{-\infty}^\tau \mathcal{T}_{\alpha_i^1}(\tau - \tau^\prime)\grad \cdot \left(\frac{G_{\alpha^1_i}}{\alpha_i^1}v^{g_i^1}\right) - \mathcal{T}_{\alpha_i^2}(\tau - \tau^\prime)\grad \cdot \left( \frac{G_{\alpha^2_i}}{\alpha_i^2}v^{g_i^2}\right) d\tau^\prime \\
& = \int_{-\infty}^\tau \left(\mathcal{T}_{\alpha_i^1} - \mathcal{T}_{\alpha_i^2}\right)(\tau-\tau^\prime)\grad \cdot \left(\frac{G_{\alpha^1_i}}{\alpha_i^1}v^{g_i^1} \right) d\tau^\prime \\ 
& \hspace{.5cm} - \int_{-\infty}^\tau \mathcal{T}_{\alpha_i^2}(\tau-\tau^\prime)\left(\grad \cdot \left( \frac{G_{\alpha^2_i}}{\alpha_i^2}v^{g_i^2}\right) - \grad \cdot \left( \frac{G_{\alpha^1_i}}{\alpha_i^1}v^{g_i^1}\right) \right) 
d\tau^\prime \\ 
& = \int_{-\infty}^\tau \left(\mathcal{T}_{\alpha_i^1} - \mathcal{T}_{\alpha_i^2}\right)(\tau-\tau^\prime)\grad \cdot \left(\frac{G_{\alpha^1_i}}{\alpha_i^1}v^{g_i^1} \right) d\tau^\prime \\ 
& \hspace{.5cm} + \int_{-\infty}^\tau \mathcal{T}_{\alpha_i^2}(\tau - \tau^\prime)\grad \cdot \left[\frac{G_{\alpha_i^1}}{\alpha_i^1}\left( v^{g_i^1} - v^{g_i^2} \right) - v^{g_i^2}\left(\frac{G_{\alpha_i^2}}{\alpha_i^2} - \frac{G_{\alpha_i^1}}{\alpha_i^1}\right)\right] d\tau^\prime.  
\end{align*}
By \eqref{ineq:TGradDecay} and \eqref{ineq:TDiffGradDecay}, 
\begin{align*}
\norm{G_{i,1}(\tau)}_{L^2(m)} & \lesssim \int_{-\infty}^\tau \delta \frac{e^{-\nu(\tau - \tau^\prime)}}{a(\tau - \tau^\prime)^{1/2}}\norm{\frac{G_{\alpha^1_i}}{\alpha_i^1}v^{g_i^1}}_{L^2(m)} d\tau^\prime \\  
& \hspace{.5cm} + \int_{-\infty}^\tau \frac{e^{-\nu(\tau - \tau^\prime)}}{a(\tau - \tau^\prime)^{1/2}}\norm{\frac{G_{\alpha_i^1}}{\alpha_i^1} \left(v^{g_i^1} - v^{g_i^2}\right)}_{L^2(m)} d\tau^\prime \\ 
& \hspace{.5cm} + \int_{-\infty}^\tau \frac{e^{-\nu(\tau - \tau^\prime)}}{a(\tau - \tau^\prime)^{1/2}}\norm{\left(\frac{G_{\alpha_i^2}}{\alpha_i^2} - \frac{G_{\alpha_i^1}}{\alpha_i^1}\right) v^{g_i^2}}_{L^2(m)} d\tau^\prime.
\end{align*} 
Notice that while the third term is always zero for the Navier-Stokes equations, it is non-zero for PKS due to the nonlinear nature of the self-similar solutions.  
The first term can be estimated as we estimated $F_{i,1}$ in Lemma \ref{lem:Fmap}, which gives the following 
with an implicit constant independent of $\epsilon$ and $\alpha_i$,  
\begin{equation*}
\int_{-\infty}^\tau \delta \frac{e^{-\nu(\tau - \tau^\prime)}}{a(\tau - \tau^\prime)^{1/2}}\norm{\frac{G_{\alpha^1_i}}{\alpha_i^1}v^{g_i^1}}_{L^2(m)} d\tau^\prime \lesssim \delta e^{\tau/2}. 
\end{equation*} 
The second term can be estimated in a similar fashion but now using Proposition \ref{prop:Galpha} and Proposition \ref{prop:Vel} to deduce the following with an implicit constant independent of $\epsilon$ and $\alpha_i$,
\begin{equation*}
\int_{-\infty}^\tau \frac{e^{-\nu(\tau - \tau^\prime)}}{a(\tau - \tau^\prime)^{1/2}}\norm{\frac{G_{\alpha_i^1}}{\alpha_i^1} \left(v^{g_i^1} - v^{g_i^2}\right)}_{L^2(m)} d\tau^\prime \lesssim e^{\tau/2}\sum_{j \neq i} \abs{\alpha_j^1 - \alpha_j^2} \lesssim e^{\tau/2}\delta. 
\end{equation*} 
Finally the third term can be estimated similarly with an implicit constant independent of $\epsilon$ and Proposition \ref{prop:Galpha}, 
\begin{align*}
\int_{-\infty}^\tau \frac{e^{-\nu(\tau - \tau^\prime)}}{a(\tau - \tau^\prime)^{1/2}}\norm{\left(\frac{G_{\alpha_i^2}}{\alpha_i^2} - \frac{G_{\alpha_i^1}}{\alpha_i^1}\right) v^{g_i^2}}_{L^2(m)} d\tau^\prime & \lesssim e^{\tau/2} \norm{\frac{G_{\alpha_i^2}}{\alpha_i^2} - \frac{G_{\alpha_i^1}}{\alpha_i^1}}_{L^2(m+1)} \\ 
   \lesssim \frac{e^{\tau/2}}{\alpha_i^2} \abs{\alpha_i^2 - \alpha_i^1} \lesssim \frac{e^{\tau/2}}{\alpha_i^2} \delta.
\end{align*}
Hence putting the three estimates together we have,  
\begin{equation*}
\norm{G_{i,1}}_{L^2(m)} \lesssim \delta e^{\tau/2}\left(1 + \frac{1}{\alpha_i^2}\right). 
\end{equation*} 
Dealing with the other terms does not really present any new real challenges, so we will include less details. 
The term $G_{i,2}$ is done in a similar way: first re-write as
\begin{align*}
G_{i,2}(\tau) & = \sum_{j \neq i} \int_{-\infty}^\tau \mathcal{T}_{\alpha_i^1}(\tau - \tau^\prime)\grad \cdot \left( \frac{G_{\alpha_i^1}}{\alpha_i^1} \alpha_j^1 \tilde{v}_{j}^1 \right) d\tau^\prime  + \sum_{j \neq i} \int_{-\infty}^\tau \mathcal{T}_{\alpha_i^2}(\tau - \tau^\prime)\grad \cdot \left( \frac{G_{\alpha_i^2}}{\alpha_i^2} \alpha_j^2 \tilde{v}_{j}^2 \right)  d\tau^\prime \\ 
& = \sum_{j \neq i} \int_{-\infty}^\tau \left(\mathcal{T}_{\alpha_i^1} - \mathcal{T}_{\alpha_i^2}\right)(\tau - \tau^\prime)\grad \cdot \left( \frac{G_{\alpha_i^1}}{\alpha_i^1} \alpha_j^1 \tilde{v}_{j}^1 \right) d\tau^\prime  \\ 
& \hspace{.5cm} + \sum_{j \neq i} \int_{-\infty}^\tau \mathcal{T}_{\alpha_i^2}(\tau-\tau^\prime)\grad \cdot \left(\frac{G_{\alpha_i^2}}{\alpha_i^2} \left(\alpha_j^1\tilde{v}_j^1 - \alpha_j^2 \tilde{v}_j^2\right) \right) d\tau^\prime \\ 
& \hspace{.5cm} + \sum_{j \neq i} \int_{-\infty}^\tau \mathcal{T}_{\alpha_i^2}(\tau - \tau^\prime)\grad \cdot \left[\left(\frac{G_{\alpha_i^1}}{\alpha_i^1} - \frac{G_{\alpha_i^2}}{\alpha_i^2}\right) \alpha_j^1 \tilde{v}_{j}^1\right]  d\tau^\prime. \\ 
\end{align*}
By estimates analogous to those used for $F_{i,2}$ in Proposition \ref{prop:Fmapping} combined with Propositions \ref{prop:Galpha} and \ref{prop:TPerturb},
\begin{equation*}
\norm{G_{i,2}(\tau)}_{L^2(m)} \lesssim \delta e^{\gamma/2 - 1/q} M[\tilde{w}^1](t) + \frac{\delta}{\alpha_i^2}e^{\gamma/2 - 1/q}M[\tilde{w}^1](t) + \Delta(t)e^{\gamma/2 - 1/q}, 
\end{equation*} 
where $\gamma$ and $q$ are as in the proof of Lemma \ref{lem:Fmap} and the implicit constant is independent of $\epsilon$ and $\delta$. 
Similarly, we re-write the third term as
\begin{align*}
G_{i,3} 
 & = \int_{-\infty}^\tau \left(\mathcal{T}_{\alpha_i^1} - \mathcal{T}_{\alpha_i^2}\right)(\tau - \tau^\prime)\grad \cdot \left( \frac{G_{\alpha_i^1}}{\alpha_i^1}e^{\tau^\prime/2} \tilde{v}_{0}^1(e^{\tau^\prime},\xi e^{\tau^\prime/2} + z_i) \right) d\tau^\prime  \\ 
& \hspace{.5cm} + \int_{-\infty}^\tau \mathcal{T}_{\alpha_i^2}(\tau-\tau^\prime)\grad \cdot \left[ \left(\frac{G_{\alpha_i^1}}{\alpha_i^1} - \frac{G_{\alpha_i^2}}{\alpha_i^2}\right) e^{\tau^\prime/2}\tilde{v}_{0}^1(e^{\tau^\prime},\xi e^{\tau^\prime/2} + z_i) \right]  d\tau^\prime \\ 
& \hspace{.5cm} + \int_{-\infty}^\tau \mathcal{T}_{\alpha_i^2}(\tau-\tau^\prime)\grad \cdot \left[\frac{G_{\alpha_i^2}}{\alpha_i^2} e^{\tau^\prime/2}\left((\tilde{v}_0^1 - \tilde{v}_0^2)(e^{\tau^\prime},\xi e^{\tau^\prime/2} + z_i) \right) \right] d\tau^\prime. 
\end{align*}
Using ideas from Lemma \ref{lem:Fmap} we deduce that there exists some $K_3$ independent of $\epsilon$ and $i$ such that,  
\begin{equation*}
\norm{G_{i,3}(\tau)}_{L^2(m)} \leq K_3 \delta M_0[\tilde{w}_0^1](t) + K_3 \frac{\delta}{\alpha_i^2} M_0[\tilde{w}_0^1](t) + K_3 M_0[\tilde{w}_0^1 - \tilde{w}_0^2](t).  
\end{equation*}
We may continue in a similar manner to handle the remaining terms and eventually prove an inequality of the form for some $\eta > 0$ using also that $M[\tilde{w}^l](t)$ is uniformly bounded, 
\begin{align}
\norm{\tilde{w}_i^1 - \tilde{w}_i^2}_{L^2(m)} & \leq K_4\left(1 + e^{\eta \tau} + \frac{e^{\eta \tau}}{\alpha^2_i}\right)\delta + K_3M_0[\tilde{w}_0^1 - \tilde{w}_0^2](t) + K_5\left(\delta + e^{\eta \tau} + M[\tilde{w}^1](t) + M[\tilde{w}^2](t) \right) \Delta(t) \nonumber \\ 
& \leq K_4\left(1 + e^{\eta \tau} + \frac{e^{\eta \tau}}{\inf \alpha_j^2}\right)\delta + \frac{K_3}{K_0}\Delta(t) + K_5\left(\delta + e^{\eta\tau} + M[\tilde{w}^1](t) + M[\tilde{w}^2](t) \right) \Delta(t), \label{ineq:Lipschtildew} 
\end{align}
where $K_4$, $K_3$ and $K_5$ are all independent of $\epsilon$ and $\delta$.  
Choosing $\epsilon > 0$ small depending only on absolute constants, $K_0 \geq \max(1,4K_3)$ and $t < t_0$ for $t_0$ sufficiently small depending on $\epsilon$. Note, none of these choices depend on $\delta$ as long as $\delta \leq \epsilon$.  
Moreover, we may $T$ small such that $M[\tilde{w}^l] < \epsilon$ by the work of Section \S\ref{sec:PKSUnique}.  
Taking the supremum over $i$, from \eqref{ineq:Delta0} and \eqref{ineq:Lipschtildew} we deduce that
\begin{equation*}
\Delta(t) \lesssim \delta, \;\;\; 0 < t < t_0, 
\end{equation*}
which as remarked above, implies Theorem \ref{thm:Lipschitz}.
\end{proof}

\subsubsection*{Acknowledgments} 
The authors would like to deeply thank the anonymous referee for his or her careful reading of the manuscript and their commitment to ensuring the correctness and completeness of the work.
Jacob Bedrossian would like to thank Nancy Rodr\'{i}guez for helping to proofread the manuscript and Jose A. Carrillo for relaying the results of Campos and Dolbeault to us. 
J. Bedrossian was partially supported by NSF Postdoctoral Fellowship in Mathematical Sciences DMS-1103765.  

\appendix
\section{Appendix: Linear Estimates} \label{apx:LinearEstimates}

\subsection{Spectral Gap Estimate} \label{apx:SpectralGap} 
In this Appendix we sketch an independent proof of a weaker version of a result due to J. Campos and J. Dolbeault \cite{CamposDolbeault12}.
In what follows define $L_0^2(G_\alpha^{-1}d\xi):=\set{f \in L^2(G_\alpha^{-1}d\xi): \int f d\xi = 0}$.  

\begin{proposition} \label{prop:OurSpectralGap}
Let $f \in L^2_0(G_\alpha^{-1} d\xi)$. Then for all $\alpha \in (0,8\pi)$ there exists some $K_\alpha > 0$ such that
\begin{equation}
\norm{\mathcal{T}_\alpha(\tau)f}_{L^2(G_\alpha^{-1} d\xi)} \lesssim_\alpha e^{-K_\alpha\tau}\norm{f}_{L^2(G_\alpha^{-1}d\xi)}, \label{Apx:SpecDecay}
\end{equation} 
where $K_\alpha$ and the implicit constant only depend  on $K < 8 \pi$ for all $\alpha \leq K$. 
\end{proposition}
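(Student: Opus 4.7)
The plan is to exploit the gradient-flow structure of the linearized PKS. Viewing $\mathcal{L}_\alpha := L - \Lambda_\alpha$ as the linearization at $G_\alpha$ of the Wasserstein gradient flow for the self-similar free energy $\mathcal{G}$ from \eqref{def:Gssfree}, I would first verify the algebraic identity
\begin{equation*}
\mathcal{L}_\alpha f = \grad \cdot\bigl(G_\alpha \grad \psi_f\bigr), \qquad \psi_f := \frac{f}{G_\alpha} - (-\Delta)^{-1} f,
\end{equation*}
which is a direct computation using the Euler-Lagrange equation $\grad \log G_\alpha + \xi/2 = \grad c_\alpha$ satisfied by $G_\alpha$ (Proposition \ref{prop:Galpha}). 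I would then introduce the Hessian of $\mathcal{G}$ at $G_\alpha$,
\begin{equation*}
\mathcal{E}(f) := \tfrac{1}{2}\int f\,\psi_f\, d\xi = \tfrac{1}{2}\int \tfrac{f^2}{G_\alpha}\, d\xi - \tfrac{1}{2}\int f(-\Delta)^{-1}f\, d\xi,
\end{equation*}
and the associated dissipation $\mathcal{D}(f) := \int G_\alpha|\grad \psi_f|^2\, d\xi$. An integration by parts shows that $\mathcal{L}_\alpha$ is symmetric with respect to the bilinear form polarizing $\mathcal{E}$, and that along the flow $\partial_\tau f = \mathcal{L}_\alpha f$ one has $\tfrac{d}{d\tau}\mathcal{E}(f(\tau)) = -\mathcal{D}(f(\tau))$. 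This is the correct Hilbert-space setting for the spectral analysis.

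The crux of the proof is to establish two coercivity bounds on $L^2_0(G_\alpha^{-1}\,d\xi)$. First I would prove \textbf{entropy coercivity}: $\mathcal{E}(f) \geq c_\alpha \norm{f}_{L^2(G_\alpha^{-1})}^2$, which is the quantitative form of the strict minimality of $G_\alpha$ from Proposition \ref{prop:Galpha}(ii). The strict subcriticality $\alpha < 8\pi$ together with the sharp logarithmic Hardy-Littlewood-Sobolev inequality of Carlen-Loss controls the attractive term $\int f(-\Delta)^{-1}f$ strictly by $\norm{f}_{L^2(G_\alpha^{-1})}^2$, and a standard compactness argument (exploiting the Gaussian-type decay of $G_\alpha^{-1}$ from Proposition \ref{prop:Galpha}(i)) promotes this into a quantitative gap. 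Second and more delicately, I would establish an \textbf{entropy-production} inequality $\mathcal{D}(f) \geq \mu_\alpha \mathcal{E}(f)$, the analogue of the Campos-Dolbeault estimate. For this, the plan is to first verify that $\mathcal{L}_\alpha$ has compact resolvent on $L^2_0(G_\alpha^{-1})$ --- the Fokker-Planck operator $L$ does by classical theory, and $\Lambda_\alpha$ is a relatively compact perturbation via Proposition \ref{prop:Vel} and the decay of $G_\alpha$ from Proposition \ref{prop:Galpha}(i) --- so that $\mu_\alpha$ is attained by some eigenfunction $\phi^\ast$. The task then reduces to ruling out $\mu_\alpha = 0$: any such $\phi^\ast$ would satisfy $\grad \psi_{\phi^\ast} = 0$, forcing $\phi^\ast = G_\alpha(\kappa + c_{\phi^\ast})$ for some constant $\kappa$, and the resulting elliptic problem $-\Delta c_{\phi^\ast} = G_\alpha(\kappa + c_{\phi^\ast})$ with $\int \phi^\ast = 0$ and appropriate decay should admit only the trivial solution $\phi^\ast \equiv 0$.

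Putting these together with the dissipation identity gives $\mathcal{E}(\mathcal{T}_\alpha(\tau) f) \leq e^{-\mu_\alpha \tau}\mathcal{E}(f)$, and the two-sided equivalence $\mathcal{E}(f) \sim \norm{f}_{L^2(G_\alpha^{-1})}^2$ (upper bound immediate from log-HLS and Cauchy-Schwarz, lower bound from entropy coercivity) then yields \eqref{Apx:SpecDecay} with $K_\alpha = \mu_\alpha/2$. Uniformity of the constants for $\alpha \leq K < 8\pi$ should follow from the continuous dependence of $G_\alpha$ on $\alpha$ supplied by Proposition \ref{prop:Galpha}(v), which propagates through both the coercivity and the compactness steps. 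The main obstacle is unquestionably the entropy-production inequality: existence of the optimizer is soft, but the rigidity step --- classifying $\ker \mathcal{L}_\alpha$ on $L^2_0(G_\alpha^{-1})$ via a genuinely nonlinear elliptic equation --- is where the analysis becomes sensitive to $G_\alpha$ being a true nonlinear profile rather than the Gaussian one encounters for NSE.
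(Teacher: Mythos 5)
Your outline follows the paper's own proof almost step by step. Your quadratic form $\mathcal{E}$ (with your sign, which is the correct second variation of $\mathcal{G}$; the paper's stated definition of $\K$ carries a sign slip, but the intended $\tilde F$ is the same object) is the paper's $\tilde F$, your $\mathcal{D}$ is the paper's $D$, the dissipation identity $\frac{d}{d\tau}\mathcal{E}=-\mathcal{D}$ is exactly \eqref{ineq:dissipationLinear}, the divergence-form identity $(L-\Lambda_\alpha)f = \grad\cdot(G_\alpha\grad\psi_f)$ does follow from \eqref{pde:logPDE}, and the two compactness arguments you describe are the content of Proposition \ref{prop:CoerciveDissipation}(i) and (ii).

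The genuine gap is precisely the step you flag as ``the main obstacle'' and then leave to assertion: ruling out a nontrivial kernel. You write that the elliptic problem derived from $\grad\psi_{\phi^\ast}=0$ ``should'' admit only the trivial solution, but this is not a soft fact, and the entire argument rests on it --- \emph{both} the entropy coercivity (strict positivity of $\mathcal{E}$, which is Proposition \ref{prop:Positive}) \emph{and} the entropy-production inequality reduce to this rigidity, which the paper calls ``by far the most difficult step.'' Substituting $h=\phi^\ast/G_\alpha$, the kernel equation becomes $\Delta h + G_\alpha h = 0$ with $h$ bounded and of finite Dirichlet energy, and the paper rules out nonzero such $h$ by a lengthy ODE argument: decompose $h$ into angular Fourier modes; for $n=0$ use the explicit solution $\partial_\lambda G_\lambda|_{\lambda=\alpha}/G_\alpha$, whose boundedness at the origin is itself a nontrivial lemma adapted from \cite{Biler06}; for $n=1$ use $\partial_r G_\alpha/G_\alpha$; and for $n\ge 2$ run a Sturm--Picone comparison against the sign-definite $n=1$ solution. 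Until that rigidity is supplied, your proposal identifies the correct framework but does not prove the proposition. A secondary, lesser point: log-HLS does not directly give $\int f c_f \le (1-c_\alpha)\int f^2/G_\alpha$ on $L^2_0(G_\alpha^{-1}\,d\xi)$; the constant $C_\alpha<1$ in Proposition \ref{prop:CoerciveDissipation}(i) comes from compactness together with the rigidity above, not from log-HLS.
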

\begin{remark} \label{rmk:controlledSpec} 
That Proposition \ref{prop:OurSpectralGap} holds for $\alpha$ sufficiently small with uniformly controlled implicit constant and $K_\alpha$ for $\alpha \searrow 0$ can be shown either by the work of \cite{BlanchetDEF10} or by an argument essentially the same as Lemma \ref{lem:S1alphaSmall} below, using Proposition \ref{prop:Galpha} (iv). Indeed, one can show $K_\alpha \approx 1/2$ for $\alpha$ small in the same sense as Lemma \ref{lem:S1alphaSmall}. 
\end{remark}
Let $\K(x) := \frac{1}{2\pi}\log \abs{x}$ be the fundamental solution for Poisson's equation in two dimensions. 
The following characterization of $G_\alpha$ is important for what follows: 
\begin{equation}
\grad\log G_\alpha = \grad c_\alpha - \frac{1}{2}\xi. \label{pde:logPDE}
\end{equation}

Formally linearizing $\mathcal{G}$ (defined in \eqref{def:Gssfree}) around the stationary point $G_\alpha$ yields (using \eqref{pde:logPDE} and $\int f d\xi = 0$), 
\begin{equation*}
\mathcal{G}(G_\alpha + \epsilon f) = \mathcal{G}(G_\alpha) + \frac{\epsilon^2}{2}\left[\int \frac{\abs{f}^2}{G_\alpha} d\xi - \int f \K \ast f d\xi \right] +\mathcal{O}(\epsilon^3), 
\end{equation*}
which suggests a natural Lyapunov function for the linearized problem. Hence, define 
\begin{equation*}
\tilde{F}(f) = \frac{1}{2}\int \frac{\abs{f}^2}{{G_\alpha}} d\xi - \frac{1}{2}\int f \K \ast f d\xi.   
\end{equation*} 
It will turn out that $\tilde{F}$ is convex and that the linear evolution is the corresponding gradient flow with the appropriate metric.
The first step is the following dissipation inequality. 
The proof is a direct computation using \eqref{pde:logPDE} which we omit for brevity. 
\begin{proposition} 
Let $f_0$ be mean zero with $\tilde{F}(f_0) < \infty$ and let $f(\tau) = e^{\tau(L-\Lambda_\alpha)}f_0 = \mathcal{T}_\alpha(\tau)f_0$.
Then, 
\begin{equation}
\frac{d}{d\tau}\tilde{F}(f(\tau)) = -\int G_\alpha \abs{\grad\left(\frac{f}{G_\alpha}\right) - \grad c}^2 d\xi := -D(f), \label{ineq:dissipationLinear}
\end{equation}
where $-\Delta c = f$. 
\end{proposition}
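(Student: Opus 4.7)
The strategy is to exploit \eqref{pde:logPDE} to recast $f_\tau = (L-\Lambda_\alpha)f$ in the gradient-flow form $\grad \cdot[G_\alpha \grad \phi]$ for an explicit potential $\phi$, whereupon $\tilde F$ will be recognized as the associated quadratic Lyapunov functional and the stated dissipation will arise from a single integration by parts.

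To carry this out, I would first substitute $\frac{1}{2}\xi = \grad c_\alpha - \grad \log G_\alpha$ (from \eqref{pde:logPDE}) into $Lf = \Delta f + \frac{1}{2}\grad \cdot(\xi f)$ and use the algebraic identity $\grad f = G_\alpha \grad(f/G_\alpha) + f\grad \log G_\alpha$. The $\grad \log G_\alpha$ contributions telescope, producing
\begin{equation*}
Lf = \grad \cdot\bigl[G_\alpha \grad(f/G_\alpha)\bigr] + \grad \cdot(f\grad c_\alpha),
\end{equation*}
and the second piece exactly cancels the corresponding summand of $\Lambda_\alpha f = \grad \cdot(G_\alpha \grad c) + \grad \cdot(f\grad c_\alpha)$, with $-\Delta c = f$. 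This yields
\begin{equation*}
f_\tau = \grad \cdot\bigl[G_\alpha\bigl(\grad(f/G_\alpha) - \grad c\bigr)\bigr].
\end{equation*}
Next, differentiating $\tilde F$ along the flow and using the symmetry of $\K$ to make convolution self-adjoint on $L^2$,
\begin{equation*}
\frac{d}{d\tau}\tilde F(f) = \int f_\tau \left(\frac{f}{G_\alpha} - \K \ast f\right)d\xi,
\end{equation*}
the bracket being precisely the potential $f/G_\alpha - c$ whose gradient appears in the flux above (once the sign convention relating $\K \ast f$ and $c$ is applied). Substituting the expression for $f_\tau$ and integrating by parts then gives $\frac{d}{d\tau}\tilde F(f) = -\int G_\alpha |\grad(f/G_\alpha) - \grad c|^2 d\xi = -D(f)$, as required.

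The only real obstacle is justifying the integration by parts and the interchange of $\frac{d}{d\tau}$ with the spatial integral: one needs enough decay of $f$, $\grad(f/G_\alpha)$, and $\grad c$ for boundary-at-infinity contributions to vanish, and enough regularity in $\tau$. Given $\tilde F(f_0) < \infty$ (so in particular $f_0 \in L^2(G_\alpha^{-1}d\xi)$), this can be obtained from the Gaussian-type decay of $G_\alpha$ in Proposition \ref{prop:Galpha}(i) together with the smoothing properties of $\mathcal{T}_\alpha(\tau)$ for $\tau > 0$ furnished by Proposition \ref{prop:SpecT}. A standard approximation argument by smooth, compactly supported initial data and passage to the limit in the energy identity renders the computation rigorous; the mean-zero condition $\int f\,d\xi = 0$ is automatically preserved by the divergence form of the equation.
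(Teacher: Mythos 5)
Your computation is exactly the "direct computation using \eqref{pde:logPDE}" that the paper cites but omits: rewriting $L-\Lambda_\alpha$ in divergence form as $\grad\cdot[G_\alpha(\grad(f/G_\alpha)-\grad c)]$, pairing $f_\tau$ with the linearized potential $f/G_\alpha - c$, and integrating by parts. Your parenthetical on the sign convention is appropriate, since the paper's literal definitions of $\K$ and $c$ differ by a sign from what makes $\tilde F$ the second variation of $\mathcal{G}$; with the consistent convention your identity gives $-D(f)$ as claimed.
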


Drawing intuition from classical Bakry-Emery analysis and the more recent insights of entropy dissipation methods (see e.g. \cite{ArnoldMarkowichEtAl01,CarrilloEntDiss01,CarrilloMcCannVillani03}) it is expected that convexity and coercivity of $\tilde{F}$ are  equivalent to the decay estimate \eqref{Apx:SpecDecay}.
Since $\tilde{F}$ is quadratic, this is in turn equivalent to strict positivity. 
\begin{proposition}\label{prop:Positive}  
The energy $\tilde{F}$ is strictly positive, that is, 
if $f \in L_0^2(G_\alpha^{-1}d\xi)$ and $f \neq 0$  then $\tilde{F}(f) > 0$. 
This is equivalent to strict positivity of the dissipation:  
\begin{equation*}
D(f) = \int G_\alpha \abs{\grad\left(\frac{f}{G_\alpha}\right) - \grad c}^2 d\xi > 0. 
\end{equation*}
\end{proposition}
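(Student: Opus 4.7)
The plan is a variational/rigidity argument that reduces strict positivity of $\tilde F$ to a kernel-uniqueness statement for the linear elliptic operator $-\Delta - G_\alpha$. The key initial observation is that, since $G_\alpha > 0$ pointwise by Proposition~\ref{prop:Galpha}, $D(f) = 0$ forces $\grad(f/G_\alpha) = \grad c$ a.e., hence $f = G_\alpha(c + \mu)$ for a constant $\mu$ with $-\Delta c = f$. Setting $\phi := c + \mu$ yields $(-\Delta - G_\alpha)\phi = 0$ together with $\phi(\xi) \to \mu$ at infinity (since $\int f = 0$ forces $c \to 0$) and the mass constraint $\int G_\alpha \phi = 0$. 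Substituting back one checks that $\tilde F(f) = 0$ follows automatically from $D(f) = 0$ and $\int f = 0$, so the zero sets of $\tilde F$ and $D$ on $L^2_0(G_\alpha^{-1}d\xi)$ coincide, and strict positivity reduces to showing this elliptic system admits only $\phi \equiv 0$.

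I would first obtain $\tilde F \geq 0$ from Proposition~\ref{prop:Galpha}(ii): expanding $\mathcal{G}(G_\alpha + \epsilon f) - \mathcal{G}(G_\alpha) \geq 0$ for $\int f = 0$ and $\epsilon$ small enough that $G_\alpha + \epsilon f \geq 0$ is preserved, the linear term vanishes by \eqref{pde:logPDE} and the quadratic term is precisely $\epsilon^2 \tilde F(f) \geq 0$. Next I would set up the variational problem
\[
\lambda_0 := \inf \set{\tilde F(f) : f \in L^2_0(G_\alpha^{-1}d\xi),\ \norm{f}_{L^2(G_\alpha^{-1})} = 1} \geq 0
\]
and establish attainment by compactness. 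A minimizing sequence is bounded in $L^2(G_\alpha^{-1})$ with weak limit $f^*$; the quadratic part is weakly lower semicontinuous, and the nonlocal term $\int f \K * f$ passes to the limit using the super-Gaussian tail of $G_\alpha$ from \eqref{eq:Galpha_limit}, which yields a compact embedding $L^2(G_\alpha^{-1}) \hookrightarrow L^p$ for appropriate $p < 2$ (weighted Rellich-Kondrachov) and continuity via HLS. The mean-zero constraint passes by duality since $\int G_\alpha < \infty$, so the infimum is attained at some nontrivial $f^*$.

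Suppose for contradiction $\lambda_0 = 0$. By the initial reduction there is a nontrivial $\phi$ solving $(-\Delta - G_\alpha)\phi = 0$ with $\phi \to \mu$ at infinity and $\int G_\alpha \phi = 0$. In the radial sector, the second-order ODE $-\phi'' - r^{-1}\phi' - G_\alpha(r)\phi = 0$ has at most a one-dimensional space of solutions regular at the origin, and this space is explicitly generated by differentiating \eqref{pde:logPDE} and $-\Delta c_\beta = G_\beta$ along $\beta \mapsto G_\beta$ at $\beta = \alpha$, yielding $\phi_\alpha = \partial_\beta c_\beta\big|_{\beta=\alpha} + C'(\alpha)$. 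However, the Newtonian-potential asymptotics $\partial_\beta c_\beta(\xi) \sim -(2\pi)^{-1}\log\abs{\xi}$ show $\phi_\alpha$ grows logarithmically at infinity, \emph{incompatible} with the boundedness condition $\phi \to \mu$, ruling the radial sector out. For non-radial angular modes $\ell \geq 1$, the reduced operators $-\partial_r^2 - r^{-1}\partial_r + \ell^2/r^2 - G_\alpha(r)$ are strictly positive: unlike physical-variables PKS, \eqref{def:resPKS} carries no translation symmetry, so the naive candidates $\partial_j G_\alpha$ are \emph{not} in the kernel, and a Hardy/Schr\"odinger-type quadratic-form estimate extracts positivity using the centrifugal barrier $\ell^2/r^2$. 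Hence $\phi \equiv 0$, contradicting $\norm{f^*}_{L^2(G_\alpha^{-1})} = 1$, so $\lambda_0 > 0$.

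The principal obstacle is the rigidity step, particularly the spectral analysis of the low angular sectors $\ell \geq 1$: since the PKS linearization lacks the skew perturbation of Fokker-Planck structure enjoyed by NSE (difficulty (d) in the introduction), the clean arguments of \cite{GallayWayne05} are unavailable and one must work directly with the Schr\"odinger operator $-\Delta - G_\alpha$. The uniform dependence on $\alpha \leq K < 8\pi$ stated in the proposition further requires tracking constants carefully through all three steps, where Proposition~\ref{prop:Galpha}(iv)-(v) becomes indispensable.
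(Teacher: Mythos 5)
Your reduction is correct and matches the paper: $D(f) = 0$ forces $h := f/G_\alpha$ to be a bounded, finite-energy solution of $\Delta h + G_\alpha h = 0$, and the proposition comes down to showing this equation has no nontrivial such solution. Your observation that $D(f)=0$ and $\int f = 0$ conversely force $\tilde F(f)=0$ (so the two zero sets agree) is also correct and completes the stated equivalence, which the paper's proof leaves implicit. Your radial-sector argument likewise matches (you do gloss over the fact that the regularity of $\partial_\beta c_\beta\big|_{\beta=\alpha}$ at the origin is itself a lemma -- Lemma~\ref{lem:EalphaControl} in the paper), and your variational $\lambda_0$-packaging is a harmless reorganization that front-loads the coercivity later proved in Proposition~\ref{prop:CoerciveDissipation}.

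The genuine gap is in the nonradial sectors $\ell \geq 1$. First, a factual error: the claim that ``the naive candidates $\partial_j G_\alpha$ are not in the kernel'' is misleading. Differentiating \eqref{pde:logPDE} shows that $\partial_j \log G_\alpha = \partial_j G_\alpha / G_\alpha$ \emph{is} a nontrivial solution of $\Delta h + G_\alpha h = 0$ supported in the $\ell = 1$ mode; it is merely unbounded (growing linearly at infinity). This matters in two ways. For $\ell = 1$, the existence of this zero-energy, regular-at-origin solution means the Schr\"odinger quadratic form $\int\bigl(\abs{f'}^2 + r^{-2}\abs{f}^2 - G_\alpha \abs{f}^2\bigr)r\,dr$ has infimum exactly zero (witnessed by truncations of $\partial_r G_\alpha / G_\alpha$), so it is \emph{not} coercive; a Hardy-type bound cannot ``extract strict positivity'' in the sense your argument requires. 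What actually rules out bounded $\ell=1$ solutions is the ODE structure: the unique solution regular at the origin is a multiple of $\partial_r G_\alpha/G_\alpha$, which is unbounded at infinity. Second, for $\ell \geq 2$ the naive pointwise comparison $\ell^2 r^{-2} \geq G_\alpha(r)$ does not hold for all $r$ once $\alpha$ is close to $8\pi$, and you have not exhibited the sharp weighted Hardy inequality a quadratic-form approach would need. The paper's argument (Lemma~\ref{lem:zerocrossings}) instead runs a Sturm-type comparison against $g = -\partial_r G_\alpha / G_\alpha$ (which is strictly positive on $(0,\infty)$ by the monotonicity of $G_\alpha$), producing the integral identity
\begin{equation*}
\lim_{r \to x_1}\bigl(r f' g - r g' f\bigr) = (n^2 - 1)\int_0^{x_1} \frac{g f}{r}\,dr,
\end{equation*}
which has the right sign on both sides only if $f$ vanishes identically. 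This is the key idea of the proof for $\ell\geq 2$ and is absent from your proposal; the ``Hardy/Schr\"odinger-type'' estimate you sketch would not substitute for it.
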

\begin{proof}
Linearization of $\mathcal{G}$ with smooth, compactly supported perturbations and passing to the limit shows that necessarily $\tilde{F}(f) \geq 0$.  
Now, consider the possibility that $f \in L_0^2(G_\alpha^{-1}d\xi)$ and $\tilde{F}(f) = 0$. 
The dissipation inequality \eqref{ineq:dissipationLinear} implies that
\begin{equation*}
\int G_\alpha \abs{ \grad \frac{f}{G_\alpha} - \grad c}^2 d\xi = 0,
\end{equation*}
which since $G_\alpha$ is strictly positive implies
\begin{equation}
\grad \frac{f}{G_\alpha} - \grad c = 0, \label{eq:gradfc}
\end{equation}
almost everywhere. 
A bootstrap argument using the smoothing effect of the nonlocal term shows that $f$ is necessarily smooth. 
Moreover, we also have, 
\begin{equation*}
\frac{f}{G_\alpha} - c = K,
\end{equation*}
for some constant $K$. 
Since $f \in L^2_0(G_\alpha^{-1}d\xi)$ we also have $c \in L^\infty$, and hence $f G_\alpha^{-1} \in L^\infty$. 
Taking the divergence of \eqref{eq:gradfc} implies that 
\begin{equation*}
\Delta \frac{f}{G_\alpha} + f = 0. 
\end{equation*}
Re-naming $h = f G_\alpha^{-1}$, we see that the question of whether or not $\tilde{F}$ is strictly positive reduces to whether or not there are any \emph{bounded}, finite energy $\int \abs{\grad h}^2 dx < \infty$, solutions to the elliptic PDE 
\begin{equation}
\Delta h + G_\alpha h = 0. \label{def:EvilPDE}
\end{equation}
Ruling out bounded solutions of \eqref{def:EvilPDE} turns out to be by far the most difficult step in the proof of 
Proposition \ref{prop:OurSpectralGap}.  
\begin{lemma}
The elliptic PDE \eqref{def:EvilPDE} admits no bounded, finite energy solutions.  
\end{lemma}
\begin{proof}
The proof requires a lengthy ODE argument and several lemmas. 
The first step is to consider any potential solution and decompose into the radial Fourier series (with $\xi = (r\sin \theta,r\cos \theta)^T$):
\begin{equation*}
h(\xi) = \sum_{n = -\infty}^\infty f_n(r)e^{in\theta}. 
\end{equation*}
Let us first rule out radially symmetric solutions $f(r):=f_0(r)$. In this case, \eqref{def:EvilPDE} becomes 
\begin{equation}
\frac{1}{r}(rf^\prime)^\prime + G_\alpha f = f^{\prime\prime} + \frac{1}{r}f^\prime + G_\alpha f = 0. \label{ode:radsym}
\end{equation}
The next lemma is a standard ODE result: 
\begin{lemma}
The ODE \eqref{ode:radsym} has two linearly independent solutions, and the possible behaviors at zero and infinity are $f(r) \sim K$ for some constant $K$ or $f(r) \sim \log r$. 
\end{lemma}
Hence it suffices to exhibit a solution to \eqref{ode:radsym} which is bounded at zero and unbounded at infinity.
Such a solution will be provided by the zero eigenfunction: 
\begin{equation}
E^0_\alpha(\xi) := \frac{d}{d\lambda}G_\lambda(\xi)|_{\lambda = \alpha}. 
\end{equation}
Indeed, by \eqref{pde:logPDE}
\begin{equation}
\Delta \frac{E_\alpha^0}{G_\alpha} + E^0_\alpha = 0. \label{def:E0alphaPDE}
\end{equation}
By \eqref{def:E0alphaPDE}, $e(\abs{\xi}) := E_\alpha^0(\xi) G_\alpha^{-1}(\xi)$ solves \eqref{ode:radsym}. 
Moreover, since $\int E^0_\alpha(\xi) dx = 1$, $e(r)$ is necessarily logarithmically unbounded at infinity (by \eqref{def:E0alphaPDE}). 
\begin{lemma} \label{lem:EalphaControl}
The function $e(r)$ is bounded at zero. 
\end{lemma}
\begin{proof}
Define, 
\begin{equation*}
n(t) = \int_{\abs{x} \leq \sqrt{t}} E^0_\alpha(x) dx, 
\end{equation*}
which solves the ODE
\begin{equation*}
4n^{\prime\prime} + n^\prime + \frac{1}{\pi t}\left( n m^\prime + n^\prime m \right) = 0,
\end{equation*}  
with boundary conditions $n(0) = 0$, $n(\infty) = 1$, where $m(t):= \int_{\abs{x} \leq \sqrt{t}} G_\alpha(x) dx$.
Note that this ODE is linear in $n(t)$. 
From here one can apply an analysis similar to what is done in Lemma 4.1 in \cite{Biler06} to prove that $n^\prime(0)$ exists and is finite, and hence $E_\alpha$ and $e$ are bounded at zero. 
The argument in \cite{Biler06} is already localized to a small neighborhood of zero, which is necessary as $n(t)$ does not satisfy the same monotonicity properties that $m(t)$ does. 
That $m^\prime(t)$ is well-behaved and satisfies certain monotonicity properties is necessary for the proof.
Since the argument is a little technical and follows that of \cite{Biler06} very closely we omit it for brevity.  
\end{proof}
\begin{remark} 
By Lemma \ref{lem:EalphaControl}, $E_\alpha^0$ is bounded, hence from \eqref{def:E0alphaPDE} a bootstrap argument implies that $E_\alpha^0$ is smooth.
\end{remark}

Now we turn to the angular modes. In this case we get the ODE (which holds for the real and imaginary parts of the solutions)
\begin{equation*}
\frac{1}{r}(rf^\prime)^\prime - \frac{n^2}{r^2}f + G_\alpha f = 0, 
\end{equation*}
which we re-write as 
\begin{equation}
(rf^\prime)^\prime - \frac{n^2}{r}f  + r G_\alpha f = 0. \label{ode:angular}
\end{equation}
Of course we have the corresponding classical ODE result: 
\begin{lemma}
The ODE \eqref{ode:angular} has two linearly independent solutions, and the possible behaviors at zero and infinity are $\sim r^{-n}$ or $\sim r^{n}$. 
\end{lemma}
We first rule out bounded  solutions supported in the mode $n = 1$, namely, solutions that satisfy  $\sim r^{-1}$ when $r$ goes to infinity  and  $\sim r$ when $r$ goes to zero. 
For this,  we  use that
\begin{equation}
\Delta \frac{\partial_{\xi_1} G_\alpha}{G_\alpha} + \partial_{\xi_1}G_\alpha = 0,
\end{equation}
and hence $n_1 = \partial_{r}G_\alpha G_\alpha^{-1}$ is a solution to \eqref{ode:angular} with $n=1$.  
By definition $n_1(0) = 0$.
It also follows from Proposition \ref{prop:Galpha} that necessarily $n_1(r)$ is linearly unbounded at infinity. 
This rules out any bounded, non-zero solutions in the first angular mode. Moreover, from the monotonicity of $G_\alpha$, we get the important fact that $n_1(r)$ is \emph{strictly negative} for $r > 0$.

Now we confront $n \geq 2$. For this we will use second-order comparison principles against $n_1$, similar to, for example, Chapter 8 in \cite{CoddingtonLevinson}.  
Suppose we have a bounded solution $f(r)$ to \eqref{ode:angular} with $n\geq 2$. 
Therefore, near zero $f(r) \sim r^{n}$ and near infinity $f(r) \sim r^{-n}$. 
\begin{lemma} \label{lem:zerocrossings}
Let $f$ be a solution to \eqref{ode:angular} with $n \geq 2$ which is bounded. Then $f \equiv 0$.  
\end{lemma} 
\begin{proof}
If $f$ vanishes in an open neighborhood of zero then by unique continuation $f \equiv 0$, therefore since $f \sim r^{n}$ near zero, 
we can assume $f$ is strictly positive on some open set (replacing $f$ by $-f$ if necessary). 
Define $x_1 \in (0,\infty]$ by 
\begin{equation*}
x_1 := \sup \set{r >0: 0 < f(s), \;\; \forall s \in (0,r)}. 
\end{equation*}
If $f(r)$ crosses zero at a finite value of $r$ then $x_1$ is the location of the first zero of $f$. 
If $f$ remains positive for all time then $x_1 = \infty$.  
We will compare $f$ to the strictly positive solution $g(r) := -n_1(r)$ of \eqref{ode:angular} with $n = 1$ on the interval $(0,x_1)$.  
Multiplying the ODE satisfied by $g$ by $f$ and vice-versa and then subtracting gives
\begin{align*}
(rf^\prime)^\prime g - (r g^\prime)^\prime f - \frac{1}{r}\left(n^2 - 1\right)gf = 0. 
\end{align*}
Integrate now from $0$ to $x_1$ (both sides will turn out to be integrable in the case $x_1 = \infty$): 
\begin{align}
\int_0^{x_1} (rf^\prime)^\prime g - (r g^\prime)^\prime f dr = \int_0^{x_1} \frac{1}{r}\left(n^2 - 1\right)gf dr. \label{eq:intecomp}
\end{align}
First notice that
\begin{equation*}
\left(r f^\prime g - r g^\prime f\right)^\prime = (rf^\prime)^\prime g - (r g^\prime)^\prime f. 
\end{equation*} 
However $r f^\prime g - r g^\prime f$ is zero at $r = 0$, which implies \eqref{eq:intecomp} becomes
\begin{equation*}
\lim_{r \rightarrow x_1} rf^\prime(r) g(r) - rg^\prime(r) f(r) = \int_0^{x_1} \frac{n^2-1}{r} g(r)f(r) dr \geq 0. 
\end{equation*}
If $x_1 < \infty$ then $f^\prime(x_1) \leq 0$ and $f(x_1) = 0$ which implies that the integral on the RHS must be equal to zero, which implies $f \equiv 0$ on $(0,x_1)$. By unique continuation, $f \equiv 0$ on $[0,\infty)$. 
If $x_1 = \infty$ we have to first show that the RHS of \eqref{eq:intecomp} is integrable. 
This follows since $g \sim r$ and $f \sim r^{-n}$ as $r \rightarrow \infty$. 
Moreover,  
\begin{equation*}
\lim_{r \rightarrow \infty} rf^\prime(r) g(r) - rg^\prime(r) f(r) = 0,   
\end{equation*}
since $g^\prime(r) \sim 1$ and $f^\prime(r) \lesssim r^{-3}$. 
Hence in the case that $x_1= \infty$, \eqref{eq:intecomp} still implies $f \equiv 0$.  
\end{proof} 
This completes the proof that \eqref{def:EvilPDE} has no non-trivial bounded solutions on $\Real^2$.  
\end{proof}
This in turn, completes the proof that $\tilde{F}$ and $D(f)$ are positive. 
\end{proof}
Proposition \ref{prop:Positive} proves that $\tilde{F}$ is positive, which implies $\tilde{F}$ is convex since it is quadratic in $f$. 
The next proposition uses compactness arguments to confirm first that $\tilde{F}$ is coercive and next that $D(f)$ controls $\tilde{F}$.
\begin{proposition} \label{prop:CoerciveDissipation}
For all $\alpha \in (0,8\pi)$ the following holds. 
\begin{itemize} 
\item[(i)] There is a constant $C_\alpha \in (0,1)$ such that for all $f \in L^2_0(G_\alpha^{-1}d\xi)$,  
\begin{equation*}
 0 \leq \int f \K \ast f dx \leq C_\alpha \int \abs{f}^2 G_\alpha^{-1} dx.  
\end{equation*} 
In particular, $\tilde{F}$ is coercive:
\begin{equation*}
(1-C_\alpha)\int \abs{f}^2 G_\alpha^{-1} dx \leq 2  \tilde{F}(f),   
\end{equation*}
where $1-C_\alpha>0$.
\item[(ii)] There exists a constant $K_\alpha > 0$ such that for  all $f \in L^2_0(G_\alpha^{-1}d\xi)$,  
\begin{equation}
 K_\alpha \tilde{F}(f) \leq D(f). \label{ineq:DDineq}
\end{equation}
\end{itemize}
\end{proposition}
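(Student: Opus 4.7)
The plan is to derive both (i) and (ii) by a direct compactness/rigidity argument, using Proposition \ref{prop:Positive} as the rigidity input and exploiting the fact that $L_0^2(G_\alpha^{-1}d\xi)$ has very strong decay (faster than any Gaussian, by \eqref{eq:Galpha_limit}), which makes the nonlocal bilinear form $Q(f):=\int f\, \K\ast f\,d\xi$ behave like a compact operator.

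The key preliminary step is weak continuity of $Q$ on $L_0^2(G_\alpha^{-1}d\xi)$. A bounded sequence $\{f_n\}$ in $L^2(G_\alpha^{-1}d\xi)$ is automatically bounded in every $L^p$ ($1\le p<\infty$) and in every polynomial weighted space $L^2(m)$. By Proposition \ref{prop:Vel} the velocity fields $\nabla c_n = B\ast f_n$ are bounded in $L^p$ for $p\in(2,\infty)$; combined with elliptic regularity, $\K\ast f_n$ is bounded in $H^2_{\mathrm{loc}}$. The mean-zero hypothesis cancels the logarithmic growth at infinity: writing $\K\ast f_n(\xi)=\int[\K(\xi-\zeta)-\K(\xi)]f_n(\zeta)d\zeta$ and using Gaussian decay of $f_n$, one obtains pointwise decay $|\K\ast f_n(\xi)|\lesssim |\xi|^{-1}$ at infinity with uniform constants. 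A diagonal Rellich--Kondrachov extraction then gives strong convergence of $\K\ast f_n$ in $L^2(G_\alpha\,d\xi)$, which is precisely what is needed to pass to the limit in $Q(f_n)=\int f_n(\K\ast f_n)\,d\xi$ against weak convergence of $f_n$ in $L^2(G_\alpha^{-1}d\xi)$.

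For (i), define
\begin{equation*}
C_\alpha := \sup_{f\in L_0^2(G_\alpha^{-1}d\xi)\setminus\{0\}} \frac{\int f\,\K\ast f\,d\xi}{\int |f|^2 G_\alpha^{-1}\,d\xi}.
\end{equation*}
Non-negativity of $\int f\,\K\ast f$ follows from the identity $\int f\,\K\ast f = \int |\nabla c|^2$ with $-\Delta c = f$, and $C_\alpha \le 1$ is a restatement of Proposition \ref{prop:Positive}. Suppose $C_\alpha = 1$ and pick a maximizing sequence $f_n$ with $\int |f_n|^2 G_\alpha^{-1}=1$, $\int f_n = 0$, $Q(f_n)\to 1$. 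Extract a weak limit $f_*$; the weak continuity of $Q$ established above yields $Q(f_*)=1$, while weak lower semicontinuity of the weighted $L^2$ norm gives $\int |f_*|^2 G_\alpha^{-1}\le 1$. Hence $\tilde F(f_*)\le 0$, which combined with Proposition \ref{prop:Positive} forces $f_*\equiv 0$, contradicting $Q(f_*)=1$. So $C_\alpha<1$ and the coercivity follows.

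For (ii), an analogous argument works: suppose for contradiction $K_\alpha=0$ and let $f_n$ satisfy $\tilde F(f_n)=1$, $D(f_n)\to 0$. Part (i) already gives $\int |f_n|^2 G_\alpha^{-1}$ bounded, so extract a weak limit $f_*$. Weak continuity of $Q$ and lower semicontinuity of the norm give $\tilde F(f_*) = 1$, hence $f_* \neq 0$. The dissipation $D$, being a sum of squares in $\nabla(f/G_\alpha)$ and $\nabla c$, is lower semicontinuous, so $D(f_*) = 0$, which forces $\nabla(f_*/G_\alpha) = \nabla c_*$, i.e., $f_*/G_\alpha - c_* = \mathrm{const}$; applying $-\Delta$ yields the elliptic equation \eqref{def:EvilPDE} ruled out in Proposition \ref{prop:Positive}, a contradiction. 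The main obstacle is the weak-continuity step: one must carefully combine the rapid decay of $L^2(G_\alpha^{-1}d\xi)$ with elliptic regularity and the mean-zero cancellation of the logarithmic kernel, since $\K$ itself is not a bounded operator and the domain is unbounded. The uniformity as $\alpha\searrow 0$ and for $\alpha\le K<8\pi$ follows by perturbing around the Gaussian case using Proposition \ref{prop:Galpha} (iv)--(v), as indicated in Remark \ref{rmk:controlledSpec}.
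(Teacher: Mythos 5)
You take the same compactness--rigidity route as the paper, and part (i) is fine: the paper proves the weak continuity of $Q(f)=\int f\,\K\ast f$ by splitting the kernel into $K_\delta^R$ and near/far tails and using uniform exponential localization in $L^1$, whereas you use elliptic regularity and a weighted Rellich--Kondrachov extraction; both close the same argument, so this is a cosmetic difference.

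Part (ii), however, has a genuine gap. You extract a weak limit $f_*$ in $L^2(G_\alpha^{-1}d\xi)$ and assert that ``weak continuity of $Q$ and lower semicontinuity of the norm give $\tilde F(f_*)=1$, hence $f_*\neq 0$.'' But weak lower semicontinuity only gives
\begin{equation*}
\tilde F(f_*) \;=\; \tfrac12\norm{f_*}^2_{L^2(G_\alpha^{-1}d\xi)} - \tfrac12 Q(f_*)
 \;\leq\; \liminf_n\bigl(\tfrac12\norm{f_n}^2_{L^2(G_\alpha^{-1}d\xi)}\bigr) - \tfrac12 Q(f_*)
 \;=\; \liminf_n \tilde F(f_n) \;=\; 1,
\end{equation*}
which is consistent with $f_*\equiv 0$ (in which case $\tilde F(f_*)=0\leq 1$). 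Nothing in your argument prevents the normalized sequence $f_n$ from oscillating away to zero weakly while its norm stays bounded below. This is precisely the step the paper handles by \emph{upgrading} weak convergence to strong convergence: the assumption $D(f_n)\to 0$ (together with the uniform bound $\norm{\nabla\K\ast f_n}_2\lesssim 1$ coming from part (i)) gives a uniform bound on $\int G_\alpha\bigl|\nabla(f_n/G_\alpha)\bigr|^2\,d\xi$, i.e. $f_n/G_\alpha$ is bounded in the weighted space $V=\{F: \int G_\alpha(|\nabla F|^2+|F|^2)\,d\xi<\infty\}$, which embeds compactly into $L^2(G_\alpha d\xi)$. Hence $f_n\to f_*$ \emph{strongly} in $L^2(G_\alpha^{-1}d\xi)$, so the norm is preserved in the limit, $\norm{f_*}^2 = \lim\norm{f_n}^2 \geq 2\tilde F(f_n)=2>0$ (using $Q\geq 0$), and $f_*\neq 0$. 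Without this gradient-compactness step, the contradiction you want to derive is not available. You should also be explicit that $D(\cdot)$ is weakly lower semicontinuous (so $D(f_*)=0$), which you do state, but the real work is showing $f_*\neq 0$.
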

\begin{remark}
By Remark \ref{rmk:controlledSpec}, we do not need to worry about the behavior of these constants as $\alpha \searrow 0$.  
\end{remark}

First let us show why Proposition \ref{prop:CoerciveDissipation} completes the proof of Proposition \ref{prop:OurSpectralGap}. 
By  \eqref{ineq:DDineq} and \eqref{ineq:dissipationLinear} we get exponential decay of $\tilde{F}$, which combined with coercivity implies
\begin{equation*}
\norm{f}_{L_0^2(G_\alpha^{-1}d\xi)}^2 \lesssim \tilde{F}(f) \lesssim \tilde{F}(f_0)e^{-K_\alpha t} \lesssim \norm{f_0}_{L_0^2(G_\alpha^{-1}d\xi)}^2 e^{-K_\alpha t}.  
\end{equation*}
This completes the proof of Proposition \ref{prop:OurSpectralGap}. 

Now let us turn to the proof of Proposition \ref{prop:CoerciveDissipation}.  
\begin{proof}(Proposition \ref{prop:CoerciveDissipation})
Since $\tilde{F}(f) > 0$ we already have 
\begin{equation*}
\int f \K \ast f d\xi < \int \abs{f}^2 G_\alpha^{-1} d\xi,  
\end{equation*}
for all $f \neq 0$. 
Suppose there exists a sequence of $\set{f_k}_{k = 1}^\infty \subset L_0^2(G_\alpha^{-1}d\xi)$, such that $\int \abs{f_k}^2 G_\alpha^{-1} d\xi = 1$ (without loss of generality by homogeneity) and
\begin{equation*}
\lim_{k \rightarrow \infty}\int f_k \K \ast f_k d\xi = 1. 
\end{equation*}
By the boundedness of $f_k$ in $L_0^2(G_\alpha^{-1}d\xi)$ we may extract a subsequence (not relabeled) that weakly converges in $L^2_0(G_\alpha^{-1}d\xi)$ to some limit $h$ which by lower semicontinuity satisfies
\begin{equation*}
\int \abs{h}^2 G_\alpha^{-1} d\xi \leq 1. 
\end{equation*} 
Since $f_k$ can also be chosen to converge weakly in $L^1$, we have that $\int h d\xi = 0$. 
We claim that 
\begin{equation}
1= \lim_{k \rightarrow \infty}\int f_k \K \ast f_k d\xi = \int h \K \ast h d\xi, \label{eq:hKh} 
\end{equation}
which implies both that $h \neq 0$ and $\tilde{F}(h) \leq 0$, in contradiction with Proposition \ref{prop:Positive}.  
We now prove \eqref{eq:hKh}. 
First, define $K_\delta^R = \K(\abs{x-y})\mathbf{1}_{\delta < \abs{x-y} < R}$ and break up the convolution into 
\begin{align*}
\int f_k \K \ast f_k dx & = \int f_k K_\delta^R \ast f_k dx + \frac{1}{4\pi}\int_{\abs{x-y} < \delta} f_k(x)f_k(y) \log\abs{x-y} dx dy \\ & \;\;\;+ \frac{1}{4\pi}\int_{\abs{x-y} > R} f_k(x)f_k(y) \log\abs{x-y} dx dy \\
& = T1 + T2 + T3. 
\end{align*}
The $f_k$ are all uniformly well localized, hence the term $T3$ can be made arbitrarily small by choosing $R > 1$ large: 
\begin{align*}
\int_{\abs{x-y} > R} f_k(x)f_k(y) \log\abs{x-y} dx dy & \leq \frac{\log R}{R}\int_{\abs{x-y} > R}\abs{f_k(x)}\abs{f_k(y)}\abs{x-y} dx dy \\ 
& \leq \frac{\log R}{R}\int_{\abs{x-y} > R}\abs{f_k(x)}\abs{f_k(y)}(\abs{x} + \abs{y}) dx dy \\ 
& \lesssim \frac{\log R}{R} \norm{f_k}_{L^1} \norm{f_k(x)\abs{x}}_{L^1} \\ 
& \lesssim \frac{\log R}{R} \int \abs{f_k}^2 G_\alpha^{-1} dx. 
\end{align*}
Similarly, since the $f_k$ are uniformly bounded in $L^2$, the term $T2$ can be made arbitrarily small by choosing $\delta$ small: 
\begin{align*}
\int_{\abs{x-y} < \delta} f_k(x)f_k(y) \log\abs{x-y} dx dy \lesssim \norm{f_k}_{L^2}^2\norm{ \log\abs{x-y}\mathbf{1}_{\abs{x-y}<\delta}}_{L^1}.  
\end{align*} 
Notice that the exact same arguments apply to $h$. Hence, it suffices to prove that for all $\delta, R$ we have 
\begin{equation*}
\int f_k K_\delta^R\ast f_k dx   \rightarrow   \int h K_\delta^R \ast h dx. 
\end{equation*}
This convergence follows from classical weak convergence arguments, but we will 
still give a proof. 
Indeed, consider 
\begin{align*}
\int f_k K_\delta^R\ast f_k dx - \int h K_\delta^R \ast h dx & = \int (f_k - h) K_\delta^R \ast h dx + \int f_k K_\delta^R \ast (f_k - h). 
\end{align*}
Since $f_k \rightharpoonup h$ in $L^1$ the first term converges to zero, so we need only focus on the latter. 
Define 
\begin{equation*}
v_k(x) := \int (f_k(y) - h(y))K_\delta^R(x-y) dy = (f_k - h)\ast K_\delta^R. 
\end{equation*}
By weak convergence again, $v_k(x) \rightarrow 0$ pointwise a.e.. Let $\bar{v}(x) := \sup_k v_k(x)$.
Firstly, 
\begin{align*}
\abs{\bar{v}(x)} \lesssim \sup_k \norm{f_k - h}_{L^1(B_R(x))} \max(\abs{\log \delta}, \abs{\log R}).  
\end{align*}
Since $f_k$ and $h$ are exponentially localized in $L^1$ uniformly in $k$ (since $f_k \in L^2_0(G_\alpha^{-1}d\xi)$), it follows that $\bar{v}(x)$ is pointwise exponentially localized as well as bounded, and hence integrable.
By the dominated convergence theorem, it follows that $v_k \rightarrow 0$ strongly in $L^1$. By interpolation this implies $v_k \rightarrow 0$ in $L^p$ for all $p \in [1,\infty)$ from which it follows that $\int f_k K_\delta^R \ast (f_k - h) \rightarrow 0$.
Putting all of the estimates together, we deduce that
\begin{equation*}
\int f_k \K \ast f_k dx \rightarrow \int h \K \ast h dx, 
\end{equation*}
which is the desired contradiction. This completes the proof of (i). 

The proof of (ii) continues in a similar fashion. 
We suppose there exists a sequence $\set{f_k}_{k = 1}^\infty$ of functions $f_k \neq 0$ normalized such that $\tilde{F}(f_k) \equiv 1$ but $D(f_k) \rightarrow 0$ and derive a contradiction. By the coercivity estimate (i),
 this implies 
$ 2 \leq 
 \norm{f_k}^2_{L^2(G_\alpha^{-1}d\xi)} \leq 2  (1-C_\alpha)^{-1}$ and $\norm{\grad \K \ast f_k}_2 \lesssim 1$.  
Extracting a subsequence if necessary, we can assume that $f_k$ converges weakly 
to some $h$ in $L^2(G_\alpha^{-1} d\xi)$. Since $f_k$ can also be chosen to converge weakly in $L^1$, we have that $\int h d\xi = 0$ and hence $h \in L_0^2(G_\alpha^{-1} d\xi) $.  By lower semicontinuity of 
$D(f)$ with respect to weak convergence, we also deduce that $ D(h)=0$.
To get a contradiction, we have just to prove that $h \neq 0$.

It follows from the bound $\norm{\grad \K \ast f_k}_2 \lesssim 1$ and 
 the boundedness of $D(f_k)$ that $\int G_\alpha \abs{\grad \frac{f_k}{G_\alpha}}^2 d\xi$ is uniformly bounded.
  Define  $V= V (G_\alpha) = \{  F \, :  \norm{F}_{V}^2 = \int G_\alpha (\abs{\grad F}^2 
 + \abs{F}^2   ) d\xi < \infty       \}. $
As $V$ is compactly embedded in $L^2(G_\alpha d\xi)$  and since 
 $ \frac{f_k}{G_\alpha} $ is bounded in  $V$, it is relatively compact in $L^2(G_\alpha d\xi)$. 
Hence, $f_k$ is relatively compact in  $L^2(G_\alpha^{-1} d\xi)$ and extracting a subsequence, 
we deduce that   $f_k$  converges strongly to $h$  in  $L^2(G_\alpha^{-1} d\xi)$ and hence $h \neq 0$. 
This ends the proof of the Proposition.  

\end{proof}

\subsection{Proof of Proposition \ref{prop:SpecT}} 
\subsubsection{Proof of \textit{(i)}} \label{sec:SpecTi}
Fix $\alpha \in (0,8\pi)$. 
Consider $f$ which solves $\partial_\tau f = Lf - \Lambda_\alpha f$ with initial data $f_0$. Written with Duhamel's formula this is
\begin{equation*}
f(\tau) = S(\tau)f_0 - \int_0^\tau S(\tau - s) \Lambda_\alpha f(s) ds. 
\end{equation*}
A straightforward contraction mapping argument similar to the others employed in this work implies that $\mathcal{T}_\alpha(\tau) = e^{\tau(L - \Lambda_\alpha)}$ defines a strongly continuous semigroup on $L^2(m)$. 
However, an additional analysis must be done to ensure \eqref{ineq:TBounded} holds independently  of $\tau$. 
By linearity, it suffices to consider nonnegative $f_0$ with $\norm{f_0}_{L^2(m)} = 1$. As above, write $f(\tau) = \mathcal{T}_\alpha(\tau)f_0$. 
As $L^2(m) \hookrightarrow L^1$, $f(\tau) \in L^1$. Furthermore, $\mathcal{T}_\alpha(\tau)$ preserves non-negativity and is in divergence form, hence $\norm{f(\tau)}_1 = \norm{f_0}_1$ for all $\tau > 0$. 

We first show that $\norm{f(\tau)}_2 \lesssim \norm{f_0}_2$ independent of $\tau$.
Indeed, 
\begin{align*}
\frac{1}{2}\frac{d}{d\tau}\int \abs{f}^2 d\xi & = \int f(Lf - \Lambda_\alpha f) d\xi \\ 
& = -\int \abs{\grad f}^2 d\xi + \frac{1}{2}\int \abs{f}^2 d\xi + \frac{3}{2}\int G_\alpha \abs{f}^2 d\xi - \int f \grad G_\alpha \cdot \grad c d\xi \\ 
& \leq -\int \abs{\grad f}^2 d\xi + C\norm{f}_2^2 + C\norm{f}_{4/3}\norm{\grad c}_4 \\ 
& \leq -\int \abs{\grad f}^2 d\xi + C\norm{f}_2^2 + C\norm{f}_{4/3}^2 \\
& \leq  -\int \abs{\grad f}^2 d\xi + C\norm{f}_2^2 + C\norm{f}_{1}\norm{f}_2 \\ 
& \leq  -\int \abs{\grad f}^2 d\xi + C\norm{f}_2^2 + C\norm{f_0}_{1}^2, 
\end{align*}
for some constants $C$ which depend on $\alpha$ but whose precise values are not very relevant.
 Similar to the proof of Theorem \ref{thm:Basics} (i), we apply the Gagliardo-Nirenberg inequality $\norm{f}_3^3 \leq C_{GNS}\norm{\grad f}^{2}_2 \norm{f}_1$ and
\begin{equation*}
\frac{1}{2}\frac{d}{d\tau}\int \abs{f}^2 d\xi \leq -C_{GNS}\frac{\norm{f}_3^3}{\norm{f_0}_1} + C\norm{f}_2^2 + C\norm{f_0}_{1}^2, 
\end{equation*}
Using that for all $K > 0$, $\norm{f}_2^2 \leq \frac{1}{K}\norm{f}_3^3 + K\norm{f}_1$
the above differential inequality implies that $\norm{f(\tau)}_2$ is uniformly bounded by a constant.  

Now we may use the uniform bound on $\norm{f(\tau)}_2$ to control $\norm{\abs{\xi}^m f}_2$, similar to what is done in Theorem 3.1 in \cite{GallayWayne02}. 
Computing as there, 
\begin{align*}
\frac{1}{2}\frac{d}{dt}\int \abs{\xi}^{2m} f^2 d\xi & = \int \abs{\xi}^{2m}f(Lf - \Lambda_\alpha f) d\xi \\
& = -\int \abs{\xi}^{2m}\abs{\grad f}^2 d\xi - m\int \abs{\xi}^{2m} f^2 d\xi + 2m^2\int \abs{\xi}^{2m-2}f^2 d\xi - \int \abs{\xi}^{2m} f \Lambda_\alpha f d\xi. 
\end{align*}
The latter term expands to the following after a short computation (using that $-\Delta c = f$)
\begin{align*}
-\int \abs{\xi}^{2m}\Lambda_\alpha f d\xi & = \frac{3}{2}\int \abs{\xi}^{2m}G_\alpha f^2 d\xi - \int \abs{\xi}^{2m}f(\grad G_\alpha \cdot \grad c) d\xi + m \int \abs{\xi}^{2m-2}\xi \cdot \grad c_{\alpha} f^2 d\xi. 
\end{align*}
By the rapid decay of $G_\alpha$ the first term is uniformly bounded via 
\begin{align*}
\int \abs{\xi}^{2m}G_\alpha f^2 d\xi \lesssim \norm{f}_{L^2}^2. 
\end{align*}
Using H\"older's inequality and the $L^4$ estimate \eqref{ineq:VelLp} we control the second term   
\begin{align*}
-\int \abs{\xi}^{2m}f \grad G_\alpha \cdot \grad c d\xi & \leq \norm{\abs{\xi}^{2m}\grad G_\alpha}_4 \norm{f}_2 \norm{\grad c}_{4} \\ 
& \lesssim \norm{f}_2\norm{f}_{4/3} \lesssim \norm{f}_2\norm{f}_{L^2(m)}.  
\end{align*}
For all $\delta > 0$ there exists a constant $C_\delta > 0$ such that the following three inequalities all hold (using the spatial decay of $\grad c_\alpha$),
\begin{align*}
\norm{f}_2\norm{f}_{L^2(m)} & \leq \delta\norm{f}_{L^2(m)}^2 + C_\delta\norm{f}_2^2,  \\ 
2m^2 \int \abs{\xi}^{2m-2} f^2 d\xi & \leq \delta \int \abs{\xi}^{2m}f^2 d\xi + C_\delta \int f^2 d\xi, \\ 
m \int \abs{\xi}^{2m-2}\xi \cdot \grad c_{\alpha} f^2 d\xi & \leq \delta \int \abs{\xi}^{2m} f^2 d\xi + C_\delta \int f^2 d\xi.  
\end{align*}
Putting all of the estimates together we have, 
\begin{align*}
\frac{1}{2}\frac{d}{d\tau} \int \abs{\xi}^{2m} f^2 d\tau & \leq -\int \abs{\xi}^{2m} \abs{\grad f}^2 d\xi - m\int \abs{\xi}^{2m}f^2 d\xi + 3\delta \int \abs{\xi}^{2m} f^2 d\xi + C \int f^2  d\xi. 
\end{align*}
Therefore, since $\norm{f}_2$ is uniformly bounded, for $\delta$ chosen sufficiently small this inequality implies that
$\norm{\abs{\xi}^{2m}f}_2$ is also uniformly bounded. Since $\mathcal{T}_\alpha(\tau)$ is a linear operator the bound 
\eqref{ineq:TBounded} must hold. 

A contraction mapping argument similar to what is done in Lemma 2.1 in \cite{GallayWayne05} and Lemma \ref{lem:precompact} shows that the uniform bound \eqref{ineq:TBounded} implies the regularization estimate \eqref{ineq:TGradientHyper}.
Moreover, in both cases, one can verify that Proposition \ref{prop:Galpha} (iv) implies the implicit constants in \eqref{ineq:TBounded} and \eqref{ineq:TGradientHyper} only depend 
on $K < 8 \pi $  for $\alpha \leq K $. 
Note that 
 $\mathcal{T}_\alpha$ can be treated as a small perturbation of $S(\tau)$ for $\alpha$ small.   

\subsubsection{Proof of \textit{(ii)}}
Proposition \ref{prop:OurSpectralGap} implies the following bound on the point spectrum of $L - \Lambda_\alpha$ in $L^2_0(G_\alpha^{-1})$.  

\begin{theorem}[Spectral Gap] \label{thm:CD_SpectralGap}
For all $\alpha \in (0,8\pi)$, there exists some $K_\alpha \in (0,1/2]$ such that any eigenvalue $\lambda$ of $L - \Lambda_\alpha$ in $L_0^2(G_\alpha^{-1}d\xi)$ satisfies 
\begin{equation*}
\textup{Re}(\lambda) \leq -K_\alpha,
\end{equation*}
for some $K_\alpha > 0$ which is uniformly bounded for $\alpha$ small.  
\end{theorem}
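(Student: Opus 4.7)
The plan is essentially to invoke the standard principle that exponential decay of a strongly continuous semigroup in a Banach space controls the real part of the point spectrum of its generator. All of the real work has already been done in Proposition \ref{prop:OurSpectralGap}; the theorem is a one-line corollary, so I would organize the proof as follows.

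First I would verify that $L_0^2(G_\alpha^{-1}d\xi)$ is invariant under $\mathcal{T}_\alpha(\tau)$, which is needed in order for Proposition \ref{prop:OurSpectralGap} to apply along the full orbit. Both $L$ and $\Lambda_\alpha$ are in divergence form, so total mass is preserved along the evolution: for $g_0 \in L_0^2(G_\alpha^{-1}d\xi)$, which is contained in $L^1$ by Cauchy--Schwarz against $G_\alpha^{1/2}$, the function $g(\tau) = \mathcal{T}_\alpha(\tau)g_0$ satisfies $\tfrac{d}{d\tau}\int g \, d\xi = \int (L g - \Lambda_\alpha g)\, d\xi = 0$, so $\int g(\tau)\, d\xi = 0$ for all $\tau \geq 0$.

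Next, suppose $\lambda \in \mathbb{C}$ belongs to the point spectrum of $L - \Lambda_\alpha$ on $L_0^2(G_\alpha^{-1}d\xi)$, with a nontrivial eigenfunction $f$. Then $\mathcal{T}_\alpha(\tau)f = e^{\lambda\tau}f$ for all $\tau \geq 0$. Taking the $L^2(G_\alpha^{-1}d\xi)$ norm of both sides and invoking Proposition \ref{prop:OurSpectralGap} gives
\begin{equation*}
e^{\textup{Re}(\lambda)\tau}\norm{f}_{L^2(G_\alpha^{-1}d\xi)} = \norm{\mathcal{T}_\alpha(\tau)f}_{L^2(G_\alpha^{-1}d\xi)} \lesssim_\alpha e^{-K_\alpha \tau}\norm{f}_{L^2(G_\alpha^{-1}d\xi)}
\end{equation*}
for every $\tau \geq 0$. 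Since $f \neq 0$, dividing and letting $\tau \to \infty$ forces $\textup{Re}(\lambda) + K_\alpha \leq 0$, i.e., $\textup{Re}(\lambda) \leq -K_\alpha$. The bound $K_\alpha \leq 1/2$ can simply be enforced by replacing $K_\alpha$ with $\min(K_\alpha, 1/2)$; positivity of $K_\alpha$ and the uniform behavior as $\alpha \searrow 0$ are exactly the content of Proposition \ref{prop:OurSpectralGap} together with Remark \ref{rmk:controlledSpec}.

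All of the genuine difficulty is hidden in Proposition \ref{prop:OurSpectralGap}, whose proof relies on ruling out nontrivial bounded, finite-energy solutions of $\Delta h + G_\alpha h = 0$ via the ODE/comparison argument already carried out. Given that, the passage from semigroup decay to the spectral bound is routine, and I do not anticipate any obstacle beyond the invariance check above.
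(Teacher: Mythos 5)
Your argument is correct and coincides with the paper's (implicit) derivation: the paper simply states that Proposition \ref{prop:OurSpectralGap} implies Theorem \ref{thm:CD_SpectralGap} without writing out the routine passage from semigroup decay to a point-spectrum bound, which is exactly what you supply. The invariance check and the observation that $K_\alpha$ can be truncated at $1/2$ are both sound, though strictly speaking the invariance of $L_0^2(G_\alpha^{-1}d\xi)$ is not needed for the eigenvalue bound itself, since $\mathcal{T}_\alpha(\tau)f = e^{\lambda\tau}f$ stays in the span of $f$ automatically.
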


We extend Theorem \ref{thm:CD_SpectralGap} to the polynomial-weighted spaces in a way analogous to \cite{GallayWayne05}. 
In the polynomial-weighted spaces $\sigma(L - \Lambda_\alpha)$, the spectrum of $L-\Lambda_\alpha$, will not be discrete, as can be expected by considering $\sigma(L)$ (see \cite{GallayWayne02}). 
In what follows, for any linear operator $A$, denote $\sigma_{ess}(A)$ the essential spectrum, defined here as the set of $\lambda \in \sigma(A)$ such that $\lambda I - A$ is not Fredholm. This is in slight contrast to \cite{GallayWayne05}, where it is defined as $\sigma(A)\setminus \sigma_{disc}(A)$ where $\sigma_{disc}(A)$ denotes the discrete spectrum, the set of isolated eigenvalues of finite multiplicity.   
Note that the essential spectrum as we have defined is contained in the essential spectrum as defined by \cite{GallayWayne05} (see e.g. \cite{GoldbergEtAl}).
The advantage is that with the convention we take, $\sigma_{ess}(A)$ is invariant under compact perturbations \cite{EngelNagel}, however the disadvantage is that there may be points in the spectrum which are neither in $\sigma_{ess}(A)$ or $\sigma_{disc}(A)$. 
 We denote the point spectrum, the set of all eigenvalues, as $P\sigma(A)$, which clearly contains the discrete spectrum, but in general is larger.  
The following proposition extends Theorem \ref{thm:CD_SpectralGap} to $L^2_0(m)$ by bounding the point spectrum of $L - \Lambda_\alpha$.

\begin{proposition} \label{prop:SpecGapLm}
Fix $m > 1$. Then any eigenvalue $\lambda$ of $L - \Lambda_\alpha$ in $L_0^2(m)$ satisfies
\begin{equation*}
\textup{Re}(\lambda) \leq \max\left(-K_\alpha, \frac{1-m}{2}\right). 
\end{equation*}
\end{proposition}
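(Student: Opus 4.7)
The plan is to reduce the polynomial-weighted statement to the Gaussian-weighted spectral gap Theorem \ref{thm:CD_SpectralGap}. So suppose $(L - \Lambda_\alpha)f = \lambda f$ with $f \in L^2_0(m) \setminus \{0\}$ and $\textup{Re}(\lambda) > (1-m)/2$; the goal is to show $f \in L^2_0(G_\alpha^{-1}d\xi)$, whereupon Theorem \ref{thm:CD_SpectralGap} forces $\textup{Re}(\lambda) \leq -K_\alpha$ (and the trivial alternative $\textup{Re}(\lambda) \leq (1-m)/2$ then completes the proposition).

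The first step is a polynomial-weight bootstrap from $L^2(m)$ into every $L^2(m')$. Decompose $\Lambda_\alpha f = \nabla\cdot(G_\alpha v^f) + \nabla\cdot(f v^{G_\alpha})$. By Proposition \ref{prop:Galpha}(i), $G_\alpha$ and $\nabla G_\alpha$ decay faster than any polynomial; by \eqref{eq:GradGalpha_limit} and Proposition \ref{prop:Vel}, $|v^{G_\alpha}(\xi)| \lesssim \jap{\xi}^{-1}$ and $\nabla\cdot v^{G_\alpha} = -G_\alpha$. Writing $\nabla\cdot(f v^{G_\alpha}) = v^{G_\alpha}\cdot\nabla f - f G_\alpha$, every term in $\Lambda_\alpha f$ is either Gaussian-localized or controlled pointwise by $\jap{\xi}^{-1}|\nabla f(\xi)|$. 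Using the weighted $H^1$ control $\nabla f \in L^2(m)$ inherited from the eigenvalue equation (via the parabolic regularization argument of \S\ref{sec:SpecTi}), one obtains $\Lambda_\alpha f \in L^2(m+1)$. Since $\textup{Re}(\lambda) > (1-(m+1))/2$ still holds by a positive margin, $\lambda$ lies outside the essential spectrum of $L$ on $L^2(m+1)$, and $(L-\lambda)^{-1}$ is bounded there modulo at most a finite-dimensional Hermite-Gaussian eigenspace (which is itself Gaussian-localized). Inverting and iterating yields $f \in L^2(m')$ for every $m' > 0$.

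The second step upgrades super-polynomial decay to Gaussian decay. Once $\nabla f$ decays faster than every polynomial, $\Lambda_\alpha f$ becomes genuinely Gaussian-localized. Writing Duhamel's formula for the eigenvalue equation,
\begin{equation*}
f \;=\; e^{-\tau\lambda}\,e^{\tau L}f \;-\; \int_0^\tau e^{-(\tau-s)\lambda}\,e^{(\tau-s)L}\,\Lambda_\alpha f\,ds,
\end{equation*}
for fixed $\tau > 0$ of order one, the explicit Mehler kernel of $e^{\tau L}$ — which has Gaussian decay in the output variable — converts rapidly decaying input into genuinely Gaussian output. Both terms on the right then lie in $L^2(G_\alpha^{-1}d\xi)$, so $f$ does too, and Theorem \ref{thm:CD_SpectralGap} concludes the argument.

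The main obstacle is the polynomial bootstrap: unlike in the NSE analogue (Proposition 4.12 of \cite{GallayWayne05}), where incompressibility eliminates the offending term $\nabla\cdot(f v^{G_\alpha})$, here it survives and gains only a single polynomial power per iteration, so one must track carefully how the resolvent $(L-\lambda)^{-1}$ behaves as the target space $L^2(m')$ is enlarged, ensuring the margin $\textup{Re}(\lambda) - (1-m')/2$ remains positive and the resolvent constants stay controlled across the iterations. The final Gaussian upgrade is, by comparison, routine via the explicit Mehler formula.
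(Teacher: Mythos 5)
Your Step 1 (the polynomial bootstrap from $L^2(m)$ to every $L^2(m')$, modulo a finite Hermite--Gaussian part) is reasonable in outline, though it would need careful Fredholm bookkeeping when $\lambda$ hits a Hermite eigenvalue as $m'$ grows. The genuine gap is in Step 2, the passage from super-polynomial decay to Gaussian decay, and it occurs at exactly the point you flag earlier as the difference from the NSE case.

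First, the claim ``once $\nabla f$ decays faster than every polynomial, $\Lambda_\alpha f$ becomes genuinely Gaussian-localized'' is false and contradicts your own decomposition: you correctly note that $\nabla\cdot(f v^{G_\alpha}) = v^{G_\alpha}\cdot\nabla f - f G_\alpha$, and that the first term is only controlled by $\jap{\xi}^{-1}|\nabla f(\xi)|$. If $\nabla f$ decays super-polynomially (but not exponentially), then $v^{G_\alpha}\cdot\nabla f$ decays super-polynomially but not Gaussianly; no amount of polynomial gain gets you into $L^2(G_\alpha^{-1}d\xi)$. Second, even granting Gaussian decay of the source, the claim that the Mehler kernel ``converts rapidly decaying input into genuinely Gaussian output'' is also false. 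For fixed $\tau>0$ the kernel of $e^{\tau L}$ is $\frac{1}{4\pi a(\tau)}\exp\bigl(-\frac{|\xi - e^{-\tau/2}\zeta|^2}{4a(\tau)}\bigr)$; for $|\xi|$ large the integral over $|\zeta|\gtrsim e^{\tau/2}|\xi|$ is bounded below by roughly $f(e^{\tau/2}\xi)$, so the output inherits the tail of the input (merely dilated by $e^{\tau/2}$), with no Gaussian upgrade. The same problem afflicts the boundary term $e^{\lambda\tau}e^{\tau L}f$ in the Duhamel formula: if $f$ decays super-polynomially but not Gaussianly, so does $e^{\tau L}f$.

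This is precisely why the paper does not treat $v^{G_\alpha}\cdot\nabla f$ as a source at all. Lemma \ref{lem:TexpoLoc} passes to polar coordinates and angular Fourier modes, moves to the variable $t = r^2/4$, and then \emph{absorbs the drift $v^{G_\alpha}\cdot\nabla$ into the coefficients of the homogeneous ODE} \eqref{eq:HomogG} (the term $c_\alpha'(2\sqrt{t})/\sqrt{t}$ in the first-order coefficient). The only genuine source is $G_\alpha'\,c'$, which is Gaussian-localized, and the asymptotic ODE theorem of Coddington and Levinson then gives the two fundamental behaviours $\phi_1(t)\sim t^{\mu-1}$ and $\phi_2(t)\sim t^{-\mu-\alpha/(4\pi)}e^{-t}$ directly. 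The hypothesis $\textup{Re}\,\mu > (1-m)/2$ with $f\in L^2(m)$ forces the coefficient of $\phi_1$ to vanish, and variation of constants against the Gaussian source and the Gaussian-decaying $\phi_2$ gives the exponential localization. No bootstrap is needed; the asymptotic integration does in one step what iterating the resolvent cannot, because it resolves the dichotomy between the polynomial solution and the Gaussian solution rather than gaining one polynomial power at a time. If you wish to avoid the ODE machinery, the natural repair is to invert the full Fokker--Planck operator $\mathcal{L} := L - \nabla\cdot(\,\cdot\,\nabla c_\alpha)$ rather than $L$, so that only the compact Gaussian piece $\nabla\cdot(G_\alpha v^f)$ remains as a source; but controlling $(\mathcal{L}-\lambda)^{-1}$ on $L^2(m)$ and showing its action agrees with the Gaussian-weighted inverse again requires an asymptotic analysis equivalent to what Lemma \ref{lem:TexpoLoc} provides.
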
 

\begin{proof}
As in section 4 of   \cite{GallayWayne05}, we use an ODE argument. 
The idea is to show that any eigenfunction of $L - \Lambda_\alpha$ in $L_0^2(m)$ actually lies in $L_0^2(G_\alpha^{-1}d\xi)$ and hence we may apply Theorem \ref{thm:CD_SpectralGap}.
The approach of Gallay and Wayne \cite{GallayWayne05} is to re-write the linear operator in radial variables and reduce the question to a statement about the asymptotic behavior of an ODE, for which classical results of Coddington and Levinson may be applied \cite{CoddingtonLevinson}. 
Hence the core of the argument is the following lemma. 
\begin{lemma} \label{lem:TexpoLoc}
Let $f \in L_0^2(m)$ satisfy $Lf - \Lambda_\alpha f = \mu f$ for $\textup{Re}(\mu) > \frac{1-m}{2}$. Then there exists $\gamma \geq 0$ such that 
\begin{equation*}
\abs{f(\xi)} \lesssim (1+\abs{\xi}^2)^\gamma e^{-\abs{\xi}^2/4}, \;\;\; \xi \in \Real^2. 
\end{equation*}  
\end{lemma} 
\begin{proof}
As in \cite{GallayWayne05},  we decompose the eigenfunction with a Fourier transform in the angular variables. Each mode decouples due to the radial symmetry of the coefficients. 
Define $-\Delta c = f$ and write $f = \sum_{n = -\infty}^\infty f_n(r) e^{in\theta}$ and $c = \sum_{n = -\infty}^\infty c_n(r) e^{in\theta}$. 
Written like this we have 
\begin{align*}
Lf - \Lambda_\alpha f & = \sum_{n = -\infty}^\infty e^{in\theta}\left[ \frac{1}{r}(rf_n^\prime)^\prime + \left(\frac{r}{2} - c_\alpha^\prime(r)\right)f_n^\prime + \left(1 + 2G_\alpha(r) - \frac{n^2}{r^2} \right)f_n - G_\alpha^\prime(r) c_n^\prime(r)    \right] \\ 
f_n & = -\frac{1}{r}(rc_n^\prime)^\prime + \frac{n^2}{r^2}c_n. 
\end{align*}
Due to this decoupling and linearity, we may assume without loss of generality that the eigenfunction is supported in only one mode, $n$, and writing $f(r) := f_n(r)$ and $c(r) := c_n(r)$ we have 
\begin{align*}
\frac{1}{r}(rf^\prime)^\prime + \left(\frac{r}{2} - c_\alpha^\prime(r)\right)f^\prime + \left(1 - \mu + 2G_\alpha(r) - \frac{n^2}{r^2} \right)f - G_\alpha^\prime(r) c^\prime(r) & = 0 \\ 
-\frac{1}{r}(rc^\prime)^\prime + \frac{n^2}{r^2}c & = f. 
\end{align*}
As in \cite{GallayWayne05} we will re-write this ODE in a form amenable to a classical result on ODEs (Theorem 8.1, pg 92 \cite{CoddingtonLevinson}, and following Gallay and Wayne, we introduce a new set of variables 
\begin{equation*}
t = \frac{r^2}{4}, \;\; f(r) = g(t).
\end{equation*}
In these new variables we have 
\begin{equation}
g^{\prime\prime}(t) + \left(1 + \frac{1}{t} - \frac{c_\alpha^\prime(2\sqrt{t})}{\sqrt{t}}\right)g^\prime(t) + \left( \frac{1-\mu}{t} - a(t)\right) g(t) = \frac{G_\alpha^\prime(2\sqrt{t})c^\prime(2\sqrt{t})}{t}, \label{eq:InhomogG}
\end{equation} 
where 
\begin{equation*}
a(t) = \frac{n^2}{4t^2} - \frac{2G_\alpha(2\sqrt{t})}{t}. 
\end{equation*}
We first analyze the linearly independent solutions of the homogeneous equation, 
\begin{equation}
\tilde{g}^{\prime\prime}(t) + \left(1 + \frac{1}{t} - \frac{c_\alpha^\prime(2\sqrt{t})}{\sqrt{t}}\right)\tilde{g}^\prime(t) + \left( \frac{1-\mu}{t} - a(t)\right) \tilde{g}(t) = 0. \label{eq:HomogG}
\end{equation} 
The primary way this ODE differs from the corresponding one for the Navier-Stokes equation is the term involving $c_\alpha^\prime$, which is only present for PKS. 
Note that by \eqref{ineq:GradcalphaAsymptotic} 
\begin{equation*}
\lim_{t \rightarrow \infty} 2\sqrt{t} c_\alpha^\prime(2\sqrt{t}) = -\frac{\alpha}{2\pi}.
\end{equation*}
Now, define $x(t) = (\tilde{g}(t), \tilde{g}^\prime(t))$ and re-write \eqref{eq:HomogG} as the system 
\begin{equation*}
x^\prime(t) = (A + V(t) + R(t))x(t),
\end{equation*}
with
\begin{equation*}
A = 
\begin{pmatrix}
0 & 1 \\ 
0 & -1
\end{pmatrix}
, \;\;\; 
V(t) = 
\begin{pmatrix}
0 & 0 \\ 
-\frac{1-\mu}{t} & -\frac{1}{t} + \frac{c_\alpha^\prime(2 \sqrt{t})}{\sqrt{t}}
\end{pmatrix}
, \;\;\; 
R(t) = 
\begin{pmatrix}
0 & 0 \\ 
a(t) & 0
\end{pmatrix}
.
\end{equation*}
Hence by Theorem 8.1, pg 92 of \cite{CoddingtonLevinson} we can get information about the decay of solutions by analyzing the eigenvalues of $A + V(t)$, given by
\begin{equation*}
\lambda_{\pm}(t) = -\frac{1}{2} - \frac{1}{2t} + \frac{c_\alpha^\prime(2\sqrt{t})}{2\sqrt{t}} \pm \frac{1}{2}\sqrt{\left(1 + \frac{1}{t} - \frac{c_\alpha^\prime(2\sqrt{t})}{\sqrt{t}}\right)^{2} - \frac{4 - 4\mu}{t}}. 
\end{equation*}
Hence $\lim_{t \rightarrow \infty} \lambda_+(t) = 0$ and $\lim_{t \rightarrow \infty} \lambda_-(t) = -1$, the eigenvalues of $A$.  
In order to use the result of Coddington and Levinson, we only need to compute the terms which are unbounded in $T$ in the time-integrals
\begin{equation*}
\int_1^T \lambda_\pm(t) dt,  
\end{equation*}
as the integrable terms can be absorbed into the constants. 
Using a Taylor expansion of the square root, dropping the terms which are integrable as $t \rightarrow \infty$ we get
\begin{align*}
\lambda_+ & \approx - \frac{1}{2t} + \frac{c_\alpha^\prime(2\sqrt{t})}{2\sqrt{t}} + \frac{1}{4}\left[ -\frac{2}{t} + \frac{4\mu}{t} - \frac{2c_\alpha^\prime(2\sqrt{t})}{\sqrt{t}} \right], \\ 
\lambda_- & \approx - 1 - \frac{1}{2t} + \frac{c_\alpha^\prime(2\sqrt{t})}{2\sqrt{t}} - \frac{1}{4}\left[ -\frac{2}{t} + \frac{4\mu}{t} - \frac{2c_\alpha^\prime(2\sqrt{t})}{\sqrt{t}}\right]. 
\end{align*}
Simplifying, 
\begin{align*}
\lambda_+ & \approx \frac{\mu - 1}{t},  \\ 
\lambda_- & \approx -1 - \frac{\mu}{t} + \frac{c_\alpha^\prime(2\sqrt{t})}{\sqrt{t}} \approx -1 - \frac{\mu}{t} - \frac{\alpha}{4\pi t}.
\end{align*}
The theorem of Coddington and Levinson then implies that there exists two linearly independent solutions to \eqref{eq:HomogG}, $\phi_1(t)$, $\phi_2(t)$, such that 
\begin{equation*}
\lim_{t \rightarrow \infty} t^{1-\mu} 
\begin{pmatrix} 
\phi_1(t) \\
\phi_1^\prime(t)
\end{pmatrix}
= 
\begin{pmatrix} 
1 \\ 
0
\end{pmatrix}
, \;\;\;\;\;
\lim_{t \rightarrow \infty} t^{\mu + \frac{\alpha}{4\pi}}e^{t} 
\begin{pmatrix} 
\phi_2(t) \\
\phi_2^\prime(t)
\end{pmatrix}
= 
\begin{pmatrix} 
1 \\ 
-1
\end{pmatrix}
. 
\end{equation*}
Returning now to inhomogeneous ODE \eqref{eq:InhomogG} we use the variation of constants formula, as in \cite{GallayWayne05}.
Note that the inhomogeneity here is 
\begin{equation*}
b(t) = \frac{G^\prime_\alpha(2\sqrt{t})c^\prime(2\sqrt{t})}{t}. 
\end{equation*}
Writing the solution $g(t)$ with the variation of constants formula gives
\begin{equation*}
g(t) = A(t)\phi_1(t) + B(t)\phi_2(t), 
\end{equation*}
with
\begin{equation*}
A(t) = A_1 - \int_1^t (W(s))^{-1} b(s)\phi_2(s) ds, \;\;\; B(t) = B_1 + \int_1^t (W(s))^{-1} b(s)\phi_1(s) ds, 
\end{equation*} 
where the Wronskian $W(t) = \phi_1(t)\phi_2^\prime(t) - \phi_2(t)\phi_1^\prime(t) \approx -t^{-1 - \frac{\alpha}{4\pi}}e^{-t}$ as $t \rightarrow \infty$.
Since $f$ is well-localized (since $f \in L^2(m)$) and is average zero, by a variant of \eqref{ineq:GradcalphaAsymptotic} we have
\begin{equation*}
\abs{c^\prime(r)} \lesssim \frac{1}{r^2}, \;\;\; r \rightarrow \infty. 
\end{equation*}
Hence by \eqref{eq:GradGalpha_limit} in Proposition \ref{prop:Galpha}, 
\begin{equation*} 
\abs{b(t)} \lesssim t^{-\frac{3}{2} - \frac{\alpha}{4\pi}}e^{-t}, \;\;\; t \rightarrow \infty, 
\end{equation*}
which implies that
\begin{equation*}
W(t)^{-1}\abs{b(t)} \lesssim t^{-\frac{1}{2}}, \;\;\; t \rightarrow \infty.  
\end{equation*} 
As $t \rightarrow \infty$, 
\begin{equation*}
W(t)^{-1}\abs{b(t)}\abs{\phi_2(t)} \lesssim t^{- \frac{1}{2} - \frac{\alpha}{4\pi} - \mu} e^{-t}, 
\end{equation*}
which is integrable, so $A(t) \rightarrow A_\infty$ for some constant. On the other hand as $t \rightarrow \infty$, 
\begin{equation*}
W(t)^{-1}\abs{b(t)}\abs{\phi_1(t)} \lesssim t^{\mu-\frac{3}{2}},
\end{equation*}
which implies
\begin{equation*}
\abs{B(t)} \lesssim (1+t)^{\mu - \frac{1}{2}}. 
\end{equation*}
By the asymptotic behavior of $\phi_1$, for large $t$ we have
\begin{equation*}
\abs{A(t) \phi_1(t)} \approx A_\infty t^{\mu-1}. 
\end{equation*}
Returning to the original variables shows that $A_\infty \neq 0$ would violate $f \in L^2(m)$ and hence must be zero. 
By the asymptotic decay of $\phi_2(t)$ together with the polynomial bound on $B(t)$ we have the claimed exponential localization. 
\end{proof} 

The lemma shows that any eigenfunction $f \in L_0^2(m)$ with eigenvalue $\textup{Re}\mu > (1-m)/2$ must in fact be in $L^2(G_\alpha^{-1}d\xi)$, and the result follows by Theorem \ref{thm:CD_SpectralGap}.
\end{proof} 

The next step is to prove the decay estimate for the time-dependent linear evolution equation, for which we more or less follow the arguments of section 4.2 in \cite{GallayWayne05}. 
With the convention we are taking, we have that in $L_0^2(m)$, $\sigma_{ess}(S(\tau)) \subset \set{\lambda \in \Complex: \abs{\lambda} \leq e^{\tau(1-m)/2}}$ \cite{GallayWayne02}. 
The following lemma shows that $\mathcal{T}_\alpha(\tau)$ is a compact perturbation of $S(\tau)$, which in turn controls $\sigma_{ess}(\mathcal{T}_\alpha(\tau))$.
\begin{lemma} \label{lem:compact_perturb}
Let $m > 1$. The linear operator $K(\tau) = \mathcal{T}_\alpha(\tau) - S(\tau)$ is compact in $L^2(m)$ for all $\tau > 0$.  
\end{lemma}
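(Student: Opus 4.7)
The plan is to exploit the Duhamel representation
\[
K(\tau) = \mathcal{T}_\alpha(\tau) - S(\tau) = -\int_0^\tau S(\tau - s)\, \Lambda_\alpha\, \mathcal{T}_\alpha(s)\, ds,
\]
and to show that $\Lambda_\alpha$ gains a polynomial weight when acting on $H^1(m)$ functions. Since $S(\tau-s)$ preserves these weights and provides additional $\grad$-smoothing via Proposition~\ref{prop:Stau}, this will produce a bounded map $K(\tau)\colon L^2(m) \to H^1(m+1)$, which is compact into $L^2(m)$ by a weighted Rellich--Kondrachov argument.

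The heart of the argument is the gain of one weight in $\Lambda_\alpha$. Using $\grad \cdot v = -g$ for $v = B*g$ (Proposition~\ref{prop:Vel} (ii)) and $\grad \cdot v^{G_\alpha} = -G_\alpha$, we expand
\[
\Lambda_\alpha g = \grad G_\alpha \cdot v + \grad g \cdot v^{G_\alpha} - 2 G_\alpha g.
\]
Here $G_\alpha$ and $\grad G_\alpha$ have Gaussian localization by Proposition~\ref{prop:Galpha} (i), while the asymptotic $v^{G_\alpha}(\xi) \sim -\alpha\xi/(2\pi|\xi|^2)$ gives $\jap{\xi}\, v^{G_\alpha} \in L^\infty$. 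Combining these with Proposition~\ref{prop:Vel} (i) and H\"older's inequality, each term can be estimated in $L^2(m+1)$ by $\norm{g}_{L^2(m)} + \norm{\grad g}_{L^2(m)}$; for instance,
$\norm{\grad g \cdot v^{G_\alpha}\, \jap{\xi}^{m+1}}_2 \leq \norm{\jap{\xi} v^{G_\alpha}}_\infty \norm{\grad g}_{L^2(m)}$, while the Gaussian decay of $\grad G_\alpha$ absorbs arbitrary polynomial weights and leaves a factor $\norm{B*g}_4 \lesssim \norm{g}_{4/3} \lesssim \norm{g}_{L^2(m)}$ (using $m>1 > 1/2$). Hence
\[
\norm{\Lambda_\alpha g}_{L^2(m+1)} \lesssim \norm{g}_{L^2(m)} + \norm{\grad g}_{L^2(m)}.
\]

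Applying this to $g = \mathcal{T}_\alpha(s) f$ with $f \in L^2(m)$, Proposition~\ref{prop:SpecT} (i) supplies $\norm{\mathcal{T}_\alpha(s)f}_{L^2(m)} \lesssim \norm{f}_{L^2(m)}$ and $\norm{\grad \mathcal{T}_\alpha(s) f}_{L^2(m)} \lesssim a(s)^{-1/2}\norm{f}_{L^2(m)}$. Then Proposition~\ref{prop:Stau} (i), applied with weight $m+1 > 1$, yields
\[
\norm{K(\tau) f}_{L^2(m+1)} \lesssim \int_0^\tau (1 + a(s)^{-1/2})\, \norm{f}_{L^2(m)}\, ds \lesssim_\tau \norm{f}_{L^2(m)},
\]
and an analogous estimate using the gradient smoothing $\norm{\grad S(\tau - s) w}_{L^2(m+1)} \lesssim a(\tau-s)^{-1/2}\norm{w}_{L^2(m+1)}$ gives
\[
\norm{\grad K(\tau) f}_{L^2(m+1)} \lesssim \int_0^\tau a(\tau - s)^{-1/2}(1 + a(s)^{-1/2})\, ds\, \norm{f}_{L^2(m)} \lesssim_\tau \norm{f}_{L^2(m)}.
\]
Thus $K(\tau)$ sends the unit ball of $L^2(m)$ into a bounded set of $H^1(m+1)$.

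To finish, any sequence bounded in $H^1(m+1)$ is precompact in $L^2(m)$: the tail is controlled by $\int_{|\xi|>R} |f|^2 \jap{\xi}^{2m} \leq \jap{R}^{-2}\norm{f}_{L^2(m+1)}^2$, while on each ball $B_R$ the standard Rellich--Kondrachov theorem provides $L^2$-compactness from $H^1$-boundedness; a diagonal extraction over $R \to \infty$ yields a subsequence converging in $L^2(m)$. The main obstacle is precisely the gain-of-weight estimate for $\Lambda_\alpha$; for the NSE analogue in \cite{GallayWayne05} this is trivial by incompressibility and symmetry, but here the divergence-form structure forces us to rely on the sharp $1/|\xi|$ decay of $v^{G_\alpha}$, which is why the $\grad g\cdot v^{G_\alpha}$ term requires the full regularity $\grad g \in L^2(m)$ supplied by Proposition~\ref{prop:SpecT} (i).
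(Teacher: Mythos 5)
Your argument is correct and reaches the same conclusion as the paper's, but by a genuinely different route in the treatment of $\Lambda_\alpha$ inside the Duhamel integral. The paper keeps $\Lambda_\alpha f(s) = \grad \cdot (G_\alpha v^{f(s)} + f(s) v^{G_\alpha})$ in divergence form and moves the derivative onto the heat propagator, using $\grad S(\tau) = e^{\tau/2} S(\tau)\grad$ together with the smoothing estimate \eqref{ineq:gradSDecay} to produce a factor $e^{-(\tau-s)/2}a(\tau-s)^{-1/2}$; this way only $\norm{\mathcal{T}_\alpha(s)f_0}_{L^2(m)}$ (not its gradient) is needed, and the integrand has a single endpoint singularity. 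You instead Leibniz-expand $\Lambda_\alpha g = \grad G_\alpha \cdot v + \grad g \cdot v^{G_\alpha} - 2G_\alpha g$, which forces the derivative onto $g = \mathcal{T}_\alpha(s)f_0$ and hence requires the gradient hypercontractivity \eqref{ineq:TGradientHyper} of $\mathcal{T}_\alpha$, producing a double-singular but still integrable kernel $a(\tau-s)^{-1/2}(1+a(s)^{-1/2})$. Both are legitimate: your version costs the extra product-rule bookkeeping and a slightly heavier integral, but it returns a cleaner uniform $H^1(m+1)$ bound for $K(\tau)$, whereas the paper works in $L^2(m+1)\cap H^1(m)$ and, notably, obtains the $H^1(m)$ part \emph{without} the Duhamel decomposition at all, simply by subtracting the gradient hypercontractivity estimates of $\mathcal{T}_\alpha(\tau)$ and $S(\tau)$ directly. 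Either intersection is enough input for the weighted Rellich--Kondrachov argument (tail smallness from the extra weight, local precompactness from $H^1_{loc}$), so the conclusion follows identically. One small remark: you observe that $L^2(m)\hookrightarrow L^{4/3}$ holds already for $m>1/2$, which is indeed the sharp threshold and slightly better than the ``$m>2$'' convention used in \eqref{ineq:L2mInject}; this is fine here since $m>1$ is assumed.
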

\begin{proof}
Let $f_0 \in L^2(m)$ and write
\begin{align*}
K(\tau)f_0 = -\int_0^\tau S(\tau-s) \Lambda_\alpha f(s) ds,
\end{align*}
where $f(\tau) := \mathcal{T}_\alpha(\tau) f_0$. 
Then by \eqref{ineq:gradSDecay},  
\begin{align*}
\norm{K(\tau)f_0}_{L^2(m + 1)} & \leq \int_0^\tau \norm{S(\tau - s)\Lambda_\alpha f(s)}_{L^2(m+1)} ds \\ 
 & \lesssim \int_0^\tau \frac{e^{-\frac{1}{2}(\tau - s)}}{a(\tau - s)^{1/2}}\left( \norm{v^{G_\alpha}f(s)}_{L^2(m+1)} + \norm{G_\alpha v^f(s)}_{L^2(m+1)} \right) ds,
\end{align*}
where $v^f = B \ast f$. To control the first term we use the spatial decay of $v^{G_\alpha}$,  
\begin{equation*}
\norm{v^{G_\alpha} f}_{L^2(m+1)} \leq \norm{v^{G_\alpha}(\xi) \jap{\xi}}_{L^\infty} \norm{f(s)}_{L^2(m)} \lesssim \norm{f(s)}_{L^2(m)}
\end{equation*}
The second term we use the $L^4$ estimate \eqref{ineq:VelLp}, 
\begin{equation*}
\norm{v^f G_\alpha}_{L^2(m+1)} \leq \norm{v^f}_{L^4}\norm{G_\alpha}_{L^{4}(m+1)} \lesssim \norm{f}_{L^{4/3}} \lesssim \norm{f}_{L^2(m)}.  
\end{equation*}
Hence, 
\begin{align*}
\norm{K(\tau)f_0}_{L^2(m + 1)} & \lesssim \int_0^\tau \frac{e^{-\frac{1}{2}(\tau - s)}}{a(\tau - s)^{1/2}}\norm{f(s)}_{L^2(m)} ds. 
\end{align*}
However, since $\mathcal{T}_\alpha(\tau)$ is bounded on $L^2(m)$ we have
\begin{equation*}
\norm{\mathcal{K}_\alpha(\tau)f_0}_{L^2(m+1)} \lesssim_\tau \norm{f_0}_{L^2(m)}. 
\end{equation*}
The estimate \eqref{ineq:TGradientHyper} similarly implies
\begin{equation*}
\norm{\mathcal{K}_\alpha(\tau)f_0}_{H^1(m)} \lesssim \norm{f_0}_{L^2(m)}. 
\end{equation*}
Compactness follows from the Rellich-Khondrashov embedding theorem.  
\end{proof}

We use this to prove 
\begin{proposition}
Let $\nu \in (0,K_\alpha)$ and $f_0 \in L^2_0(m)$ for any $m > 1+2\nu$. Then, 
\begin{equation*}
\norm{T_\alpha(\tau)f_0}_m \lesssim_\nu e^{-\nu \tau}\norm{f_0}_m. 
\end{equation*}
\end{proposition}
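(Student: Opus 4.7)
The strategy is a standard semigroup spectral argument that converts the point spectrum bound of Proposition \ref{prop:SpecGapLm} and the compactness in Lemma \ref{lem:compact_perturb} into a norm decay estimate. First I would note that since $L - \Lambda_\alpha$ is in divergence form, $\mathcal{T}_\alpha(\tau)$ preserves the mean and thus restricts to a strongly continuous semigroup on $L_0^2(m)$, uniformly bounded by \eqref{ineq:TBounded}. Pick an auxiliary parameter $\nu' \in (\nu,\min(K_\alpha,(m-1)/2))$, which is possible since $\nu < K_\alpha$ and $m > 1 + 2\nu$.

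Next I would locate the essential spectrum of the restricted semigroup. By Lemma \ref{lem:compact_perturb}, $K(\tau) = \mathcal{T}_\alpha(\tau) - S(\tau)$ is compact on $L^2(m)$, and hence also on the invariant subspace $L_0^2(m)$. Invariance of the essential spectrum under compact perturbations, combined with the known bound $\sigma_{\textup{ess}}(S(\tau)|_{L_0^2(m)}) \subset \{|\lambda| \leq e^{\tau(1-m)/2}\}$ from \cite{GallayWayne02}, yields
$$\sigma_{\textup{ess}}(\mathcal{T}_\alpha(\tau)|_{L_0^2(m)}) \subset \{|\lambda| \leq e^{\tau(1-m)/2}\} \subset \{|\lambda| < e^{-\nu'\tau}\}.$$
Any spectral point of $\mathcal{T}_\alpha(\tau)|_{L_0^2(m)}$ with modulus strictly greater than $e^{-\nu'\tau}$ is therefore an isolated eigenvalue of finite multiplicity.

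Now I would apply the spectral mapping theorem for the point spectrum of a $C_0$-semigroup: such an eigenvalue must have the form $\mu = e^{\tau \lambda}$ for some eigenvalue $\lambda$ of the generator $L - \Lambda_\alpha$ on $L_0^2(m)$. But Proposition \ref{prop:SpecGapLm} forces $\textup{Re}(\lambda) \leq \max(-K_\alpha,(1-m)/2) < -\nu'$, whence $|\mu| < e^{-\nu'\tau}$, contradicting the assumption. Consequently the spectral radius of $\mathcal{T}_\alpha(\tau)|_{L_0^2(m)}$ is at most $e^{-\nu'\tau}$ for every $\tau > 0$.

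Finally, I would convert this into a growth bound via Gelfand's spectral radius formula. Fix $\tau_0 > 0$; for any $\epsilon > 0$ there exists $C_\epsilon$ with $\|\mathcal{T}_\alpha(n\tau_0)\|_{L_0^2(m) \to L_0^2(m)} \leq C_\epsilon e^{-(\nu'-\epsilon)n\tau_0}$ for every $n \in \mathbb{N}$, and \eqref{ineq:TBounded} handles the remainder factor $\mathcal{T}_\alpha(s)$ when one writes $\tau = n\tau_0 + s$ with $s \in [0,\tau_0)$. Choosing $\epsilon$ so that $\nu' - \epsilon > \nu$ yields the desired bound. The main delicate point is ensuring that the spectral mapping theorem for the point spectrum applies correctly in combination with the essential-spectrum statement so that every spectral point outside the essential disk can actually be traced back to a generator eigenvalue controlled by Proposition \ref{prop:SpecGapLm}; once this framework is set up, all remaining steps are soft consequences of results proved earlier in the paper.
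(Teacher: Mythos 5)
Your proposal follows essentially the same argument as the paper: compact perturbation (Lemma \ref{lem:compact_perturb}) to locate the essential spectrum inside the disk of radius $e^{\tau(1-m)/2}$, the spectral mapping theorem for the point spectrum, Proposition \ref{prop:SpecGapLm} to bound the generator eigenvalues, and a spectral-radius-to-growth-bound conversion (the paper cites Proposition 2.2, Chapter IV of \cite{EngelNagel} for exactly the Gelfand-formula step you carry out by hand with $\nu' \in (\nu,\min(K_\alpha,(m-1)/2))$). The one step you flag as delicate---passing from the essential-spectrum disk to the claim that every spectral point outside it is an isolated eigenvalue of finite multiplicity---is exactly where the paper fills a genuine gap: with the Fredholm definition of $\sigma_{\textup{ess}}$ this inclusion is not automatic, and the paper resolves it by noting that $\mathcal{T}_\alpha(\tau)$ is bounded (so the resolvent set contains the complement of a large disk) and invoking Theorem 2.1, Chapter XVII.2 of \cite{GoldbergEtAl} on the connected component of $\mathbb{C}\setminus\sigma_{\textup{ess}}$ meeting the resolvent set.
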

\begin{proof} 
Since $\mathcal{T}_\alpha(\tau)$ is a compact perturbation of $S(\tau)$ in $L^2_0(m)$ by Lemma \ref{lem:compact_perturb}
and since $\sigma_{ess}$ is invariant under compact perturbations \cite{EngelNagel}, 
$\sigma_{ess}(\mathcal{T}_\alpha(\tau)) = \sigma_{ess}(S(\tau)) \subset \set{\lambda \in \Complex: \abs{\lambda} \leq e^{\tau(1-m)/2}}$.
Since $\mathcal{T}_\alpha(\tau)$ is bounded in $L^2_0(m)$, it has a non-empty resolvent set (in particular the resolvent must contain some half-plane $\set{\lambda \in \Complex: \textup{Re}\lambda> C}$ for some $C \geq 1$),  
it follows from Theorem 2.1, Chapter XVII.2 \cite{GoldbergEtAl} that 
$\sigma(\mathcal{T}_\alpha(\tau)) \cap \set{\lambda \in \Complex : \abs{\lambda} > e^{\tau(1-m)/2}} \subset \sigma_{disc}(\mathcal{T}_\alpha(\tau))$.  
Since the discrete spectrum is contained in the point spectrum, to prove \eqref{ineq:SpecGapT} it suffices to control $P\sigma(\mathcal{T}_\alpha(\tau))$. 
For this we use the spectral mapping result, Theorem 3.7, Chapter IV \cite{EngelNagel}, which proves $P\sigma(\mathcal{T}_\alpha(\tau)) \setminus \set{0} = e^{\tau P\sigma(L - \Lambda_\alpha)}$. 
By Proposition \ref{prop:SpecGapLm}, if $\lambda \in P\sigma(L-\Lambda_\alpha)$ then, 
\begin{align*} 
\abs{e^{\tau \lambda}} = e^{\tau \textup{Re}(\lambda)} \leq \min(e^{-K_\alpha \tau},e^{\tau(1-m)/2}) .  
\end{align*}
Since everything except the discrete spectrum is contained in the ball of radius $e^{\tau(1-m)/2}$ already, it follows that the spectral radius
\begin{align*} 
r(\mathcal{T}_\alpha(\tau)) \leq \min(e^{\tau(1-m)/2},e^{-K_\alpha \tau}),
\end{align*}  
from which the proposition follows by Proposition 2.2, Chapter IV \cite{EngelNagel}. 
\end{proof} 

\subsubsection{Proof of \textit{(iii)}}
The proof of (iii) is analogous to Proposition 4.6(iii) in \cite{GallagherGallay05}, and one can easily check that the proof 
contained therein can be carried out in our case without any significant changes. 
However, notice that the spectral gap estimate deduced in (ii) is vitally important.  

\subsection{Proof of Proposition \ref{prop:SNProperties}} \label{sec:SNPrp}
\subsubsection{Existence of mild solutions} \label{subsec:SNExistence}
We prove the following general lemma. The assumptions could be weakened but this does not seem necessary.   
\begin{lemma}[Existence] \label{lem:GenLinear}
Suppose $T \in (0,\infty)$, $s \in [0,T)$ and let $v \in C^\infty( (0,T) \times \Real^2)$ be such that for all $q \in (2,\infty]$, $\norm{v(t)}_{q} \lesssim t^{\frac{1}{q} - \frac{1}{2}}$, for all $p \in [1,\infty]$, $\norm{\grad v(t)}_p \lesssim t^{\frac{1}{p} - 1}$ and for all $k \in \Naturals^2$ with $1 \leq \abs{k} \leq k_0$ for some $k_0$ we have $\norm{D^k v(t)}_{2} \lesssim t^{-\frac{\abs{k}}{2}}$ 
which additionally satisfies the tightness condition: for all $\epsilon > 0$ there exists an $R > 0$ such that 
\begin{align} 
\norm{\mathbf{1}_{\abs{x} > R}v(t)}_{4} < \epsilon t^{-1/4}. \label{def:vtight}
\end{align}
Then the linear PDE
\begin{subequations} \label{def:linAdVec}
\begin{align} 
\partial_t w + \grad \cdot (v w) & = \Delta w, \\ 
w(s) & = \mu, 
\end{align}
\end{subequations}
has a mild solution (in the sense of Remark \ref{def:MildSolnLinear}) on $(s,T)$ for all $\mu \in \mathcal{M}(\Real^2)$ which satisfies for all $p \in [1,\infty]$ and $k \in \Naturals^2$ with $1 \leq \abs{k} \leq k_0$, 
\begin{subequations} \label{ineq:linearHyper}
\begin{align} 
\norm{w(t)}_p & \lesssim (t-s)^{\frac{1}{p} - 1}\norm{\mu}_{\mathcal{M}(\Real^2)}, \label{ineq:linearHyperI} \\ 
\norm{D^k w(t)}_{2} & \lesssim (t-s)^{-\frac{1}{2}-\frac{\abs{k}}{2}}\norm{\mu}_{\mathcal{M}(\Real^2)}, \label{ineq:linearHyperII} \\
\limsup_{t \rightarrow s^+} (t-s)^{1-\frac{1}{p}}\norm{w(t)}_p & \lesssim \norm{\mu}_{pp}. \label{ineq:linearHyperLoc}
\end{align} 
\end{subequations}
Moreover, if $\mu \in \mathcal{M}_+(\Real^2)$ then we may take this mild solution to be strictly positive for $t > s$. 
Finally, if $\mu = \alpha \delta$ for $\alpha \in \Real$ then we may take this mild solution to satisfy the following localization: for all $\gamma > 1$, there exists a $c = c(\gamma) > 0$ such that,
\begin{align}
\int e^{\frac{\abs{x}^2}{4\gamma (t-s)}}\abs{w(t,x)} dx \lesssim_\gamma e^{c\left(\sup_{s^\prime \in (s,T)}s^{1/2}\norm{v(s^\prime)}_\infty\right)^2}.  \label{ineq:GaussLoc}
\end{align} 
\end{lemma}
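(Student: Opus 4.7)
The plan is to construct the mild solution by approximation and uniform a priori estimates, then establish the special properties (positivity, Gaussian localization, the $pp$ estimate) through separate standard arguments.

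First I would regularize: let $\mu_\epsilon = e^{\epsilon\Delta}\mu$ (or any standard mollification) so that $\mu_\epsilon \in C^\infty \cap L^1$ with $\norm{\mu_\epsilon}_1 \leq \norm{\mu}_{\mathcal{M}}$, and $\mu_\epsilon \rightharpoonup^\star \mu$ as $\epsilon \searrow 0$. For each $\epsilon>0$, since $v$ is smooth on $[s+\epsilon, T)\times \Real^2$, the linear advection-diffusion equation with smooth data $\mu_\epsilon$ at time $s+\epsilon$ admits a unique classical solution $w^\epsilon$ by standard parabolic theory. Positivity and the $L^1$ bound $\norm{w^\epsilon(t)}_1 \leq \norm{\mu}_{\mathcal{M}}$ are immediate from the maximum principle and the divergence form.

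The core of the proof is to establish \eqref{ineq:linearHyperI} uniformly in $\epsilon$. Writing Duhamel's formula
\begin{equation*}
w^\epsilon(t) = e^{(t-s-\epsilon)\Delta}\mu_\epsilon - \int_{s+\epsilon}^t e^{(t-r)\Delta}\grad\cdot(v(r)w^\epsilon(r))\, dr,
\end{equation*}
I would define $N_p(t) := (t-s)^{1-1/p}\norm{w^\epsilon(t)}_p$ and bootstrap via the heat kernel estimates \eqref{ineq:HeatLpLqEasy}, \eqref{ineq:HeatLpLq}. For example with $p = 4/3$, estimate $\norm{v(r)w^\epsilon(r)}_{1} \leq \norm{v(r)}_4\norm{w^\epsilon(r)}_{4/3} \lesssim r^{-1/4}(r-s)^{1/4-1}N_{4/3}(r)$ via the hypothesis on $v$, and use \eqref{ineq:HeatLpLq} to control the Duhamel term in $L^{4/3}$ with kernel $(t-r)^{-3/4}$. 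Iterating a Gronwall/small-time contraction on a sufficiently short interval $[s,s+t_1]$ with $t_1$ independent of $\epsilon$ yields $N_{4/3}(t) \lesssim \norm{\mu}_{\mathcal{M}}$, after which all other $L^p$ estimates follow from the regularizing effect of the heat semigroup, and \eqref{ineq:linearHyperII} from $\norm{D^k e^{\tau \Delta}}_{2\to 2}\lesssim \tau^{-\abs{k}/2}$ applied to $w^\epsilon(t/2)$ combined with the uniform bound on $\norm{w^\epsilon(t/2)}_2$.

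With uniform estimates in hand, Arzel\`a--Ascoli/compactness (using uniform interior estimates away from $t=s$ and weak$^*$ compactness in $\mathcal{M}(\Real^2)$ uniformly in $t$) gives a subsequential limit $w$ which is a mild solution; the weak$^*$ initial condition follows because the Duhamel integral can be shown to vanish as $t \to s^+$ against test functions (the kernel $(t-r)^{-3/4}r^{-1/2}$ is integrable). For \eqref{ineq:linearHyperLoc} I would apply the precise linear estimate \eqref{ineq:Heatpp} to the first term of Duhamel's formula and show the Duhamel correction term vanishes faster than $(t-s)^{1/p-1}$, which is straightforward since $v$ is only singular like $(r-s)^{-1/2}$ in $L^\infty$ but $w$ is bounded in $L^1$ so $\norm{v w}_{L^q}$ has an integrable singularity against $(t-r)^{-3/2+1/p-1/q}$ for suitable $q$. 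For the Gaussian localization \eqref{ineq:GaussLoc} when $\mu = \alpha \delta$, I would use the weighted energy method: setting $\phi(t,x) = \abs{x}^2/(4\gamma(t-s))$, compute $\frac{d}{dt}\int e^{\phi}\abs{w^\epsilon}\,dx$ using $\partial_t e^\phi - \Delta e^\phi + (\Delta \phi - \abs{\grad\phi}^2)e^\phi = 0$ for $\gamma=1$ (with the slack $\gamma>1$ absorbing the $v\cdot\grad\phi$ term via $\abs{v\cdot\grad\phi}\leq \frac{\gamma-1}{\gamma}\abs{\grad\phi}^2 + C(\gamma)\abs{v}^2$), then Gronwall gives the double-exponential factor $e^{c(\sup s^{\prime 1/2}\norm{v(s^\prime)}_\infty)^2}$. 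The tightness assumption \eqref{def:vtight} ensures the uniform $L^1$ tightness needed when $\mu$ is a general measure, by a similar weighted cutoff argument.

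The main obstacle is the joint singularity of the velocity field $v$ (with $L^\infty$ norm blowing up like $(t-s)^{-1/2}$) and the initial measure at $t=s$. The decisive observation which makes the contraction/bootstrap close is that the product of kernel singularities $(t-r)^{-3/4}r^{-1/2}$ (and its analogues for other exponents) remains integrable up to the initial time, and the uniform $L^1$ bound from the maximum principle supplies the needed base control. The delicate $pp$ estimate \eqref{ineq:linearHyperLoc} requires extra care: one must show that despite the singular $v$, the Duhamel correction does not contribute atoms at $t=s$, which follows from the uniform $L^1$ bound combined with the integrable time-singularity above.
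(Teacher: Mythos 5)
Your overall architecture (regularize, obtain uniform a priori estimates, pass to a compactness limit, verify weak$^\star$ attainment of the data) is in the same spirit as the paper's. However, the central step — the $\epsilon$-uniform estimate \eqref{ineq:linearHyperI} — is where your argument fails.

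You propose a short-time contraction on $[s,s+t_1]$ with $t_1$ independent of $\epsilon$. Consider $s=0$ (the only delicate case: for $s>0$ the hypotheses make $v$ bounded on $(s,T)$). Writing the critical Duhamel estimate with $\norm{v(r)}_4 \lesssim c_v r^{-1/4}$ and $\norm{w^\epsilon(r)}_{4/3} = r^{-1/4}N_{4/3}(r)$, one gets
\begin{equation*}
t^{1/4}\norm{\int_0^t e^{(t-r)\Delta}\grad\cdot(v w^\epsilon)\,dr}_{4/3}
\lesssim c_v\Big(\sup_{r<t} N_{4/3}(r)\Big)\, t^{1/4}\int_0^t \frac{dr}{(t-r)^{3/4}r^{1/2}}
= c_v\, B\!\left(\tfrac12,\tfrac14\right)\sup_{r<t} N_{4/3}(r),
\end{equation*}
and the Beta-function integral is \emph{scale-invariant}: it does not shrink as $t_1\searrow 0$. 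So the Lipschitz constant of the Duhamel map is $\approx c_v B(\tfrac12,\tfrac14)$ no matter how small the interval, and the contraction does not close unless the implicit constant $c_v$ is small — which it is not in the intended application (it is tied to the total mass, up to $8\pi$). This is precisely the obstruction the paper singles out: the linear equation ``cannot be treated as a perturbation of the heat equation locally in time.'' The paper's fix is different. After a short $\epsilon$-dependent time $t_\epsilon$ (shrinking to $s$ as $\epsilon\to 0$), where the trivial contraction works because both the mollified data and the mollified velocity are bounded, it changes to similarity variables $\tau=\log(t-s)$, $\xi = x/\sqrt{t-s}$, $f_\epsilon(\tau,\xi) = e^\tau w_\epsilon(e^\tau+s,\xi e^{\tau/2})$. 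In these variables the rescaled velocity $\tilde v_\epsilon(\tau,\xi) = e^{\tau/2}v_\epsilon(e^\tau+s,\xi e^{\tau/2})$ satisfies $\norm{\tilde v_\epsilon(\tau)}_q \lesssim 1$ for all $q\in(2,\infty]$ and $\norm{D^k\tilde v_\epsilon}_2\lesssim 1$ uniformly in $\tau,\epsilon,s$, so one can propagate the $L^p$ and $H^k$ bounds on $f_\epsilon$ indefinitely by Moser--Alikakos iteration / standard parabolic regularity. This criticality-via-similarity-variables device is the essential ingredient your sketch is missing.

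A secondary gap concerns \eqref{ineq:GaussLoc}. Your weighted energy computation retains the term $\Delta\phi = \tfrac{1}{\gamma(t-s)}$ in the Gr\"onwall exponent, which is not integrable down to $t=s$ and produces a divergent factor $\epsilon^{-1/\gamma}$ as $\epsilon\to 0$; the identity you state ($\partial_t e^\phi - \Delta e^\phi + (\Delta\phi-\abs{\grad\phi}^2)e^\phi=0$) is also incorrect — the eikonal cancellation is $\partial_t\phi + \abs{\grad\phi}^2 = 0$ at $\gamma=1$, which still leaves $\Delta\phi$ uncontrolled. For the heat kernel the cancellation between $\Delta\phi>0$ and the negative $\abs{x}^2$-term is exact but only pointwise, and the integral Gr\"onwall bound does not see it. The paper instead obtains a pointwise Gaussian upper bound on the fundamental solution by the stochastic representation, citing (2.6) of Carlen and Loss. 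The remaining elements of your sketch — $W^{-2,1}$ equicontinuity in time, vanishing of the Duhamel integral in $\mathcal{D}'$ as $t\to s^+$, $L^1$ tightness via \eqref{def:vtight}, and the semi-norm estimate \eqref{ineq:linearHyperLoc} — match the paper in spirit and would be salvaged once the uniform $L^p$ bounds are established.
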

\begin{remark} 
Theorem \ref{thm:Basics} (i) can be adapted to show that \eqref{ineq:linearHyperI} and \eqref{ineq:linearHyperII} hold for all mild solutions. 
However, it is not clear that \eqref{ineq:linearHyperLoc} or \eqref{ineq:GaussLoc} hold for all mild solutions with $\mu = \alpha\delta$.   
\end{remark} 
\begin{proof} 
Since \eqref{def:linAdVec} is linear, we may assume that $\norm{\mu}_{\mathcal{M}(\Real^2)} = 1$ (if $\mu = 0$ then we obviously take $w \equiv 0$ as our mild solution).  
For a standard mollifier $\rho_\epsilon(x) = \epsilon^{-2}\rho(\epsilon^{-1}x)$, we may define the regularized velocity field $v_\epsilon = \rho_\epsilon \ast v$ and
for all $\epsilon > 0$ we define $w_\epsilon$ as the classical solution to 
\begin{align*} 
\partial_t w_\epsilon + \grad \cdot (v_\epsilon w_\epsilon) = \Delta w_\epsilon \\ 
w_\epsilon(s) = \rho_\epsilon \ast \mu.  
\end{align*}
We next deduce some a priori estimates on $w_\epsilon$ to show we may extract a mild solution satisfying the appropriate analogue of Definition \ref{def:MildSolution}.  

By a straightforward contraction mapping argument (for example, one that could be used as the first step of constructing the classical solutions) it follows that for each $\epsilon > 0$ we may find a $t_\epsilon > s$ (uniformly in $s \geq 0$) such that 
$w_\epsilon$ satisfies \eqref{ineq:linearHyper} on $s < t < t_\epsilon$ with a constant which is independent of $\epsilon$ and $s$ (although $t_\epsilon \rightarrow s$ as $\epsilon \rightarrow 0$).  
Now we sketch how to extend $t_\epsilon$ independent of $\epsilon$.  
We re-write the problem in similarity variables, $\xi = x(t-s)^{-1/2}$ and $\tau = \log (t-s)$, $\tau \in (-\infty,\log (T-s)]$ and define 
\begin{align*}
\tilde v_\epsilon(\tau,\xi) & = e^{\tau/2}v_\epsilon(e^{\tau}+s,\xi e^{\tau/2}), \\ 
f_\epsilon(\tau,\xi) & = e^{\tau}w_\epsilon(e^{\tau}+s,\xi e^{\tau/2}), 
\end{align*} 
which together solve 
\begin{align*} 
\partial_\tau f_\epsilon + \grad \cdot (\tilde v_\epsilon f_\epsilon) = Lf_\epsilon. 
\end{align*} 
By the assumptions on $v$, $\tilde v_\epsilon$ satisfies $\norm{\tilde v_\epsilon(\tau)}_q \lesssim 1$ for all $q \in (2,\infty]$
and for all $k \in \Naturals^2$, $1 \leq \abs{k} \leq k_0$ and $\norm{D^k \tilde v_\epsilon}_2 \lesssim 1$ independent of $\epsilon$ and $s$. 
By the change of variables, we have that $f_\epsilon$ is uniformly bounded in $L^p$ for all $p \in [1,\infty]$ for $\tau < \log t_\epsilon$. 
Using standard parabolic regularity theory (for example, the Moser-Alikakos iteration), it follows by the uniform bounds on $\tilde v_\epsilon$ that in fact $f_\epsilon$ is bounded 
uniformly in $L^p$ and $H^k$ up until $\tau = \log (T-s)$ with bounds independent of $\epsilon$ and $s$.  
Upon passing back to the original variables, we have \eqref{ineq:linearHyper}.
By these controls, $\set{w_\epsilon(t)}_{\epsilon > 0}$ is locally precompact in $C_{loc}((s,T); H_{loc}^{k}(\Real^2))$ for all $1 \leq k \leq k_0-\delta$ (for any $\delta>0$).
From here we want to pass to the limit and extract a mild solution to \eqref{def:linAdVec} which satisfies the desired properties. 
Getting \eqref{ineq:linearHyper} follows immediately and \eqref{eq:MildSolutionDef} is not hard.
The property which is not so obvious is that the extracted solution $w(t)$ satisfies $w(t) \rightharpoonup^\star \mu$ as $t \rightarrow s^+$; for this we need a bit of regularity in time. 
Directly from the PDE and the uniform bound in $L^1$, we have the uniform bound in the negative order Sobolev space $W^{-2,1}$: 
\begin{align*} 
\norm{\partial_t w_\epsilon}_{W^{-2,1}} & \leq \norm{w_\epsilon v_\epsilon}_1 + \norm{w_\epsilon}_1  \lesssim \frac{1}{\sqrt{t}}.     
\end{align*} 
As the above is integrable, we have that $w_\epsilon(t)$ is uniformly equi-continuous in time with values in $W^{-2,1}$. 
Hence, as $\epsilon \rightarrow 0$ we may extract a subsequence $\epsilon_n$ such that $w_{\epsilon_n}$ 
converges to a function $w(t)$ in $C([s,T];W^{-2,1}(\Real^2))\cap C_{loc}((s,T);H_{loc}^{k}(\Real^2) \cap L^1(\Real^2))$ for all $1 \leq k \leq k_0-\delta$ (for any $\delta>0$). 
It follows that $w(t)$ is non-negative, satisfies \eqref{ineq:linearHyper} (by lower-semicontinuity) and for all $s^\prime > s$, 
\begin{align} 
w(t) = e^{(t-s^\prime)\Delta} w(s^\prime) - \int_{s^\prime}^t e^{(t-\tilde s)} \grad \cdot (v(\tilde s)w(\tilde s)) d\tilde s, \label{def:wtf}
\end{align}
and moreover for all $t > s$, we may pass to the limit $s^\prime \rightarrow s$ in \eqref{def:wtf} at least in the sense of distributions. 
By the equi-continuity in $W^{-2,1}$ we also know that $w(t) \rightarrow \mu$ in $W^{-2,1}$as $t \rightarrow s$. 
Hence $w(t)$ satisfies the initial value problem \eqref{def:linAdVec} in some sense but in order to satisfy the definition 
of mild solution we need to prove it takes the initial data in the weak$^\star$ topology, for which we need to use the $L^1$ bound and tightness in $L^1$ as $t \rightarrow s^+$. 
To see the latter, multiply \eqref{def:linAdVec} by a smooth, non-negative cut-off function $\chi$ which is equal to one for $\abs{x} > R$ and zero for $\abs{x} \leq R/2$. Define $w^R(t,x) = w(t,x)\chi(x)$ which solves (in a sense analogous to \eqref{def:wtf}), 
\begin{align*}
\partial_t w^R + \grad \cdot (vw^R) = \Delta w^R + \grad \chi \cdot v w + w \Delta \chi - 2\grad\cdot(w\grad \chi). 
\end{align*}  
By Duhamel's formula, followed by \eqref{ineq:HeatLpLqEasy} and \eqref{ineq:HeatLpLq}, H\"older's inequality and \eqref{ineq:linearHyperI}  
\begin{align*} 
\norm{w^R(t)}_1 & \leq \norm{e^{(t-s)\Delta}\mu \chi}_1 + \norm{\int_s^t e^{(t-t^\prime)\Delta}\left[\grad \cdot (vw^R(t^\prime)) + \grad \chi \cdot v w(t^\prime) + w(t^\prime) \Delta \chi - 2\grad\cdot(w(t^\prime)\grad \chi)\right] dt^\prime}_1 \\ 
& \lesssim \norm{\mu \chi}_1 + \int_s^t \frac{1}{(t-t^\prime)^{1/2}}\left(\norm{ \chi vw (t^\prime)}_1 + \frac{1}{R}\norm{w(t^\prime)}_1\right) + \frac{1}{R}\norm{\mathbf{1}_{\abs{x} \geq R/2} v w (t^\prime)}_1 + \frac{1}{R^2}\norm{w(t^\prime)}_1 dt^\prime \\ 
& \lesssim \norm{\mu \chi}_1 + \frac{(t-s)^{1/2}}{R} + \frac{(t-s)}{R^2} + \int_s^t \frac{1}{(t-t^\prime)^{1/2}}\norm{\mathbf{1}_{\abs{x}\geq R/2}v(t^\prime)}_{4} \norm{w (t^\prime)}_{4/3} \\ & \quad\quad + \int_s^t\frac{1}{R}\norm{\mathbf{1}_{\abs{x} \geq R/2} v(t^\prime)}_{4} \norm{w (t^\prime)}_{4/3} dt^\prime \\ 
& \lesssim \norm{\mu \chi}_1 + \frac{(t-s)^{1/2}}{R} + \frac{(t-s)}{R^2} + \int_s^t \frac{1}{(t-t^\prime)^{1/2}(t^\prime)^{1/4}}\norm{\mathbf{1}_{\abs{x}\geq R/2}v(t^\prime)}_{4} + \frac{1}{R(t^\prime)^{1/2}} dt^\prime \\ 
 & \lesssim \norm{\mu \chi}_1 + \frac{(t-s)^{1/2}}{R} + \frac{(t-s)}{R^2} + \int_s^t \frac{1}{(t-t^\prime)^{1/2}(t^\prime)^{1/4}}\norm{\mathbf{1}_{\abs{x}\geq R/2}v(t^\prime)}_{4}dt^\prime.  
\end{align*} 
For the last term we use \eqref{def:vtight}, which implies that we can choose $R$ sufficiently large so that: 
\begin{align*} 
\norm{w^R(t)}_1  
  & \lesssim \norm{\mu \chi}_1 + \frac{(t-s)^{1/2}}{R} + \frac{(t-s)}{R^2} + \epsilon. 
\end{align*}  
Then choosing $R$ such that $\mu(\Real^2 \setminus B_R) < \epsilon$ and $(t-s) < \epsilon R^2$ it follows that 
$\norm{w^R(t)}_1 \lesssim \epsilon$, which implies by definition that $w(t)$ is tight in $\mathcal{M}(\Real^2)$ as $t \rightarrow s^+$. 
As $w(t)$ is tight in $\mathcal{M}(\Real^2)$, uniformly bounded in total variation and converges in the sense of distributions to $\mu$ as $t \rightarrow s$ 
it follows that the convergence also holds in the weak$^\star$ topology (see e.g. \S\ref{sec:finalStep}). Therefore, by definition $w(t)$ is a mild solution. 

The Gaussian localization \eqref{ineq:GaussLoc} comes from adapting (2.6) in \cite{CarlenLoss92} which follows by identifying non-negative solutions to \eqref{def:linAdVec} as the law for the corresponding SDE, justified for the mild solution constructed above by approximation. 
\end{proof} 

\subsubsection{$N=1$: One Concentration}
We now prove Proposition \ref{prop:SNProperties} in the case of one concentration, which requires some known spectral gap properties of  Fokker-Planck operators.
We study, for fixed $\alpha \in (0,8\pi)$, the PDE 
\begin{equation}
\partial_t w + \grad \cdot \left( \frac{1}{\sqrt{t}}\grad c_\alpha\left( \frac{x}{\sqrt{t}} \right) w \right) = \Delta w.  \label{eq:propN1}
\end{equation}
In self-similar coordinates, $\xi = x t^{-1/2}$, $\tau = \log t$, $t w(t,x) = f(\tau,\xi)$, we may re-write this as
\begin{equation}
\partial_\tau f + \grad \cdot \left(f \grad c_\alpha \right) = Lf. \label{eq:FKP}
\end{equation}
First we prove that the mild solution constructed in Lemma \ref{lem:GenLinear} is unique, which requires a sequence of steps since the PDE is critically singular in the sense that a contraction mapping argument can only be applied if $\alpha$ is small.  
By linearity, it suffices to show that an arbitrary mild solution with zero initial data remains zero for positive times. 
The main idea is to `break scaling' and improve the sense in which the mild solution converges to zero as $t \rightarrow 0^+$ and then apply a straightforward duality argument.
In what follows, denote $v(t,x) = \frac{1}{\sqrt{t}}\grad c_\alpha\left(\frac{x}{\sqrt{t}}\right)$. 
We remark that our proof does not explicitly use any spectral properties of \eqref{eq:FKP} however it does rely on the monotonicity $v(t,x)\cdot x \leq 0$ (compare with \cite{JiaSverak13}). 

We begin by proving all mild solutions with zero initial data converge strongly to zero as $t \rightarrow 0^+$ everywhere except possibly near the origin, where the singularity in the velocity field is located. 

\begin{lemma} \label{lem:easyLoc}
Let $w(t,x)$ be a mild solution to \eqref{eq:propN1} with zero initial data. Then for all $\delta > 0$, 
\begin{align*} 
\lim_{t \rightarrow 0^+}\int_{\abs{x} > \delta}\abs{w(t,x)} dx = 0.
\end{align*}
\end{lemma}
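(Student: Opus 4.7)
}
The plan is to combine Kato's inequality with the key monotonicity property $v(t,x)\cdot x\le 0$ to derive a one-sided differential inequality for a weighted $L^1$ norm of $|w|$ supported away from the origin, and then close the estimate using local parabolic regularity on annuli away from $0$. First, by standard parabolic bootstrapping from the a priori bounds $\norm{w(t)}_p\lesssim t^{1/p-1}$ and the smoothness of $v$ on $(0,T)\times(\Real^2\setminus\{0\})$, $w$ is smooth on $(0,T)\times\Real^2$, so Kato's inequality applies: $\partial_t|w|+\grad\cdot(v|w|)\le\Delta|w|$ in the sense of distributions. Fix $\delta>0$ and choose a smooth, radial, non-negative, non-decreasing-in-$|x|$ cutoff $\phi$ with $\phi\equiv 0$ on $\{|x|\le\delta/2\}$ and $\phi\equiv 1$ on $\{|x|\ge\delta\}$.

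The critical observation is monotonicity: since $G_\alpha$ is radial, Newton's shell theorem gives
\[
v(t,x)\cdot x \;=\; \grad c_\alpha(x/\sqrt t)\cdot(x/\sqrt t) \;=\; -\frac{1}{2\pi}\int_{|\eta|<|x/\sqrt t|}G_\alpha(\eta)\,d\eta \;\le\; 0,
\]
and because $\grad\phi=\phi'(|x|)\,\hat x$ with $\phi'\ge 0$, one has $v\cdot\grad\phi\le 0$ pointwise. Testing Kato's inequality against $\phi$ and integrating by parts yields the one-sided estimate
\[
\frac{d}{dt}\int\phi|w(t)|\,dx \;\le\; \int|w(t)|\,\Delta\phi\,dx \;\le\; C\int_{\delta/2<|x|<\delta}|w(t,x)|\,dx,
\]
with the advective term having the favorable sign and hence being discarded. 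Integrating from $s$ to $t$ then reduces the lemma to two sub-claims: (a) the integral $\int_{\delta/2<|x|<\delta}|w(\tau)|\,dx$ tends to $0$ as $\tau\to 0^+$; (b) the quantity $\int\phi|w(s)|\,dx$ tends to $0$ as $s\to 0^+$.

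Claim (a), and through it much of (b), will be obtained from local parabolic regularity on the compact annulus $A=\{\delta/4\le|x|\le 2\delta\}$. On $[0,T]\times A$ the field $v$ is uniformly smooth and, crucially, its derivatives (in particular $\grad\cdot v=-t^{-1}G_\alpha(x/\sqrt t)$) decay super-exponentially as $t\to 0^+$ thanks to the Gaussian tail of $G_\alpha$ in Proposition \ref{prop:Galpha}(i). Combining this with the uniform $L^1$ bound $\norm{w(t)}_1\lesssim 1$, standard interior estimates for linear divergence-form parabolic equations (Nash--Moser--De Giorgi type) give uniform Hölder continuity of $w$ on $[0,T]\times K$ for any compact $K\subset A$, up to $t=0$. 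Since $w(t)\rightharpoonup^*0$ restricts to weak convergence to $0$ on $C_c(A)$, Arzel\`a--Ascoli forces $w(t,\cdot)\to 0$ locally uniformly on $A$, so $\int_{\delta/2<|x|<\delta}|w(\tau)|\,dx\to 0$. For (b) we split $\int\phi|w(s)|\,dx$ into the bounded piece $\int_{\delta/2<|x|<R}\phi|w(s)|\,dx$, which vanishes as $s\to 0^+$ by the same argument applied on the annulus $\{\delta/2<|x|<R+1\}$, and a tail piece $\int_{|x|>R}|w(s)|\,dx$; the tail is handled by repeating the Kato/monotonicity argument with a radial non-decreasing cutoff $\chi_R$ supported in $\{|x|>R\}$, which (again by monotonicity) gives $\int\chi_R|w(t)|\,dx\le\int\chi_R|w(s)|\,dx+Ct R^{-2}$, combined with the tightness at infinity of $w(t)$ (obtained either via the Gaussian localization \eqref{ineq:GaussLoc} of Lemma \ref{lem:GenLinear} applied to the non-negative reference mild solution and a linearity argument, or directly from the Duhamel representation since $\mu=0$).

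The main obstacle is Claim (a): $w(t)$ has no uniform $L^\infty$ bound as $t\to 0^+$ globally (only $\norm{w(t)}_\infty\lesssim t^{-1}$), so parabolic regularity up to $t=0$ is not automatic. The entire argument rests on confining this blow-up to the origin by exploiting the Gaussian decay of $G_\alpha$ and its derivatives at infinity: in the $x$-coordinate this translates into the drift field becoming vanishingly weak on any fixed annulus as $t\to 0^+$, which is precisely what promotes uniform equicontinuity of $w$ on such annuli all the way to the initial time. The monotonicity $v\cdot x\le 0$ is what prevents any pathological mass that might exist near the origin from being advected outward and spoiling this localized regularity.
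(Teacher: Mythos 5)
Your argument is much heavier than necessary, is structurally confused, and has a genuine gap in the tail control. Structurally, the Kato/monotonicity step carries no weight: once you have claim (a) --- local uniform convergence $w(t,\cdot)\to 0$ on annuli away from the origin --- together with tightness at infinity, the lemma is already proved, and the differential inequality from Kato and the sign $v\cdot x\le 0$ never enter. More seriously, the tail control is not justified. You cannot invoke \eqref{ineq:GaussLoc}: it is established only for the specific non-negative mild solution constructed in Lemma \ref{lem:GenLinear} with data $\alpha\delta$, and does not transfer to a signed $w$ with zero initial data (which in this uniqueness argument is a priori the difference of two possibly distinct mild solutions, about which no localization is known). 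The alternative you offer, ``directly from the Duhamel representation since $\mu=0$,'' begs the question: the Duhamel integral $-\int_0^t e^{(t-s)\Delta}\grad\cdot(v(s)w(s))\,ds$ with the \emph{unlocalized} field $v$ (for which $\norm{v(s)}_\infty\sim s^{-1/2}$) only reproduces boundedness of $\norm{w(t)}_1$, not smallness of the tail, since $\int_0^t (t-s)^{-1/2}s^{-1/2}\,ds$ is $O(1)$ and does not vanish. Finally, the local $L^1\to C^\gamma$ parabolic regularity up to $t=0$ on the annulus that you need for (a) is not ``standard'': Moser/De Giorgi iteration normally starts from $L^p$ with $p>1$, and reaching the initial time forces the extension-by-zero argument (exploiting $w(t)\rightharpoonup^\star 0$) plus an $L^1$-to-$L^2$ step --- doable, but substantially more delicate than you acknowledge.

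The paper bypasses all of this with one Duhamel estimate. Choose $\chi$ smooth, $\chi\equiv 1$ on $\{|x|\ge\delta\}$, $\chi\equiv 0$ on $\{|x|\le\delta/2\}$, and set $w^\delta=\chi w$. Then $w^\delta$ solves a heat equation advected by the \emph{bounded} field $\chi v$ (with $\norm{\chi v(t)}_\infty\lesssim\delta^{-1}$ for $t\lesssim\delta^2$, because $v$ decays like $|x|^{-1}$ away from the origin) and forced by terms supported on the annulus $\{\delta/2\le|x|\le\delta\}$ involving $\grad\chi$ and $\Delta\chi$. Since $w^\delta(0)=0$, Duhamel with the heat kernel and the a priori bound $\norm{w(t)}_1\lesssim 1$ give
\begin{align*}
\norm{w^\delta(t)}_1 \lesssim \frac{t}{\delta^2} + \frac{\sqrt t}{\delta}\longrightarrow 0,
\end{align*}
which is the lemma. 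Note that this single estimate handles the bounded region and the tail at infinity simultaneously, since the cut-off velocity is uniformly bounded on all of $\{|x|>\delta/2\}$; it needs no parabolic regularity theory, no Kato inequality, and no sign structure in the drift.
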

\begin{proof} 
Let $\chi(x)$ be a smooth, non-negative function which satisfies $\chi(x) = 1$ for $\abs{x} \geq \delta$ and $\chi(x) = 0$ for $\abs{x} < \delta/2$.  
Define $w^\delta(t,x) = \chi w(t,x)$, which is a mild solution to the PDE 
\begin{align*} 
\partial_t w^\delta + \grad \cdot (v w^\delta) & = \Delta w^\delta + \grad \chi \cdot v w - w\Delta \chi  - 2\grad \chi \cdot \grad w \\
& = \Delta w^\delta + \grad \chi \cdot v w + w\Delta \chi  - 2 \grad\cdot (w \grad \chi).  
\end{align*} 
Using Duhamel's formula and the zero initial data assumption,
\begin{align*} 
w^\delta(t) = -\int_0^t e^{(t-s)\Delta}\left[\grad \cdot (\chi v(s) w(s)) + 2 \grad\cdot (w \grad \chi) - \grad \chi \cdot v w - w\Delta \chi\right] ds.
\end{align*} 
Note that for $t \lesssim \delta^2$, $\norm{\chi v(t)}_\infty \lesssim \delta^{-1}$. 
Hence, by the a priori estimates on $w$ from \eqref{ineq:linearHyper}, for $t$ sufficiently small (using also \eqref{ineq:HeatLpLqEasy} and \eqref{ineq:HeatLpLq})
\begin{align*} 
\norm{w^\delta(t)}_1 & \leq \int_0^t\frac{1}{(t-s)^{1/2}}\left[\norm{\chi v(s)}_{\infty}\norm{w(s)}_{1} + \frac{1}{\delta}\norm{w}_1\right] + \norm{\grad \chi \cdot v(s)}_{\infty}\norm{w(s)}_{1} + \frac{1}{\delta^2}\norm{w(s)}_{1} ds \\ 
& \lesssim \frac{t}{\delta^2} + \int_0^t\frac{1}{\delta (t-s)^{1/2}} ds \lesssim \frac{t}{\delta^2} + \frac{\sqrt{t}}{\delta}, 
\end{align*} 
from which we conclude. 
\end{proof}

The next step is to localize the mild solutions to a natural parabolic region in space-time which shows that any spurious information created at the origin cannot propagate away unphysically fast.
This is quantified by compactness of solutions to \eqref{eq:FKP} in the sense of the following lemma.
The hypothesis is a direct consequence of Lemma \ref{lem:easyLoc}.
\begin{lemma} \label{lem:compactL1}
Let $f(\tau,\xi)$ be a smooth, uniformly bounded $L^1$ solution to \eqref{eq:FKP} on $\tau \in (-\infty,T]$ such that for all $\epsilon > 0$ and  $\delta > 0$ there is a $\tau_{\epsilon,\delta}$ such that for $\tau < \tau_{\epsilon,\delta}$ the following holds: 
\begin{align} 
\int_{e^{\tau/2}\abs{\xi} > \delta} \abs{f(\tau,\xi)} d\xi < \epsilon. \label{lem:compcHypo}
\end{align}   
Then $f(\tau,\xi)$ is tight in $L^1$ in the sense that for all $\epsilon > 0$ there is an $R > 0$ such that the following holds for all $\tau < T$: 
\begin{align*} 
\int_{\abs{\xi} > R} \abs{f(\tau,\xi)} d\xi < \epsilon. 
\end{align*}   
\end{lemma}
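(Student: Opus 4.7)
The plan is to derive a Gronwall-type estimate for a smooth cutoff functional $\int \psi_R |f|\,d\xi$, use the hypothesis at a very negative starting time $\tau_0$ to secure initial smallness, and then propagate forward, exploiting that both drifts in \eqref{eq:FKP} point inward. I would take $\psi_R(\xi) = \eta(|\xi|/R)$ with $\eta : [0,\infty) \to [0,1]$ smooth, non-decreasing, vanishing on $[0,1]$ and identically $1$ on $[2,\infty)$. Reducing first to $f \geq 0$ (the sign-indefinite case follows from Kato's inequality $\partial_\tau |f| \leq \Delta|f| + \tfrac{1}{2}\grad\cdot(\xi|f|) - \grad\cdot(|f|\grad c_\alpha)$), I test \eqref{eq:FKP} against $\psi_R$ and integrate by parts. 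The Fokker-Planck contribution is $-\tfrac{1}{2}\int (|\xi|/R)\eta'(|\xi|/R) f\,d\xi \leq 0$, and the chemotactic contribution is $\int (1/R)\eta'(|\xi|/R)(\xi\cdot\grad c_\alpha)/|\xi|\cdot f\,d\xi \leq 0$ since $\xi\cdot\grad c_\alpha \leq 0$ by the radial monotonicity of $c_\alpha$ (via Proposition \ref{prop:Galpha}). The only sign-indefinite term is $\int f\,\Delta\psi_R\,d\xi$, controlled by $(C/R^2)\|f\|_1$ since $|\Delta\psi_R| \lesssim R^{-2}$ on $\{R \leq |\xi| \leq 2R\}$. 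Hence
\begin{equation*}
\frac{d}{d\tau}\int \psi_R |f|\,d\xi \leq \frac{C}{R^2}\|f\|_1.
\end{equation*}

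Given $\epsilon > 0$, the hypothesis applied with $\delta = 1$ yields a $\tau_*$ such that whenever $\tau_0 < \tau_*$ one has $\int_{|\xi| > e^{-\tau_0/2}}|f(\tau_0)|\,d\xi < \epsilon$. Taking $R := e^{-\tau_0/2}$ and integrating the differential inequality from $\tau_0$ to any $\tau \in [\tau_0, T]$ gives
\begin{equation*}
\int \psi_R |f(\tau)|\,d\xi \leq \epsilon + C\|f\|_1 (T-\tau_0)e^{\tau_0}.
\end{equation*}
Since $(T-\tau_0)e^{\tau_0} \to 0$ as $\tau_0 \to -\infty$, for $\tau_0$ sufficiently negative the bound improves to $2\epsilon$, yielding $\int_{|\xi|>2R}|f(\tau)|\,d\xi \leq 2\epsilon$ throughout $[\tau_0, T]$.

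The remaining range $\tau \in (-\infty, \tau_0)$ at the same radius $2R$ I would cover by re-invoking the hypothesis with a smaller parameter $\delta_\tau := 2R\,e^{\tau/2}$, for which $\delta_\tau e^{-\tau/2} = 2R$ and the hypothesis directly bounds $\int_{|\xi|>2R}|f(\tau)|\,d\xi$ by $\epsilon$ as soon as $\tau < \tau_{\epsilon,\delta_\tau}$. The main obstacle will be verifying that this choice of $\delta_\tau$ is admissible in the hypothesis uniformly for all $\tau < \tau_0$; if the threshold map $\delta \mapsto \tau_{\epsilon,\delta}$ does not behave monotonically in a sufficiently controlled way, I would instead iterate the forward-propagation step from progressively more negative initial times $\tau_0' < \tau_0$, using the monotonicity of the inward drift (emphasized in the paper's outline) to reconcile the different radii $R(\tau_0')$ into a single common $R$ independent of $\tau$.
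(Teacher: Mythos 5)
The forward-propagation portion of your argument is correct and cleanly executed: the cutoff identity, the sign of the two drift contributions, the $O(R^{-2})$ bound on the $\Delta\psi_R$ term, the choice $R = e^{-\tau_0/2}$, and the observation that $(T - \tau_0)e^{\tau_0} \to 0$ all work, and they do produce tightness at radius $2R$ on $[\tau_0, T]$. The gap is entirely in the extension to $\tau < \tau_0$, and neither of your proposed remedies closes it. Re-invoking the hypothesis with $\delta_\tau = 2Re^{\tau/2}$ requires $\tau < \tau_{\epsilon,\delta_\tau}$, but the hypothesis gives no control on how $\tau_{\epsilon,\delta}$ degenerates as $\delta \to 0$; it is perfectly consistent with $\tau_{\epsilon,\delta_\tau} \ll \tau$ for all large negative $\tau$ (indeed the hypothesis alone, without the PDE, does not even imply the conclusion: a traveling bump at $|\xi| \sim e^{-\tau/4}$ satisfies \eqref{lem:compcHypo} for every $\delta$ yet fails tightness). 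Iterating forward-propagation from $\tau_0' < \tau_0$ fares no better, because the radius you are forced to take is $R' = e^{-\tau_0'/2}$, which grows without bound; your differential inequality only shows $\int\psi_{R'} f$ stays small, not $\int\psi_{R} f$ with the original $R$, and the error term $CR^{-2}\|f\|_1(\tau - \tau_0')$ at the fixed radius $R$ grows linearly in the time interval with no compensating mechanism.

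The structural reason the cutoff argument cannot give a $\tau$-independent radius is that it only uses the \emph{sign} of the confining drift (you drop the two dissipative terms), whereas the lemma genuinely needs the \emph{exponential contraction} that this drift provides: mass that sits at scale $|\xi| \sim e^{-\tau'/2}$ at time $\tau'$ must be pulled in to scale $O(1)$ by any fixed later time $\tau$, with a gain of $e^{-(\tau-\tau')}$. This is exactly what the paper's SDE comparison captures, by conditioning on the event $\{|\bar X_{\tau'}| \leq \delta e^{-\tau'/2}\}$ and bounding $\mathbb E|\bar X_\tau|^2$ via comparison with the Ornstein-Uhlenbeck process, for which the second moment satisfies the explicit contraction. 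A purely PDE route exists but must incorporate the same ingredient: decompose $f(\tau') = f^{(1)} + f^{(2)}$ where $f^{(1)} = f(\tau')\mathbf{1}_{\{|\xi|\le\delta e^{-\tau'/2}\}}$, propagate each forward separately, discard $f^{(2)}$ using $L^1$ conservation and the hypothesis $\|f^{(2)}\|_1 < \epsilon$, and apply to $f^{(1)}$ the second-moment inequality
\begin{equation*}
\frac{d}{d\tau}\int |\xi|^2 f^{(1)}\,d\xi \;\le\; 4\|f^{(1)}\|_1 - \int|\xi|^2 f^{(1)}\,d\xi,
\end{equation*}
which uses both $\xi\cdot\grad c_\alpha \le 0$ and, crucially, retains the Fokker--Planck contribution $-\int|\xi|^2 f^{(1)}$ rather than merely dropping it. Integrating gives $\int|\xi|^2 f^{(1)}(\tau) \lesssim \delta^2 e^{-\tau}\|f\|_1 + \|f\|_1$, so choosing $\delta \lesssim e^{\tau/2}$ makes this $O(\|f\|_1)$ with a constant independent of $\tau$ and $\tau'$, and Chebyshev then yields the conclusion at a fixed radius $R \sim \sqrt{\|f\|_1/\epsilon}$. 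Your weight $\psi_R$, which is flat for $|\xi| > 2R$, has $\xi\cdot\grad\psi_R$ supported only on an annulus and hence cannot reproduce this decay; switching to an unbounded weight such as $|\xi|^2$ (or $(|\xi|-R)_+^2$) and doing the above decomposition would repair the argument.
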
 
\begin{proof} 
By considering the positive and negative parts separately we see that if $\bar f$ solves \eqref{eq:FKP} with data
$\bar f(\tau^\prime) = \abs{f(\tau^\prime)}$ then for $\tau > \tau^\prime$, $\abs{f(\tau)} \leq \bar f(\tau)$ and hence we may assume without loss of generality that $f$ is non-negative. 

We will eventually compare solutions to \eqref{eq:FKP} with the corresponding PDE in the case $\alpha = 0$, for which there is an explicit expression for the solution. Indeed if $f(\tau)$ solves \eqref{eq:FKP} with $\alpha = 0$ then,  
\begin{align} 
f(\tau,\xi) = \frac{1}{4\pi a(\tau-\tau^\prime)} \int_\zeta e^{-\frac{1}{4a(\tau - \tau^\prime)}\abs{\xi - e^{\frac{\tau^\prime - \tau}{2}}\zeta}^2} f(\tau^\prime,\zeta) d\zeta. \label{def:explicit}
\end{align}
where as above $a(\tau-\tau^\prime) = 1 - e^{\tau^\prime-\tau}$. 
Let $\epsilon > 0$ and $\tau < T$ be arbitrary and choose $R > \epsilon^{-1/2}$.
Now fix $\delta < e^{\tau/2}R/4$ and let $\tau^\prime$ be sufficiently small such that 
$a(\tau-\tau^\prime) > \frac{1}{2}$ and (using \eqref{lem:compcHypo}), 
\begin{align} 
\int_{e^{\tau^\prime/2}\abs{\xi} > \delta} f(\tau^\prime,\xi) d\xi < \epsilon. \label{ineq:loc_tauprime}
\end{align}
We use the SDE representation of \eqref{eq:FKP} (see e.g. \cite{Gardiner,Oksendal}) and compare the solution with $\alpha > 0$ to the case $\alpha = 0$.
Define $\bar X_t$ as the unique non-anticipating solution to the Ito diffusion 
\begin{align} 
d\bar X_t & = -\frac{1}{2}\bar X_t dt + \grad c_\alpha(\bar X_t)dt + dW_t, \label{def:ItoBar}
\end{align}
with $\bar X_{\tau^\prime}$ distributed by the density $f(\tau^\prime,\xi)/\norm{f(\tau^\prime)}_1$.  
By \eqref{ineq:loc_tauprime}, 
\begin{align} 
\mathbb P\left( \abs{\bar X_\tau} > R^2\right) & =  \mathbb P\left( \abs{\bar X_\tau} > R^2 | \abs{\bar X_{\tau^\prime}} \leq \delta e^{-\tau^\prime/2}\right)\mathbb P\left(\abs{\bar X_{\tau^\prime}} \leq \delta e^{-\tau^\prime/2}\right) \\ & \quad +\mathbb P\left( \abs{\bar X_\tau} > R^2 | \abs{\bar X_{\tau^\prime}} > \delta e^{-\tau^\prime/2}\right) \mathbb P\left(\abs{\bar X_{\tau^\prime}} > \delta e^{-\tau^\prime/2}\right) \nonumber \\ 
& \leq \mathbb P\left( \abs{\bar X_\tau} > R^2 | \abs{\bar X_{\tau^\prime}} \leq \delta e^{-\tau^\prime/2}\right) + \mathbb P\left(\abs{\bar X_{\tau^\prime}} > \delta e^{-\tau^\prime/2}\right) \nonumber \\ 
& \lesssim \mathbb P\left( \abs{\bar X_\tau} > R^2 | \abs{\bar X_{\tau^\prime}} \leq \delta e^{-\tau^\prime/2}\right) + \epsilon. \label{ineq:Prob} 
\end{align}
Now define $\tilde X_t$ as the unique non-anticipating solution to the Ito diffusion \eqref{def:ItoBar} with  $\tilde X_{\tau^\prime}$ distributed by the law $\mathbf{1}_{\abs{\xi} \leq \delta e^{-\tau^\prime/2}} f(\tau^\prime,\xi)/\norm{\mathbf{1}_{\abs{\xi} \leq \delta e^{-\tau^\prime/2}} f(\tau^\prime)}_1$. 
By Markov's inequality, 
\begin{align} 
\mathbb P\left( \abs{\bar X_t} > R^2 | \abs{\bar X_{\tau^\prime}} \leq \delta e^{-\tau^\prime/2}\right) & = \mathbb P\left( \abs{\tilde X_t} > R^2\right) \leq \frac{1}{R^4}\mathbb E\abs{\tilde X_t}^2. \label{ineq:Markovs}
\end{align} 
It remains to estimate the expectation. 
Define $X_t$ to be the unique non-anticipating solution to the Ito diffusion
\begin{align*}
dX_t & = -\frac{1}{2}X_t dt + dW_t,
\end{align*}
with $X_{\tau^\prime}$ distributed by $\mathbf{1}_{\abs{\xi} \leq \delta e^{-\tau^\prime/2}} f(\tau^\prime,\xi)/\norm{\mathbf{1}_{\abs{\xi} \leq \delta e^{-\tau^\prime/2}} f(\tau^\prime)}_1$.
Consider the third Ito diffusion 
\begin{align*} 
Y_t = \abs{\tilde X_t}^2 - \abs{X_t}^2, 
\end{align*} 
with initial density $Y_{\tau^\prime} = \delta$ (the $\delta$ mass at the origin) to compare the two SDEs path-wise. 
By Ito's formula, $Y_t$ satisfies 
\begin{align*} 
dY_t = -Y_t dt +  2\grad c_\alpha(\tilde X_t)\cdot \tilde X_t dt + 2 \left(\tilde X_t - X_t\right)\cdot dW_t. 
\end{align*} 
Writing $y_t = Y_t e^{t}$ and applying the Ito formula again,
\begin{align*} 
y_t = 2\int_{\tau^\prime}^t e^s \grad c_\alpha(\tilde X_s)\cdot \tilde X_s ds +  2 \int_{\tau^\prime}^t e^s\left(\tilde X_s - X_s\right)\cdot dW_s,
\end{align*} 
from which it follows 
\begin{align*} 
\mathbb E y_t & = 2 \mathbb E\int_{\tau^\prime}^t e^s \grad c_\alpha(\tilde X_s)\cdot \tilde X_s ds +  2 \mathbb E \int_{\tau^\prime}^t e^s\left(\tilde X_s - X_s\right)\cdot dW_s \leq 0; 
\end{align*} 
the first term is negative due to the fact that $\grad c_\alpha(x) \cdot x \leq 0$ and the second expectation is zero due to the mean value formula for Ito integrals with non-anticipating integrands. 
Finally, from the definition of $Y_t$ it follows (since $X_{\tau^\prime}$ and $\tilde X_{\tau^\prime}$ are identically distributed), 
\begin{align} 
\mathbb E \abs{\tilde X_t}^2 \leq \mathbb E \abs{X_t}^2. \label{ineq:Markovs2}
\end{align} 
To compute the expectation on the RHS of \eqref{ineq:Markovs2},  we use the explicit formula \eqref{def:explicit} for solutions to \eqref{eq:FKP} in the case $\alpha = 0$ (using also \eqref{ineq:loc_tauprime} and our choice of $\tau^\prime$), 
\begin{align*} 
\mathbb E \abs{X_\tau}^2  &  \leq  \frac{1}{2\pi}\int_\xi \abs{\xi}^2 \int_\zeta e^{-\frac{1}{4}\abs{\xi - e^{\frac{\tau^\prime - \tau}{2}}\zeta}^2} \frac{f(\tau^\prime,\zeta)\mathbf{1}_{\abs{\zeta} \leq \delta e^{-\tau^\prime/2}}}{\norm{f(\tau^\prime,\cdot)\mathbf{1}_{\abs{\cdot} \leq \delta e^{-\tau^\prime/2}}}_1}d\zeta d\xi \\ 
& \lesssim  \frac{1}{1-\epsilon}\int_\xi \abs{\xi}^2 \int_\zeta e^{-\frac{1}{4}\abs{\xi - e^{\frac{\tau^\prime - \tau}{2}}\zeta}^2} f(\tau^\prime,\zeta)\mathbf{1}_{\abs{\zeta} \leq \delta e^{-\tau^\prime/2}} d\zeta d\xi \\ 
& \lesssim \int_\xi \abs{\xi}^2 \int_\zeta e^{-\frac{1}{4}\abs{\xi - e^{\frac{\tau^\prime - \tau}{2}}\zeta}^2} f(\tau^\prime,\zeta)\mathbf{1}_{\abs{\zeta} \leq \delta e^{-\tau^\prime/2}} \left( \mathbf{1}_{\abs{\xi - e^{\frac{\tau^\prime - \tau}{2}}\zeta} < \frac{\abs{\xi}}{2}} + \mathbf{1}_{\abs{\xi - e^{\frac{\tau^\prime - \tau}{2}}\zeta} \geq \frac{\abs{\xi}}{2}} \right) d\zeta d\xi \\ 
& \lesssim  \int_{\xi}\abs{\xi}^2 e^{-\frac{\abs{\xi}^2}{16}} d\xi + \int_\xi \abs{\xi}^2 \int_\zeta e^{-\frac{1}{2}\abs{\xi - e^{\frac{\tau^\prime - \tau}{2}}\zeta}^2} f(\tau^\prime,\zeta)\mathbf{1}_{\abs{\zeta} \leq \delta e^{-\tau^\prime/2}} \mathbf{1}_{\abs{\xi - e^{\frac{\tau^\prime - \tau}{2}}\zeta} < \frac{\abs{\xi}}{2}} d\zeta d\xi \\ 
& \lesssim 1 + \int_{\abs{\xi} > R} \abs{\xi}^2 \int_\zeta e^{-\frac{1}{2}\abs{\xi - e^{\frac{\tau^\prime - \tau}{2}}\zeta}^2} f(\tau^\prime,\zeta)\mathbf{1}_{\abs{\zeta} \leq \delta e^{-\tau^\prime/2}} \mathbf{1}_{\abs{\xi - e^{\frac{\tau^\prime - \tau}{2}}\zeta} < \frac{\abs{\xi}}{2}} d\zeta d\xi \\ 
& \quad + \int_{\abs{\xi} \leq R} \abs{\xi}^2 \int_\zeta e^{-\frac{1}{2}\abs{\xi - e^{\frac{\tau^\prime - \tau}{2}}\zeta}^2} f(\tau^\prime,\zeta)\mathbf{1}_{\abs{\zeta} \leq \delta e^{-\tau^\prime/2}} \mathbf{1}_{\abs{\xi - e^{\frac{\tau^\prime - \tau}{2}}\zeta} < \frac{\abs{\xi}}{2}} d\zeta d\xi.  
\end{align*} 
In fact the second term vanishes. Indeed on the support of the integrand, 
\begin{align*} 
e^{\frac{\tau - \tau^\prime}{2}} \abs{\xi} -\delta e^{-\tau^\prime/2}    < e^{\frac{\tau - \tau^\prime}{2}} \abs{\xi} - \abs{\zeta}   < \abs{e^{\frac{\tau - \tau^\prime}{2}} \xi - \zeta} < \frac{1}{2}e^{\frac{\tau - \tau^\prime}{2}} \abs{\xi}, 
\end{align*} 
which implies that 
\begin{align*} 
\abs{\xi} e^{\frac{\tau - \tau^\prime}{2}} < 2\delta e^{-\tau^\prime/2}. 
\end{align*}
However, since $\abs{\xi} > R$ and $\delta <  Re^{\tau/2}/4$ on the support of the integrand, this is a contradiction and in fact the support must vanish. 
Therefore, 
\begin{align*}
\mathbb E \abs{X_t}^2 & \lesssim 1 + \int_{\abs{\xi} \leq R} \abs{\xi}^2 \int_\zeta e^{-\frac{1}{2}\abs{\xi - e^{\frac{\tau^\prime - \tau}{2}}\zeta}^2} f(\tau^\prime,\zeta)\mathbf{1}_{\abs{\zeta} \leq \delta e^{-\tau^\prime/2}} \mathbf{1}_{\abs{\xi - e^{\frac{\tau^\prime - \tau}{2}}\zeta} < \frac{\abs{\xi}}{2}} d\zeta d\xi \\ 
& \lesssim 1 + R^2. 
\end{align*} 
Therefore with \eqref{ineq:Prob}, \eqref{ineq:Markovs} and \eqref{ineq:Markovs2} together with our choice of $R$, this implies 
\begin{align*} 
\mathbb P( \abs{\bar X_t} > R^2) \lesssim \frac{1}{R^2} + \epsilon \lesssim \epsilon.  
\end{align*} 
\end{proof}

Lemma \ref{lem:compactL1} provides the necessary compactness to improve Lemma \ref{lem:easyLoc} and prove that solutions with zero initial data converge to zero strongly in $L^1$ as $t \searrow 0$. 
The main tool is an $L^1$ decay result for advection-diffusion equations of Carlen and Loss \cite{CarlenLoss92} (also proved using stochastic techniques). 
\begin{lemma} \label{lem:cont}
All mild solutions to \eqref{eq:propN1} with zero initial data satisfy  
\begin{align*} 
\lim_{t \rightarrow 0^+} \int \abs{w(t)} dx = 0. 
\end{align*}
\end{lemma}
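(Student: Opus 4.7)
The plan is to work in self-similar variables and use a compactness/rigidity argument: Lemma \ref{lem:easyLoc} and Lemma \ref{lem:compactL1} reduce matters to ruling out nontrivial entire $L^1$-bounded, mean-zero orbits of \eqref{eq:FKP}, and this rigidity comes from convergence to equilibrium for a Fokker--Planck equation whose equilibrium is (proportional to) $G_\alpha$. Concretely, passing to $f(\tau,\xi) := e^\tau w(e^\tau,e^{\tau/2}\xi)$, which solves \eqref{eq:FKP} and satisfies $\norm{f(\tau)}_1 = \norm{w(e^\tau)}_1$, the hypercontractive a priori estimate (linear analogue of Theorem \ref{thm:Basics} (i)) yields $\sup_{\tau<\log T}\norm{f(\tau)}_1 < \infty$, while Lemma \ref{lem:easyLoc} rewritten in self-similar coordinates is exactly the hypothesis \eqref{lem:compcHypo} of Lemma \ref{lem:compactL1}, yielding uniform $L^1$-tightness of $f(\tau)$ as $\tau\to-\infty$. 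It thus suffices to prove $\norm{f(\tau)}_1 \to 0$ as $\tau\to-\infty$.

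Arguing by contradiction, suppose $\norm{f(\tau_n)}_1 \geq \eta > 0$ along some $\tau_n \to -\infty$, and consider the time-shifts $f_n(\sigma,\xi) := f(\tau_n + \sigma, \xi)$. These share the uniform $L^1$ bound and tightness; parabolic smoothing (from the smoothness of $\grad c_\alpha$ and the $L^1$ bound) gives uniform local bounds in stronger topologies, and the PDE supplies equi-continuity in time with values in a negative Sobolev space, as in the construction behind Lemma \ref{lem:GenLinear}. By Arzel\`a--Ascoli and Riesz compactness, a subsequence converges to $f_\infty \in C_{\textup{loc}}(\Real; L^1(\Real^2))$, an entire weak solution of \eqref{eq:FKP} with $\sup_{\sigma \in \Real}\norm{f_\infty(\sigma)}_1 < \infty$, uniform $L^1$-tightness in $\sigma$, and $\norm{f_\infty(0)}_1 \geq \eta$. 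Moreover, integrating the Duhamel formula for $w$ with zero initial data (the convective term $vw$ is integrable with vanishing spatial integral) gives $\int w(t)\,dx = 0$ for all $t>0$, hence $\int f(\tau)\,d\xi = 0$, and in the limit $\int f_\infty(\sigma)\,d\xi = 0$ for every $\sigma \in \Real$.

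The rigidity step exploits the Fokker--Planck structure of \eqref{eq:FKP}. Via \eqref{pde:logPDE}, rewrite \eqref{eq:FKP} as $\partial_\sigma f_\infty = \grad \cdot \bigl(G_\alpha \grad(f_\infty/G_\alpha)\bigr)$, with equilibrium measure $G_\alpha\,d\xi$ and confining potential $V = \abs{\xi}^2/4 - c_\alpha$. Fix $\sigma_0 \in \Real$, decompose $f_\infty(\sigma_0) = f^+ - f^-$ into positive/negative parts, and propagate each forward by the nonnegative mild solution map of Lemma \ref{lem:GenLinear} (uniqueness among nonnegative $L^1$ solutions to this linear equation is classical from the stochastic representation and is independent of the signed uniqueness we are ultimately after), obtaining $\bar f^\pm(\tau) \geq 0$ with $f_\infty(\sigma_0+\tau) = \bar f^+(\tau) - \bar f^-(\tau)$ for $\tau \geq 0$ by linearity. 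Convergence to equilibrium (entropy dissipation combined with a Poincar\'e/log-Sobolev inequality for $G_\alpha\,d\xi$, whose validity follows from the Gaussian-like decay of $G_\alpha$ in \eqref{eq:Galpha_limit}) gives $\bar f^\pm(\tau) \to (\norm{f^\pm}_1/\alpha)G_\alpha$ in $L^1$ with a rate depending only on $\sup_\sigma\norm{f_\infty(\sigma)}_1$. Mass conservation $\int f_\infty(\sigma_0)\,d\xi = 0$ forces $\norm{f^+}_1 = \norm{f^-}_1$, so the two limits cancel and $\norm{f_\infty(\sigma_0+\tau)}_1 \to 0$ as $\tau \to \infty$, uniformly in $\sigma_0$. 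Since $f_\infty$ is entire, fixing any $\sigma \in \Real$ and sending $\sigma_0 \to -\infty$ along $\tau = \sigma - \sigma_0 \to \infty$ yields $f_\infty(\sigma) = 0$, contradicting $\norm{f_\infty(0)}_1 \geq \eta$.

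The main obstacle is the $L^1$ convergence-to-equilibrium step with a quantitative rate uniform over merely $L^1$-bounded nonnegative data (as opposed to data in a weighted space such as $L^2(G_\alpha^{-1}\,d\xi)$, where a spectral gap in the spirit of Proposition \ref{prop:OurSpectralGap} would yield immediate exponential decay). This is precisely where the $L^1$ decay result of Carlen--Loss \cite{CarlenLoss92} enters: the hypercontractive Gaussian-type smoothing underlying \eqref{ineq:GaussLoc} regularizes in short positive time from $L^1$ into a weighted Hilbert space in which the spectral gap/log-Sobolev inequality for $G_\alpha\,d\xi$ delivers exponential decay, which is then transferred back to $L^1$ via Csisz\'ar--Kullback--Pinsker or direct interpolation, producing the uniform rate that closes the contradiction.
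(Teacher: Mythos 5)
Your overall strategy matches the paper's: pass to similarity variables, use Lemmas \ref{lem:easyLoc} and \ref{lem:compactL1} for precompactness and $L^1$-tightness, extract an ancient entire solution of \eqref{eq:FKP}, show it has zero mean, and invoke Carlen--Loss \cite{CarlenLoss92} to conclude it vanishes; the paper packages this as an $\alpha$-limit-set argument but the three steps are the same. Your Duhamel argument for the mean-zero step ($\int w(t)\,dx=0$ for all $t>0$, since $e^{(t-s)\Delta}\grad\cdot(vw(s))$ is the divergence of a $W^{1,1}$ function with vanishing integral and the Bochner integral converges absolutely because $\|vw(s)\|_1\lesssim s^{-1/2}$) is a legitimate and cleaner substitute for the paper's cut-off/test-function contradiction argument, and gives the stronger statement $\int f(\tau)\,d\xi\equiv 0$ rather than just $\limsup|\int f(\tau)\,d\xi|=0$. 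The one point you should correct is in your closing paragraph: the picture that the Gaussian localization \eqref{ineq:GaussLoc} regularizes into the weighted space $L^2(G_\alpha^{-1}d\xi)$ so that the spectral gap \eqref{ineq:FKPSpecGap} can be applied is exactly what the paper says does \emph{not} work --- the localization estimate only controls moments against $e^{|\xi|^2/(4\gamma)}$ for $\gamma>1$, which falls strictly short of the weight $G_\alpha^{-1}\sim|\xi|^{\alpha/2\pi}e^{|\xi|^2/4}$, so after positive time the solution still need not lie in $L^2(G_\alpha^{-1}d\xi)$. The whole reason the paper cites Theorem~7 of \cite{CarlenLoss92} is that their argument needs only $L^1$-tightness, mean-zero, and boundedness, and bypasses this weighted-Hilbert-space bottleneck. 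Since you do ultimately invoke Carlen--Loss for the uniform rate, your argument is sound as written, but the intermediate ``hypercontract into $L^2(G_\alpha^{-1})$ and then apply the spectral gap'' heuristic should be dropped or replaced by a pointer to the actual mechanism in \cite{CarlenLoss92}.
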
 
\begin{proof} 
Define the re-scaled solution 
\begin{align*} 
f(\tau,\xi) = e^{\tau}w(e^\tau, \xi e^{\tau/2}), 
\end{align*}
which solves \eqref{eq:FKP}. 
By adapting the arguments of Lemma \ref{lem:precompact} it follows that $\set{f(\tau)}_{\tau \in (-\infty,0]}$ is precompact in $C_{loc}((-\infty,0);H_{loc}^k(\Real^2))$. 
Moreover, by Lemma \ref{lem:compactL1} $\set{f(\tau)}_{\tau \in (-\infty,0]}$ is tight in $L^1$ as $\tau \rightarrow -\infty$. 
Therefore, for any sequence $\tau_k \rightarrow - \infty$, we may extract a subsequence (not relabeled) such that $f(\tau + \tau_k)$ converges strongly in $C((-\infty,0);L^1(\Real^2))$ and $C_{loc}((-\infty,0);H_{loc}^k(\Real^2))$ to some function $g(\tau)$ which solves \eqref{eq:FKP}.  
Denote the set of all limits of this type as $\mathcal{A}$. 
By weak lower semicontinuity, $\mathcal{A}$ consists of smooth, ancient solutions of \eqref{eq:FKP} which are uniformly bounded in $L^1 \cap L^\infty \cap H^k$ and by Lemma \ref{lem:compactL1} it follows that these solutions are precompact in $L^1$.
Next we claim that $\int g(\tau,\xi) d\xi = 0$ for all $g \in \mathcal{A}$. 
Suppose for contradiction that $a = \limsup_{\tau \rightarrow -\infty} \abs{\int f(\tau,\xi) d\xi} > 0$.
Let $\phi\in C_0$ be a bounded, non-negative continuous function with $\phi(x) = 1$ for $\abs{x} < 1$. 
Let $\tau_k \rightarrow -\infty$ be such that 
\begin{align*} 
\lim_{\tau_k \rightarrow -\infty} \abs{\int f(\tau_k,\xi) d\xi} = a.  
\end{align*} 
Now, 
\begin{align} 
\abs{\int w(e^{\tau_{k}},x) \phi(x) dx} & = \abs{\int_\xi f(\tau_{k},\xi)\phi\left(\xi e^{\tau_{k}/2}\right) d\xi} \nonumber \\ & \geq \abs{\int_\xi f(\tau_{k},\xi) d\xi} - \abs{\int_\xi f(\tau_{k},\xi)\left[1-\phi\left(\xi e^{\tau_{k}/2}\right)\right] d\xi}. \label{ineq:wphi}
\end{align}
Then by the support of the second integrand and Lemma \ref{lem:easyLoc}, for $k$ sufficiently large we can ensure 
\begin{align*}
\abs{\int_\xi f(\tau_{k},\xi)\left[1-\phi\left(\xi e^{\tau_{k}/2}\right)\right] d\xi} & \leq \norm{\phi}_\infty\int_{\abs{\xi} \geq e^{-\tau_{k}/2}} \abs{f(\tau_{k},\xi)} d\xi < a/4.  
\end{align*}  
It follows from \eqref{ineq:wphi} that for $k$ sufficiently large, 
\begin{align*} 
\abs{\int w(e^{\tau_{k}},x) \phi(x) dx} \geq \frac{3a}{4}, 
\end{align*} 
which contradicts the assumption on the initial data $\lim_{t \rightarrow 0^+}\int w(t,x) \phi(x) dx = 0$. 
Hence it follows that $\lim_{\tau \rightarrow -\infty}\abs{\int f(\tau,\xi) d\xi} = 0$, from which we have $\int g(\tau,\xi) d\xi = 0$ for all $g \in \mathcal{A}$. 

We have now shown that $g \in \mathcal{A}$ are ancient, mean-zero, uniformly bounded, $L^1$ precompact solutions and we would like to conclude that the only such solution is in fact $g \equiv 0$.  
The spectral gap \eqref{ineq:FKPSpecGap} does not quite apply since we do not have such strong control on the spatial decay of $g$. 
However, the proof of Theorem 7 in \cite{CarlenLoss92} only requires solutions to be tight in $L^1$, mean-zero and bounded.  
Hence we may conclude that $g \equiv 0$ for all $g \in \mathcal{A}$, which implies that $f(\tau)$ converges to zero in $L^1$ as $\tau \rightarrow -\infty$, which completes the lemma upon changing variables back.  
\end{proof}

Using the preceding lemma we may now use a standard duality argument to show that the mild solution is unique.
\begin{lemma} \label{lem:dual}
The only mild solution to \eqref{eq:propN1} with zero initial data is $w \equiv 0$.
\end{lemma}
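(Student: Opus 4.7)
The plan is a standard duality argument. Lemma \ref{lem:cont} has done the essential work of promoting the weak-$\ast$ convergence $w(t) \rightharpoonup^\ast 0$ to the strong statement $\norm{w(t)}_1 \to 0$ as $t \to 0^+$, and from there the remainder is routine parabolic duality.

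First I would fix $T > 0$ and an arbitrary $\varphi \in C_c^\infty(\Real^2)$, and for each $s \in (0, T)$ construct a bounded classical solution $\psi_s$ on $[s, T] \times \Real^2$ of the backward adjoint equation
\begin{equation*}
-\partial_t \psi_s - v(t,x) \cdot \grad \psi_s = \Delta \psi_s, \qquad \psi_s(T) = \varphi,
\end{equation*}
where $v(t,x) = t^{-1/2}\grad c_\alpha(x/\sqrt t)$. Since $v$ is smooth with $\norm{v(t)}_\infty \lesssim t^{-1/2}$, it is bounded and smooth on $[s,T] \times \Real^2$ for each $s > 0$, so standard parabolic theory yields $\psi_s$. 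The adjoint is in non-divergence form, so the scalar maximum principle gives the crucial $s$-uniform bound $\norm{\psi_s(t)}_\infty \leq \norm{\varphi}_\infty$ for all $t \in [s, T]$; in particular this bound does not deteriorate as $\norm{v(s)}_\infty \to \infty$, which is exactly what makes the argument close.

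Second, since $v$ is smooth for $t > 0$, parabolic regularity applied to the Duhamel formula \eqref{eq:MildSolutionDef} shows that $w$ is a classical solution on $(0, T]$ with enough decay at infinity (via the a priori bounds of Theorem \ref{thm:Basics}(i) adapted to the linear setting, or more directly via standard Schauder/Moser theory once $v$ is frozen) to justify integration by parts. A direct computation, using both PDEs, yields $\frac{d}{dt}\int w(t)\psi_s(t)\,dx = 0$ on $(s, T)$, hence
\begin{equation*}
\int w(T)\varphi\, dx = \int w(s)\psi_s(s)\, dx.
\end{equation*}
Letting $s \to 0^+$, H\"older's inequality together with the uniform bound on $\psi_s$ gives $\bigl|\int w(s)\psi_s(s)\,dx\bigr| \leq \norm{w(s)}_1 \norm{\varphi}_\infty$, which tends to zero by Lemma \ref{lem:cont}. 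Therefore $\int w(T)\varphi\, dx = 0$ for every $\varphi \in C_c^\infty(\Real^2)$, so $w(T) \equiv 0$; since $T > 0$ was arbitrary, $w \equiv 0$.

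The main hurdle was really already surmounted in Lemma \ref{lem:cont}: the singularity of $v$ as $t \searrow 0$ prevents a naive Gr\"onwall or contraction-mapping proof of uniqueness, but once one knows $\norm{w(s)}_1 \to 0$ strongly, the asymmetry with the maximum-principle bound on $\psi_s$ (uniform in $s$ but not improving as $s \to 0$) can be exploited to close the duality. No further delicate spectral or transport-estimate input is needed at this final step.
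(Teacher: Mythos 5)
Your proof is correct and follows essentially the same duality strategy as the paper: construct a bounded solution of the backward adjoint equation via the maximum principle, pair with $w$, and let the strong $L^1$ vanishing from Lemma \ref{lem:cont} kill the boundary term at $t \to 0^+$. The only cosmetic difference is that the paper tests against an arbitrary compactly supported source $F \in C_0^\infty((0,T)\times\Real^2)$ solving the inhomogeneous backward problem with zero terminal data (which sidesteps having to argue that $w(T)$ is a genuine function), whereas you prescribe nonzero terminal data $\varphi$ with homogeneous right-hand side; both are valid and both hinge on the identical maximum-principle bound and the identical use of Lemma \ref{lem:cont}.
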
 
\begin{proof} 
Let $F \in C^\infty_0( (0,T) \times \Real^2)$ be arbitrary and let $\phi$ solve 
\begin{align*}
-\partial_t \phi  - \Delta \phi - v\cdot \grad \phi & = F, \\ 
\phi(T) & = 0. 
\end{align*} 
By the maximum principle, it follows that $\norm{\phi}_\infty \leq \int_0^T\norm{F(\tau)}_\infty d\tau$. 
Then for all $\epsilon > 0$, 
\begin{align*} 
\abs{\int_\epsilon^T \int w F dx dt} &  = \abs{\int_\epsilon^T \int w \left( -\partial_t \phi  - \Delta \phi - v\cdot \grad \phi \right)  dx dt} \\
& \leq \abs{\int w(\epsilon)\phi(\epsilon) dx} + \abs{\int_\epsilon^T \left(\partial_t w  -\Delta w + \grad \cdot (v w)\right)\phi dx dt} \\
& = \int \abs{w(\epsilon)\phi(\epsilon)} dx \leq \norm{\phi(\epsilon)}_\infty \int \abs{w(\epsilon)} dx.  
\end{align*} 
Taking $\epsilon \rightarrow 0$ and applying Lemma \ref{lem:cont} verifies that $\int_0^T \int w F dx = 0$. 
Since $F$ is arbitrary it follows that $w \equiv 0$. 
\end{proof}
From the uniqueness implied by Lemma \ref{lem:dual} and the properties of the solution constructed in Lemma \ref{lem:GenLinear}, 
we have that $S_1(t,s)$ is well defined and satisfies \eqref{ineq:SNHypercon} and \eqref{ineq:SNpp}. 
To complete the proof of Proposition \ref{prop:SNProperties} in the case $N=1$ we also have to justify that $S_1(t,s)$ is weak$^\star$ continuous in the sense of Remark \ref{def:weakstarcont}. 
Let $t_n,s_n$ and $\mu_n$ be as given there and define $w_n(t) = S_1(t+s_n,s_n)\mu_n$ which is a mild solution to
\begin{subequations} \label{def:wNSN}
\begin{align} 
\partial_t w_n(t) + \grad\cdot \left(w_n(t) \frac{1}{\sqrt{t + s_n}} v^{G_\alpha}\left(\frac{x}{\sqrt{t + s_n}}\right) \right) & = \Delta w_n(t) \\ 
w_n(0) = \mu_n. 
\end{align} 
\end{subequations}
As in the proof of Lemma \ref{lem:GenLinear} above, $w_n(t)$  satisfies \eqref{ineq:linearHyper} uniformly in $n$
and is uniformly equi-continuous in $W^{-2,1}$. 
Therefore, we may extract a subsequence $w_{n_j}(t)$ which converges in $C([0,T-\bar{s});W^{-2,1}(\Real^2)) \cap C_{loc}((0,T-\bar{s}); H_{loc}^k(\Real^2)\cap L^1(\Real^2))$ for all $k < \infty$ to some limit $w(t)$.
It now suffices to show that $w(t)$ solves \eqref{def:wNSN} with $s_{n_j}$ replaced by $\bar s$ and $w(0) = \mu$ (as a mild solution in the sense of Remark \ref{def:MildSolnLinear}). 
To see convergence of the Duhamel integral it suffices to apply \eqref{ineq:HeatLpLq} and note
\begin{align*}
\lim_{j \rightarrow \infty} t^{1/4}\norm{\frac{1}{\sqrt{t + s_{n_j}}} v^{G_\alpha}\left(\frac{x}{\sqrt{t + s_n}}\right) - \frac{1}{\sqrt{t + \bar s}} v^{G_\alpha}\left(\frac{x}{\sqrt{t + \bar s}}\right)}_4 & = 0,  \\ 
\lim_{j \rightarrow \infty} t^{1/4}\norm{w_{n_j}(t) - w(t)}_4 & = 0. 
\end{align*} 
By the continuity, $w(t) \rightarrow \mu$ in $W^{-2,1}$ as $t \rightarrow 0^+$. 
As in the proof of Lemma \ref{lem:GenLinear}, the uniform bound on the total variation and tightness in $\mathcal{M}(\Real^2)$ (one can use essentially the same argument as is used in Lemma \ref{lem:GenLinear}) imply that this convergence is also in the weak$^\star$ topology.  
Similarly, by uniform equi-continuity, we have $w_{n_j}(t_{n_j}) \rightarrow w(\bar t)$ in $W^{-2,1}$, which is improved to weak$^\star$ convergence by the total variation bound and tightness. 
This completes the proof of Proposition \ref{prop:SNProperties} (i) and (ii).

The remainder of the section is devoted to the proof of Proposition \ref{prop:SNProperties} (iii).
We proceed similar to Appendix 6.2 of \cite{GallagherGallay05}, however, the core of our argument uses the theory on the spectral gap of linear Fokker-Planck operators, due to the gradient nature of the nonlocal velocity law.    
As in Appendix 6.2 of \cite{GallagherGallay05}, it is easily seen by applying \eqref{ineq:SNHypercon} that it suffices to prove \eqref{ineq:SNgrad} for $p= 1$, so we concentrate on this.  

The PDE \eqref{eq:FKP}, for $\tau > 0$, is  a linear Fokker-Planck operator with confining potential 
\begin{equation*}
A(\xi) := \frac{1}{4}\abs{\xi}^2 - c_\alpha(\xi), 
\end{equation*}
and it is known that operators of this form admit a spectral gap in the appropriate weighted space, in this case $L^2(G_\alpha^{-1} d\xi)$. 
The easiest way to see this is to re-write \eqref{eq:FKP} as a self-adjoint Schr\"odinger operator for the variable $z = f G_{\alpha}^{-1/2}$ and apply a classical result that such operators has a pure discrete spectrum under certain conditions which are satisfied here (Theorem XIII.67 \cite{ReedSimonIV}). 
See e.g. \cite{ArnoldMarkowichEtAl01} for a more detailed explanation.
The existence of a spectral gap implies that there exists some $\lambda_\alpha > 0$ such that for all $f_0 \in L^2(G_\alpha^{-1}d\xi)$ with $\int f_0 d\xi = 0$,
\begin{equation}
\norm{f(\tau)}_{L^2(G_\alpha^{-1}dx)} \lesssim e^{-\lambda_\alpha \tau} \norm{f_0}_{L^2(G_\alpha^{-1}d\xi)}, 
 \quad \hbox{for} \, \tau \geq  0. \label{ineq:FKPSpecGap}
\end{equation}
We may assume without loss of generality that $\lambda_\alpha \in (0, 1/2)$.
We will show that \eqref{ineq:FKPSpecGap} ultimately implies
\begin{equation}
\norm{S_1(\tau)\grad_\xi f_0}_1 \lesssim \frac{e^{(-\lambda_\alpha + \gamma)\tau}}{a(\tau)^{1/2}}\norm{f_0}_1, \label{ineq:ssS1grad}
\end{equation}
for any $\gamma > 0$ sufficiently small.
When we transform back into physical coordinates, \eqref{ineq:ssS1grad} becomes
\begin{equation*}
\norm{S_1(t,s)\grad_x w}_1 \lesssim \frac{1}{(t-s)^{1/2}}\left[\frac{t}{s}\right]^{\gamma + 1/2 - \lambda_\alpha}  
 \norm{w}_1, 
\end{equation*}
noting carefully that we are taking $\tau = \log t - \log s$. 
To prove \eqref{ineq:ssS1grad} we proceed analogously to \cite{GallagherGallay05}. 
We define the Banach space $X$,
\begin{equation*}
X = \set{f \in L^1(\Real^2): f = \partial_{x_1} g_1 + \partial_{x_2}g_2, \;\; g_1,g_2 \in L^1(\Real^2)}, 
\end{equation*} 
equipped with the norm 
\begin{equation*}
\norm{f}_X = \norm{f}_{1} + \inf\set{\norm{g_1}_1 + \norm{g_2}_1 : f = \partial_{x_1}g_1 + \partial_{x_2}g_2}. 
\end{equation*}
As in \cite{GallagherGallay05}, consider the auxiliary equation for $g = (g_1,g_2)$
\begin{equation}
\partial_\tau g + \grad c_\alpha \grad \cdot g = \left( L - \frac{1}{2}\right)g, 
\end{equation}
and denote the associated linear propagator by $T_1(\tau)$, which is related to $S_1$ via
\begin{equation}
\grad \cdot ( T_1(\tau) g) = S_1(\tau) \grad \cdot g,  \label{eq:T1S1Relate}
\end{equation}
for $g \in (H^1(\Real^2))^2$.
A contraction mapping argument similar to Lemma 6.4 in \cite{GallagherGallay05} shows the following. 
\begin{lemma} \label{lem:T1Contract} 
$T_1(\tau)$ defines a strongly continuous semigroup on $L^1(\Real^2)^2$ and there exists some $\tau_0 > 0$ such that for all $g\in L^1(\Real^2)^2$, 
\begin{equation}
\norm{T_1(\tau)g}_1 \lesssim \norm{g}_1, \;\;\;\;\; \norm{\grad T_1(\tau) g}_1 \lesssim \frac{1}{a(\tau)^{1/2}}\norm{g}_1, \;\;\; \tau \in (0,\tau_0].
\end{equation}
\end{lemma}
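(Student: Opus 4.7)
The plan is a contraction mapping argument in Duhamel form, entirely parallel to Lemma 6.4 of \cite{GallagherGallay05}. Splitting the equation as $\partial_\tau g = (L - \tfrac{1}{2})g - \grad c_\alpha\, \grad \cdot g$ and treating $(L-\tfrac{1}{2})$ as the free evolution gives the mild formulation
\begin{equation*}
T_1(\tau)g_0 = e^{-\tau/2}S(\tau)g_0 - \int_0^\tau e^{-(\tau-s)/2}\, S(\tau-s)\bigl(\grad c_\alpha\, \grad \cdot g(s)\bigr)\, ds.
\end{equation*}
I would produce the fixed point in the Banach space
\begin{equation*}
Y_T = \Big\{\, g \in C((0,T]; L^1(\Real^2)^2) \,:\, \norm{g}_{Y_T} := \sup_{0<\tau\leq T}\norm{g(\tau)}_1 + \sup_{0<\tau\leq T} a(\tau)^{1/2}\norm{\grad g(\tau)}_1 < \infty\,\Big\}
\end{equation*}
for $T = \tau_0$ small, noting that this norm encodes exactly the two estimates the lemma claims, so a successful contraction immediately yields them.

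Two inputs feed the contraction. First, the $L^1 \to L^1$ bounds on the Fokker-Planck semigroup: $\norm{S(\tau)f}_1 \leq \norm{f}_1$ (from mass conservation in divergence form plus non-negativity of the Mehler kernel) and the hypercontractive gradient bound $\norm{\grad S(\tau)f}_1 \lesssim a(\tau)^{-1/2}\norm{f}_1$ (from Young's inequality applied to the explicit Ornstein-Uhlenbeck kernel, which has dispersion $a(\tau)^{1/2}$). Second, the uniform bound $\norm{\grad c_\alpha}_\infty < \infty$, which follows from Proposition \ref{prop:Galpha} by splitting the Biot-Savart integral $\grad c_\alpha(\xi) = -\tfrac{1}{2\pi}\int \tfrac{\xi-\zeta}{|\xi-\zeta|^2}\,G_\alpha(\zeta)\,d\zeta$ into $|\xi-\zeta| < 1$ (controlled by $\norm{G_\alpha}_\infty$) and $|\xi-\zeta| \geq 1$ (controlled by $\norm{G_\alpha}_1 = \alpha$).

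Feeding these into the Duhamel map $\Phi$ gives termwise
\begin{align*}
\norm{\Phi(g)(\tau)}_1 &\lesssim \norm{g_0}_1 + \norm{g}_{Y_T}\!\int_0^\tau a(s)^{-1/2}\,ds \;\lesssim\; \norm{g_0}_1 + a(\tau)^{1/2}\norm{g}_{Y_T}, \\
a(\tau)^{1/2}\norm{\grad \Phi(g)(\tau)}_1 &\lesssim \norm{g_0}_1 + \norm{g}_{Y_T}\, a(\tau)^{1/2}\!\int_0^\tau\! a(\tau-s)^{-1/2}a(s)^{-1/2}\,ds,
\end{align*}
where $\norm{\grad c_\alpha}_\infty$ has been absorbed into the implicit constants. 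The final integral is bounded by a universal constant (for $\tau$ small it is essentially the beta integral $B(\tfrac12,\tfrac12)=\pi$), so both error terms vanish like $\tau^{1/2}$ as $\tau\searrow 0$. For $\tau_0$ small $\Phi$ is thus a contraction on the closed ball of radius $2\norm{g_0}_1$ in $Y_{\tau_0}$, yielding the unique solution $g = T_1(\cdot)g_0$ obeying the claimed bounds on $(0,\tau_0]$. The semigroup and strong continuity properties then follow by standard abstract arguments: uniqueness comes from the contraction applied to the difference of two solutions (the equation is linear), the semigroup identity from uniqueness applied with shifted initial data, and strong continuity at $\tau = 0^+$ from strong continuity of $S(\tau)$ on $L^1$ together with dominated convergence on the Duhamel integral.

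The only genuinely technical point is the $L^1$ hypercontractive gradient estimate for $S(\tau)$: the analogous bound in Proposition \ref{prop:Stau} is stated only in weighted $L^2(m)$, and the commutation identity \eqref{eq:SgradCommute} costs a derivative off the data, so I would derive the $L^1$ version directly from the explicit Mehler kernel rather than attempting to import it from the abstract semigroup analysis.
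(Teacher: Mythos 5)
Your proof is correct and follows the same contraction mapping argument the paper invokes (citing Lemma 6.4 of Gallagher--Gallay): Duhamel formulation with $(L-\tfrac12)$ as the free evolution, an $L^1$-based norm with hypercontractive weight $a(\tau)^{1/2}$ on the gradient, boundedness of $\grad c_\alpha$, and the $L^1$ kernel estimates for the Mehler semigroup. Your closing remark is also right that the $L^1$ gradient bound on $S(\tau)$ must be derived directly from the Ornstein--Uhlenbeck kernel rather than from the weighted $L^2(m)$ estimates of Proposition \ref{prop:Stau} or the commutation identity \eqref{eq:SgradCommute}.
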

By \eqref{eq:T1S1Relate}, Lemma \ref{lem:T1Contract} implies that for $\tau \in (0,\tau_0)$, 
\begin{equation}
\norm{S_1(\tau) \grad \cdot f}_{X} \lesssim \frac{1}{a(\tau)^{1/2}}\norm{f}_1. \label{ineq:S1XContraction}
\end{equation}  
To prove \eqref{ineq:ssS1grad}, it suffices to verify the spectral gap-type estimate 
\begin{equation}
\norm{S_1(\tau)f}_{X} \lesssim e^{(-\lambda_0 + \gamma)\tau}\norm{f}_X, \label{ineq:S1XSpecGap}
\end{equation}
since \eqref{ineq:S1XSpecGap} and \eqref{ineq:S1XContraction} together imply for $\tau > \tau_0$ (the difference between divergence and gradient is not relevant to the final estimate after adjusting the implicit constant),
\begin{align*}
\norm{S_1(\tau)\grad \cdot f}_1 \leq \norm{S_1(\tau)\grad \cdot f}_X & \lesssim e^{(-\lambda_\alpha + \gamma)(\tau - \tau_0)} \norm{S_1(\tau_0) \grad \cdot f}_{X} \\ 
& \lesssim_{\tau_0} \frac{e^{(-\lambda_\alpha + \gamma)\tau}}{a(\tau)^{1/2}} \norm{f}_{1}. 
\end{align*}
To prove \eqref{ineq:S1XSpecGap} we follow a procedure similar to what is carried out above to prove \eqref{ineq:SpecGapT}. 
By writing the solution $f(\tau) = S_1(\tau)f_0$ as the integral equation
\begin{equation*}
f(\tau) = S(\tau)f_0 - \int_0^\tau S(\tau-\tau^\prime)\grad \cdot (f(\tau^\prime) \grad c_\alpha) d\tau^\prime,
\end{equation*}
similar to Lemma \ref{lem:compact_perturb}, $S_1(\tau)$ can be shown to be a compact perturbation of $S(\tau)$, which has spectral radius $e^{-\tau/2}$ in $X$ \cite{GallagherGallay05}. It then suffices to show that all the eigenvalues of the Fokker-Planck operator $Lf - \grad\cdot( f\grad c_\alpha)$ have real part less than $-\lambda_\alpha$ (recall we are assuming without loss of generality that $\lambda_\alpha \in (0,1/2)$). 
Suppose that there exists some $w \in X$ with 
\begin{equation}
Lw - \grad \cdot( w \grad c_\alpha) = \mu w \label{eq:eigenvalS1}
\end{equation} 
 for some $\mu \in \Complex$ with $\textup{Re} \mu > -\lambda_\alpha$.  
As in the proof of \eqref{ineq:SpecGapT}, by the radial symmetry we may can assume $w(r\cos \theta, r\sin\theta)= f(r)e^{in\theta}$ for some $n \in \Integer$ and re-write \eqref{eq:eigenvalS1} as an ODE for $f(r)$. 
Similar to the argument in Lemma \ref{lem:TexpoLoc} (easier as the ODE can be treated as homogeneous), $w\in X$ satisfying \eqref{eq:eigenvalS1} implies 
\begin{equation*}
f(r) \approx K r^{2\mu - 2} + \mathcal{O}(r^{p}e^{-r^2/4}), \;\;\; r \rightarrow \infty, 
\end{equation*}
for some $p \geq 0$ and $K \in \Complex$. However since $w \in X$ and $\textup{Re} \mu > -\lambda_\alpha \geq -1/2$ it follows that $K = 0$. 
Therefore, $w \in L_0^2(G_\alpha^{-1} d\xi)$, but this would contradict the spectral gap \eqref{ineq:FKPSpecGap} for the Fokker-Planck operator. 
Hence, necessarily the spectrum of $Lf - \grad \cdot (f \grad c_\alpha)$ in $X$ must be contained in the set $\set{\lambda \in \Complex: \textup{Re} \lambda \leq -\lambda_\alpha}$. This in turn implies \eqref{ineq:S1XSpecGap} which completes the proof of \eqref{ineq:ssS1grad} and hence \eqref{ineq:SNgrad} in the special case $N = 1$.  

For the next section we also need the following perturbation lemma which shows that $\lambda_\alpha \approx 1/2$ for $\alpha$ small. This is important to ensure that $\lambda_0$ in \eqref{ineq:SNgrad} can be taken independent of $\epsilon$. 
\begin{lemma} \label{lem:S1alphaSmall}
For all $\delta > 0$ sufficiently small, there exists $\alpha_\delta$ such that if $\alpha < \alpha_\delta$ then $S_1(\tau)$ satisfies 
\begin{equation*}
\norm{S_1(\tau)f_0}_X \lesssim_\delta e^{(-1/2 + \delta)\tau}\norm{f_0}_X,
\end{equation*}
where the implicit constant is independent of $\alpha$. 
\end{lemma}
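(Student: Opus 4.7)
The plan is to realize $S_1(\tau)$ as a small perturbation of the pure Fokker--Planck semigroup $S(\tau) = e^{\tau L}$, which by Gallagher--Gallay has spectral radius $e^{-\tau/2}$ in $X$. The essential smallness input is $\norm{\grad c_\alpha}_\infty \lesssim \alpha$, which I would deduce by splitting the convolution $\grad c_\alpha = B \ast G_\alpha$ at $|x-y|=1$ and bounding the two pieces by $\norm{G_\alpha}_\infty$ and $\norm{G_\alpha}_1$, respectively, both of which are $\lesssim \alpha$ by Proposition \ref{prop:Galpha}(iv).

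I would first establish the unperturbed auxiliary estimate
\begin{equation*}
\norm{S(\tau)\grad\cdot h}_X \lesssim \frac{e^{-\tau/2}}{a(\tau)^{1/2}}\norm{h}_1, \qquad \tau > 0,
\end{equation*}
for every $L^1$ vector field $h$. The factor $e^{-\tau/2}$ emerges for free from the commutator identity \eqref{eq:SgradCommute} rewritten as $S(\tau)(\grad\cdot h) = e^{-\tau/2}\grad\cdot(S(\tau) h)$, which exhibits a divergence representation with potential of $L^1$ norm at most $e^{-\tau/2}\norm{h}_1$. The $L^1$ part of the $X$-norm is controlled by the short-time smoothing bound $\norm{\grad S(\tau) h}_1 \lesssim a(\tau)^{-1/2}\norm{h}_1$, proved by exactly the contraction argument used for Lemma \ref{lem:T1Contract} at $\alpha = 0$. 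Independently, since the spectral radius of $S(\tau)$ in $X$ is $e^{-\tau/2}$, the growth bound of the semigroup yields $\norm{S(\tau) f}_X \lesssim_\delta e^{-(1/2-\delta)\tau}\norm{f}_X$ for every $\delta > 0$.

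I would then apply Duhamel,
\begin{equation*}
S_1(\tau) f_0 = S(\tau) f_0 - \int_0^\tau S(\tau-s)\grad\cdot\bigl(S_1(s) f_0 \cdot \grad c_\alpha\bigr)\, ds,
\end{equation*}
and combine the two bounds above with $\norm{S_1(s) f_0 \cdot \grad c_\alpha}_1 \le \norm{\grad c_\alpha}_\infty \norm{S_1(s) f_0}_X \lesssim \alpha\,\norm{S_1(s) f_0}_X$ to obtain
\begin{equation*}
\norm{S_1(\tau) f_0}_X \le C_\delta e^{-(1/2-\delta)\tau}\norm{f_0}_X + C\alpha \int_0^\tau \frac{e^{-(\tau-s)/2}}{a(\tau-s)^{1/2}}\norm{S_1(s) f_0}_X\, ds.
\end{equation*}
Setting $\phi(\tau) := e^{(1/2-\delta)\tau}\norm{S_1(\tau) f_0}_X$ turns this into the Volterra inequality
\begin{equation*}
\phi(\tau) \le C_\delta \norm{f_0}_X + C\alpha \int_0^\tau \frac{e^{-\delta(\tau-s)}}{a(\tau-s)^{1/2}}\phi(s)\, ds.
\end{equation*}
The key point is that the kernel $e^{-\delta\sigma}/a(\sigma)^{1/2}$ is integrable on $[0,\infty)$ (behaving like $\sigma^{-1/2}$ near $0$ and decaying like $e^{-\delta\sigma}$ at infinity); call its $L^1$ norm $I_\delta$. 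Since $\phi$ is locally bounded (by Lemma \ref{lem:T1Contract} combined with semigroup composition), truncating to $[0,T]$ and taking the supremum yields $\Phi_T \le C_\delta \norm{f_0}_X + C\alpha I_\delta \Phi_T$; choosing $\alpha_\delta := 1/(2CI_\delta)$ closes this uniformly in $T$ and produces the claimed bound.

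The main obstacle will be ensuring that all constants ($C$, $C_\delta$, and $I_\delta$) are genuinely independent of $\alpha$ as $\alpha \searrow 0$. This rests essentially on the linear-in-$\alpha$ control from Proposition \ref{prop:Galpha}(iv), which --- unlike its NSE counterpart --- is non-trivial precisely because $G_\alpha$ solves a fully nonlinear elliptic problem; the same input is what ultimately makes the parameter $\lambda_0$ in \eqref{ineq:SNgrad} independent of $\epsilon$.
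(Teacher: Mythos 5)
Your proposal is correct and follows essentially the same route as the paper: write $S_1(\tau)$ by Duhamel around $S(\tau)$, use the spectral gap of $S(\tau)$ in $X$, bound the perturbation via $\|\grad c_\alpha\|_\infty \lesssim \alpha$ (which, as you note, is what Proposition~\ref{prop:Galpha}(iv) buys), and close with a Gr\"onwall/Volterra argument. In fact your treatment is slightly tighter than the paper's in two places: you retain the $a(\tau-\tau')^{-1/2}$ short-time factor from $\|S(\sigma)\grad\cdot h\|_X\lesssim e^{-\sigma/2}a(\sigma)^{-1/2}\|h\|_1$ (the paper silently drops it, which is harmless only because the resulting kernel is still integrable, as you verify), and you justify the a priori finiteness of the truncated supremum via the local $X$-boundedness of $S_1(\tau)$ — a step the paper leaves implicit when it "takes the supremum in $\tau$."
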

\begin{proof} 
Let $f_0 \in X$ and write $f(\tau) = S_1(\tau)f_0$ in the Duhamel integral form
\begin{equation*}
f(\tau) = S(\tau)f_0 - \int_0^\tau S(\tau - \tau^\prime) \grad \cdot (f(\tau^\prime) \grad c_\alpha) d\tau^\prime.  
\end{equation*}
Using the known spectral gap of $S(\tau)$ in $X$, and Proposition \ref{prop:Galpha} (iv), 
\begin{align*}
e^{\tau/2 -\delta \tau}\norm{f(\tau)}_{X} & \leq e^{\tau/2 - \delta \tau}\norm{S(\tau)f_0}_X + e^{\tau/2 - \delta \tau}\int_0^\tau \norm{S(\tau - \tau^\prime) \grad \cdot (f(\tau^\prime) \grad c_\alpha)}_X d\tau^\prime \\ 
&\lesssim e^{-\delta \tau}\norm{f_0}_X + e^{\tau/2 - \delta \tau} \int_0^\tau e^{-\frac{\tau - \tau^\prime}{2}}\norm{f(\tau^\prime) \grad c_\alpha)}_1 d\tau^\prime \\ 
&\lesssim e^{-\delta/2}\norm{f_0}_X + \alpha \left(\sup_{\tau^\prime \in (0,\tau)} e^{(1/2 - \delta)\tau}\norm{f(\tau^\prime)}_X \right) e^{- \delta \tau} \frac{1}{\delta}\left(e^{\delta \tau} - 1\right). 
\end{align*}
Taking the supremum in $\tau$ of both sides the lemma follows by choosing $\alpha < \delta$.
\end{proof}

\subsubsection{$N > 1$: Multiple Concentrations} \label{sec:multi}
As in \cite{GallagherGallay05}, to extend to multiple concentrations, we use the intuition that if $t/d^2$ is small, then separated concentrations should basically decouple. 
Introduce a nonnegative, smooth cutoff $\chi(x)$ which is one for $\abs{x} \leq 1/2$ and zero for $\abs{x} > 3/4$.  The localizations around the corresponding concentrations are $\chi_i(x)= \chi\left(\frac{x - z_i}{d}\right)$.  Define the opposite localization to be $\chi_0(x) = 1 - \sum_{i = 1}^N \chi_i(x)$.  Proceeding as in \cite{GallagherGallay05}, if $f(t,x)$ is a solution of \eqref{eq:SNDefinition}, define $f_i(t,x) = \chi_i(x)f(t,x)$ and note that for $i \in \set{1,..,N}$,
\begin{equation*}
\frac{df_i}{dt} + \grad \cdot \left(\frac{1}{\sqrt{t}}v^{G_i}\left(\frac{x-z_i}{\sqrt{t}}\right) f_i\right) = \Delta f_i + Q_i f - \grad \cdot (R_i f), 
\end{equation*}
with 
\begin{align*}
R_i(t,x) & = \sum_{j \neq i} \frac{1}{\sqrt{t}}v^{G_j}\left( \frac{x-z_j}{\sqrt{t}} \right) \chi_i(x) + 2 \grad \chi_i(x),  \\ 
Q_i(t,x) & =  \sum_{j = 1}^N \frac{1}{\sqrt{t}}v^{G_j}\left( \frac{x-z_j}{\sqrt{t}} \right) \cdot \grad \chi_i(x) + \Delta \chi_i(x).
\end{align*}
For $i = 0$ we have (with the remainders $Q_0$, $R_0$ defined similarly), 
\begin{equation*}
\frac{df_0}{dt} = \Delta f_0 + Q_0 f - \grad \cdot (R_0 f). 
\end{equation*}
Note that the a priori estimates on $f$, Proposition \ref{prop:Galpha} and the definition of the cutoffs imply
\begin{align*}
\norm{\sum_{i = 0}^N \abs{R_i(t)} }_\infty \lesssim \frac{1}{d}, \;\;\; \norm{\sum_{i = 0}^N \abs{Q_i(t)}}_\infty \lesssim \frac{1}{d^2},  
\end{align*}
for all $t \geq 0$. 
Denoting $S^i(t,s)$ the linear propagator associated with the concentration centered at $z_i$ we have, 
  for $t > s > 0$, 
\begin{equation}
f_i(t) = S^i(t,s)f_i(s) + \int_s^t S^i(t,t^\prime)\left[Q_i(t^\prime)f(t^\prime) - \grad \cdot (R_i(t^\prime)f(t^\prime))\right] dt^\prime. 
\end{equation}
By the previous section, we know that each $S_i(t,s)$ satisfies 
\begin{align*} 
\norm{S^i(t,s) w}_p & \lesssim \frac{1}{(t-s)^{1-\frac{1}{p}}}\norm{w}_{\mathcal{M}(\Real^2)}, \\ 
\limsup_{t \rightarrow 0^+} t^{1-\frac{1}{p}}\norm{S^i(t,0) w}_p & \lesssim \norm{w}_{pp}, \\ 
\norm{S^i(t,s)\grad w}_1 &\lesssim \frac{1}{(t-s)^{1/2}}\left[\frac{t}{s}\right]^{\gamma + 1/2 - \lambda_{\alpha_i}} \norm{w}_1. 
\end{align*} 
From here we may proceed as in Proposition 4.3 of \cite{GallagherGallay05} to finish the proof of the Proposition \ref{prop:SNProperties} with $\lambda_0 := \min_{1 \leq i \leq N}(\lambda_{\alpha_i})$.
Lemma \ref{lem:S1alphaSmall} shows that $\lambda_0 \in (0,1/2)$ uniformly in $N$, since only large values of $\alpha$ can have a relevant effect on $\lambda_{\alpha}$. 
From the proof we see that we need to choose $t_0$ in Proposition \ref{prop:SNProperties} such that 
\begin{equation*}
\frac{t_0}{d^2} \leq \min\left(1, K\right),
\end{equation*}
where $K$ is some constant which is independent of $\epsilon$, $N$ and $d$. 

\section{Appendix: Properties of Self-Similar Solutions} \label{apx:PropSelfSim}
\subsection{Sketch of Proposition \ref{prop:Galpha}}
\subsubsection{Part (i)} 
There are several methods for proving existence of a self-similar solution with finite energy, for example see \cite{BilerAccretion95,NaitoSuzuki04}. 
Another approach is to use the direct method of calculus of variations to produce a non-negative global minimizer to the self-similar free energy \eqref{def:Gssfree} which satisfies $G_\alpha = G_\alpha^\star$. 
As $G_\alpha$ is a finite energy solution to \eqref{def:resPKS} with $\norm{G_\alpha}_1 = \alpha < 8\pi$, it follows by standard iteration methods that $G_\alpha \in L^\infty$ (see e.g. \cite{CalvezCarrillo10}) 
with norm that depends on $\mathcal{G}(G_\alpha)$ and $\alpha$. 
By bootstrapping elliptic regularity it follows that $G_\alpha \in C^\infty$ and strictly positive. 
As $G_\alpha$ satisfies the Euler-Lagrange equation for \eqref{def:Gssfree}, for a given mass $\alpha \in (0,8\pi)$, it also follows that
\begin{equation}
G_{\alpha}(\xi) = \alpha \frac{e^{c_\alpha(\xi) - \abs{\xi}^2/4}}{\int_{\Real^2} e^{c_\alpha(\zeta) - \abs{\zeta}^2/4} d\zeta} = -\Delta c_\alpha. \label{eq:Gformula}
\end{equation}
By Lemma 4.3 in \cite{BlanchetEJDE06} it follows that for $\abs{\xi} \geq 1$, there is some constant $\bar C = \bar C(\alpha,\mathcal{G}(G_\alpha))$ such that
\begin{align} 
\abs{c_\alpha(\xi) + \frac{\alpha}{2\pi} \log \abs{\xi}} \leq \bar C. \label{ineq:BarC}
\end{align} 
Near the origin we have something better: 
\begin{align} 
\sup_{\abs{\zeta} \leq 1} \abs{c_\alpha(\zeta)} & \lesssim \sup_{\abs{\zeta} \leq 1} \left(  \abs{\int_{\abs{\eta -\zeta} > 2} \log\abs{\eta - \zeta} G_\alpha(\eta) d\eta} + \abs{\int_{\abs{\eta -\zeta} \leq 2} \log\abs{\eta - \zeta} G_\alpha(\eta) d\eta}  \right) \nonumber \\ 
& \lesssim \sup_{\abs{\zeta} \leq 1} \int_{\abs{\eta -\zeta} > 2} \frac{\log\abs{\eta - \zeta}}{\abs{\eta-\zeta}} \abs{\eta-\zeta} G_\alpha(\eta) d\eta + \norm{G_\alpha}_\infty \nonumber \\ 
& \lesssim \sup_{\abs{\zeta} \leq 1} \int_{\abs{\eta -\zeta} > 2} \left(1+\abs{\eta}\right)G_\alpha(\eta) d\eta + \norm{G_\alpha}_\infty \lesssim_{\bar C} 1,  \label{ineq:supC}
\end{align} 
from which it follows that 
\begin{align*} 
\int_{\Real^2} e^{c_\alpha(\zeta) - \abs{\zeta}^2/4} d\zeta & = \int_{\abs{\zeta} > 1} e^{c_\alpha(\zeta) - \abs{\zeta}^2/4} d\zeta + \int_{\abs{\zeta} \leq 1} e^{c_\alpha(\zeta) - \abs{\zeta}^2/4} d\zeta \\ 
& \leq e^{\bar C}\int_{\abs{\zeta} > 1} \abs{\zeta}^{\frac{-\alpha}{2\pi}}e^{- \abs{\zeta}^2/4} d\zeta
 + e^{\sup_{\abs{\zeta} \leq 1} \abs{c_\alpha(\zeta)}} \int_{\abs{\zeta} \leq 1} e^{-\abs{\zeta}^2/4} d\zeta \\ 
& \lesssim_{\bar C} 1,    
\end{align*} 
Similarly, 
\begin{align*} 
\int_{\Real^2} e^{c_\alpha(\zeta) - \abs{\zeta}^2/4} d\zeta & \geq e^{-\bar C}\int_{\abs{\zeta} > 1} \abs{\zeta}^{\frac{-\alpha}{2\pi}}e^{- \abs{\zeta}^2/4} d\zeta \gtrsim e^{-\bar C}. 
\end{align*} 
Therefore, for $\abs{\xi} \geq 1$, 
\begin{align*} 
\abs{\xi}^{-\frac{\alpha}{2\pi}}e^{-\abs{\xi}^2/4} \lesssim_{\bar C} G_\alpha(\xi) \lesssim_{\bar C} \abs{\xi}^{-\frac{\alpha}{2\pi}}e^{-\abs{\xi}^2/4}.
\end{align*} 
This allows one to improve Lemma 4.3 in \cite{BlanchetEJDE06} further to deduce the slightly more precise
\begin{align} 
\lim_{\xi \rightarrow \infty}\abs{c_\alpha(\xi) + \frac{\alpha}{2\pi} \log \abs{\xi}} = 0, \label{ineq:GcalphaAsymptotic}
\end{align} 
and hence \eqref{eq:Galpha_limit}. 
Using a similar technique, one also obtains the following for $\abs{\xi} \geq 1$, which implies \eqref{eq:GradGalpha_limit}  will be useful in several other places, 
\begin{align} 
\abs{\grad c_\alpha(\xi) + \frac{\alpha \xi}{2\pi \abs{\xi}^2}} \lesssim \frac{1}{\abs{\xi}^2}. \label{ineq:GradcalphaAsymptotic} 
\end{align} 
Moreover, by applying the above estimates along with Proposition \ref{prop:Vel} to \eqref{eq:Gformula}, one derives \eqref{ineq:GalphaBoundsLarge}. 
 
\subsubsection{Part (ii)}
In \cite{Biler06}, Biler et. al. show that $G_\alpha$ is the unique, radially symmetric, self-similar solution to the PKS (equivalent to fixed points of the self-similar PDE \eqref{def:resPKS} and hence critical points of $\mathcal{G}$ \eqref{def:Gssfree}). 
In \cite{NaitoSuzuki04,NaitoSuzukiYoshida02} (see also \cite{Naito01}) it is shown using the moving planes method that any self-similar solution to \eqref{def:PKS} must be radially symmetric, and hence $G_\alpha$ is the unique self-similar solution.  
One can prove the same result with a symmetrization argument as follows. 
Suppose that there existed a non-radially symmetric self-similar solution $u(\xi)$ with mass $\alpha \in (0,8\pi)$ and finite self-similar energy.  
Denote $\tilde{u}(\tau,\xi)$ the radially symmetric solution to \eqref{def:resPKS} with initial data $\tilde{u}(0,\xi) = u^\star(\xi)$, the Riesz symmetric decreasing rearrangement of $u$.
By the symmetrization inequalities in \cite{DiazNagai95,DiazNagaiRakotoson98}, 
we know that $\tilde{u}$ dominates $u$ in the sense of mass concentration:
\begin{equation}
u \prec \tilde{u}(\tau), \;\;\; \forall \, \tau \geq 0, \label{ineq:mass_concentration}  
\end{equation}
where if $f$, $g$ are two integrable functions, $f \prec g$ denotes
\begin{equation*}
\int_{\abs{x} < R} f^\star(x) dx \leq \int_{\abs{x} < R} g^\star(x) dx, \;\;\; \forall \, R>0. 
\end{equation*}
Since $G_\alpha$ is the unique radially symmetric stationary point of \eqref{def:resPKS}, a compactness argument using the energy dissipation inequality for \eqref{def:Gssfree} implies that $\tilde{u}(\tau) \rightarrow G_\alpha$ as $\tau \rightarrow \infty$ in $L^p$ for all $p \in [1,\infty)$. 
Passing to the limit in \eqref{ineq:mass_concentration} implies
\begin{equation*}
u \prec G_\alpha. 
\end{equation*}
Further, note that any stationary solution to \eqref{def:resPKS} with mass $\alpha$ satisfies the virial-type identity
\begin{equation*}
0 = \frac{d}{dt}\int \abs{\xi}^2 u(\xi) d\xi = 4\alpha\left( 1 - \frac{\alpha}{8\pi} \right) - \int \abs{\xi}^2 u(\xi) d\xi,  
\end{equation*}
hence both $G_\alpha$ and $u$ have the same second moment. 
An elementary lemma regarding the Riesz symmetric decreasing rearrangement shows that these facts together imply $u(\xi) = G_\alpha(\xi)$ \cite{GallagherGallayLions05}.     

\subsubsection{Part (iii)} 
Follows from part (ii) using an argument similar to what is employed in \cite{GallayWayne05}. 

\subsubsection{Part (iv)}
By the above, the $G_\alpha$ are the unique solutions to the system
\begin{align*}
\grad \cdot ( G_\alpha \grad c_\alpha) & = LG_\alpha, \\ 
-\Delta c_\alpha & = G_\alpha. 
\end{align*}
The operator $L$ can only be inverted up to the zero eigenmode, given by the Gaussian $G(\xi)$,
\begin{equation*}
G(\xi) = \frac{e^{-\abs{\xi}^2/4}}{(4\pi)^{1/2}}. 
\end{equation*}
Hence, we can write $G_\alpha$ as
\begin{equation}
G_\alpha = \alpha G + L^{-1}\grad \cdot (G_\alpha \grad c_\alpha), \label{eq:GalphaPerturb}
\end{equation}
which is amenable to contraction mapping arguments. 
We need the following lemma regarding the linear operator $L$. 
\begin{lemma} \label{lem:Lbounded} 
\begin{itemize}
\item[(i)] $L$ satisfies the following for all $p \in (1,2]$ and $m > 4$, 
\begin{equation}
\norm{L^{-1}\grad f}_{L^2(m)} \lesssim \norm{f}_{L^p(m)}. \label{ineq:LinvGrad}
\end{equation}
\item[(ii)] $L$ satisfies the following for all $p \in (1,2]$ and $m > 4$, 
\begin{equation}
\norm{\grad L^{-1}\grad f}_4 \lesssim \norm{f}_{4} + \norm{f}_{L^p(m)} \label{ineq:Linv2xGrad}
\end{equation} 
\end{itemize}
\end{lemma}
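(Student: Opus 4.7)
For part (i), the plan is to represent the inverse via the semigroup:
\begin{equation*}
L^{-1}\grad f \;=\; -\int_0^\infty S(\tau)\grad f\,d\tau,
\end{equation*}
which is legitimate because $\grad f$ is mean-zero (justified by the weighted decay of $f \in L^p(m)$ with $m > 4$ via integration by parts against a cutoff approximation of the constant $1$), so that Proposition \ref{prop:Stau}(ii) ensures the integrand decays like $e^{-\tau/2}$ in $L^2(m)$ at infinity. Using the commutation $S(\tau)\grad = e^{-\tau/2}\grad S(\tau)$ together with the smoothing bound from Proposition \ref{prop:Stau}(iii) gives
\begin{equation*}
\norm{S(\tau)\grad f}_{L^2(m)} \;\lesssim\; \frac{e^{-\tau/2}}{a(\tau)^{1/p}}\norm{f}_{L^p(m)};
\end{equation*}
since $p > 1$, the singularity $a(\tau)^{-1/p}$ is integrable at $\tau = 0$ while the exponential factor handles the tail, so integrating in $\tau$ yields (i).

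For part (ii), the plan is to separate the Calder\'on--Zygmund part (responsible for $\norm{f}_4$) from the drift perturbation (responsible for $\norm{f}_{L^p(m)}$). Split
\begin{equation*}
\grad L^{-1}\grad f \;=\; -\int_0^1 \grad S(\tau)\grad f\,d\tau \;-\; \int_1^\infty \grad S(\tau)\grad f\,d\tau.
\end{equation*}
The tail is treated as in (i) via the factorization $\grad S(\tau) = \grad S(1/2)\,S(\tau-1/2)$: the inner factor $S(\tau - 1/2)\grad f$ is bounded in $L^2(m)$ by $e^{-\tau/2}\norm{f}_{L^p(m)}$, and the outer smoothing operator $\grad S(1/2)$ maps $L^2(m) \to L^4$ by Mehler-kernel regularization (the hypothesis $m > 4$ providing room to absorb $\jap{\xi}$-factors). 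For the head, I would use the Duhamel identity $S(\tau) = e^{\tau\Delta} + \int_0^\tau e^{(\tau-s)\Delta}\,A\,S(s)\,ds$ with $A = \tfrac{1}{2}\grad\cdot(\xi\,\cdot)$. The pure-heat part collapses via Fubini and the Fourier-multiplier identity $\grad\grad(-\Delta)^{-1} = R_i R_j$ to
\begin{equation*}
\int_0^1 \grad e^{\tau\Delta}\grad f\,d\tau \;=\; -R_i R_j(I - e^{\Delta})f,
\end{equation*}
which is bounded in $L^4$ by $\norm{f}_4$ via the $L^p$-boundedness of Riesz transforms. The drift correction inherits an unbounded factor of $\xi$ from $A$ and is controlled by applying (i) with one extra polynomial weight, producing the $\norm{f}_{L^p(m)}$ contribution.

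The main obstacle is handling the drift perturbation in the short-time integral: the coefficient $\xi$ in $A$ grows at infinity, precluding any naive $L^p$ perturbation of the heat semigroup, and it is exactly this growth that forces the weighted norm $\norm{f}_{L^p(m)}$ in the final bound rather than a pure Calder\'on--Zygmund inequality in $L^4$ alone. A secondary technical point is verifying the smoothing statement $\grad S(1/2) : L^2(m) \to L^4$ used for the tail; this is not explicitly in Proposition \ref{prop:Stau} but follows by iterating the gradient estimate together with the standard $L^2 \to L^4$ hypercontractive gain of the Fokker--Planck semigroup (derivable from Mehler-kernel bounds), with $m > 4$ leaving enough polynomial decay to be traded for the required $L^4$ integrability via H\"older against the weight.
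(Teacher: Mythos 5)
Your proof of part (i) is the same as the paper's: represent $L^{-1}\grad f = -\int_0^\infty S(\tau)\grad f\,d\tau$, use the commutation $S(\tau)\grad = e^{-\tau/2}\grad S(\tau)$ together with the smoothing bound \eqref{ineq:gradSDecay} to obtain $\norm{S(\tau)\grad f}_{L^2(m)} \lesssim e^{-\tau/2}a(\tau)^{-1/p}\norm{f}_{L^p(m)}$, then integrate in $\tau$ using $p>1$ to handle the singularity at $\tau = 0$ and the exponential for the tail.

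For part (ii) you take a genuinely different, and considerably longer, route than the paper, which is purely elliptic. The paper sets $u = L^{-1}\grad\cdot f$, reads off $\Delta u = \grad\cdot\bigl(f - \tfrac12\xi u\bigr)$ from $Lu = \grad\cdot f$, and applies Calder\'on--Zygmund directly:
\begin{equation*}
\norm{\grad u}_2 \lesssim \norm{f}_2 + \norm{u}_{L^2(m)},
\qquad
\norm{\grad u}_4 \lesssim \norm{f}_4 + \norm{\xi u}_4.
\end{equation*}
It then dispatches the weighted term via $\norm{\xi u}_4 \lesssim \norm{u}_{L^2(m)}^{1/4}\norm{u}_6^{3/4}$ (this is precisely where $m>4$ is consumed, trading four powers of $\xi$ against the $L^2(m)$ weight), Gagliardo--Nirenberg $\norm{u}_6 \lesssim \norm{u}_2^{1/3}\norm{\grad u}_2^{2/3}$, Young's inequality, the $L^2$ gradient bound, the interpolation $\norm{f}_2 \lesssim \norm{f}_1 + \norm{f}_4 \lesssim \norm{f}_{L^p(m)} + \norm{f}_4$, and finally part (i) to control $\norm{u}_{L^2(m)}$. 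Your parabolic head/tail split can in fact be made rigorous: the tail factorization $\grad S(\tau) = \grad S(1/2)\, S(\tau-1/2)$ works because the Mehler kernel of $S(1/2)$ is Schwartz, so $\grad S(1/2):L^2\to L^4$ by Young (note this step does not actually use $m>4$); and the head Duhamel expansion $S(\tau) = e^{\tau\Delta} + \int_0^\tau e^{(\tau-s)\Delta}A\,S(s)\,ds$ correctly isolates the Riesz-transform piece $R_iR_j(I - e^{\Delta})f$ bounded by $\norm{f}_4$. Two caveats, though. First, the phrase ``applying (i) with one extra polynomial weight'' for the drift correction is imprecise: the conclusion of (i) bounds $L^{-1}\grad f$ in $L^2(m)$, not the quantity you need under the double integral. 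What one actually uses there is the pointwise-in-$s$ estimate $\norm{S(s)\grad f}_{L^2(m)} \lesssim a(s)^{-1/p}\norm{f}_{L^p(m)}$ (the \emph{integrand} in the proof of (i), not its conclusion), together with $\norm{\xi\,S(s)\grad f}_{L^2(m-1)} \lesssim \norm{S(s)\grad f}_{L^2(m)}$, the embedding $L^2(m-1)\hookrightarrow L^2$, and the $L^2\to L^4$ smoothing of $\grad e^{(\tau-s)\Delta}\grad\cdot$ with singularity $(\tau-s)^{-3/4}$, giving a convergent double integral. Second, your route leaves the role of $m>4$ buried inside that drift correction, while the paper's interpolation step makes it explicit. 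On balance the paper's elliptic reading of the equation buys a much shorter argument in which the semigroup is invoked only once, through part (i).
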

\begin{proof}
Observe the formula, 
\begin{equation*}
L^{-1}\grad f = -\int_0^\infty S(\tau- \tau^\prime) \grad f d\tau^\prime. 
\end{equation*}
Using a compactness argument and the estimates \eqref{ineq:gradSDecay}, \eqref{eq:SgradCommute}, one can justify the convergence of the integral for $f \in L^q(m)$, $q \in (1,2]$ and in particular we see that \eqref{ineq:gradSDecay} implies \eqref{ineq:LinvGrad}.  
Write $u = L^{-1}\grad f$ and note that since $\Delta u = \grad f - \frac{1}{2}\grad \cdot (x u)$ the Calder\'on-Zygmund inequality implies (or since this is $L^2$, really just the Fourier transform), 
\begin{equation*}
\norm{\grad u}_2 \lesssim \norm{f}_2 + \norm{x u}_{2} \leq \norm{f}_2 + \norm{u}_{L^2(m)}.
\end{equation*}
Hence, by Calder\'on-Zygmund followed by H\"older's inequality and Gagliardo-Nirenberg,  
\begin{align*}
\norm{\grad u}_{4} & \lesssim \norm{xu}_4 + \norm{f}_4 \\ 
& \lesssim \norm{u}_{L^2(m)}^{1/4}\norm{u}_6^{3/4} + \norm{f}_4 \\ 
& \lesssim \norm{u}_{L^2(m)}^{1/4}\norm{u}_2^{1/4}\norm{\grad u}_2^{1/2} + \norm{f}_4 \\ 
&  \lesssim \norm{u}_{L^2(m)}^{1/2}\norm{\grad u}_{2}^{1/2} + \norm{f}_4 \\ 
& \lesssim \norm{u}_{L^2(m)} + \norm{\grad u}_2 + \norm{f}_4 \\ 
& \lesssim \norm{u}_{L^2(m)} + \norm{f}_2 + \norm{f}_4 \\ 
 & \lesssim \norm{u}_{L^2(m)} + \norm{f}_1^{1/3}\norm{f}_4^{2/3} + \norm{f}_4 \\ 
 & \lesssim \norm{u}_{L^2(m)} + \norm{f}_1 + \norm{f}_4 \\ 
 & \lesssim \norm{u}_{L^2(m)} + \norm{f}_{L^p(m)} + \norm{f}_4, 
\end{align*}
where the last line followed from H\"older's inequality and $m > 2$. Hence \eqref{ineq:LinvGrad} implies \eqref{ineq:Linv2xGrad}.
\end{proof} 

Turning back to the proof of (iv),  we set up a contraction argument using the norm 
\begin{equation}
\norm{f}_X := \norm{f}_{L^2(m)} + \norm{\grad f}_{4}, \label{def:XPerturb}
\end{equation}
which by Gagliardo-Nirenberg and $m > 2$, embeds into every $L^p$ space for $p \in [1,\infty]$ as well as $L^q(m)$ for all $q \geq 2$.
Indeed, recall that
\begin{equation*}
\norm{f}_\infty \lesssim \norm{f}_{2}^{1/3}\norm{\grad f}^{2/3}_{4} \leq \norm{f}_X,  
\end{equation*}
and since $m > 2$, for $p \in [1,2)$,
\begin{equation*}
\norm{f}_{p} \lesssim \norm{f}_{L^2(m)} \leq \norm{f}_X. 
\end{equation*}
For $\rho \in X$, define the map $F\rho \rightarrow f$ by 
\begin{align*}
f & = \alpha G + L^{-1}\grad \cdot (\rho \grad c_\rho), \\ 
-\Delta c_\rho & = \rho. 
\end{align*}
Using \eqref{ineq:LinvGrad}, for any $p \in (1,2]$,  
\begin{align*}
\norm{f}_{L^2(m)} \lesssim \alpha \norm{G}_{L^2(m)} + \norm{\rho \grad c_\rho}_{L^p(m)}. 
\end{align*}
Using H\"older's inequality, \eqref{ineq:VelLp}, and $m > 2$,   
\begin{align*}
\norm{\rho \grad c_\rho} & \leq \norm{\rho}_{L^2(m)} \norm{\grad c_\rho}_{\frac{2p}{2-p}} \\ 
& \lesssim \norm{\rho}_{L^2(m)} \norm{\rho}_{p} \lesssim \norm{\rho}_{L^2(m)}^2 \leq \norm{\rho}_X^2. 
\end{align*}
Hence, 
\begin{equation*}
\norm{f}_{L^2(m)} \leq \alpha \norm{G}_{L^2(m)} + \norm{\rho}_{X}^2. 
\end{equation*}
A similar argument also shows that if $f_i = F[\rho_i]$, 
\begin{equation*}
\norm{f_1 - f_2}_{L^2(m)} \lesssim \left( \norm{\rho_1}_{L^2(m)} + \norm{\rho_2}_{L^2(m)}\right)\norm{\rho_1 - \rho_2}_{L^2(m)}. 
\end{equation*}
Hence for $m > 2$, using \eqref{ineq:Linv2xGrad}, \eqref{ineq:VelLp} and similar estimates to above,  
\begin{align*}
\norm{\grad f}_4 & \lesssim \alpha \norm{\grad G}_{4} + \norm{\rho \grad c_\rho}_{4} + \norm{\rho \grad c_\rho}_{L^p(m)} \\
 & \lesssim \alpha \norm{\grad G}_{4} + \norm{\rho}_\infty\norm{\rho}_{4/3} + \norm{\rho}_{L^2(m)}^2 \\ 
 & \lesssim \alpha \norm{\grad G}_{4} + \norm{\rho}_{X} \norm{\rho}_{L^2(m)} + \norm{\rho}_{L^2(m)}^2 \\ 
 & \lesssim \alpha \norm{\grad G}_{4} + \norm{\rho}_{X}^2,  
\end{align*}
and similarly, 
\begin{align*}
\norm{\grad f_1 - \grad f_2}_{4} & \lesssim \left( \norm{\rho_1}_X + \norm{\rho_2}_X\right)\norm{\rho_1 - \rho_2}_{X}. 
\end{align*}
Hence, we have the two estimates
\begin{align*} 
\norm{f}_X & \lesssim \alpha\norm{G}_X + \norm{\rho}_X^2 \\ 
\norm{f_1 - f_2}_X & \lesssim \left( \norm{\rho_1}_X + \norm{\rho_2}_X\right)\norm{\rho_1 - \rho_2}_{X},  
\end{align*} 
which together with the contraction mapping theorem (applied in a small ball defined by $\norm{\cdot}_X$) implies that for $\alpha$ sufficiently small, provided $m > 4$, $\norm{G_\alpha}_X \lesssim \alpha$, which in turn proves the following estimate: for all $1 \leq p \leq \infty$ and $m > 4$, 
\begin{align} 
\norm{G_\alpha}_{L^p} + \norm{G_\alpha}_{L^2(m)} \lesssim_{p,m} \alpha. \label{ineq:GalphaPerturb}
\end{align}
From \eqref{ineq:GalphaPerturb}, one can show that $\bar C$ in \eqref{ineq:BarC} satisfies $\bar C \lesssim \alpha$
and hence that for $\alpha$ sufficiently small, 
\begin{align*} 
1 \lesssim \int e^{c_\alpha(\zeta) - \abs{\zeta}^2/4} d\zeta \lesssim 1, 
\end{align*} 
independent of $\alpha$. Similarly, the bound in \eqref{ineq:supC} can be taken to be $O(\alpha)$ for $\alpha$ small. 
From this, \eqref{eq:Gformula}, \eqref{ineq:GalphaPerturb} and Proposition \ref{prop:Vel}, the bounds \eqref{ineq:GalphaBounds} follow.  

\subsubsection{Part (v)}
The argument for (v) is similar to that for (iv).  
For all $\alpha \in (0,8\pi)$, $G_\alpha$ is the unique solution of the system
\begin{align*}
\grad \cdot ( G_\alpha \grad c_\alpha) & = LG_\alpha \\ 
-\Delta c_\alpha & = G_\alpha. 
\end{align*}
Defining $f := G_\alpha - G_\beta$, subtracting the systems satisfied for each self-similar solution and rearranging we get the following elliptic system for $f$: 
\begin{align}
\grad \cdot (f \grad c) &= (L-\Lambda_\alpha)f \label{eq:PerturbG} \\ 
-\Delta c & = f \nonumber
\end{align} 
Let $E_\alpha^0$ be the zero eigenfunction of $L - \Lambda_\alpha$ in $L^2(G_\alpha^{-1} d\xi)$, defined above by \eqref{def:E0alphaPDE}.  First note that Proposition \ref{prop:SpecT} implies
 implies $L-\Lambda_\alpha$ can be inverted uniquely in $L^2_0(m)$ for $m>2$ by using \eqref{ineq:SpecGapT} to justify the formula
\begin{align*} 
\left(L-\Lambda_\alpha\right)^{-1}g = -\int_0^\infty \mathcal{T}_\alpha(\tau-\tau^\prime)g d\tau,  
\end{align*}  
for $g \in L_0^2(m)$ (note that $\mathcal{T}_\alpha$ preserves the mean zero property).
Therefore we may formally write the solution to \eqref{eq:PerturbG} as follows ($\int E_\alpha^0 d\xi = 1$; see \S\ref{apx:SpectralGap}), 
\begin{align*} 
f  =  \left(\int f d\xi \right)E_\alpha^0 + \left(L - \Lambda_\alpha\right)^{-1}\grad \cdot (f \grad c). 
\end{align*} 
With $X$ defined in \eqref{def:XPerturb}, one can bootstrap the result of Lemma \ref{lem:EalphaControl} (see \S\ref{apx:SpectralGap})  with \eqref{def:E0alphaPDE} and elliptic regularity to prove,   
\begin{equation*}
\norm{E_\alpha^0}_{X} \lesssim_\alpha 1,   
\end{equation*}
from which it follows that $\norm{\left(\int f d\xi \right)E_\alpha^0}_X \lesssim \abs{\alpha - \beta}$. 
Hence, the same contraction mapping argument used to deduce (iv) can be used here provided we have the analogous lemma.  
\begin{lemma} \label{lem:LalphaBounded} 
\begin{itemize}
\item[(i)] $L - \Lambda_\alpha$ satisfies the following for all $p \in (1,2]$ and $m > 2$, 
\begin{equation}
\norm{(L - \Lambda_\alpha)^{-1}\grad f}_{L^2(m)} \lesssim \norm{f}_{L^p(m)}. \label{ineq:LalphaGrad}
\end{equation}
\item[(ii)] $L - \Lambda_\alpha$ satisfies the following for all $p \in (4/3,2]$ and $m > 4$, 
\begin{equation}
\norm{\grad (L - \Lambda_\alpha)^{-1}\grad f}_4 \lesssim \norm{f}_{4} + \norm{f}_{L^p(m)}. \label{ineq:Lalpha2xGrad}
\end{equation} 
\end{itemize}
\end{lemma}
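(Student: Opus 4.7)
The plan is to mirror the strategy used for Lemma \ref{lem:Lbounded}, replacing the heat semigroup $S(\tau)$ by the perturbed semigroup $\mathcal{T}_\alpha(\tau)$ and relying on Proposition \ref{prop:SpecT} in place of Proposition \ref{prop:Stau}.

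For part (i), since $\grad f$ is automatically mean zero, Proposition \ref{prop:SpecT} (iii) legitimizes the formula
\begin{equation*}
(L-\Lambda_\alpha)^{-1}\grad f \;=\; -\int_0^\infty \mathcal{T}_\alpha(\tau)\grad f\, d\tau,
\end{equation*}
and gives the pointwise bound $\norm{\mathcal{T}_\alpha(\tau)\grad f}_{L^2(m)} \lesssim e^{-\nu\tau}a(\tau)^{-1/p}\norm{f}_{L^p(m)}$. The factor $a(\tau)^{-1/p}$ is integrable at $\tau=0$ exactly because $p>1$, and the exponential decay handles $\tau \to \infty$. A compactness argument with smooth $f$, analogous to the one used for Lemma \ref{lem:Lbounded}, identifies this integral with $(L-\Lambda_\alpha)^{-1}\grad f$.

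For part (ii), I would set $u = (L-\Lambda_\alpha)^{-1}\grad f$, which belongs to $L^2(m)$ by (i). Using $v^{G_\alpha}=\grad c_\alpha$, the equation $(L-\Lambda_\alpha)u = \grad f$ rearranges to
\begin{equation*}
\Delta u \;=\; \grad f \;-\; \tfrac{1}{2}\grad\cdot(\xi u) \;+\; \grad\cdot(G_\alpha\, B\ast u) \;+\; \grad\cdot(u\,\grad c_\alpha),
\end{equation*}
so Calder\'on--Zygmund in $L^q$ gives, for $q\in(1,\infty)$,
\begin{equation*}
\norm{\grad u}_q \;\lesssim\; \norm{f}_q + \norm{\xi u}_q + \norm{G_\alpha\, B\ast u}_q + \norm{u\,\grad c_\alpha}_q.
\end{equation*}
I would first run this at $q=2$ to obtain $\norm{\grad u}_2 \lesssim \norm{f}_2 + \norm{u}_{L^2(m)}$: the weighted term uses $\norm{\xi u}_2 \le \norm{u}_{L^2(m)}$ for $m\ge 1$; the new nonlocal term is bounded by $\norm{G_\alpha}_4\norm{B\ast u}_4 \lesssim \norm{u}_{4/3} \lesssim \norm{u}_{L^2(m)}$ via Propositions \ref{prop:Galpha} and \ref{prop:Vel}; and $\norm{\grad c_\alpha}_\infty < \infty$ thanks to \eqref{ineq:GradcalphaAsymptotic} together with the smoothness of $G_\alpha$ at the origin. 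Running the same inequality at $q=4$, the weighted contribution is treated exactly as in Lemma \ref{lem:Lbounded} via H\"older and Gagliardo--Nirenberg,
\begin{equation*}
\norm{\xi u}_4 \lesssim \norm{u}_{L^2(m)}^{1/4}\norm{u}_6^{3/4} \lesssim \norm{u}_{L^2(m)}^{1/2}\norm{\grad u}_2^{1/2},
\end{equation*}
requiring $m>4$; the nonlocal term is $\norm{G_\alpha\, B\ast u}_4 \le \norm{G_\alpha}_\infty \norm{B\ast u}_4 \lesssim \norm{u}_{L^2(m)}$; and $\norm{u\grad c_\alpha}_4 \lesssim \norm{u}_4$ is absorbed via Ladyzhenskaya's inequality $\norm{u}_4 \lesssim \norm{u}_2^{1/2}\norm{\grad u}_2^{1/2}$. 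Combining these, using (i) to replace $\norm{u}_{L^2(m)}$ by $\norm{f}_{L^p(m)}$, and interpolating $\norm{f}_2 \lesssim \norm{f}_1^{1/3}\norm{f}_4^{2/3} \lesssim \norm{f}_{L^p(m)} + \norm{f}_4$ (the constraint $p>4/3$ is what ensures $L^p(m) \hookrightarrow L^1$ cleanly with $m>4$), delivers \eqref{ineq:Lalpha2xGrad}.

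The main obstacle is keeping the interpolation bookkeeping tight enough that every quantity on the right-hand side is controlled purely by $\norm{f}_4 + \norm{f}_{L^p(m)}$, with no unabsorbed $\norm{u}$ or $\norm{\grad u}$ terms. This is also where the genuinely new feature beyond Lemma \ref{lem:Lbounded} appears: the nonlocal contribution $G_\alpha\, B\ast u$ must be controlled using Proposition \ref{prop:Galpha} and \eqref{ineq:VelLp}, while the term $u\,\grad c_\alpha$ relies on the precise spatial asymptotics \eqref{ineq:GradcalphaAsymptotic}. Everything else is a close adaptation of the linear theory already developed for $L$.
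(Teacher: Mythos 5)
Your proposal is correct and follows essentially the same route as the paper: part (i) via the integral representation and Proposition \ref{prop:SpecT}(iii), and part (ii) via Calder\'on--Zygmund applied at $q=2$ and then $q=4$, with the extra terms $G_\alpha\,B\ast u$ and $u\,\grad c_\alpha$ handled exactly as the paper does (H\"older together with \eqref{ineq:VelLp} and the boundedness of $\grad c_\alpha$). The paper's own proof is terser — it only writes out the $q=2$ estimate and refers to Lemma \ref{lem:Lbounded} for the rest — so your filled-in $q=4$ interpolation chain is a faithful reconstruction rather than a different argument.
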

\begin{proof} 
Inequality \eqref{ineq:LalphaGrad} follows similarly to \eqref{ineq:LinvGrad}, using the estimates collected in Proposition \ref{prop:SpecT}.
Denote $u = (L-\Lambda_\alpha)^{-1}\grad f$ as the unique solution in $L_0^2(m)$ to
\begin{equation*}
Lu - \Lambda_\alpha u = \grad f, 
\end{equation*}
and denote $c$ the solution to $-\Delta c = u$. 
Similar to the proof of \eqref{ineq:Linv2xGrad} we use the Calder\'on-Zygmund inequality which implies (using also \eqref{ineq:VelLp}),  
\begin{align*}
\norm{\grad u}_2 & \lesssim \norm{f}_2 + \norm{xu}_2 + \norm{G_\alpha \grad c}_2 + \norm{u \grad c_\alpha}_2 \\ 
& \lesssim_\alpha \norm{f}_2 + \norm{u}_{L^2(m)} + \norm{\grad c}_4 + \norm{u}_2 \\
&  \lesssim_\alpha \norm{f}_2 + \norm{u}_{L^2(m)} + \norm{u}_{4/3} \\ 
& \lesssim_\alpha \norm{f}_2 + \norm{u}_{L^2(m)}, 
\end{align*}
where the last line followed from $m > 2$. 
From here, the corresponding $L^4$ estimate implies \eqref{ineq:Lalpha2xGrad} similar to above in the proof of \eqref{ineq:Linv2xGrad}.  
\end{proof}

\vfill\eject
\bibliographystyle{plain}
\bibliography{nonlocal_eqns}

\begin{thebibliography}{10}

\bibitem{ArnoldMarkowichEtAl01}
A.~Arnold, P.~Markowich, G.~Toscani, and A.~Unterreiter.
\newblock On convex {Sobolev} inequalities and the rate of convergence to
  equilibrium for {Fokker-Planck} type equations.
\newblock {\em Comm. Part. Diff. Eqn.}, 26(1-2):43--100, 2001.

\bibitem{BedrossianIA10}
J.~Bedrossian.
\newblock Intermediate asymptotics for critical and supercritical aggregation
  equations and {Patlak-Keller-Segel} models.
\newblock {\em Comm. Math. Sci.}, 9:1143--1161, 2011.

\bibitem{BRB10}
J.~Bedrossian, N.~Rodr\'iguez, and A.L. Bertozzi.
\newblock Local and global well-posedness for aggregation equations and
  {Patlak-Keller-Segel} models with degenerate diffusion.
\newblock {\em Nonlinearity}, 24(6):1683--1714, 2011.

\bibitem{Biler95}
P.~Biler.
\newblock The {Cauchy} problem and self-similar solutions for a nonlinear
  parabolic equation.
\newblock {\em Studia Math.}, 114(2):181--192, 1995.

\bibitem{BilerAccretion95}
P.~Biler.
\newblock Growth and accretion of mass in an astrophysical model.
\newblock {\em Applicationes Mathematicae}, 23:179--189, 1995.

\bibitem{BilerCorriasDolbeault11}
P.~Biler, L.~Corrias, and J.~Dolbeault.
\newblock Large mass self-similar solutions of the parabolic-parabolic
  {Keller-Segel} model of chemotaxis.
\newblock {\em J. Math. Biol.}, 61(1):1--32, 2011.

\bibitem{Biler06}
P.~Biler, G.~Karch, P.~Lauren\c{c}ot, and T.~Nadzieja.
\newblock The $8\pi$-problem for radially symmetric solutions of a chemotaxis
  model in the plane.
\newblock {\em Math. Meth. Appl. Sci}, 29:1563--1583, 2006.

\bibitem{BlanchetCalvezCarrillo08}
A.~Blanchet, V.~Calvez, and J.A. Carrillo.
\newblock Convergence of the mass-transport steepest descent scheme for
  subcritical {Patlak-Keller-Segel} model.
\newblock {\em SIAM J. Num. Anal.}, 46:691--721, 2008.

\bibitem{BlanchetCarlenCarrillo10}
A.~Blanchet, E.~Carlen, and J.A. Carrillo.
\newblock Functional inequalities, thick tails and asymptotics for the critical
  mass {Patlak-Keller-Segel} model.
\newblock {\em J. Func. Anal.}, to appear.

\bibitem{Blanchet09}
A.~Blanchet, J.A. Carrillo, and P.~Lauren\c{c}ot.
\newblock Critical mass for a {Patlak-Keller-Segel} model with degenerate
  diffusion in higher dimensions.
\newblock {\em Calc. Var.}, 35:133--168, 2009.

\bibitem{Blanchet08}
A.~Blanchet, J.A. Carrillo, and N.~Masmoudi.
\newblock Infinite time aggregation for the critical {Patlak-Keller-Segel}
  model in {$\mathbb R^2$}.
\newblock {\em Comm. Pure Appl. Math.}, 61:1449--1481, 2008.

\bibitem{BlanchetDEF10}
A.~Blanchet, J.~Dolbeault, M.~Escobedo, and J.~Fern\'andez.
\newblock Asymptotic behavior for small mass in the two-dimensional
  parabolic-elliptic {Keller-Segel} model.
\newblock {\em J. Math. Anal. Appl.}, 361:533--542, 2010.

\bibitem{BlanchetEJDE06}
A.~Blanchet, J.~Dolbeault, and B.~Perthame.
\newblock Two-dimensional {Keller-Segel} model: Optimal critical mass and
  qualitative properties of the solutions.
\newblock {\em E. J. Diff. Eqn}, 2006(44):1--32, 2006.

\bibitem{CalvezCarrillo06}
V.~Calvez and J.A. Carrillo.
\newblock Volume effects in the {Keller-Segel} model: energy estimates
  preventing blow-up.
\newblock {\em J. Math. Pures Appl.}, 86:155--175, 2006.

\bibitem{CalvezCarrillo10}
V.~Calvez and J.A. Carrillo.
\newblock Refined asymptotics for the subcritical {Keller-Segel} system and
  related functional inequalities.
\newblock {\em Proc. AMS}, to appear.

\bibitem{CalvezCorrias}
V.~Calvez and L.~Corrias.
\newblock The parabolic-parabolic {K}eller-{S}egel model in {$\mathbb R^2$}.
\newblock {\em Commun. Math. Sci.}, 6(2):417--447, 2008.

\bibitem{CamposDolbeault12}
J.~Campos and J.~Dolbeault.
\newblock Asymptotic estimates for the parabolic-elliptic {Keller-Segel} model
  in the plane.
\newblock {\em Preprint}, 2012.

\bibitem{CarlenLoss92}
E.~Carlen and M.~Loss.
\newblock Competing symmetries, the logarithmic {HLS} inequality and {Onofri's}
  inequality on {$\mathbb S^n$}.
\newblock {\em Geom. Func. Anal.}, 2(1):90--104, 1992.

\bibitem{CarlenLoss94}
E.~Carlen and M.~Loss.
\newblock Optimal smoothing and decay estimates for viscously damped
  conservation laws, with application to the {2-D Navier-Stokes} equation.
\newblock {\em Duke Math. J.}, 81:135--157, 1995.

\bibitem{CarrilloEntDiss01}
J.A. Carrillo, A.~J\"ungel, P.A. Markowich, G.~Toscani, and A.~Unterreiter.
\newblock Entropy dissipation methods for degenerate parabolic problems and
  generalized {Sobolev} inequalities.
\newblock {\em Montash. Math.}, 133:1--82, 2001.

\bibitem{CarrilloMcCannVillani03}
J.A. Carrillo, R.J. McCann, and C.~Villani.
\newblock Kinetic equilibration rates for granular media and related equations:
  entropy dissipation and mass transportation estimates.
\newblock {\em Rev. Mat. Ibero.}, 19(3):971--1018, 2003.

\bibitem{ChavanisSire04}
P.-H. Chavanis and C.~Sire.
\newblock Estimate of the blow-up and relaxation time for self-gravitating
  {Brownian} particles and bacterial populations.
\newblock {\em Phys. Rev. E}, 70(026115), 2004.

\bibitem{ChavanisSire06}
P.-H. Chavanis and C.~Sire.
\newblock Virial theorem and dynamical evolution of self-gravitating {Brownian}
  particles in an unbounded domain. i. {Overdamped} models.
\newblock {\em Phys. Rev. E}, 73(066103), 2006.

\bibitem{ChildressPercus81}
S.~Childress and J.K. Percus.
\newblock Nonlinear aspects of chemotaxis.
\newblock {\em Math. Biosci.}, 56:217--237, 1981.

\bibitem{CoddingtonLevinson}
E.A. Coddington and N.~Levinson.
\newblock {\em Theory of Ordinary Differential Equations}.
\newblock International Series in Pure and Applied Math. McGraw-Hill Book
  Company, 1955.

\bibitem{Corrias04}
L.~Corrias, B.~Perthame, and H.~Zaag.
\newblock Global solutions of some chemotaxis and angiogenesis systems in high
  space dimensions.
\newblock {\em Milan J. Math.}, 72:1--28, 2004.

\bibitem{Cottet86}
G.-H. Cottet.
\newblock Equations de {Navier-Stokes} dans le plan avec tourbillon initial
  measure.
\newblock {\em C.R. Acad. Sci. Paris Ser. I Math.}, 303:105--108, 1986.

\bibitem{DiazNagai95}
J.~Diaz and T.~Nagai.
\newblock Symmetrization in a parabolic-elliptic system related to chemotaxis.
\newblock {\em Adv. Math. Sci. Appl.}, 5(2):659--680, 1995.

\bibitem{DiazNagaiRakotoson98}
J.~Diaz, T.~Nagai, and J.M. Rakotoson.
\newblock Symmetrization techniques on unbounded domains: application to a
  chemotaxis system on {$\Real^N$}.
\newblock {\em J. Diff. Eqn.}, 145:156--183, 1998.

\bibitem{Dolbeault04}
J.~Dolbeault and B.~Perthame.
\newblock Optimal critical mass in the two dimensional {Keller-Segel} model in
  {$\mathbb R^2$}.
\newblock {\em C.R. Acad. Sci. Paris, S\'{e}r I Math}, 339(9):611--616, 2004.

\bibitem{DolbeaultSchmeiser09}
J.~Dolbeault and C.~Schmeiser.
\newblock The two-dimensional {Keller-Segel} model after blow-up.
\newblock {\em Disc. Cont. Dyn. Sys.: Ser B}, 25:109--121, 2009.

\bibitem{EngelNagel}
K.-J. Engel and R.~Nagel.
\newblock {\em One-parameter semigroups for linear evolution equations}.
\newblock Graduate Texts in Mathematics. Springer-Verlag, 2000.

\bibitem{GallagherGallay05}
I.~Gallagher and T.~Gallay.
\newblock Uniqueness for the two-dimensional {Navier-Stokes} equation with
  measure as initial vorticity.
\newblock {\em Math. Ann.}, 332:287--327, 2005.

\bibitem{GallagherGallayLions05}
I.~Gallagher, T.~Gallay, and P.L. Lions.
\newblock On the uniqueness of the solution to the two-dimensional
  {Navier-Stokes} equation with a {Dirac} mass as initial vorticity.
\newblock {\em Math. Nachr.}, 278(14):1665--1672, 2005.

\bibitem{GallayWayne02}
T.~Gallay and E.~Wayne.
\newblock Invariant manifolds and the long-time asymptotics of the
  {Navier-Stokes} and {Vorticity} equations on {$\Real^2$}.
\newblock {\em Arch. Rat. Mech. Anal.}, 163:209--258, 2002.

\bibitem{GallayWayne05}
T.~Gallay and E.~Wayne.
\newblock Global stability of vortex solutions of the two-dimensional
  {Navier-Stokes} equation.
\newblock {\em Comm. Math. Phys.}, 255:97--129, 2005.

\bibitem{Gardiner}
C.W. Gardiner.
\newblock {\em Handbook of stochastic methods for physics, chemistry and the
  natural sciences}.
\newblock Springer series in synergetics. Springer, 2003.

\bibitem{GigaMiyakawaOsada88}
Y.~Giga, T.~Miyakawa, and H.~Osada.
\newblock Two-dimensional {Navier-Stokes} flow with measures as initial
  vorticity.
\newblock {\em Arch. Rat. Mech. Anal.}, 104:223--250, 1988.

\bibitem{GoldbergEtAl}
Israel Goldberg, Seymour Goldberg, and Marinus~A. Kaashoek.
\newblock {\em Classes of linear operators, {Vol I.}}, volume~49 of {\em
  Operator Theory: Advances and Applications}.
\newblock Birkh{\"a}user, 1990.

\bibitem{HerreroMedinaVelazquez98}
M.~Herrero, E.~Medina, and J.J.L. Vel\'azquez.
\newblock Self-similar blow-up for a reaction-diffusion system.
\newblock {\em J. Comp. Appl. Math.}, 97:99--119, 1998.

\bibitem{HerreroMedinaVelazquez97}
M.~Herrero, E.~Medina, and J.L. Vel\'azquez.
\newblock Finite-time aggregation into a single point in a reaction-diffusion
  system.
\newblock {\em Nonlinearity}, 10:1739--1754, 1997.

\bibitem{HerreroVelazquez96}
M.~Herrero and J.J.L. Vel\'azquez.
\newblock Singularity patterns in a chemotaxis model.
\newblock {\em Math. Ann.}, 306:583--623, 1996.

\bibitem{HerreroVelazquez97}
M.~Herrero and J.J.L. Vel\'azquez.
\newblock A blow-up mechanism for a chemotaxis model.
\newblock {\em Annali della Scuola Normale Sup. di Pisa, Classe Sci. 4
  S{\'e}r}, 24(4):633--683, 1997.

\bibitem{HandP}
T.~Hillen and K.~J. Painter.
\newblock A user's guide to {PDE} models for chemotaxis.
\newblock {\em J. Math. Biol.}, 58(1-2):183--217, 2009.

\bibitem{Hortsmann}
D.~Horstmann.
\newblock {From 1970 until present: the Keller-Segel model in chemotaxis and
  its consequences}.
\newblock {\em I, Jahresber. Deutsch. Math.-Verein}, 105(3):103--165, 2003.

\bibitem{JagerLuckhaus92}
W.~J\"ager and S.~Luckhaus.
\newblock On explosions of solutions to a system of partial differential
  equations modelling chemotaxis.
\newblock {\em Trans. Amer. Math. Soc.}, 329(2):819--824, 1992.

\bibitem{JiaSverak13}
H.~Jia and V.~Sverak.
\newblock Are the incompressible {Navier-Stokes} equations locally ill-posed in
  the energy space?
\newblock {\em Preprint, {\textup{arXiv:1306.2136}}}, 2013.

\bibitem{JiaSverak12}
Hao Jia and Vladim{\'\i}r {\v{S}}ver{\'a}k.
\newblock Local-in-space estimates near initial time for weak solutions of the
  navier-stokes equations and forward self-similar solutions.
\newblock {\em Inventiones mathematicae}, pages 1--33, 2012.

\bibitem{KS}
E.~F. Keller and L.A. Segel.
\newblock Model for chemotaxis.
\newblock {\em J. Theor. Biol.}, 30:225--234, 1971.

\bibitem{KimYao11}
I.~Kim and Y.~Yao.
\newblock The {Patlak-Keller-Segel} model and its variations: properties of
  solutions via maximum principle.
\newblock {\em SIAM J. Math. Anal.}, to appear.

\bibitem{Kowalczyk05}
R.~Kowalczyk.
\newblock Preventing blow-up in a chemotaxis model.
\newblock {\em J. Math. Anal. Appl.}, 305:566--588, 2005.

\bibitem{LiebLoss}
E.H. Lieb and M.~Loss.
\newblock {\em Analysis}, volume~14 of {\em Grad. Stud. Math.}
\newblock American Mathematical Society, 2001.

\bibitem{LiebYau87}
E.H. Lieb and H-T Yau.
\newblock The {Chandrasekhar} theory of stellar collapse a the limit of quantum
  mechanics.
\newblock {\em Comm. Math. Phys.}, 112:147--174, 1987.

\bibitem{Nagai95}
T.~Nagai.
\newblock Blow-up of radially symmetric solutions to a chemotaxis system.
\newblock {\em Adv. Math. Sci. Appl.}, 5(2):581--601, 1995.

\bibitem{Naito01}
Y.~Naito.
\newblock Symmetry results for semilinear elliptic equations in {$\mathbf
  R^2$}.
\newblock {\em Nonlin. Anal.}, 47(6):3661--3670, 2001.

\bibitem{NaitoSuzuki04}
Y.~Naito and T.~Suzuki.
\newblock Self-similar solutions to a nonlinear parabolic-elliptic system.
\newblock {\em Taiwanese J. of Math.}, 8(1):43--55, 2004.

\bibitem{NaitoSuzukiYoshida02}
Y.~Naito, T.~Suzuki, and K.~Yoshida.
\newblock Self-similar solutions to a parabolic system modeling chemotaxis.
\newblock {\em J. Diff. Eqns.}, 184:386--421, 2002.

\bibitem{Oksendal}
B.~{\O}ksendal.
\newblock {\em Stochastic differential equations}.
\newblock Springer-Verlag, 1989.

\bibitem{Patlak}
C.~S. Patlak.
\newblock Random walk with persistence and external bias.
\newblock {\em Bull. Math. Biophys.}, 15:311--338, 1953.

\bibitem{Poupaud02}
F.~Poupaud.
\newblock Diagonal defect measures, adhesion dynamics and {Euler} equation.
\newblock {\em Meth. Appl. Anal.}, 9(4):533--562, 2002.

\bibitem{ReedSimonIV}
M.~Reed and B.~Simon.
\newblock {\em Methods of Modern Mathematical Physics {IV}: Analysis of
  Operators}.
\newblock Academic Press, 1987.

\bibitem{Senba07}
T.~Senba.
\newblock Type {II} blowup of solutions to a simplified {Keller-Segel} system
  in two dimensional domains.
\newblock {\em Nonlin. Anal.}, 66:1817--1839, 2007.

\bibitem{Senba02}
T.~Senba and T.~Suzuki.
\newblock Weak solutions to a parabolic-elliptic system of chemotaxis.
\newblock {\em J. Func. Anal.}, 191:17--51, 2002.

\bibitem{SireChavanis08}
C.~Sire and P.-H. Chavanis.
\newblock Critical dynamics of self-gravitating {Langevin} particles and
  bacterial populations.
\newblock {\em Phys. Rev. E}, 78, 2008.

\bibitem{SugiyamaDIE06}
Y.~Sugiyama.
\newblock Global existence in sub-critical cases and finite time blow-up in
  super-critical cases to degenerate {Keller-Segel} systems.
\newblock {\em Diff. Int. Eqns.}, 19(8):841--876, 2006.

\bibitem{SugiyamaADE07}
Y.~Sugiyama.
\newblock Application of the best constant of the {Sobolev} inequality to
  degenerate {Keller-Segel} models.
\newblock {\em Adv. Diff. Eqns.}, 12(2):121--144, 2007.

\bibitem{Velazquez04I}
J.J.L. Vel\'{a}zquez.
\newblock Point dynamics in a singular limit of the {Keller-Segel} model i:
  motion of the concentration regions.
\newblock {\em SIAM J. Appl. Math.}, 64(4):1198--1223, 2004.

\bibitem{Velazquez04II}
J.J.L. Vel\'{a}zquez.
\newblock Point dynamics in a singular limit of the {Keller-Segel} model ii:
  formation of the concentration regions.
\newblock {\em SIAM J. Appl. Math.}, 64(4):1224--1248, 2004.

\bibitem{Velazquez04}
J.J.L. Vel\'{a}zquez.
\newblock Well-posedness of a model of point dynamics for a limit of the
  {Keller-Segel} system.
\newblock {\em J. Diff. Eqns.}, 206:315--352, 2004.

\end{thebibliography}

\end{document}